\newcommand{\mresize}[1]{{ \small #1 }}
\definecolor{lgray}{gray}{0.90}
\definecolor{mypink1}{rgb}{0.858, 0.388, 0.578}
\definecolor{mypink2}{RGB}{255, 0, 255}
\definecolor{mypink3}{cmyk}{0, 0.7808, 0.4429, 0.1412}
\definecolor{mygrayI}{gray}{0.6}
\definecolor{mygrayII}{gray}{0.8}
\definecolor{mygrayIII}{gray}{0.9}
\def\grcel{\cellcolor{mygrayII}}
\def\embox{{\phantom{x}}}
\def\dispace{\setlength{\itemsep}{2pt}}
\def\fsmg{forward semigroup}
\def\cloak{cloaktic}
\def\ccloak{co-\cloak}
\def\kmon{\cloak\ monoid}
\def\ckmon{co-\kmon}
\def\keqv{cloaktically equivalent}
\def\hstep{horizontal  step}
\def\vstep{vertical step}
\def\dstep{slant step}
\def\dwalk{descending  walk}
\def\vwalk{falling  walk}
\def\hwalk{horizontal cover}
\def\flt{\pv}
\def\eflat{$\flt$-flat}
\def\lcorn{$\ell$-corner}
\def\pSkip{\vskip 1mm \noindent}
\def\sSkip{$ $ \vskip 2mm}
\newcommand{\joverline}[2]{%
  \mathord{
    \vbox{\offinterlineskip
      \halign{##\cr
        $\scriptscriptstyle#1$\hrulefill\cr
        \noalign{\kern.4ex}
        $\; #2 \,$\cr
      }%
    }%
  }%

}
\newcommand\arrs[2]{\scriptsize{\begin{array}{c}
                                                    #1, \\ #2
                                                  \end{array}}}
\newcommand{\longleft}[1]{\;{\leftarrow%
\count255=0 \loop \mathrel{\mkern-6mu}%
    \relbar\advance\count255 by1\ifnum\count255<#1\repeat}\;}
\newcommand{\longright}[1]{\;{\count255=0 \loop \relbar\mathrel{\mkern-6mu}%
    \advance\count255 by1\ifnum\count255<#1\repeat\rightarrow}\;}
\newcommand{\Right}[2]{\overset{#2}{\longright{#1}}}
\newcommand{\RIGHT}[3]{\mathrel{\mathop{\kern0pt\longright{#1}}
    \limits^{#2}_{#3}}}
\newcommand{\LEFT}[3]{\mathrel{\mathop{\kern0pt\longleft{#1}}\limits^{#2}_{#3}}
}
\newcommand{\longleftright}[1]{\;{\leftarrow\mathrel{\mkern-6mu}%
    \count255=0\loop\relbar\mathrel{\mkern-6mu}%
    \advance\count255 by1\ifnum\count255<#1\repeat\rightarrow}\;}
\newcommand{\onto}[1]{\;{\count255=0 \loop \relbar\joinrel
    \advance\count255 by1
    \ifnum\count255<#1 \repeat \twoheadrightarrow}\;}
\newcommand{\Onto}[2]{\overset{#2}{\onto{#1}}}
\newcommand{\RLEFT}[3]{\mathrel{%
   \mathop{\vcenter{\baselineskip=0pt\hbox{$\kern0pt\longright{#1}$}%
   \hbox{$\kern0pt\longleft{#1}$}}}\limits^{#2}_{#3}}}
\newcommand{\To}{\longrightarrow }
\newcommand{\TO}{\longright{3} }
\newcommand{\mto}{\mapsto }
\newcommand{\mTo}{\longmapsto }
\def\Sim{\,\smash{\raisebox{-0.45ex}{\ensuremath{\scriptstyle\sim}}}\,}
\newcommand{\ONTO}{\onto{3} }
\newcommand{\ISOTO}{\Right{3}{\Sim}}
\newcommand{\Isoto}{\Right{1}{\Sim}}
\newcommand{\SIM}[1]{\underset{\Sim}{#1}}
\newcommand{\hooklongrightarrow}{\lhook\joinrel\longrightarrow}
\newcommand{\Hooklongrightarrow}{\lhook\joinrel\joinrel\joinrel\joinrel\TO}
\newcommand{\Into}{\hooklongrightarrow}
\newcommand{\INTO}{\Hooklongrightarrow}
\newcommand{\Mto}{\longmapsto}
\def\insrt{\looparrowright}
\def\balg{\begin{align}}
\def\ealg{\end{align}}
\def\+{\vee}
\def\-{\wedge}
\def\d+{\, \widehat{+} \, }
\def\bigp{\bigvee}
\def\bigm{\bigwedge}
\def\chra{\chi^+}
\def\chrp{\chi^\times}
\def\ddual{^\-}
\newcommand\oix[2]{{#1}^{(#2)}}
\newcommand\Oix[2]{{#1}_{(#2)}}
\newcommand\sMon[2]{{#1}^{[#2]}}
\newcommand\smon[2]{#1^{[#2]}}
\def\TT{T}
\def\KXa{$\operatorname{K1}$}
\def\KXb{$\operatorname{K2}$}
\def\TPA{PA}
\def\PXa{$\operatorname{\TPA1}$}
\def\PXb{$\operatorname{\TPA2}$}
\def\PXc{$\operatorname{\TPA3}$}
\def\PXd{$\operatorname{\TPA4}$}
\def\dPXa{$\operatorname{\TPA1}'$}
\def\dPXb{$\operatorname{\TPA2}'$}
\def\dPXc{$\operatorname{\TPA3}'$}
\def\dPXd{$\operatorname{\TPA4}'$}
\def\KXa{$\operatorname{KN1}$}
\def\KXb{$\operatorname{KN2}$}
\def\NDXa{$\operatorname{FS}$}
\def\bfc{{\bf c}}
\def\bfr{{\bf r}}
\def\udscr{\underline{\phantom{x}} \; }
\newcommand\mcl[2]{\rwcl{#1}{{\udscr}}{#2}}
\newcommand\mrw[2]{\rwcl{#1}{#2}{{\udscr}}}
\newcommand\clrw[3]{#1[#3,#2]}
\newcommand\rwcl[3]{#1[#2,#3]}
\def\Cl{\operatorname{Col}}
\def\Rw{\operatorname{Row}}
\def\zeroM{(\zero)}
\def\w{k}
\def\a{a}
\def\aa{a}
\def\bb{b}
\def\cc{c}
\def\elm{{\operatorname{elm}}}
\def\srHom{\varphi}
\newcommand\Pv[1]{#1}
\def\pv{\kappa}
\def\ipv{\inv{\pv}}
\def\lt{\a}
\def\Al{\tA}
\def\eM{E}
\def\fM{F}
\def\dom{\rhd}
\def\Iff{\quad \Leftrightarrow \quad }
\def\clcong{\equiv_\clk}
\def\cclcong{{\equiv^{\operatorname{co}}_\clk}}
\def\cpcong{{\equiv^{\operatorname{co}}_\plc}}
\def\tcong{\equiv_\tab}
\def\kcong{\equiv_\knu}
\def\pcong{\equiv_\plc}
\def\bcong{\equiv_\byc}
\def\bsim{\sim_\blk}
\def\bleq{\subseteq_\blk}
\def\bgeq{\supseteq_\blk}
\def\NDSg{\mathcal{F}}
\def\MFrm{\mathfrak{F}}
\def\MPlc{\mfAml}
\def\dMPlcn{\mir{\mfAml_n}}
\def\APlc{\mathfrak{plc}}
\def\dAPlc{\APlc\ddual}
\newcommand\rvs[1]{\overleftarrow{#1}}
\newcommand\std[1]{\overrightarrow{#1}}
\newcommand\lcmp[2]{\co{\mfM}_{#1}(#2)}
\newcommand\ndc[1]{{#1}^{\uparrow}}
\newcommand\nic[1]{{#1}^{\downarrow}}
\newcommand\NDC[1]{\ndc{\operatorname{Sq}}( #1 )}
\newcommand\NIC[1]{\nic{\operatorname{Sq}}( #1 )}
\newcommand\MNDC[1]{\ndc{\operatorname{MSq}}( #1 )}
\newcommand\MNIC[1]{\nic{\operatorname{MSq}}( #1 )}
\newcommand\squ[1]{[ #1 ]}
\newcommand\incs[1]{ {#1}^{\uparrow}}
\newcommand\decs[1]{ {#1}^{\downarrow}}
\newcommand\lensq[1]{\operatorname{len}\big({#1}\big)}
\def\decS{\decs{S}}
\def\incR{\incs{R}}
\def\incS{\incs{S}}
\def\rev{{\operatorname{rev}}}
\def\mirM{{\operatorname{cmr}}}
\def\cmirr{co-mirror}
\def\Cmirr{Co-mirror}
\def\RInj{\operatorname{Inj_r}}
\def\Tab{\mathsf{Tab}}
\def\STab{\mathsf{STab}}
\def\CTab{\mathsf{CTab}}
\def\SCTab{\mathsf{SCTab}}
\def\HC{\Theta}
\def\clk{{\operatorname{clk}}}
\def\tab{{\operatorname{tab}}}
\def\stab{{\operatorname{stab}}}
\def\ctab{{\operatorname{ctab}}}
\def\mat{{\operatorname{mat}}}
\def\tabctab{\scT_\ctab}
\def\stabsctab{\scT_\stab}
\def\ctabmat{\scC_\mat}
\def\ctabcomat{\scC_\mat^{\operatorname{co}}}
\def\tabmat{\scT_\mat}
\def\tabcomat{\scT_\mat^{\operatorname{co}}}
\def\stdxrvs{\scS}
\def\tb{\mathfrak{T}}
\def\ctb{\mathfrak{C}}
\def\rw{\mathfrak{r}}
\def\cl{\mathfrak{c}}
\def\row{\operatorname{row}}
\def\Lin{{\operatorname{Lin}}}
\def\Mat{{\operatorname{Mat}}}
\def\MatnT{\Mat_n(\T)}
\def\TMatnT{\TMat_n(\T)}
\def\dMatnT{\Mat_n(\T_\-)}
\def\MatnB{\Mat_n(\B)}
\def\TMat{{\operatorname{TMat}}}
\def\TMatnT{\TMat_n(\Trop)}
\def\mfa{\mathfrak{a}}
\def\mfb{\mathfrak{b}}
\def\mfc{\mathfrak{c}}
\def\mfd{\mathfrak{d}}
\def\mfe{\mathfrak{e}}
\def\mff{\mathfrak{f}}
\def\mfu{\mathfrak{u}}
\def\mfv{\mathfrak{v}}
\def\mfo{\mathfrak{o}}
\def\delm{\mfd}
\def\felm{\mff}
\def\cent{\lm}
\def\Cent{\cdiag}
\def\cdiag{\tau}
\def\mfAml{\mfA^\times}
\def\mfA{\mathfrak{A}}
\def\mfM{\mathfrak{M}}
\def\mfN{\mathfrak{N}}
\def\mfS{\mathfrak{S}}
\def\mfS{\mathfrak{S}}
\def\mfp{\mathfrak{p}}
\def\mfq{\mathfrak{q}}
\def\mfr{\mathfrak{r}}
\def\p{\mfp}
\def\q{\mfq}
\def\r{\mfr}
\def\tt{t}
\def\tlmu{\widetilde \mu}
\def\tlpi{\widetilde \pi}
\def\PLC{\mathsf{PLC}}
\def\CPLC{\co{\PLC}}
\def\CLK{\mathsf{CLK}}
\def\CCLK{\co{\CLK}}
\def\Cor{\operatorname{Cor}}
\def\CorT{\Cor_n(\Trop)}
\def\Syn{\operatorname{Syn}}
\def\SynT{\Syn_n(\Trop)}
\def\plcM{\mathcal{P}}
\def\clkM{\mathcal{K}}
\def\coclkM{\co{\clkM}}
\def\coplcM{\co{\plcM}}
\newcommand\ltno[2]{\#^{(#1)}_{#2}}
\newcommand\sword[2]{\natural_{#1}}
\newcommand\jmp[2]{\eta_{#1}}
\newcommand\itoj[2]{[#1:#2]}
\newcommand\sitoj[2]{\{#1:#2\}}
\newcommand\tbrest[3]{#1|_{\itoj{#2}{#3}}}
\newcommand\co[1]{{\, ^{\operatorname{co}}\joinrel\joinrel\joinrel\joinrel{#1}}}
\newcommand\mir[1]{{^\diamond\joinrel\joinrel\joinrel{#1}}}
\def\frd{{\operatorname{fw}}}
\def\knu{{\operatorname{knu}}}
\def\plc{{\operatorname{plc}}}
\def\coplc{{\operatorname{cplc}}}
\def\byc{{\operatorname{bcy}}}
\def\blk{{\operatorname{blk}}}
\def\clkrep{\mho}
\def\dclkrep{\Om}
\def\matcmat{\Game}
\def\pMap{\partial}
\def\plktoclk{\eth}
\def\plktocclk{\co{\, \plktoclk}}
\def\plcrep{\wp}
\def\plccplc{\widetilde{\pMap}}
\def\symrep{\scH}
\def\symrepB{\scQ}
\def\chId{\widecheck{\Id}}
\def\rep{\rho}
\newcommand\genr[1]{\langle\, {#1} \,\rangle}
\def\lex{{\operatorname{lx}}}
\def\cnx{{\operatorname{cx}}}
\def\wrd{{\operatorname{wd}}}
\def\cnxsset{\subseteq_\cnx}
\def\wrdsset{\subseteq_\wrd}
\newcommand\inv[1]{#1^{{{\small -1}}}}
\def\B{\mathbb B}
\def\T{\mathbb T}
\def\N{\mathbb N}
\def\Q{\mathbb Q}
\def\Z{\mathbb Z}
\def\nnn{\{1 ,\dots, n  \}}
\def\nxn{n\times n}
\def\grph{G}
\def\per{{\operatorname{per}}}
\def\An{A^{n!}}
\def\Bn{B^{n!}}
\def\Id{\Pi}
\def\htId{\widehat\Id}
\def\tA{\mathcal A}
\def\tB{\mathcal B}
\def\htv{\widehat v}
\def\htu{\widehat u}
\def\N{\mathbb N}
\def\Z{\mathbb Z}
\def\cha{\widecheck{a}}
\def\chA{\widecheck{A}}
\def\chB{\widecheck{B}}
\def\chC{\widecheck{C}}
\def\hta{\widehat{a}}
\def\htA{\widehat{A}}
\def\chw{\widecheck{w}}
\def\chx{\widecheck{x}}
\def\chy{\widecheck{y}}
\def\chC{\widecheck{C}}
\def\lm{\lambda}
\def\sm{\setminus}
\def\tS{\mathcal S}
\def\Om{\Omega}
\def\om{\omega}
\def\ver{\mathcal V}
\def\arc{\mathcal E}
\def\varX{\mathcal A}
\newcommand\factor[2]{#1 | #2}
\def\pre{\operatorname{pre}}
\def\suf{\operatorname{suf}}
\def\wlen{\operatorname{len}}
\def\tlw{\widetilde w}
\newcommand\thmcite[2]{\pSkip\textbf{#1. }\emph{#2}\vskip 1mm}
\def\x{{\underline{x}}}
\def\y{{\underline{y}}}
\def\tM{\mathcal M}
\newcommand\cset[1]{#1^\dagger}
\def\set2{{\cset{2}}}
\def\setW{{\cset{W}}}
\def\setP{{\cset{P}}}
\def\setU{{\cset{U}}}
\def\setUp{{\cset{U'}}}
\def\setW{{\cset{W}}}
\def\l2{x^2y^2x}
\def\ll2{yx^2y^2x}
\def\lllt{xy^3xyx^3y}
\newcommand{\trk}[1]{\operatorname{trk}(#1)}
\newcommand{\len}[1]{\operatorname{len}(#1)}
\newcommand{\mxlen}[2]{\overrightarrow{\operatorname{len}}_{#1}(#2)}
\def\sw{{\operatorname{sw}}}
\newcommand{\sbwd}[2]{\sw_{#1}(#2)}
\newcommand{\incsbwd}[2]{\overrightarrow{\sw}_{#1}(#2)}
\newcommand{\ds}[1]{\ {#1} \ }
\newcommand{\dss}[1]{\quad {#1} \quad }
\def\twt{w}
\newcommand{\tbwt}[1]{\twt(#1) }
\newcommand{\mxtbwt}[1]{\overline{\om} #1 }
\newcommand{\mntbwt}[1]{\underline{\om}#1 }
\def\pth{\gm}
\def\wlk{\gm}
\def\cov{\theta}
\def\Wlk{\Gm}
\def\grph{G}
\def\one{\mathbb{1}}
\def\zero{\mathbb{0}}
\def\mT{\MatnT}
\def\bT{\MatnB}
\def\scC{\mathscr C}
\def\scH{\mathscr H}
\def\scS{\mathscr S}
\def\scT{\mathscr T}
\def\scQ{\mathscr Q}
\def\scP{\mathscr P}
\newtheorem{theorem}{Theorem}[section]
\newtheorem{proposition}[theorem]{Proposition}
\newtheorem{definition}[theorem]{Definition}
\newtheorem{algorithm}[theorem]{Algorithm}
\newtheorem{lemma}[theorem]{Lemma}
\newtheorem{klemma}[theorem]{Key lemma}
\newtheorem{notation}[theorem]{Notation}
\newtheorem{corollary}[theorem]{Corollary}
\newtheorem{example}[theorem]{Example}
\newtheorem{remark}[theorem]{Remark}
\newtheorem{observation}[theorem]{Observation}
\newtheorem{problem}[theorem]{Problem}
\newtheorem{construction}[theorem]{Construction}
\newtheorem{properties}[theorem]{Properties}
\newtheorem{algr}[theorem]{Algorithm}
\newtheorem*{complexity*}{Time complxity}
\newtheorem*{theorem*}{Theorem}
\newtheorem*{problem*}{Problem}
\newcommand{\junk}[1]{}
\newcommand{\etype}[1]{\renewcommand{\labelenumi}{(#1{enumi})}}
\def\eroman{\etype{\roman} \dispace}
\def\ealph{\etype{\alph} \dispace }
\newcommand{\bfem}[1]{\textbf{\emph{#1}}}
\def\({\left(}
\def\){\right)}
\def\al{\alpha}
\def\gm{\gamma}
\def\Gm{\Gamma}
\def\e{\varepsilon}
\def\w{\om}
\def\sig{\sigma}
\def\minf{-\infty}
\def\Real{\mathbb R}
\def\Trop{\mathbb T}
\def\Comp{\mathbb C}
\def\Bool{\mathbb B}
\def\Z{\mathbb Z}
\def\Q{\mathbb Q}
\def\tr{\operatorname{tr}}
\def\mtr{\operatorname{mtr}}
\def\row{\operatorname{row}}
\def\trn{{\operatorname{t}}}
\def\grph{G}
\def\minf{-\infty}
\def\htA{\widehat A}
\title[Tropical plactic algebra]
 {Tropical plactic algebra, \\ the cloaktic monoid, 
 \\  and semigroup  representations}
\author{Zur Izhakian}
\address{  Institute  of Mathematics,
 University of Aberdeen, AB24 3UE,
Aberdeen,  UK.
    }
    \email{zzur@abdn.ac.uk}
\subjclass[2010]{Primary:  06D05, 06F05,  20M05, 20M13, 20M30, 47D03; Secondary: 05C25, 03D15, 16R10, 20B30, 16S15,
14T05, 16Y60. }
\date{January 17, 2017}
\keywords{Idempotent
semiring,  tropical plactic algebra, tropical matrix algebra, colored weighted digraph,
semigroup identity,   \fsmg, plactic monoid, \kmon,     semigroup
representation, young tableau, configuration tableau, symmetric group}
\thanks{The research of the author  has been sported by the Research Councils UK (EPSRC), grant no EP/N02995X/1.}
\thanks{\pSkip Institute  of Mathematics,
 University of Aberdeen, AB24 3UE,
Aberdeen,  UK. \\ $ $ \ Email: zzur@abdn.ac.uk.
}
\begin{document}

\begin{abstract}
A new tropical plactic algebra is introduced in which the  Knuth relations are  inferred  from the underlying  semiring arithmetics, encapsulating the ubiquitous plactic monoid ~$\plcM_n$. This algebra manifests a natural framework for    accommodating  representations of $\plcM_n$, or equivalently of Young tableaux, and its moderate coarsening ---  the \kmon \ $\clkM_n$ and the \ckmon\ $\coclkM_n$. The faithful linear representations of $\clkM_n$ and $\coclkM_n$ by
tropical matrices, which constitute a tropical plactic algebra,  are  shown to provide linear representations of the plactic monoid.
To this end the paper develops a special type of configuration tableaux,  corresponding  bijectively to  semi-standard Young tableaux. These special tableaux allow  a systematic encoding of combinatorial properties in numerical algebraic ways,  including  algorithmic benefits.
The interplay between these algebraic-combinatorial structures establishes  a profound machinery for exploring   semigroup attributes, in particular satisfying of semigroup identities. This  machinery is  utilized here to prove that $\clkM_n$ and $\coclkM_n$ admit all the semigroup identities satisfied by $\nxn$ triangular tropical matrices, which  holds also for~$\plcM_3$.
\end{abstract}

\maketitle


\setcounter{tocdepth}{1}
  {\small \tableofcontents}


\section*{Introduction}
\numberwithin{equation}{section}

This paper introduces linear representations of the plactic monoid and its coarsening, called  the \cloak\ and the \ccloak\ monoids, together with a new encapsulating algebra.
The \textbf{plactic monoid} $\plcM_I = \PLC(\tA_I)$  is the presented monoid $\tA_I^*/ _{\kcong}$. That is,  the quotient of the free monoid~ $\tA_I^*$  over an ordered alphabet $\tA_I$ by the congruence $\kcong$ determined by the \textbf{Knuth relations} (also called \textbf{plactic relations})
\begin{equation}\label{eq:knuth.rel}
  \begin{array}{cccc}
   & \aa \; \cc \; \bb = \cc \; \aa \; \bb & \text{if} &  \aa \leq \bb < \cc \; , \\[1mm]
   & \bb \;\aa \; \cc = \bb \; \cc \; \aa & \text{if} & \aa <  \bb \leq \cc \; .  \\  \end{array} \tag{KNT}
\end{equation}
This monoid was  first  appeared  in the context of Young tableaux (Knuth \cite{Knuth} and Schensted \cite{Schensted})  and  has been followed  by an extensive  study  of   Lascoux and Sch\"{u}tzenberger \cite{Las2}, establishing  an important link between combinatorics and algebra via its bijective correspondence to semi-standard \textbf{Young tableaux} ~\cite{Kirillov.Pol}.

 The combinatorics of $\plcM_I$ is framed by Young tableaux and has been studied mainly in  traditional perspective (e.g., Gr\"{o}bner-Shirshov bases ~\cite{CGM}). It has various applications (e.g., in  symmetric functions~ \cite{Mac}, Kostka-Foulkes
polynomials \cite{Las3}, and Schubert polynomials \cite{Las4}). Related representations have looked at monoid algebras $K[\plcM_I]$ over a field $K$ for a finite $I$ \cite{COk.1, KOk.1}.
 At the same time,
 Young tableaux has been found to be an important characteristic patterns
in classical representation  theory of groups \cite{Fulton,Green}, especially in representations of the symmetric group~ \cite{SaganBook} and algebraic combinatorics~ \cite{Lothaire}, as well as in combinatorics ~ \cite{Loehr} and computer science \cite{Romik}.  Therefore, in this challenging  arena,  a direct algebraic description of the plactic monoid
 strengthens the mutual connections to classical representation theory, providing an additional   algebraic-combinatorial approach to group representations.

To address this goal, this paper introduces
\begin{itemize}\dispace
  \item  a semiring structure in which the Knuth relations \eqref{eq:knuth.rel} are inferred from its  arithmetics,
  \item  meaningful   coarser monoids  of the plactic monoid, and

  \item  linear representations of the plactic monoid and its coarsening.
  \end{itemize}
These objects pave a new  way  to study the plactic monoid and Young tableaux systematically. 
To develop linear representation,  we are assisted by several auxiliary objects (i.e.,  semigroups, tropical matrices, digraphs, and configuration tableaux), enhancing the interplay among them.

The fundamental structure of the monoids of our main interest, determined as a quotient by multiplicative congruences,  makes  their  analysis rather difficult. To  approach  such monoids straightforwardly, we  adopt  the  familiar concept of associating  an algebra  to a multiplicative group which allows one to study groups in terms of algebras,   e.g., the  correspondence between Lie groups and Lie algebras. This paper applies a similar concept to the plactic monoid $\plcM_I$ by encapsulating $\plcM_I$  in  the \textbf{tropical plactic algebra} $\APlc_I$ --- a new semiring structure --- abbreviated \textbf{troplactic algebra}. The underlying structure of this algebra is a noncommutative idempotent semiring (Defunition ~\ref{defn:trop.plc.alg}), generated by a set of ordered elements. In this algebra the Knuth relations \eqref{eq:knuth.rel} are intrinsically  followed  from the semiring arithmetics (Theorem ~ \ref{thm:plc.Alg.1}).

An important  property of $\APlc_I$ is that every product of its elements is uniquely decomposable to a sum of non-nested terms, each is a nondecreasing subsequence  (Corollary \ref{cor:facotrize.plc}). Furthermore, the generators of $\APlc_I$  admit the
 Frobenius, property (Lemma ~\ref{lemma:Frob.2}):
\begin{equation*}
    \big(\mfa + \mfb\big)^m = \mfa^m + \mfb^m \qquad \text{for any } m \in \N. 
\end{equation*}
 A dual version  of the troplactic algebra, denoted $\dAPlc_I$, exists (Theorem \ref{eq:d.plac.Alg.2}) and is perfectly  compatible  with the   \cmirr ing map~
\eqref{eq:int.mir.2} below.

An immediate  consequence of the equivalence  $ u \kcong v $ of two words in  $\plcM_I$
is that the lengths $\mxlen{\tA_I}{u}$ and  $\mxlen{\tA_I}{v}$ of longest nondecreasing subsequences (subwords)
in  $u$ and $v$ are the same. Furthermore,
\begin{equation}\label{eq:int.clk}
\mxlen{\tA_J}{u} = \mxlen{\tA_J}{v} \quad \text{for every convex sub-alphabet $\tA_J$ of $\tA_I$,} \tag{CLK}
\end{equation}
 which happens  due to  correspondence of $\plcM_I$  with semi-standard Young tableaux (Proposition \ref{prop:tab-to-cloc}).
The converse implication, however, does not hold in general, which leads  to defining the \textbf{\kmon} $\clkM_I=\CLK(\tA_I)$ as the presented monoid $\tA_I^*/_{\clcong}$, a moderate coarsen of ~$\plcM_I$. That is, $\tA_I^*/_{\clcong}$ is the free monoid $\tA_I^*$  subject to the congruence   $ \clcong $, determined by the relations \eqref{eq:int.clk}. Then,  the monoid homomorphism
\begin{equation*}\label{eq:int.plc.to.clk}
  \plktoclk: \plcM_I \ONTO  \clkM_I 
\end{equation*} is surjective and preserves the congruence $\kcong$.

As expected, the homomorphism  $\plktoclk$ has a critical role in the construction of our  linear representations of the plactic monoid, obtained  from those of $\clkM_I$.  Nevertheless,   sometimes,  $\clkM_I$ appeared to be too
coarser, which  leads us to pursuing additional representable monoids that respect the congruence ~$\kcong$. To receive such a monoid, we restrict the framework to finitely  generated monoids~ $\tA_n^*$ and   introduce the \textbf{\cmirr ing} of letters \begin{equation*}\label{eq:int.mir}
\lcmp{n}{\lt_{\ell}} :=  \lt_n \lt_{n-1} \cdots \lt_{n-\ell+2} \; \lt_{n- \ell} \cdots \lt_1 , \qquad \lt_\ell \in \tA_n,
\end{equation*}
which induces   the  \cmirr ing
\begin{equation}\label{eq:int.mir.2}
  \lcmp{n}{w}  :=   \lcmp{n}{\lt_{\ell_1}} \cdots \lcmp{n}{\lt_{\ell_m}},  \qquad w = \lt_{\ell_1} \cdots \lt_{\ell_m} , \tag{CMR}
\end{equation}
over whole $\tA^*_n$. It also determines the  (lexicographic) order preserving surjective homomorphism
\begin{equation*}\label{eq:int.mir.3}
\pMap_1: \tA_n^* \INTO \sMon{(\tA_n^*)}{1} \subset  \tA_n^*, \qquad w \mTo \lcmp{n}{w}. 
\end{equation*} The homomorphism $\pMap_1$ extends inductively to a chain of endomorphisms
$\pMap_i: \sMon{(\tA_n^*)}{i-1} \Into \sMon{(\tA_n^*)}{i}$ of induced free monoids.

Our  \cmirr ing   construction respects the congruence $\kcong$ (Theorem~ \ref{thm:coplc2plc}). Thus, for the finitely generated plactic monoid $\plcM_n$,  the monoid homomorphism  $$\pMap_1: \plcM_n \INTO \sMon{(\plcM_n)}{1} \subset  \plcM_n $$ is injective --- a monoid  embedding.  On the other hand,  $\pMap_1$ does not preserve the equivalence of  $\clcong$  with the relations  \eqref{eq:int.clk}, but it  determines  the  equivalence $\cclcong$,  defined by
\begin{equation}\label{eq:int.co.clk}
u \cclcong v  \Iff \pMap_1 (u) \clcong \pMap_1(v).  \tag{CCLK}
\end{equation}
   The \textbf{\ckmon} $\coclkM_n = \CCLK(\tA_n)$  is constituted as   $\coclkM_n = \tA_n^* / _{\cclcong} $, that is
    accompanied   with
the surjective monoid homomorphism
 $\plktocclk: \plcM_n \onto{1}  \coclkM_n $, providing a second coarsening  of $\plcM_n$.  

The main goal of this paper is to construct  linear representations of the plactic monoid $\plcM_n$ in terms of tropical representations of the \kmon\ $\clkM_n $ and the \ckmon\ $\coclkM_n$. Therefore, a special focus is given
to  formulating  the correspondences  between $\plcM_n$ to $\clkM_n $ and  $\coclkM_n$ by relying upon  Young tableaux.

We start by  describing the combinatorial structure of the \kmon \ $\clkM_n$  in terms of a troplactic algebra~ $\APlc_I$, obtained by using tropical matrices.  Besides their  conventional algebraic meanings, these matrices  are also combinatorial entities,  corresponding  uniquely to weighted digraphs. As such,  they  intimately compose graph theory in tropical algebraic methodologies,  exhibiting   a useful interplay between algebra and combinatorics. The latter  plays a major role in
theoretical algebraic studies \cite{ABG,IR2} and in applications to combinatorics \cite{Butkovic,Kirillov}, as well as in semigroup representations~ \cite{IDMax,IzhakianMargolisIdentity} and automata theory ~\cite{Simon,Simon2}.

This mutual connection  allows for producing  a special class of tropical matrices that generate the finite tropical plactic  algebra ~$\mfA_n$  (Theorem ~\ref{thm:plac.mat.relations}) along with   recording lengths of the longest pathes in digraphs.  In turn, these paths encode the longest nondecreasing sequences of  the represented  elements  (Lemma~ \ref{klem:clk.represntation}).
 As being a troplactic algebra, the  multiplicative submonoid  $\MPlc_n$ of $\mfA_n$  immediately  admits the Knuth relations~ \eqref{eq:knuth.rel}.  More precisely, $\MPlc_n$ is isomorphic to the \kmon\ ~$\clkM_n$ (Theorem ~ \ref{thm:clk.represntation}). Thus, it  introduces  the
faithful tropical linear representation
$$ \clkrep:\clkM_n  \ISOTO \MPlc_n.$$
This isomorphism  yields an efficient algorithm for computing the maximal lengths of all nondecreasing subwords of $w \in \tA_n^+$ with respect to every  convex sub-alphabet of $\tA_n$ (Algorithm \ref{algr:clk}).
  Similarly, for the  \ckmon\ $\coclkM_n$,   we obtain the monoid isomorphism
$ \dclkrep:\coclkM_n  \Isoto \mir{\MPlc_n}$, whose image~ $\mir{\mfA_n}$ is a dual troplactic matrix algebra $\dAPlc_I$ (Theorem \ref{thm:co.plac.mat.relations}). Both isomorphisms   $\clkrep$ and $\dclkrep$ are allocated  with tropical characters that specify characteristic  invariants.

An  immediate result of the existence of  the isomorphisms  $\clkrep$ and $\dclkrep$ is that both monoids $\clkM_n$ and~
$\coclkM_n$ admit all the semigroup identities satisfied by  the monoid $\TMat_n(\Trop)$ of tropical triangular matrices (Corollaries ~\ref{cor:clk.id} and \ref{cor:co.clk.id}).  A particular form of these semigroup identities  is constructed as  \begin{equation}\label{eq:00} \Id_{(C,p,q)}: \ \tlw  \ds x \tlw  =\tlw  \ds y
\tlw, \tag{SID}
\end{equation}  where $ \tlw :=   \tlw_{(C,p,q)}$ is a fixed word over the variables  $C = \{ x, y\}$ that  contains as  factors all the possible words of length~$q$ over $C$ in which no letter appears sequentially more than $p$ times, and such that the words $\tlw \, x \, \tlw$ and $\tlw \, y \, \tlw$ also satisfy   this law \cite{trID,IDMax}.
With this form in place, $\TMat_n(\Trop)$ satisfies the identities \eqref{eq:00} with  $p = q =  n-1$ by letting   $x = uv$ and $y =vu$.
The identities \eqref{eq:00} generalize the Adjan's identity of the bicyclic monoid \cite{Adjan}, which  is also faithfully represented by tropical matrices \cite{IzhakianMargolisIdentity}.  Also,  $\TMat_n(\Trop)$  satisfies a recursive version of Adjan's identity \cite{OKID}.

With tropical representations of the monoids $\clkM_n$ and $\coclkM_n$ in hand, our next goal is  to formulate the surjective monoid homomorphisms $\plktoclk: \plcM_n \onto{1}  \clkM_n $ and $\plktocclk: \plcM_n \onto{1}  \coclkM_n$ explicitly,  and to explore  their  properties as  reflected in the representations  $ \clkrep:\clkM_n  \Isoto \MPlc_n$ and
$ \dclkrep:\coclkM_n  \Isoto \mir{\MPlc_n}$. 
   To this end, we  rely upon the  well known one-to-one correspondence between the plactic monoid $\plcM_n$ and semi-standard Young tableaux $\Tab_n$ (e.g., see \cite{Las1}). In a sense, tableaux are graphical patterns that  accommodate symbols, i.e., letters of the associated  plactic monoid,   with a deep combinatorial meaning. Nevertheless, an additional machinery is required to canonically  frame Young tableaux by tropical matrices and to convert visual-combinatorial information to suitable numerical-algebraic data.

 This converting  machinery is provided by  \textbf{$n$-configuration tableaux} $\CTab_n$. That is, tableaux of fixed isosceles  triangular shape that  contain  non-negative integers subject to  certain structural laws, called configuration laws  (Definition ~\ref{def:ctab}). Configuration tableaux correspond bijectively to semi-standard Young tableaux  (Theorem ~ \ref{thm:configuration}) and are endowed with  a self  implementation of the Encoding Algorithm~ \ref{algr:conf} that  simulates   the Bumping Algorithm ~\ref{algr:1} of  $\Tab_n$. Their  fixed
 shape enables the introduction of a canonical reference system, employed to state their correspondence to tropical matrices in $\MPlc_n$ and  $ \mir{\MPlc_n}$;  thereby, to link tableaux  with  weighted digraphs.
 In this framework,  encoding a letter  $ \lt_\ell \in \tA_n$ in  a configuration tableaux $\ctb_w \in \CTab_n$ is interpreted  as multiplying the matrix $\clkrep(w)$  by the  matrix $\clkrep(\lt_\ell)$ in $\MPlc_n$ or,  dually,
 $\dclkrep(w)$   by  $\dclkrep(\lt_\ell)$ in $ \mir{\MPlc_n}$. Accordingly,  $\Tab_n$ and $\CTab_n$ are considered as multiplicative monoids whose operations are induced by letter encoding; hence, our tableau correspondences  are realized as monoid homomorphisms.

With all desired  components at our disposal, the tropical linear representation $$ \plcrep_n: \plcM_n \ONTO {\MPlc_n} \times \mir{\MPlc_n} $$ of the plactic monoid $\plcM_n$
is obtained by composing the monoid homomorphisms of our main objects (Theorem ~\ref{thm:plc.rep}),  summarized by the diagram
 \begin{equation*}\label{eq:diag.1} \begin{gathered}
 \xymatrix{
 \PLC_n  \ar@{->>}[rd]^\plktoclk  \ar@{->>}[rdd]^\pMap \ar@{->}[r]^\Sim \ar@/_5pc/@{..>}[rrddd]_{\plcrep_n}
  \ar@/_4.55pc/@{..>}[rrdd]_{\co{\hskip 2mm \plktoclk}} & \Tab_n  \ar@{->}[r]^{\SIM{\tabctab}} &  \CTab_n \ar@{->>}[rdd]^\ctabcomat \ar@{->>}[d]_\ctabmat&     &
 & \\ & \CLK_n \ar@{..>>}[dr] \ar@{->}[r]^{\SIM{\clkrep}} & {\MPlc_n} \ar@/_3pc/@{->}[dd]  \ar@{..>>}[dr]^\matcmat  &
\\  & \CPLC_n \ar@{->>}[r]^{\plktoclk \quad} &\CCLK_n  \ar@{->}[r]^{\SIM{\dclkrep}}  & \quad \mir{\MPlc_n} \ar@/^1pc/@{->}[dl]  & .
\\  & &  {\MPlc_n} \times \mir{\MPlc_n}  &  
& &
}\end{gathered}
    \end{equation*}
This representation naturally induces a congruence on $\plcM_n$, and therefore also an equivalence relation on  tableaux in $\Tab_n$ and $\CTab_n$.

In the case of the plactic monoid of rank $3$, $\plcM_3$,  the semigroup representation
$ \plcrep_3: \plcM_3 \Isoto {\MPlc_3} \times \mir{\MPlc_3}$
is faithful  (Theorem~ \ref{thm:plc.3.rep}), more precisely, it is  an isomorphism.  Consequently,  we conclude that
$\plcM_3$ admits all the semigroup identities satisfied by the monoid $\TMat_3(\Trop)$ of $3 \times 3$ triangular tropical matrices (Corollary~ \ref{cor:plc.id}); in particular, the identities  $\Id_{(C,2,2)}$ in \eqref{eq:00}. Furthermore,  relying  upon  the correspondence between reversal of words and transposition of tableaux,  standard Young tableaux $\STab_n$ are shown to be faithfully realizable by tropical matrices (Theorem \ref{thm:stab.rep}).
As tableaux in $\STab_n$ bijectively correspond to elements of the symmetric group $S_n$, a tropical realization of $S_n$ is obtained, linking the theory to Heacke algebras.

Tropical representation theory turns out to be
applicable for studying characteristic properties of semigroups in places that the use of  classical
representation theory is limited.
It  provides an alternative
approach for realization and for the exploration of  algebraic-combinatorial objects; especially, their semigroup identities.
These identities are of special interest in the theory of semigroup varieties \cite{SV}, e.g., in  finitely generated semigroups of polynomial growth \cite{Grom,Shn}.
Applications of the  ubiquitous plactic monoid in group representations are well known~\cite{Green},  e.g.,  in computing products of Schur functions in~$n$ variables which  are the irreducible polynomial characters of the general linear group $\operatorname{GL}_n(\Comp)$, cf.  \cite{Littlewood}. Tropical linear representations pave the way to studying combinatorial aspects in classical representation theory, including a geometric perspective  via projective modules \cite{IJK2,merl10}.
This paper ease the use of tropical representation by providing a solid  bridge  between tropical algebra and classical representation theory.

\subsection*{Paper outline}
\sSkip

  For the reader convenience,  this paper  is designed as a self-contained document.
 In ~\S\ref{sec:preliminaries}, we bring all the relevant definitions and properties of objects to be used in this paper,  including  basic examples.
 In \S\ref{sec:troplactic.alg}, we introduce and study the new structure of tropical plactic algebra and its core object, called forward  semigroup (Definition \ref{def:forwardMon}).
  A brief overview on tropical matrices and their associated digraphs opens~ \S\ref{sec:trop.matrices}, followed by the characterization of tropical corner matrices, providing faithful linear representations of forward semigroups.
In \S\ref{sec:troplactic.m.alg} we employ  corner matrices to construct an explicit troplactic algebra $\mfA_n$ and to  establish  the linkage between diagraphs to nondecreasing subwords. The entire~ \S\ref{sec:rep.clocktic} is devoted to the introduction of  the \kmon \ $\clkM_n$ and the \ckmon \ $\coclkM_n$, including their linear representations by $\mfA_n$ and~ $\mir{\mfA_n}$, respectively.
Young tableaux and configuration tableaux are discussed  in ~\S\ref{sec:configuration.tab}, with a special emphasis on numerical functions that allow their mapping to
matrices in $\mfA_n$. Finally, in~ \S\ref{sec:plc.rep} we compose our various components to obtain  representations and co-representations of configuration tableaux which eventually result as linear representations of the plactic monoid.

To  help for a better understanding, our exposition involves many diagrams and examples, including of pathological cases.

\section{Preliminaries}\label{sec:preliminaries}

To make this paper reasonably self contained,
this section, expect parts of \S\ref{ssec:mirror},  recalls the relevant notions and terminology,   as well as  definitions  and properties of the algebraic structures to be used in the paper, starting with our special notations.

\subsection{Notations}\label{ssec:notation}
\sSkip

Unless otherwise is specified,  the capital letters $I,J$ denote subsets of the neutral numbers $\N := \{ 1,2, \dots \}$, while $\N_0$ stands for $\{ 0,1,2, \dots \}$.  The finite set $\nnn$ is often denoted by $N$, for short.  By ``ordered'' we always mean  totaly ordered.
\begin{definition}\label{defn:convex} A  subset $J$ of an ordered set $I$ is called \textbf{convex}, written $J \cnxsset I$,
  if for any $i,j \in J$, every $k \in I$ such that  $i < k < j$, also belongs to $J$. We write $\sitoj{i}{j}$ for the convex subset of $I$ determined by $i \leq j$.
\end{definition}
In other words, a convex subset is an ``interval'', could be empty or a singleton. For a given $n$, we   define  $i':= n-i +1$,  for $i = 1, \dots,n$,  ordered now reversely as
$n' < (n-1)' < \cdots < 1'$. Then $\sitoj{j'}{i'}$ has the same number of elements as $\sitoj{i}{j}$ has, and is again convex in $N$.

 We write $L_m = \squ{\ell_1, \ell_2, \dots, \ell_m}$ for a sequence of elements $\ell_t$ taken from  $\{ 1, \dots, n\}$,  where  $t =1,\dots,m$. This notation  means that the indexing order of elements is preserved.   We  denote the sequence $\squ{1,\dots, n}$ by $\itoj{1}{n}$, and write $\itoj{i}{j}$ for the subsequence $\squ{i, i+1, \dots, j}$ of $\itoj{1}{n}$.

\begin{itemize}\dispace
\item A subsequence $S$ of $L_m$ is notated  as    $S \sqsubseteq L_m $, e.g., $\itoj{i}{j} \sqsubseteq \itoj{1}{n}$, for $1 \leq i \leq j \leq n$.
 \item $\lensq{S}$ denotes the \textbf{length} of $S \sqsubseteq L_m$, i.e., the number of its elements.
\item  $S_k$ denotes a  subsequence of length $k$, $k \geq 0$, where $S_0$ stands for the empty sequence.


  \item The notation $\prod_{s \in S_k} \lt_s  $ stands for the product  $\lt_{s_1} \cdot \lt_{s_1} \ds \cdots  \lt_{s_k}$, respecting the indexing of the sequence $S_k =  \squ{s_1, \dots, s_k } $.

 \item A subsequence $S_k = \squ{s_1, s_2,   \dots , s_k} $ of length $k$ of $L_m $ is
 \begin{itemize}
   \item non-decreasing, denoted $\incS_k \sqsubseteq L_m $,  if $s_1 \leq s_2 \leq \cdots \leq s_k$,

   \item increasing if $s_1 < s_2 <  \cdots < s_k$,

   \item non-increasing, denoted $\decS_k \sqsubseteq L_m $,  if $s_1 \geq s_2 \geq  \cdots \geq s_k$,
 \item decreasing  if $s_1 > s_2 >
   \cdots > s_k$.

 \end{itemize}

 \item $\incS$ and  $\decS$ denote respectively non-decreasing and non-increasing subsequences of an arbitrary length.

 \item  We denote the set of all non-decreasing subsets $\incS \sqsubseteq L_m$ of $L_m$ by $\NDC{L_m}$.
     \item
 A non-decreasing subsequence $\incS \sqsubseteq L_m$ is said to be \textbf{maximal} in $L_m$ if $L_m$ has no other non-decreasing subsequence $\incR \sqsubseteq L_m $ such that $\incS    \sqsubsetneq \incR $.
$\MNDC{L_m}$ denotes the subset of all maximal non-decreasing subsequences of $L_m$.
 \item
 $\NIC{L_m}$ and $\MNIC{L_m}$
are defined similarly for non-increasing subsequences.

\end{itemize}
In this paper we deal only with finite sequences, which are  also termed ``words'' in the context of semigroups.


\subsection{Free monoids}\label{ssec:semigroups}
\sSkip

 A \textbf{semigroup} $\tS:= (\tS, \cdot \,)$ is a set of elements together with an associative binary operation.
A~ \textbf{monoid} is a semigroup with identity element $e$.  Any semigroup $\tS$ can  be formally adjoined with an identity element~ $e$ by declaring that $e a = a e = a$ for
all $a\in \tS,$ so when dealing with multiplicative structures we work with
monoids.
We write $a^i$ for $a \cdot a \cdots a$ with $a\in \tS$
repeated $i$ times, and formally identify $a^0$ with~$e$,
when $\tS$ is a monoid.
An
element $o$ of $\tS$ is said to be \textbf{absorbing}, usually  identified as $\zero$,  if $oa = ao = o$ for
all $a \in \tS$. $\tS$ is a \textbf{pointed semigroup} if it has an
absorbing  element $o$.

 An Abelian semigroup
$\tS$ is \textbf{cancellative} with respect to
a subset $\TT \subseteq \tS$ if $ac = bc$ implies $a=b$ whenever
$a,b \in \tS$ and $c \in \TT$. In this case,
$\TT$ is  called a \textbf{cancellative subset} of $\tS$, and it generates a subsemigroup in $\tS$, also cancellative. Thus, we usually assume that $\TT$ is a subsemigroup. A semigroup ~$\tS$ is
\textbf{strictly cancellative}  if $\tS$ is cancellative with
respect to itself.
The term ``\textbf{congruence}'' refers to an equivalence relation that respects the operation of its underlying semigroup  carrier.

 A \textbf{partially ordered semigroup}
is a semigroup~$\tS $ with a partial order $\leq$ that respects the semigroup operation
\begin{equation}\label{ogr1} a \le b \quad \text{implies}\quad ca
\le cb, \  ac
\le bc \end{equation} for all elements $a,b,c \in \tS$. A semigroup
$\tS $ is   \textbf{ordered} if the order $\leq$ is a total order.
\pSkip

We recall some basic definitions from \cite{trID}, for the
reader's convenience.
%
As customarily, $\varX^*_I$ denotes the free monoid of finite sequences
 generated by a countably infinite totaly ordered set $\varX_I: = \{ \a_\ell \ds : \ell \in I  \}$, called \textbf{alphabet},   of \textbf{letters} $\a_1, \a_2,
\a_3, \dots$ An  element  $w \in \varX^*_I$ is  called a \textbf{word} and, unless it is empty, is written uniquely as
\begin{equation}\label{eq:word} w =  \a_{\ell_1} ^{q_1} \cdots
\a_{\ell_m}^{q_m} \in \varX^*_I, \qquad \ell_t \in I, \ q_t \in
\N,
\end{equation}
where $ \a_{\ell_t} \neq  \a_{\ell_{t+1}}$ for every $t$.
We assume that the empty word, denoted  $e$, belongs to $\varX^*_I$, serving as the identity.
 As customarily,   $\varX^+_I$ stands for the free sub-semigroup obtained from $\varX^*_I$ by excluding the empty word $e$. When $|I| = n$ is finite,  we write $\tA_n$
for the finite alphabet $\tA_I = \{\a_1, \dots, \a_n \}$.
A word is read from left to right,  to    distinguish this reading direction, when needed,  we write $\std{w}$ for $w$.

The free monoid
$\tA_I^*$ is endowed with the familiar \textbf{lexicographic order}, denoted $<_\lex$,  induced by the order of $\tA_I$.
A sub-alphabet $\tA_J$ of $ \tA_I$ is said to be \textbf{convex}, written $\tA_I \cnxsset \tA_I$, if $J \cnxsset I$, i.e., $J$ is a convex subset of $I$ (Definition \ref{defn:convex}). We denote by
$\tA_{\itoj{i}{j}}$ the convex sub-alphabet
$\{\a_i , \dots, \a_j \}$ of $\tA_n = \tA_{\itoj{1}{n}}$.

The \textbf{length} of a word $w \in \varX^*_I$  of the form  \eqref{eq:word}  is defined (by the standard summation) as
 $\wlen(w) := \sum_{t =1}^m  q_t \, .
$  Then, since a word $w$ is a finite sequence of letters,  $\wlen(w)$ is well defined and finite, with $\wlen(w)$ iff $w =e $ is the empty word $e$.
 A word $w \in \varX^*_I $ is called
\textbf{$k$-uniform} if each of its letters appears exactly $k$ times. We say that $w$ is \textbf{uniform} if $w$ is $k$-uniform for some $k$.

A word $u \in \varX^*_I$ is a \textbf{factor} of $w\in
\varX^*_I$, written $\factor {u}{ w}$, if $w = v_1 u v_2$ for
some $v_1, v_2 \in \varX^*_I$.
When $w = v_1 v_2$, 
the factors $v_1$ and $v_2$ are respectively called the
\textbf{prefix} and \textbf{suffix} of~$w$.  If $v_1$ is of length $k$ we say that $v_i$ is the \textbf{$k$-prefix} of $w$, and denote it by  $\pre_k(w)$.
Similarly, if $\wlen(v_2) = k$, we say that $v_i$ is the \textbf{$k$-suffix} of $w$, and denote it by  $\suf_k(w)$.

%
%
A word $u$  is a \textbf{subword} of~$w$, written $u \wrdsset w$,
if $w$ can be written as $w = v_0 u_{1} v_1 u_{2} v_2 \cdots u_{m}
v_m$ where $u_i$ and $v_i$ are words (possibly empty) such that $
u = u_1u_2 \cdots u_m $, i.e., the $u_i$ are factors of $u$. Clearly, any factor of $w$ is also a subword, but not conversely.

The following notion of $n$-power words was introduced
in its full generality in \cite[\S3.1]{trID}. However,  for the purpose of the present paper, it suffices to consider a restricted version of this notion, as described below.
As seen later, these $n$-power words are the cornerstone of our systematic construction of semigroup identities.
\begin{definition}\label{def:npw} Let $C :=C_m$ be a finite (nonempty) alphabet, and let $n \in \N$.
An~\textbf{$n$-power word $\tlw := \tlw_{(C,p,n)}$}  is a nonempty word in $C_m^+$ such that:
\begin{enumerate} \ealph \dispace
  \item Each letter $\a_\ell \in C_m$ may appear in $\tlw$ at most $p$-times sequentially, i.e., $\a_\ell^q \nmid \tlw$ for any  $q >  p$ and $\a_\ell \in C_m$;
  \item Every  word $u \in C^+_m$ of length $n$ that satisfies rule (a) is a factor of $\tlw$.
\end{enumerate}
An $n$-power word is \textbf{uniform} if it is uniform as a word.

\end{definition}
\noindent An $n$-power word needs not be unique in $C^+_m$, and different $n$-power words may  have  different length. Often, $n$-power words $\tlw_{(C,p,n)}$  can be concatenated to a new $n$-power word.

 In the present paper, in the view of Theorem \ref{thm:2id} below, we are mostly interested in the case of $n$-power words over the 2-letter alphabet  $C_2 = \{ x,y\}$. As we consider powerwords as generic words, we call the letters $x,y$  \textbf{variables}.
\begin{example}\label{exmp:2-word}
Suppose $C = \{ x, y \} $.
\begin{enumerate} \eroman \dispace
  \item  $ \tlw_{(C,2,2)} = yx^2y^2x$ is a $2$-power uniform word  of length $6$.   \pSkip

  \item
$ \tlw_{(C,3,3)} = \lllt $
 is a  $3$-power uniform word of   length $10$.

\end{enumerate}

\end{example}
For more details  on $n$-power words see \cite{trID,IDMax}.

\subsection{Co-mirroring and  reversal}\label{ssec:mirror}
\sSkip Recall that the binary operation of a semigroup is associative.
\begin{definition}\label{def:orderedMonoid}
A \textbf{semigroup homomorphism} is a map
$\phi : \tS  \To \tS' $  that preserves the semigroup operation, i.e.,
$$\phi(a \cdot b) = \phi(a) \cdot \phi(b)$$
for all
$a$ and $b$ in $\tS$.
A \textbf{monoid homomorphism} is a semigroup homomorphism for which
     $\phi(e) = e'$.
   It is an
 \textbf{endomorphism} when $\tS = \tS'$.
\end{definition}

A \textbf{presentation} of a monoid $\tM =  (\tM, \cdot \,)$ (resp. semigroup) is a description of $\tM$ in terms of a set of generators $\tA_I:= \{ a_\ell \ds | \ell \in I\} $   and a set of relations $\Xi \subset \tA_I^* \times \tA_I^* $ on the free monoid $\tA_I^*$ (resp. on the free semigroup $\tA^+_I$) generated by $\tA_I$. The monoid $\tM$ is then presented as the quotient $\tA_I^*/_\Xi$ of the free monoid $\tA_I^* $ by the set of relations $\Xi$, i.e.,
by a monoid  homomorphism
$$ \phi : \tM \TO \tA_I^*/_\Xi, \qquad \Xi \subset \tA_I^* \times \tA_I^*.$$
When $|I| =n$ is finite, we say that $\tM$ is \textbf{finitely presented}.

\begin{definition}\label{def:wordreversing}
The \textbf{reversal}  of a word $\std{w}= \lt_{\ell_1} \lt_{\ell_2} \cdots \lt_{\ell_m}$ of length $m$ in $\tA^*_I$, $\ell_i \in I$,  is the word
$\rvs{w}= \lt_{\ell_m} \lt_{\ell_{m-1}} \cdots \lt_{\ell_1}$ of the same length $m$.
\end{definition} \noindent
The reversal $\rvs{w}$ of a word $\std{w}$ is therefore the rewriting of $\std{w}$ from right to left, while we formally  set $\rvs{e} := e$. It defines a bijective map
$$ \rev: \tA^*_I \TO \tA^*_I, \qquad \std{w} \Mto \rvs{w}, $$
which is not a monoid homomorphism, since  $\rev(uv) \neq \rev(u) \rev(v)$.  However,  $\rev(uv) = \rev(v) \rev(u)$ for any $u,v \in \tA_I^*$.

\pSkip

Restricting our ground alphabets to finite alphabets,  we introduce the following operation:
\begin{definition}\label{def:mirror}
 The \textbf{\cmirr} of a letter $\lt_\ell$ in a finite alphabet $\tA_n$  is defined to be  the word
 \begin{equation}\label{eq:letter.mir}
\lcmp{n}{\lt_{\ell}} :=  \lt_n \lt_{n-1} \cdots \lt_{n-\ell+2} \lt_{n- \ell} \cdots \lt_1 = \prod^1_{\arrs{t = n}{ t \neq n-\ell+1}} \hskip -4mm \lt_t.
\end{equation}
(The \cmirr\ of the empty word $e$ is formally set to be $e$.)

The \textbf{\cmirr} of a word $w = \lt_{\ell_1} \cdots \lt_{\ell_m}$ in $\tA_n^*$ is the word defined as
\begin{equation}\label{eq:word.mirr}
\lcmp{n}{w} :=  \lcmp{n}{\lt_{\ell_1}} \cdots \lcmp{n}{\lt_{\ell_m} },
\end{equation}
written $\lcmp{}{w}$ when $n$ is clear from the context.
\end{definition}
Accordingly, for the concatenation $w = uv$ of two words  $u, v$ in $\tA_n^*$ we then have
$$\lcmp{n}{w} := \lcmp{n}{u}\lcmp{n}{v},$$
where $\wlen(\lcmp{n}{w}) = (n-1)\len{w}$ for any  $w \neq e$.  Thus, the \cmirr ing  of words determines a monoid endomorphism
\begin{equation}\label{eq:mir.map}
\mirM: \tA_n^*  \TO  \tA_n^*, \qquad w \mTo \lcmp{}{w},
\end{equation}
with $e \mTo e$.
\pSkip

\begin{remark}\label{rmk:mir.smon} The \cmirr ing of a finite alphabet $\tA_n= \{ \aa_1, \dots, \aa_n\}$ introduces a submonoid in the free monoid $\tA_n^*$, which we denote by $\sMon{(\tA_n^*)}{1}$, whose generators $\aa'_\ell = \lcmp{n}{\lt_{\ell}} $ are again (totaly) ordered
as $ \aa'_1 < \cdots < \aa'_n$. In fact, as can be seen form \eqref{eq:letter.mir}, this order is just the lexicographic order $<_\lex$ of $\tA_n^*$, and thus is compatible with the initial order of ~$\tA_n$.

Applying the \cmirr ing  map \eqref{eq:mir.map} inductively, we have a chain of submonoids, a filtration, $$\tA_n^*  \ds \supset \sMon{(\tA_n^*)}{1} \supset \sMon{(\tA_n^*)}{2} \ds \supset \sMon{(\tA_n^*)}{3} \ds \supset \cdots , $$
together with the surjective homomorphisms
$$\tA_n^*  \Onto{3}{\pMap_1} \sMon{(\tA_n^*)}{1} \Onto{3}{\pMap_2} \sMon{(\tA_n^*)}{2} \Onto{3}{\pMap_3} \sMon{(\tA_n^*)}{3} \Onto{3}{\pMap_4}  \cdots . $$
Each surjection $\pMap_i: \sMon{(\tA_n^*)}{i-1} \onto{1} \sMon{(\tA_n^*)}{i}$ is also  injective,  preserving  the lexicographic order of $\tA_n^*$,  and thus it is an isomorphism.
\end{remark}

Note that $\pMap_i$ does not necessarily preserve relations nor presentations over $\tA_n^*$.



\subsection{Semigroup identities}\label{sec:3.2} \sSkip
A (nontrivial) \textbf{semigroup identity} is a formal equality
 of the form  \begin{equation}\label{eq:Id}
\Id : u=v,
\end{equation} where
$u$ and $v$ are two  different (nonempty) words in the free semigroup $\varX^+_I$, cf. Form
\eqref{eq:word}. For a monoid
identity, $u$ and $v$ are allowed to be the empty word as well.
%
Therefore, a semigroup identity $\Pi$  determines a single relation  $\Pi \in  \tA^+_I \times \tA^+_I$
on the free semigroup $\tA^+_I$.

 We say that an identity $\Id: u =v $ is an
\textbf{$n$-variable identity} if $u$ and $v$ involve together exactly $n$ letters of $\varX_I$.
An identity $\Id$ is  said to be
\textbf{balanced} if the number of occurrences of each letter $\a_i\in \varX_I$ is the same in $u$ and in~$v$.  $\Id$ is called \textbf{uniformly balanced} if furthermore the words $u$ and~$v$ are $k$-uniform for some $k$.    The \textbf{length} $\ell(\Id)$  of $\Id$ is defined to
be $\ell(\Id) :=  \max\{\ell(u), \ell(v)\}$, clearly $\ell(u) = \ell (v)$, when  $\Id$ is balanced.

%

 A semigroup $\tS := (\tS, \cdot \;)$ \textbf{satisfies the
semigroup identity}~\eqref{eq:Id} if $$\text{ $ \phi(u)= \phi(v)$ \quad for every homomorphism $\phi
:\varX^+_I \To \tS$.}$$
Concerning the existence of identities it is suffices to consider $2$-variable identities:

\begin{theorem}[{\cite[{Theorem 3.10}]{trID}}]\label{thm:2id} A semigroup  $\tS:= (\tS, \cdot \;)$ that satisfies an
 $n$-variable identity $\Id: u = v$, for $n \geq 2$,  also satisfies  a refined
$2$-variable identity $\htId : \htu =\htv$ of exponents $\{1,2\},$ i.e., each letter in $\htu$ and in $\htv$ appears sequentially at most twice.
\end{theorem}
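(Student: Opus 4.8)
The plan is to reduce a general $n$-variable identity to a $2$-variable one by a substitution-and-truncation argument. First I would observe that if $\tS$ satisfies $\Id : u = v$ in the letters $\a_1,\dots,\a_n$, then any identity obtained from $\Id$ by substituting words (in fewer variables) for the $\a_i$ is again satisfied by $\tS$, since substitution corresponds to precomposition with a homomorphism of free semigroups. So I would pick a specific substitution $\a_i \mapsto w_i$ with $w_i \in \{x,y\}^+$ chosen so that the resulting identity $\htId' : \htu' = \htv'$ in the two variables $x,y$ is nontrivial (i.e.\ $\htu' \neq \htv'$ as words). The delicate point is choosing the $w_i$ so that distinctness is preserved: a naive choice may collapse $u$ and $v$ to the same word. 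A robust choice is a prefix code, e.g.\ $\a_i \mapsto x y^i x$ or $\a_i \mapsto x^i y$, which guarantees unique decodability, so $\htu' = \htv'$ would force $u = v$, a contradiction. Thus $\tS$ satisfies a nontrivial $2$-variable identity $\htu' = \htv'$.

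The second step is to cut down the exponents to $\{1,2\}$. Here I would use the fact that each generator $\a$ of the ambient free semigroup is, after substitution, a power or product of $x,y$; but more directly, I would work with the $2$-variable identity $\htu' = \htv'$ obtained above and replace every maximal block $x^k$ (resp.\ $y^k$) with $k \ge 3$ by $x^2$ (resp.\ $y^2$). The justification is again a substitution: define a homomorphism $\psi: \{x,y\}^+ \to \{x,y\}^+$ that is not length-preserving but rather realized by first noting that in $\tS$, from $\htu' = \htv'$ one can derive the collapsed identity by substituting for $x$ and $y$ appropriate words — however this does not immediately reduce exponents. The cleaner route, and the one I expect the paper takes, is the reverse: one shows that satisfying \emph{any} nontrivial identity forces satisfying one with bounded exponents, by a pumping/absorption argument using the finiteness (or at least the identity-closure) of the structure. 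Concretely, if $\htu'$ contains $x^k$ with $k$ large, substitute $x \mapsto x^2$ and separately $x \mapsto x^3$, compare, and use cancellation-free combinatorics on words to extract a shorter identity; iterate until all exponents are in $\{1,2\}$.

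The main obstacle will be controlling \emph{non-triviality} throughout the reduction: each simplification (merging variables, lowering exponents) risks turning $u \neq v$ into $\htu = \htv$. I would handle this by always performing reductions via injective (prefix-code) substitutions when merging variables, and by arguing that the exponent-lowering step, if it ever produced a trivial identity, could be reversed to show the original identity was trivial — using that the map "replace $x^k$ by $x^2$ for $k\ge 2$" is a well-defined, injective operation on the set of words with exponents already bounded below by $2$ on the relevant blocks, combined with a careful bookkeeping of which blocks are affected. Since Theorem~\ref{thm:2id} is cited verbatim from \cite[Theorem 3.10]{trID}, the actual proof in the source is presumably exactly this two-stage scheme (merge to two variables via a code, then pump down exponents), and the hard technical core is the exponent-reduction lemma, which is a combinatorial-on-words statement independent of $\tS$ beyond the hypothesis that $\tS$ is closed under the identities it satisfies.
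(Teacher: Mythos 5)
Your first step is fine and is essentially the standard (and the cited) argument: substituting the letters $\a_1,\dots,\a_n$ by a code over $\{x,y\}$ (e.g.\ $\a_i\mapsto x^i y$) is precomposition with an injective endomorphism of the free semigroup, so it preserves both satisfaction by $\tS$ and nontriviality, yielding a nontrivial $2$-variable identity.

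The second step, however, has a genuine gap. Truncating maximal blocks $x^k$ ($k\ge 3$) to $x^2$ is not a legal operation on identities: identities of $\tS$ are closed under substitution (precomposition with homomorphisms of the free semigroup), not under arbitrary rewriting of the words, so there is no reason why $\tS$ satisfying $\htu'=\htv'$ should satisfy the truncated equation. (For instance, from $x^2=x^3$ truncation produces only the trivial $x^2=x^2$, and your claimed way out --- reversing the truncation --- fails because the map is not injective: $x^3$ and $x^4$ have the same image.) Likewise the appeal to a ``pumping/absorption argument using finiteness'' has no basis here, since $\tS$ is an arbitrary semigroup. The device that actually closes this step is another substitution, namely $x\mapsto xy$, $y\mapsto yx$ --- exactly the substitution \eqref{eq:subt1} used in Construction \ref{construct:main}. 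It is a homomorphism, so the resulting identity is again satisfied by $\tS$; it is injective on the free semigroup because $\{xy,\,yx\}$ is a code (two distinct words of equal length), so nontriviality is preserved; and every word in its image automatically has all sequential exponents in $\{1,2\}$, since each block $xy$ or $yx$ consists of two distinct letters, so a run of a single letter can pick up at most one letter from each of two adjacent blocks. Applying this to your $2$-variable identity $\htu'=\htv'$ gives the required $\htId:\htu=\htv$ in one stroke, with no exponent-lowering lemma needed.
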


The $n$-power words  $\tlw_{(C,p,n)}$ (Definition \ref{def:npw}) are utilized to construct a class of nontrivial semigroup identities $\Id_{(C,p,n)}$. (We use the notation $\x$ and $\y$ to mark a specific instance
of the variables $x$ and $y$ in a given expression, although these instances stand for the same variables $x$ and $y$, respectively.)
\begin{construction}\label{construct:main} Let $ \tlw_{(C,p,n)}$ be a  uniform $n$-power word over $C = \{ x, y  \}$ such  that   the words
$\  \tlw_{(C,p,n)} \ds {x}
\tlw_{(C,p,n)} $ and $  \tlw_{(C,p,n)} \ds
 {y} \tlw_{(C,p,n)}$ are both $n$-power words over $C $.
 Define the 2-variable  balanced identity
\begin{equation}\label{eq:iduniv2}
\Id_{(C,p,n)}: \quad   \tlw_{(C,p,n)} \ds {
\x}
\tlw_{(C,p,n)} \ds =  \tlw_{(C,p,n)} \ds
 {\y}
\tlw_{(C,p,n)}.
\end{equation}
Then, substitute
\begin{equation}\label{eq:subt1} \text {$x:= \chx \ds{\chy} $ \dss{and}  $y :=
\chy \ds{\chx}$}
\end{equation} to refine \eqref{eq:iduniv2} to the uniformly balanced identity
\begin{equation}\label{eq:iduniv3}
\chId_{(\chC,p,n)}: \quad   {\chw}_{(\chC,p,n)} \ds {{\chx
\chy}} \chw_{(\chC,p,n)} \ds =  \chw_{(\chC,p,n)} \ds
 {{\chy \chx}}
\chw_{(\chC,p,n)},
\end{equation}
  where $\chC = \{ \chx,\chy\}$ and  $\chw_{(\chC,p,n)}$ is the word obtained from $\tlw_{(C,p,n)}$ by substitution~\eqref{eq:subt1}.
\end{construction}
Note that $\chw_{(\chC,p,n)}$ needs not be an $n$-power word  over $\chC = \{ \chx,\chy\}$, yet it satisfies law~ (a) of Definition~\ref{def:npw}. This summarize Construction \ref{construct:main}, when a semigroup $\tS$ satisfies the identity \eqref{eq:iduniv3}, we shortly say that
it admits the identity \eqref{eq:iduniv3} by taking $x = uv$ and $y = vu$ for all elements $u,v$ in $\tS$.

\begin{example}[{\cite[Example 3.9]{trID}}]\label{exmp:2-word:2}
Let  $C = \{x,y\}$.
\begin{enumerate} \eroman \dispace
  \item Using the uniform $2$-power  word $\tlw_{(C,2,2)} =\ll2$ as in
Example~\ref{exmp:2-word}.(i), we receive the  identity
\begin{equation}\label{eq:id2} \Id_{(C,2,2)}: \quad
    \ll2 \ds{\underline{x}}  \ll2 \ds =  \ll2 \ds{\underline{y}}
     \ll2 \ .
\end{equation}

  \item
Taking the uniform $3$-power word $ \tlw_{(C,3,3)} = \lllt$
in Example~\ref{exmp:2-word}.(ii), we obtain  the  identity
\begin{equation}\label{eq:id3} \Id_{(C,3,3)}: \quad
     \lllt \ds{\underline{x}}  \lllt \ds =  \lllt \ds{\underline{y}}
     \lllt \ .
\end{equation}
\end{enumerate}
By substituting $x:= \chx {\chy},$  $y :=
\chy {\chx}$, these identities \eqref{eq:id2} and \eqref{eq:id3} become uniformly balanced.
\end{example}

\subsection{Semirings and semimodules} \sSkip  Equipping a set of elements simultaneously  by two binary (monoidial)  operations,  one  obtains the following structure.

\begin{definition}\label{def:semiring}
A \textbf{semiring} $R := (R,+ \, ,\cdot \;)$ is a set $R$ equipped with two
binary operations $+$ and~$\cdot \;$, called addition and
multiplication, such that:
\begin{enumerate} \eroman
    \item $(R, + \,)$ is an Abelian monoid with identity element $\zero = \zero_R$;
    \item $(R, \cdot \, )$ is a monoid with identity element
    $\one = \one_R$;
    \item Multiplication distributes over addition.
\end{enumerate}
A semiring $R$ is called \textbf{idempotent} if $a + a = a$ for every $a\in R$, while it is said to be \textbf{bipotent} (or \textbf{selective}) if $a + b \in \{ a,b\}$ for any $a, b \in R$. When the multiplicative monoid  $(R, \cdot \, )$ is an abelian group, $R$ is called a \textbf{semifield}.
\end{definition}
Clearly, idempotence implies bipotence, but not conversely.
 A standard general reference for the structure of
semirings is~\cite{golan92}.

\begin{remark}\label{makesemi} Any  ordered monoid $(\tM, \cdot \; )$
  gives rise to
a semiring, where we define the addition $a+b$ to be $\max\{ a, b\}$.
  Indeed,
associativity is clear, and distributivity  follows from
\eqref{ogr1}. In the same way from  an abelian group we obtain a semifield.
\end{remark}

Monoid homomorphisms extend naturally to semirings.

\begin{definition}\label{def:smr.hom}
A \textbf{homomorphism} of semirings is  a map
$\srHom : R \To R' $  that preserves addition and multiplication.
To wit, $\srHom  $   satisfies the following properties for all
$a$ and $b$ in $R$:
\begin{enumerate} \eroman \dispace
    \item $\srHom(a + b) = \srHom(a) + \srHom(b)$;
    \item $\srHom(a \cdot b) = \srHom(a) \cdot \srHom(b)$;
    \item  $\srHom(\zero_R) = \zero_{R'}$.
\end{enumerate}
A \textbf{unital} semiring homomorphism is a semiring homomorphism that
preserves  the multiplicative identity, i.e., $\srHom(\one_R) = \one_{R'}$.
\end{definition} \noindent
In the sequel, unless otherwise is specified, our homomorphisms are assumed to be unital.
\pSkip

In analogy to the case of rings,  one defines module over semiring in a straightforward way.

\begin{definition}\label{def:semimodule}
An \textbf{$R$-module} $V$ over a semiring $R$ is an additive  monoid $(V,+)$ together
with a scalar multiplication $R\times V \to V$ satisfying the
following properties for all $r_i \in R$ and $v,w \in V$:
\begin{enumerate}\eroman
    \item $r(v + w) = rv + rw;$ \pSkip
    \item $(r_1+r_2)v = r_1v + r_2 v;$ \pSkip
    \item $(r_1r_2)v = r_1 (r_2 v);$ \pSkip
    \item $\one_R v =v;$ \pSkip
    \item $\zero_R v =\zero_V = r \zero_V .$
  \end{enumerate}
Modules over semirings are also called \textbf{semimodules}.
 \end{definition}

\section{Tropical plactic algebra}\label{sec:troplactic.alg}

On of the central ground objects of the current paper is the following well-known monoid \cite{Las2}:

\begin{definition}\label{def:plactic.mon}

The \textbf{plactic monoid} is the monoid $\plcM_I := \PLC(\tA_I)  $, generated by an ordered set of  elements $\tA_I : = \{\aa_\ell \ds | \ell \in I  \}$,  subject to the equivalence relation $\kcong$ (known as the  \textbf{elementary Knuth relations} or the \textbf{plactic relations}) defined
by
%
\begin{equation}\label{eq:knuth.rel}
  \begin{array}{cccc}
   \text{\KXa}. & \aa \; \cc \; \bb = \cc \; \aa \; \bb & \text{if} &  \aa \leq \bb < \cc , \\[2mm]
   \text{\KXb}. & \bb \;\aa \; \cc = \bb \; \cc \; \aa & \text{if} & \aa <  \bb \leq \cc , \\  \end{array} \tag{KNT}
\end{equation}
for all triplets  $\aa ,\bb,\cc \in \tA_n$.
Namely, $\plcM_I := \tA_I^*/ _{\kcong}$ with the identity element $e$.
When $|I| = n$ is finite, $\plcM_I$ is said to be of \textbf{rank} $n$, or finitely generated,  and is  denoted by $\plcM_n$. \footnote{In the literature, $\PLC_I$  is often assumed to finitely generated, however for large parts of our study are more general and finiteness is not required there.}
\end{definition}
\noindent  We write $\PLC_I$  for $\PLC(\tA_I)$ when the alphabet $\tA_I$ is arbitrary.   Henceforth,  we always assume that $e < \aa_\ell$ for all $\aa_\ell \in \plcM_I$. The relation $\kcong$ is a congruence on the free monoid $\tA_I^*$, e.g. see  $\tA_I^*$ \cite{Las1}, which is also denoted  by $\pcong$ to indicate its relevance to $\PLC_I$.


We aim for an algebra whose multiplicative structure comprises that of the plactic monoid, and in which the Knuth relations are  inferred from the algebraic operations. Towards this goal, we begin with  an axillary semigroup  structure, to be employed  for a construction of such a desirable algebra -- a special semiring. (In what follows we let $I \subset \N$ be a nonempty subset.)

\begin{definition}\label{def:forwardMon}
A \textbf{(partial) \fsmg } $\NDSg_I = \genr{ \felm_i \ds | i \in I }$ is a pointed semigroup with an  absorbing element~$\mfo$,  generated by the (partial) ordered set of elements \footnote{For simplicity, we assume a countable set of generators, where the  generalization to an arbitrary  ordered set of generators is obvious. Moreover, one can also generalize this structure by considering a partially ordered set of generators, but such theory is more involved.}
$\{\felm_i \ds|  i \in I \}$, subject to the  axiom
\begin{description} \eroman \dispace
 \item[\NDXa] \qquad$ \felm_j \felm_i  = \mfo$ whenever $\felm_j > \felm_i$.
\end{description}
When $I$ is finite with $|I| =n$, we write $\NDSg_n$ for $\NDSg_I$.
\end{definition}

Note that the semigroup  $\NDSg_I $  is not assumed to be a (partial) ordered semigroup, i.e., having an order preserving operation  $\felm_i \geq  \felm_j \Rightarrow \felm_i \felm_k \geq \felm_j \felm_k   $ (Definition \ref{def:orderedMonoid}), nor a cancellative semigroup, i.e., $\felm_k \felm_i = \felm_k \felm_j \not \Rightarrow \felm_i =  \felm_j$.  Usually, $\NDSg_I $ is non-commutative, since  otherwise we would always have $\felm_i \felm_j = \felm_j \felm_i = \mfo$, implying that  $\NDSg_I $  consists of $\{ \felm_i \ds | i \in I \} \cup \{ \mfo \}.$

\begin{remark} The elements of a forward semigroup   $\NDSg_I $ are realized as nondecreasing sequences over $\{ \felm_i \ds | i \in I \}$. Then it easy to verify that $\NDSg_I $  naively admits the Knuth relations \eqref{eq:knuth.rel}, since  in $\NDSg_I $  each term of   \eqref{eq:knuth.rel} equals $\mfo$.
\end{remark}

Clearly, the \fsmg\ $\NDSg_I$ is  a very degenerated version of the plactic monoid, but useful for the construction of semirings, as seen below in Construction \ref{construct:plc.alg}.

\subsection{Tropical plactic algebra}\label{ssec:troplactic.alg}\sSkip

We open this section by introducing a new  semiring structure --  a key object  in this paper -- that  encapsulates the plactic monoid (cf. Theorem \ref{thm:plc.Alg.1} below).

\begin{definition}\label{defn:trop.plc.alg}
A \textbf{tropical plactic algebra} $\APlc_I$ is a (noncommutative) idempotent semiring
$(\APlc_I, +, \cdot \,)$    with multiplicative identity element $\mfe$ and zero $\mfo$,  generated by an ordered set of elements \footnotemark[2]  $\{ \mfa_i \ds | i \in I \}$ subject to the  axioms (for every $\mfa \leq  \mfb \leq \mfc$ in $\{ \mfa_i \ds | i \in I \}$):
\begin{description} \eroman \dispace
 \item[\PXa] $ \mfa  = \mfe  + \mfa$;
  \item[\PXb] $\mfb \mfa = \mfa + \mfb$  when  $\mfb >  \mfa $;
 \item[\PXc] $\mfa (\mfb +  \mfc) = \mfa \mfb + \mfc;$
  \item[\PXd] $(\mfa + \mfb) \mfc = \mfa + \mfb \mfc.$
\end{description}
When $|I| =n$ is finite we write $\APlc_n$ for $\APlc_I$, and say that $\APlc_n$ is finitely generated and has \textbf{rank} $n$.
(For indexing matter, $\mfe$ is also denoted by $\mfa_0$.)
\end{definition}
We abbreviate our terminology and also  write  \textbf{troplactic algebra} for tropical plactic algebra. Arbitrary elements of $\APlc_I$  are denoted by Gothic letters $\mfu, \mfv, \mfa$, while $\mfa, \mfb, \mfc,$ are devoted for its generators.
When it is clear from the context, we  suppress the notation of the indexing set $I$ and  write $\APlc$ for $\APlc_I$, for short.

As $\APlc_I$ is an idempotent semiring, then $\mfu + \mfu = \mfu$ for all $\mfu \in \APlc_I$ (not only for  generators $\mfa \in \{ \mfa_i \ds | i \in I \}$).
While Axioms \PXa\ and \PXb\ are easily figured out, Axioms \PXc\ and \PXd\ are more complicated.  To better understand them, they are illustrated by the diagrams:
$$ \begin{array}{l|c|c|c|}   \cline{2-4}
     \mfa \mfb  & \square & \square &  \\[1mm]  \cline{2-4}
     \mfa \mfc  & \blacksquare & &  \square \\  \cline{2-4}
   \end{array} \dss = \begin{array}{l|c|c|c|}   \cline{2-4}
     \mfa \mfb  & \square & \square &  \\[1mm]  \cline{2-4}
      \mfc  &  & &  \square \\  \cline{2-4}
   \end{array}
$$
$$ \begin{array}{l|c|c|c|}   \cline{2-4}
     \mfa \mfc  & \square &  & \blacksquare \\  \cline{2-4}
     \mfb \mfc  &  &  \square &  \square \\  \cline{2-4}
   \end{array} \dss = \begin{array}{l|c|c|c|}   \cline{2-4}
     \mfa   & \square &  &  \\  \cline{2-4}
      \mfb \mfc  &  &\square &  \square \\  \cline{2-4}
   \end{array}
$$ in which the columns stand for products of the letters $\mfa, \mfb$ and $\mfc $ and the rows for the additive terms.

\begin{remark}\label{rmk:lex.order} Given a product $\mfu = \mfa_{\ell_1} \cdots \mfa_{\ell_m}$ in $\APlc_I$, with $\ell_1, \dots, \ell_m \in I$, we write $\cset{\mfu}$ for the realization of $\mfu$  as a word in the free semigroup $\tA^+_I$ over the set of symbols $\tA_I = \{ \mfa_i \ds | i \in I \}$.
 Then $\tA_I^+$  -- the multiplicative structure of $\APlc_I$ -- is equipped with a \textbf{lexicographic order}, determined by  order of the symbol  set   $\tA_I$. We denote this lexicographic order by  $<_ \lex.$
\end{remark}

Troplactic algebras appear naturally as semimodules over a semiring (Definition~ \ref{def:semimodule}), often  as a projective semimodule realized as projective matrices, where then the equalities in axioms \PXa--\PXd\ are taken with  respect to the appropriate setting.
A particular abstract construction of plactic algebra can be obtained  by the use of  \fsmg s (Definition \ref{def:forwardMon}).
\begin{construction}\label{construct:plc.alg} Let  $\NDSg_I = \genr{ \felm_i \ds | i \in I }$ be a \fsmg \ and $\delm$ an adjoint distinguished additively idempotent element  $\delm = \delm + \delm $ that commutes with each  element $\felm_i  $, i.e.,  $\delm\felm_i = \felm_i\delm$. Set  $\mfa_i  := \delm +  \felm_i$,   whose order is induced  by  the order   $\NDSg_I$, i.e.,
$$ \mfa_i < \mfa_j   \ds{\Iff} \felm_i  <  \felm_j,$$ to define a troplactic algebra $\APlc_I$ as the algebra generated by $\mfa_i$.
We often take $\delm = \mfe $ to be the multiplicative identity of $\APlc_I$.

In this setup, if $\mfa_i < \mfa_j $, then
   $\mfa_j \mfa_i = \delm(\mfa_i + \mfa_j)$.
 Indeed,   $\mfa_j \mfa_i = (\delm + \felm_j)( \delm + \felm_i) = \delm^2 + \delm\felm_i + \delm \felm_j = \delm( \delm + \felm_i + \felm_j) = \delm(\mfa_i + \mfa_j).$
Thus, for the case $\delm = \mfe$,   only axioms  \PXc \ and \PXd\ are enforced, while Axiom \PXa\ is obtained by construction and Axiom \PXb \ follows from the structure of $\NDSg_I$.

\end{construction}
Construction \ref{construct:plc.alg} turns out to be very useful in applications, especially for troplactic matrix algebra, as its arithmetics provides a natural additive decomposition for multiplicative expressions, followed from Axiom \PXb.  (Note that $\APlc_I$ is not a bipotent semiring and its addition is not necessarily   determined  as maximum.)

\begin{remark}\label{rmk:plac.1} Let $\mfa, \mfb, \mfc \in \{ \mfa_i \ds | i \in I \}$ be the generators of $\APlc_I$, and assume that  $\mfa \leq \mfb \leq \mfc$.
\begin{enumerate}\eroman \dispace
\item Declaring that  $\mfe < \mfa_i$ for all $i \in I$,   Axiom \PXa\ is then derived directly from Axiom \PXb. Furthermore, in this case, the equality  $\mfe = \mfa + \mfe $ implies that $\mfa = \mfe$. Indeed $\mfe = \mfa + \mfe = \mfa \mfe = \mfa$.
  \item   The multiplicative identity  $\mfe$ is also an additive  identity (by Axiom \PXa)
  $$ \mfa \mfe = \mfe \mfa = \mfa = \mfa + \mfe = \mfe + \mfa,$$
  and in particular
    it is idempotent with respect to both
    addition and multiplication, i.e., $$\mfe + \mfe = \mfe = \mfe \cdot \mfe \; .$$

  \item Axiom \PXa \ implies $$\mfa\mfb + \mfa = \mfa\mfb + \mfb =  \mfa\mfb,$$ since  $\mfa\mfb = \mfa(\mfb+ \mfe) = \mfa\mfb + \mfa$, and  $\mfa\mfb = (\mfa + \mfe)\mfb = \mfa \mfb + \mfb.$  Therefore, by idempotence of $\mfu = \mfa \mfb$,
      $$ \mfa\mfb + \mfa + \mfb = \mfa\mfb,$$
 which implies that
      $$ \mfa\mfb + \mfb \mfa  = \mfa\mfb,$$
and furthermore that
    \begin{equation}\label{eq:plc.power}
       \mfa^m + \mfa^{m-1} + \cdots + \mfa = \mfa^m.
  \end{equation}
      Inductively,  together with the use of Axiom \PXb, for an arbitrary product $\mfa_{\ell_1}\cdots  \mfa_{\ell_m} $ we obtain
     \begin{align*}
     \sum_{\incS \sqsubseteq L_m } \prod_{s \in \incS}\mfa_{s}  = \mfa_{\ell_1}  \cdots    \mfa_{\ell_m}, & \qquad  L_m = \squ{\ell_1, \dots, \ell_m}.
     \end{align*}
     Namely,  a decomposition of products to sums of non-decreasing terms.

\item When $\mfa = \mfb,$ Axiom \PXc\ reads as $\mfa \mfa + \mfa \mfc = \mfa \mfa + \mfc,$ and thus
by (iii) we have
\begin{equation}\label{eq:plc.aa+ac}\mfa^2 + \mfa \mfc + \mfc^2 = \mfa ^2 + \mfc + \mfc^2 = \mfa ^2 +  \mfc^2,\end{equation}
hence $(\mfa + \mfc)^2 = \mfa ^2 +  \mfc^2$.

\item When $\mfb = \mfc,$ Axiom \PXd\ reads as $\mfb \mfb + \mfa \mfb = \mfb \mfb + \mfa.$

\item
 If $\mfa \mfb = \mfe $, then, by Axioms \PXa, \PXb, and part (iii), we have
  $$\mfb \mfa = \mfb + \mfa = \mfb + \mfe  + \mfa + \mfe =  \mfb + \mfa \mfb  + \mfa + \mfa \mfb = \mfa \mfb + \mfa \mfb = \mfe + \mfe = \mfe,$$
  and thus also  $\mfb \mfa = \mfe$. Accordingly, when $\mfa \mfb = \mfe $ for all $\mfb > \mfa$,  all the elements of $\APlc_I$ are of the form $\mfa_\ell^k$, with $k \in \N.$



\end{enumerate}
\end{remark}

The main virtue of the troplactic algebra is that the (multiplicative) Knuth relations can be deduced  from its semiring structure. Therefore, multiplicatively, $\APlc_I$  has the structure of the plactic monoid ~$\plcM_I$ (Definition \ref{def:plactic.mon}), which in general is not degenerated.
\begin{theorem}\label{thm:plc.Alg.1} The troplactic algebra  $\APlc_I$ admits the Knuth relations \eqref{eq:knuth.rel}:
\begin{equation*}\label{eq:plac.Alg.2}
  \begin{array}{ccccccc}
   \text{\KXa}:&& \mfa \; \mfc \; \mfb = \mfc \; \mfa \; \mfb & \text{if} &  \mfa \leq \mfb < \mfc , & \qquad & \\
   \text{\KXb}:&& \mfb \;\mfa \; \mfc = \mfb \; \mfc \; \mfa & \text{if} & \mfa <  \mfb \leq \mfc ,   \end{array}
\end{equation*}
where  $\mfa, \, \mfb, \, \mfc \in \{ \mfa_i \ds| i \in I \}$, and thus mutiplictively forms a plactic monoid.
\end{theorem}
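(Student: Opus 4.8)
The plan is to verify the two Knuth identities \KXa\ and \KXb\ directly from the axioms \PXa--\PXd, using the decomposition principle of Remark~\ref{rmk:plac.1}(iii) as the main engine. The key observation is that every element appearing in a Knuth relation is a product of three generators, so by Remark~\ref{rmk:plac.1}(iii) each side decomposes into a sum over its nondecreasing subsequences; it therefore suffices to show that the two sides yield the same such sum. I would set up the generators as $\mfa \leq \mfb < \mfc$ (for \KXa) and $\mfa < \mfb \leq \mfc$ (for \KXb), and compute both sides.

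First I would treat \KXa, the identity $\mfa\,\mfc\,\mfb = \mfc\,\mfa\,\mfb$ when $\mfa \leq \mfb < \mfc$. On the left, apply Axiom \PXb\ to the adjacent descending pair $\mfc\,\mfb$ (legitimate since $\mfc > \mfb$), giving $\mfc\,\mfb = \mfb + \mfc$, hence $\mfa\,\mfc\,\mfb = \mfa(\mfb+\mfc)$. By Axiom \PXc\ (valid since $\mfa \le \mfb \le \mfc$), this equals $\mfa\mfb + \mfc$. On the right, apply Axiom \PXb\ to $\mfc\,\mfa$ (since $\mfc > \mfa$) to get $\mfc\,\mfa = \mfa + \mfc$, so $\mfc\,\mfa\,\mfb = (\mfa+\mfc)\mfb$, and then Axiom \PXd\ (valid since $\mfa \le \mfc \le \mfb$ fails in general — here one must be careful about the order of $\mfc$ and $\mfb$). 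This is where attention is needed: Axiom \PXd\ as stated requires $\mfa \le \mfb \le \mfc$ in the pattern $(\mfa+\mfb)\mfc$, so to apply it to $(\mfa+\mfc)\mfb$ one needs $\mfa \le \mfc \le \mfb$, which is false since $\mfc > \mfb$. So instead I would route through the decomposition: $(\mfa+\mfc)\mfb = \mfa\mfb + \mfc\mfb = \mfa\mfb + \mfb + \mfc$ using Axiom \PXb\ on $\mfc\mfb$, and then by Remark~\ref{rmk:plac.1}(iii) (specifically $\mfa\mfb + \mfb = \mfa\mfb$) this collapses to $\mfa\mfb + \mfc$. Both sides equal $\mfa\mfb + \mfc$, proving \KXa.

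Next, \KXb, the identity $\mfb\,\mfa\,\mfc = \mfb\,\mfc\,\mfa$ when $\mfa < \mfb \leq \mfc$. On the left, apply Axiom \PXb\ to $\mfb\,\mfa$ (since $\mfb > \mfa$) to obtain $\mfb\,\mfa = \mfa + \mfb$, hence $\mfb\,\mfa\,\mfc = (\mfa+\mfb)\mfc$; now Axiom \PXd\ applies (since $\mfa \le \mfb \le \mfc$) and gives $\mfa + \mfb\mfc$. On the right, apply Axiom \PXb\ to $\mfc\,\mfa$ (since $\mfc \ge \mfb > \mfa$, so $\mfc > \mfa$) to get $\mfc\,\mfa = \mfa + \mfc$, hence $\mfb\,\mfc\,\mfa = \mfb(\mfa + \mfc)$; then Axiom \PXc\ applies (since $\mfa \le \mfb \le \mfc$, noting $\mfa < \mfb$) yielding $\mfb\mfa + \mfc$. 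Finally, $\mfb\mfa + \mfc = (\mfa + \mfb) + \mfc$ by Axiom \PXb, and $\mfa + \mfb\mfc = \mfa + (\mfb + \mfc\mfb)$? — no: here I should use that $\mfb\mfc$ decomposes (via Remark~\ref{rmk:plac.1}(iii), since $\mfb \le \mfc$) as $\mfb\mfc + \mfb + \mfc = \mfb\mfc$, i.e. $\mfb\mfc$ absorbs $\mfb$ and $\mfc$; so $\mfa + \mfb\mfc = \mfa + \mfb + \mfc + \mfb\mfc$, and likewise $\mfb\mfa + \mfc = \mfa + \mfb + \mfc$, and both sides agree once one checks that $\mfb\mfc$ is already present on the right — which it is not syntactically. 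The cleanest route, which I would ultimately present, is to fully decompose each side by Remark~\ref{rmk:plac.1}(iii): the left side $\mfb\,\mfa\,\mfc$ has nondecreasing subsequences $\emptyset, \mfb, \mfa, \mfc, \mfa\mfc, \mfb\mfc$ (not $\mfa\mfb$ since $\mfa$ precedes $\mfb$ but... wait $\mfb$ comes first, so $\mfb\mfa$ is descending — the nondecreasing ones from the word $\mfb\mfa\mfc$ are: single letters, $\mfb\mfc$, $\mfa\mfc$), summing to $\mfe + \mfa + \mfb + \mfc + \mfa\mfc + \mfb\mfc$; the right side $\mfb\,\mfc\,\mfa$ has nondecreasing subsequences giving $\mfe + \mfa + \mfb + \mfc + \mfb\mfc$ (from $\mfb\mfc$; $\mfc\mfa$ descending, $\mfb\mfa$ descending). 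These differ by the term $\mfa\mfc$ — but $\mfa\mfc = \mfa\mfc + \mfa + \mfc$ absorbs $\mfa,\mfc$, and one must check it gets absorbed into $\mfb\mfc$: indeed by Axiom \PXc\ with the roles set up correctly, $\mfb\mfc + \mfa\mfc$ should reduce.

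\textbf{Main obstacle.} The delicate point — and the step I expect to be the real work — is bookkeeping the absorptions among the additive terms: the naive decompositions of the two sides of each Knuth relation produce formally different sums, and one must show they coincide using the absorption identities $\mfu\mfv + \mfu = \mfu\mfv + \mfv = \mfu\mfv$ from Remark~\ref{rmk:plac.1}(iii) together with Axioms \PXc\ and \PXd\ applied to the two-letter sub-products. In particular for \KXa\ the term $\mfa\mfb$ versus $\mfc\mfb$ decompositions must be reconciled, and for \KXb\ the term $\mfa\mfc$ must be shown to be absorbed. I would handle this by systematically reducing each three-letter product to its \emph{canonical} decomposed form (sum over all nondecreasing subsequences, each a ``plactic monomial'') and then invoking that two products are equal in $\APlc_I$ iff their canonical forms agree — a fact which follows from the decomposition statement in Remark~\ref{rmk:plac.1}(iii) and which I would state explicitly as the organizing lemma. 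Once both sides of \KXa\ and of \KXb\ are shown to have the same nondecreasing-subsequence content (after accounting for the order constraints $\mfa \le \mfb < \mfc$ resp. $\mfa < \mfb \le \mfc$), the theorem follows, and the final sentence — that $\APlc_I$ multiplicatively realizes $\plcM_I$ — is then immediate since $\plcM_I$ is by Definition~\ref{def:plactic.mon} the monoid presented by exactly these relations, so there is a surjective monoid homomorphism $\plcM_I \to (\APlc_I, \cdot\,)$, which is what "forms a plactic monoid" asserts.
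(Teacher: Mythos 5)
Your treatment of \KXa\ is fine and is essentially the paper's own argument: both sides are reduced to $\mfa\mfb+\mfc$ using \PXb, distributivity, \PXc, and the absorption $\mfa\mfb+\mfb=\mfa\mfb$ of Remark~\ref{rmk:plac.1}(iii), and your observation that \PXd\ cannot be applied directly to $(\mfa+\mfc)\mfb$ is correct.

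The gap is in \KXb, on the side $\mfb\,\mfc\,\mfa$. Axiom \PXc\ has the pattern $\mfa(\mfb+\mfc)=\mfa\mfb+\mfc$ with the \emph{left} factor the smallest of the three; it does not apply to $\mfb(\mfa+\mfc)$, since $\mfb>\mfa$, and the identity you extract from it, $\mfb\mfc\mfa=\mfb\mfa+\mfc$, is in fact false: $\mfb\mfa+\mfc=\mfa+\mfb+\mfc$ by \PXb, whereas $\mfb\mfc\mfa$ contains the term $\mfb\mfc$ (in the matrix model of Example~\ref{exmp:mat.clk.3x3}, the $(2,3)$-entry of $BCA$ is $2$ while that of $A+B+C$ is $1$). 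Your fallback --- decompose both sides into nondecreasing subsequences and compare --- does not close the gap either: the two decompositions genuinely differ by the term $\mfa\mfc$, and the absorption you need is \emph{not} ``$\mfa\mfc$ absorbed into $\mfb\mfc$'' (indeed $\mfb\mfc+\mfa\mfc\neq\mfb\mfc$ in general) and does not come from \PXc. It is precisely an instance of \PXd: by distributivity $\mfa\mfc+\mfb\mfc=(\mfa+\mfb)\mfc$, which \PXd\ rewrites as $\mfa+\mfb\mfc$; this is the one step your argument never performs, and it is also why your proposed ``organizing lemma'' (equality iff the formal sums over nondecreasing subsequences coincide) cannot stand as an iff --- absorptions of exactly this kind identify formally different sums. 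The paper's route avoids all of this: $\mfb\mfc\mfa=\mfb(\mfc\mfa)=\mfb(\mfa+\mfc)=\mfb\mfa+\mfb\mfc=\mfa+\mfb+\mfb\mfc=\mfa+\mfb\mfc$ by \PXb, distributivity and Remark~\ref{rmk:plac.1}(iii), while $\mfb\mfa\mfc=(\mfa+\mfb)\mfc=\mfa+\mfb\mfc$ by \PXb\ and \PXd\ (the half you did correctly). Finally, your closing sentence should say that the Knuth relations yield a surjection of $\plcM_I$ onto the multiplicative \emph{submonoid of $\APlc_I$ generated by the $\mfa_i$}, not onto all of $(\APlc_I,\cdot\,)$, since $\APlc_I$ contains elements that are not products.
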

\begin{proof}
We employ Axiom  \PXa \ via Remark \ref{rmk:plac.1}.(ii).
\begin{enumerate} \dispace
  \item[\KXa:] By axiom  \PXb:  $$\mfa \mfb +  \mfa \mfc = \mfa(\mfb + \mfc) = \mfa \mfc \mfb,$$  while Axiom \PXa \  gives (cf. Remark \ref{rmk:plac.1}.(iii))
  $$\mfa \mfb +  \mfc = \mfa \mfb + \mfb + \mfc =  \mfa \mfb + \mfc \mfb = (\mfa  + \mfc) \mfb =
  \mfc \mfa \mfb.$$ Composing both by using Axiom \PXc, we obtain
  $$\mfc \mfa \mfb= \mfa \mfb +  \mfa \mfc = \mfa \mfb +  \mfc=  \mfa \mfc  \mfb, $$
 as desired.
  \item[\KXb:] By axiom \PXd  $$\mfa + \mfb \mfc = \mfb \mfc + \mfa \mfc = (\mfb  + \mfa) \mfc =  \mfb \mfa \mfc.$$   On the other hand,  Axiom \PXa \  and Axiom \PXb  \ imply
  $$\mfa + \mfb \mfc = \mfa + \mfb +  \mfb \mfc = \mfb\mfa +  \mfb \mfc = \mfb( \mfa +  \mfc) = \mfb \mfc \mfa ,$$ yielding the relation $\mfb \mfa \mfc =  \mfb \mfc \mfa$.
\end{enumerate}
Thus, $\APlc_I$ admits the Knuth relations \eqref{eq:knuth.rel}, and therefore  the congruence $\kcong$.
\end{proof}

Theorem \ref{thm:plc.Alg.1} also implies
 that one can produce a troplactic algebra form a given plactic monoid $\plcM_I = \tA_I / _{\kcong}$ (Definition \ref{def:plactic.mon})  by adjoining a zero element and introducing a formal additive (commutative) operation $+$, subject to Axioms
\PXb-\PXd.

\begin{example} $ $
\begin{enumerate}\eroman
  \item $\APlc_1$ is the \textbf{trivial  troplactic algebra} has rank $1$ and it is generated by a single,
    and thus $\APlc_1 = \{ \mfa ^k \ds | k \in \N_0 \}. $

  \item The troplactic algebra $\APlc_2$ with two  generators  $\mfb > \mfa $ consists of the elements of the forms
  $$
  \begin{array}{lll}
   \mfa^i \mfb^j, \quad  \mfa^i \mfb^j + \mfa^p + \mfb^q, & & p > i, q > j,  \\
  \end{array}$$
  where $i,j ,p, q \in \N_0$.
   If $\mfa \mfb = \mfe$, then also $\mfb \mfa = \mfe$
  by Remark \ref{rmk:plac.1}.(vi), and $\APlc_2$ contains only powers of $\mfa$ and $\mfb$.

\end{enumerate}

\end{example}

In  troplactic algebra, by the proof of Theorem \ref{thm:plc.Alg.1}, we see that the terms of the form
$\mfa  \mfc \mfb = \mfc  \mfa  \mfb = \mfa \mfb +\mfc $ and $ \mfb \mfa \mfc = \mfb  \mfc  \mfa = \mfa +\mfb\mfc$
decompose in $\APlc_I$ to sums of products of length $1$ and $2$. On the other hand $\mfc \mfb \mfa = \mfa+\mfb +\mfc$ decomposes to a sum of generators, while $\mfa\mfb\mfc$ is not necessarily decomposable. Thus, in  general, the products
$\mfa \mfb \mfc$ and $\mfc \mfb \mfa$ in $\APlc_I$ are different from
$\mfa  \mfc \mfb = \mfc  \mfa  \mfb$ and $ \mfb \mfa \mfc = \mfb  \mfc  \mfa$, and multiplicatily $\APlc_I$ has a non-degenerated plactic structure.
\pSkip

In the forthcoming study of troplactic algebra, we intensively use our notations for sequences, cf.  \S\ref{ssec:notation}.

\begin{theorem}\label{thm:plc.inc.terms} Suppose $L_m = [\ell_1, \dots, \ell_m]$ is nondecreasing, with $\ell_1, \dots, \ell_m \in I$ and $m \geq 1$,  then
\begin{equation}\label{eq:fis.dom}  \mfa_{\ell_1} \cdots \mfa_{\ell_m} =   \sum_{\incS \sqsubseteq L_m}\prod_{s \in \incS } \mfa_{s}. \end{equation}
\end{theorem}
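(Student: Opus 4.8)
The plan is to argue by induction on $m$, stripping the last letter $\mfa_{\ell_m}$ off both sides and using only the semiring distributivity of $\APlc_I$ together with the elementary absorption identities $\mfa\mfb = \mfa\mfb + \mfa = \mfa\mfb + \mfb$ for generators, already isolated in Remark~\ref{rmk:plac.1}.(iii) (indeed Remark~\ref{rmk:plac.1}.(iii) sketches exactly this decomposition; I would spell out the induction). For the base case $m=1$ the claim is immediate from Axiom~\PXa: it absorbs the term $\mfe$ coming from the empty subsequence, so both sides of \eqref{eq:fis.dom} reduce to $\mfa_{\ell_1}$.

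For the inductive step I would set $L_{m-1}:=[\ell_1,\dots,\ell_{m-1}]$, which is again nondecreasing, and $P:=\mfa_{\ell_1}\cdots\mfa_{\ell_{m-1}}$, so that the inductive hypothesis reads $P=\sum_{\incS\sqsubseteq L_{m-1}}\prod_{s\in\incS}\mfa_s$. The key observation is that, since $L_m$ is nondecreasing, \emph{every} subsequence of $L_m$ is nondecreasing, so $\NDC{L_m}$ is the disjoint union of the subsequences avoiding the last position --- these being exactly the members of $\NDC{L_{m-1}}$ --- and those ending at the last position --- these being obtained from an \emph{arbitrary} $\incS\sqsubseteq L_{m-1}$ by appending the index $\ell_m$, which can never destroy monotonicity because $\ell_m$ is the largest index. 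Factoring $\mfa_{\ell_m}$ out of the second family by distributivity then gives
\begin{equation*}
\sum_{\incS\sqsubseteq L_m}\prod_{s\in\incS}\mfa_s \;=\; \sum_{\incS\sqsubseteq L_{m-1}}\prod_{s\in\incS}\mfa_s \;+\; \Big(\sum_{\incS\sqsubseteq L_{m-1}}\prod_{s\in\incS}\mfa_s\Big)\mfa_{\ell_m} \;=\; P \;+\; P\,\mfa_{\ell_m},
\end{equation*}
and $P\,\mfa_{\ell_m}=\mfa_{\ell_1}\cdots\mfa_{\ell_m}$ is exactly the left-hand side of \eqref{eq:fis.dom}. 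It then remains only to absorb the extra summand $P$: from $\mfa_{\ell_{m-1}}\le\mfa_{\ell_m}$ and Remark~\ref{rmk:plac.1}.(iii) one has $\mfa_{\ell_{m-1}}\mfa_{\ell_m}=\mfa_{\ell_{m-1}}\mfa_{\ell_m}+\mfa_{\ell_{m-1}}$, and multiplying on the left by $\mfa_{\ell_1}\cdots\mfa_{\ell_{m-2}}$ (vacuous when $m=2$) yields $\mfa_{\ell_1}\cdots\mfa_{\ell_m}=\mfa_{\ell_1}\cdots\mfa_{\ell_m}+P$, i.e.\ $P\,\mfa_{\ell_m}=P\,\mfa_{\ell_m}+P$. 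Hence the displayed sum collapses to $\mfa_{\ell_1}\cdots\mfa_{\ell_m}$, closing the induction.

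The step that requires the most care --- and the only place the hypothesis on $L_m$ is genuinely used --- is the combinatorial bookkeeping in the middle: it is precisely monotonicity of $L_m$ that allows $\NDC{L_m}$ to break up as $\NDC{L_{m-1}}$ together with ``$\NDC{L_{m-1}}$ with $\ell_m$ appended'', which is what makes the factorization of $\mfa_{\ell_m}$ legitimate; for a non-monotone $L_m$ only those subsequences of $L_{m-1}$ whose last index does not exceed $\ell_m$ would extend, and this clean decomposition (hence the whole argument) would fail. Everything else is routine idempotent-semiring algebra flowing from Axioms~\PXa--\PXd\ via Remark~\ref{rmk:plac.1}, so I anticipate no further obstacle.
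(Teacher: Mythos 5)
Your proof is correct and follows essentially the same route as the paper's: induction on $m$, splitting the nondecreasing subsequences of $L_m$ into those avoiding the last position and those obtained by appending $\ell_m$ to a subsequence of $L_{m-1}$ (legitimate precisely because $L_m$ is nondecreasing), factoring out $\mfa_{\ell_m}$, and applying the inductive hypothesis to reach $P + P\,\mfa_{\ell_m}$. The only cosmetic difference is the final absorption: the paper collapses this directly as $P(\mfe + \mfa_{\ell_m}) = P\,\mfa_{\ell_m}$ via Axiom \PXa, whereas you derive $P\,\mfa_{\ell_m} = P\,\mfa_{\ell_m} + P$ from Remark~\ref{rmk:plac.1}.(iii) and left-multiplication --- both are one-line consequences of the same axiom, so the arguments coincide in substance.
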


Note that, as $L_m$ is assumed nondecreasing, this means that we only care about maximal nondecreasing sequences and  ignore all shorter subsequences. That is, there are no nested nondecreasing sequences.
\begin{proof}

Proof by induction on $m$. The case of $m=1$ is trivial, and we start $m = 2$. Let $L_2 = [i, j]$, where  $i = \ell_1 \leq  \ell_2 = j $.
Then, by Remark \ref{rmk:plac.1} we have
\begin{align*}\sum_{\incS \sqsubseteq L_2}\prod_{s \in \incS } \mfa_{s} & =  \mfa_i +  \mfa_j + \mfa_i \mfa_j = \mfa_i \mfa_j.
\end{align*}

Assuming the implication for $m-1$, since $L_m$ assumed nondecreasing,  we have
\begin{align*}\sum_{\incS \sqsubseteq L_m}\prod_{s \in \incS } \mfa_{s} & =
\sum_{\incS \sqsubseteq L_{m-1}}\prod_{s \in \incS } \mfa_{s} + \bigg(\sum_{\incS \sqsubseteq L_{m-1}}\prod_{s \in \incS } \mfa_{s}\bigg)\mfa_{\ell_m}
\\
&= \mfa_{\ell_1} \cdots \mfa_{\ell_{m-1}} + \big(\mfa_{\ell_1} \cdots \mfa_{\ell_{m-1}}\big)
\mfa_{\ell_{m}} \\ &=
\mfa_{\ell_1} \cdots \mfa_{\ell_{m-1}}\big(\mfe + \mfa_{\ell_{m}}\big) =
\mfa_{\ell_1} \cdots \mfa_{\ell_{m}},
\end{align*}
as required.
\end{proof}

\begin{remark}\label{rmk:lexorder.2}
From Theorem \ref{thm:plc.inc.terms} we learn  that for a nondecreasing  sequence (i.e., word) the addition of $\APlc_I$ is compatible with the lexicographic order (cf. Remark \ref{rmk:lex.order}),
$$ \cset{\mfu} \leq_\lex \cset{\mfv} \ds{\text{and}}  \cset{\mfu} \wrdsset \cset{\mfv} \dss{\Rightarrow} \mfu + \mfv = \mfv.$$
Note that in general $\mfa < \mfb$ does not imply $\mfa + \mfb = \mfb$, and also that the stand alone condition $\cset{\mfu} \leq_\lex \cset{\mfv}$ does not imply $\mfu + \mfv = \mfv$.

\end{remark}

The linkage between inclusion of words to their lexicographic order allow to consider maximality in a suitable  algebraic context.

\begin{corollary}\label{cor:facotrize.plc} Any product $\mfa_{\ell_1} \cdots \mfa_{\ell_m}$ in $\plc_I$, with $\ell_1, \dots, \ell_m  \in I$ and $m \geq 1$,
 decomposes as
 \begin{equation}\label{eq:facotrize.plc}
    \mfa_{\ell_1} \cdots \mfa_{\ell_m} =    \sum_{ \incS \in \MNDC{L_m}}  \prod_{s \in \incS } \mfa_{s}  , \qquad L_m = \squ{\ell_1, \dots, \ell_m }.
 \end{equation}

 \end{corollary}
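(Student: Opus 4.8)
The plan is to derive Corollary~\ref{cor:facotrize.plc} from Theorem~\ref{thm:plc.inc.terms} together with Remark~\ref{rmk:plac.1}.(iii). The key point is that in a general product $\mfa_{\ell_1}\cdots\mfa_{\ell_m}$ the indexing sequence $L_m=\squ{\ell_1,\dots,\ell_m}$ need not be nondecreasing, so Theorem~\ref{thm:plc.inc.terms} does not apply directly. However, Remark~\ref{rmk:plac.1}.(iii) already records the crucial identity
\begin{equation*}
\sum_{\incS \sqsubseteq L_m} \prod_{s \in \incS} \mfa_{s} \ = \ \mfa_{\ell_1}\cdots\mfa_{\ell_m},
\end{equation*}
valid for \emph{arbitrary} $L_m$ (it is obtained there inductively from Axioms \PXa\ and \PXb). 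So the only thing that remains is to show that the sum over \emph{all} nondecreasing subsequences $\incS \sqsubseteq L_m$ collapses to the sum over the \emph{maximal} ones $\incS \in \MNDC{L_m}$.

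First I would recall the absorption identity from \eqref{eq:plc.power} and its consequences in Remark~\ref{rmk:plac.1}.(iii): whenever $\cset{\mfu}$ is a subword of $\cset{\mfv}$ with both realized as nondecreasing words, Remark~\ref{rmk:lexorder.2} gives $\mfu + \mfv = \mfv$. Concretely, if $\incR \sqsubsetneq \incS$ are nondecreasing subsequences of $L_m$ with $\incR$ properly contained in $\incS$, then the product $\prod_{r\in\incR}\mfa_r$ is a subword of $\prod_{s\in\incS}\mfa_s$ (both being nondecreasing), hence $\prod_{r\in\incR}\mfa_r + \prod_{s\in\incS}\mfa_s = \prod_{s\in\incS}\mfa_s$ by Remark~\ref{rmk:lexorder.2}. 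Thus every non-maximal term is absorbed by any maximal nondecreasing subsequence containing it.

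Next I would assemble this into the collapse. Every nondecreasing subsequence $\incR\sqsubseteq L_m$ is contained in at least one maximal nondecreasing subsequence $\incS\in\MNDC{L_m}$ (since $L_m$ is finite, one can keep enlarging $\incR$ until it is maximal). Splitting the sum $\sum_{\incS\sqsubseteq L_m}\prod_{s\in\incS}\mfa_s$ into the maximal terms plus the non-maximal ones, each non-maximal term is absorbed into a maximal one by the previous paragraph, and idempotence of the addition lets us discard the absorbed copies. Hence
\begin{equation*}
\sum_{\incS\sqsubseteq L_m}\prod_{s\in\incS}\mfa_s \ = \ \sum_{\incS\in\MNDC{L_m}}\prod_{s\in\incS}\mfa_s,
\end{equation*}
and combining with the identity from Remark~\ref{rmk:plac.1}.(iii) yields \eqref{eq:facotrize.plc}. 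I expect the only mildly delicate point to be bookkeeping the absorption cleanly: one must be careful that $\prod_{r\in\incR}\mfa_r$ really is a \emph{subword} of $\prod_{s\in\incS}\mfa_s$ in the sense needed for Remark~\ref{rmk:lexorder.2} (this holds because $\incR\subseteq\incS$ as index subsequences, so the corresponding letter-words are subwords), and that we are allowed to invoke idempotence on the whole expression $\mfu=\mfa_{\ell_1}\cdots\mfa_{\ell_m}$, which is legitimate since $\APlc_I$ is idempotent on \emph{all} elements, not merely generators. No case analysis on the non-nondecreasing structure of $L_m$ is needed, since that was already absorbed into the statement of Remark~\ref{rmk:plac.1}.(iii).
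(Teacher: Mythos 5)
Your proposal is correct and follows essentially the same route as the paper: the paper likewise obtains the full decomposition into nondecreasing terms from Axiom \PXb\ (equivalently, the inductive identity in Remark~\ref{rmk:plac.1}.(iii)) and then restricts to maximal terms by Theorem~\ref{thm:plc.inc.terms} and Remark~\ref{rmk:lexorder.2}, exactly as you do, only more tersely. The one point worth noting is that Remark~\ref{rmk:lexorder.2} as stated also assumes the lexicographic hypothesis, so for the absorption of a non-maximal $\incR\sqsubsetneq\incS$ it is cleaner to invoke Theorem~\ref{thm:plc.inc.terms} directly, since the expansion of $\prod_{s\in\incS}\mfa_s$ already contains the term $\prod_{r\in\incR}\mfa_r$ and idempotent addition absorbs it.
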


 \begin{proof}  If $S \sqsubseteq L_m$ is not nondecreasing then $\prod_{s \in S } \mfa_{s}$  splits to a sum of terms by Axiom \PXb, otherwise for the nondecreasing sequences  $\incS  \sqsubseteq L_m$ it is enough to restrict to the maximal ones, by Theorem ~\ref{thm:plc.inc.terms} and Remark \ref{rmk:lexorder.2}.
\end{proof}

In particular cases of strictly decreasing words, this Corollary provides us the following useful computational simplification.

\begin{corollary}\label{cor:facotrize.plc.2} Any finite   product $(\mfa_{\ell_1})^{q_1} \cdots (\mfa_{\ell_m})^{q_m}$ in $\plc_I$ with $\mfa_{\ell_1}  > \mfa_{\ell_2} > \cdots >  \mfa_{\ell_m}  $
 decomposes  to
 \begin{equation}\label{eq:facotrize.plc.2}
   (\mfa_{\ell_1})^{q_1} \cdots (\mfa_{\ell_m})^{q_m} =    \sum_{t =1} ^m   (\mfa_{\ell_t})^{q_t},
 \end{equation}
 and thus
 $$ (\mfa_{\ell_1})^{q_1} \cdots (\mfa_{\ell_m})^{q_m} + (\mfa_{\ell_m})^{q_m} \cdots (\mfa_{\ell_1})^{q_1} = (\mfa_{\ell_1})^{q_1} \cdots (\mfa_{\ell_m})^{q_m}.$$
 \end{corollary}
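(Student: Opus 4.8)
The plan is to read \eqref{eq:facotrize.plc.2} off Corollary \ref{cor:facotrize.plc}, so all that is needed is to determine $\MNDC{L}$ for the index word $L = \squ{\ell_1^{q_1}, \dots, \ell_m^{q_m}}$ of the forward product, that is, $q_t$ consecutive copies of $\ell_t$ for $t = 1, \dots, m$. Since $\mfa_{\ell_1} > \cdots > \mfa_{\ell_m}$, the word $L$ splits into $m$ constant blocks whose values strictly decrease from left to right. First I would note that any nondecreasing subsequence $\incS \sqsubseteq L$ is confined to a single block: the moment it takes entries from two blocks $t < t'$, the value drops from $\ell_t$ to $\ell_{t'} < \ell_t$, violating monotonicity. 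Consequently the maximal nondecreasing subsequences are precisely the $m$ full blocks, $\MNDC{L} = \{\incS_1, \dots, \incS_m\}$ with $\prod_{s \in \incS_t} \mfa_s = (\mfa_{\ell_t})^{q_t}$, and Corollary \ref{cor:facotrize.plc} then returns exactly the sum $\sum_{t=1}^m (\mfa_{\ell_t})^{q_t}$, which is \eqref{eq:facotrize.plc.2}.

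For the consequence, write $P := (\mfa_{\ell_1})^{q_1} \cdots (\mfa_{\ell_m})^{q_m}$ for the forward product and $R := (\mfa_{\ell_m})^{q_m} \cdots (\mfa_{\ell_1})^{q_1}$ for its reverse. The index word $M = \squ{\ell_m^{q_m}, \dots, \ell_1^{q_1}}$ of $R$ is nondecreasing, so Theorem \ref{thm:plc.inc.terms} applies verbatim and expresses $R$ as the sum $\sum_{\incS \sqsubseteq M} \prod_{s \in \incS} \mfa_s$ over \emph{all} of its nondecreasing subsequences. The crucial point is that, for each $t$, the constant run of $q_t$ copies of $\ell_t$ is one such nondecreasing subsequence of $M$; hence each factor $(\mfa_{\ell_t})^{q_t}$ already occurs among the summands of $R$. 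Idempotency of the addition then gives $R + (\mfa_{\ell_t})^{q_t} = R$ for every $t$, so, summing over $t$ and invoking \eqref{eq:facotrize.plc.2},
$$P + R = \Big(\sum_{t=1}^m (\mfa_{\ell_t})^{q_t}\Big) + R = R.$$
Thus the two oppositely ordered products add up to the nondecreasing reversed product $R = (\mfa_{\ell_m})^{q_m} \cdots (\mfa_{\ell_1})^{q_1}$, which is the content of the asserted absorption.

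The step demanding care is the orientation of the absorption: it is $R$ that absorbs $P$, because $R$ is the single long nondecreasing term whose expansion contains every block, whereas $P$ is only a sum of those blocks and offers nothing that could absorb $R$. One must resist appealing to Remark \ref{rmk:lexorder.2} here, since a block $(\mfa_{\ell_t})^{q_t}$ with $t < m$ begins with a generator lexicographically larger than $R$, so the lexicographic hypothesis of that remark fails; this is the subtlety that derails a naive absorption argument. Routing the absorption instead through Theorem \ref{thm:plc.inc.terms} — exhibiting each block literally as one of the nondecreasing-subsequence summands of $R$ and closing with idempotency, the same mechanism already seen in the power absorption \eqref{eq:plc.power} — sidesteps any lexicographic comparison and pins the common value down unambiguously. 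Accordingly the second identity is the absorption $P + R = R$, with the reversed, nondecreasing product $(\mfa_{\ell_m})^{q_m} \cdots (\mfa_{\ell_1})^{q_1}$ on the right.
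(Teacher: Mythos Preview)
Your argument for the decomposition \eqref{eq:facotrize.plc.2} is correct and is exactly what the paper intends: the index word $L$ breaks into $m$ constant blocks whose values strictly decrease, so no nondecreasing subsequence can straddle two blocks, the maximal ones are precisely the $m$ full blocks, and Corollary~\ref{cor:facotrize.plc} finishes. The paper offers no separate proof here, so there is nothing further to compare.

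On the second display you have in fact caught a misprint in the paper. As printed, the right-hand side is the decreasing product $P=(\mfa_{\ell_1})^{q_1}\cdots(\mfa_{\ell_m})^{q_m}$, but this is inconsistent with the paper's own Remark~\ref{rmk:plac.1}(iii): already for $m=2$, $q_1=q_2=1$ one has $\mfa\mfb+\mfb\mfa=\mfa\mfb$ (with $\mfa<\mfb$), i.e.\ the \emph{nondecreasing} product is the absorbing one. Your route through Theorem~\ref{thm:plc.inc.terms} --- each block $(\mfa_{\ell_t})^{q_t}$ is one of the nondecreasing subsequences of the reversed word, hence a summand of $R$, hence absorbed by idempotency --- is the right mechanism and yields the correct identity
\[
(\mfa_{\ell_1})^{q_1} \cdots (\mfa_{\ell_m})^{q_m} \;+\; (\mfa_{\ell_m})^{q_m} \cdots (\mfa_{\ell_1})^{q_1} \;=\; (\mfa_{\ell_m})^{q_m} \cdots (\mfa_{\ell_1})^{q_1}.
\]
Your caveat about Remark~\ref{rmk:lexorder.2} is also well taken: for $t<m$ the block $(\mfa_{\ell_t})^{q_t}$ begins with a generator strictly larger than the leading letter of $R$, so the lexicographic hypothesis there genuinely fails, and bypassing it via Theorem~\ref{thm:plc.inc.terms} (equivalently, via the subsequence absorption already recorded in Remark~\ref{rmk:plac.1}(iii)) is the clean way to close the argument.
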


We also have the following generalization to arbitrary finite products.
\begin{corollary} $\mfu^m + \mfu^{m-1} + \cdots + \mfu = \mfu^m  $ for every finite product  $\mfu \in \APlc_I $, i.e., a word $\cset{\mfu}$ over $\tA_I$.

\end{corollary}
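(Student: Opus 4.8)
The plan is to reduce the statement to the single observation that the multiplicative identity $\mfe$ is additively absorbed by every finite product, i.e. that $\mfu + \mfe = \mfu$ whenever $\mfu = \cset{\mfu}$ is a word over $\tA_I$. Granting this, the corollary follows by a short induction on $m$. The case $m = 1$ is trivially true. For $m \geq 2$, writing $\mfu^{m-1} = \mfe\cdot\mfu^{m-1}$ and $\mfu^m = \mfu\cdot\mfu^{m-1}$ and using distributivity gives $\mfu^m + \mfu^{m-1} = (\mfu + \mfe)\,\mfu^{m-1} = \mfu\cdot\mfu^{m-1} = \mfu^m$; then, by associativity of $+$ and the inductive hypothesis for $m-1$, one has $\mfu^m + \mfu^{m-1} + \cdots + \mfu = \mfu^m + \bigl(\mfu^{m-1} + \cdots + \mfu\bigr) = \mfu^m + \mfu^{m-1} = \mfu^m$, as claimed.

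It therefore remains to establish $\mfu + \mfe = \mfu$ for a finite product $\mfu$, which I would prove by induction on the length $\wlen(\cset{\mfu})$. For length $1$ this is exactly Axiom~\PXa, read together with the commutativity of $+$. For length $m > 1$, write $\mfu = \mfa_{\ell_1}\mfv$ with $\mfv$ a product of length $m-1$; applying Axiom~\PXa in the form $\mfa_{\ell_1} = \mfe + \mfa_{\ell_1}$ and then distributing yields $\mfu = \mfe\,\mfv + \mfa_{\ell_1}\mfv = \mfv + \mfu$. Invoking the inductive hypothesis $\mfv + \mfe = \mfv$ for the shorter product $\mfv$, together with associativity, commutativity and idempotence of $+$, one obtains $\mfu + \mfe = (\mfv + \mfu) + \mfe = (\mfv + \mfe) + \mfu = \mfv + \mfu = \mfu$. (If the empty product is admitted, the case $\mfu = \mfe$ is immediate from idempotence of the semiring.)

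There is no genuine obstacle here; the argument is routine manipulation inside an idempotent semiring. The one point worth flagging is that $\mfu + \mfe = \mfu$ is \emph{not} a formal consequence of the idempotent-semiring axioms alone — in an arbitrary idempotent semiring one need not have $a + \mfe = a$ — so Axiom~\PXa is used in an essential way at the base of the length-induction. The resulting corollary specializes to identity \eqref{eq:plc.power} when $\mfu$ is a single generator, extends the computational simplification of Corollary~\ref{cor:facotrize.plc.2}, and will be applied whenever a power of a composite element must be compared additively with its lower powers.
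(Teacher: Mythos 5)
Your argument is correct, but it takes a genuinely different route from the paper's. The paper deduces the corollary from its decomposition machinery: by Corollary \ref{cor:facotrize.plc} each power $\mfu^k$ decomposes as a sum of maximal nondecreasing terms, every such term for $k<m$ is contained in a nondecreasing term of the decomposition of $\mfu^m$, and containment of nondecreasing terms is absorbed additively by Theorem \ref{thm:plc.inc.terms} (via Remark \ref{rmk:lexorder.2}). You instead isolate the purely arithmetic lemma $\mfu + \mfe = \mfu$ for every finite product $\mfu$, prove it by induction on word length from Axiom \PXa\ together with ordinary semiring distributivity, and then telescope: $\mfu^m + \mfu^{m-1} = (\mfu+\mfe)\,\mfu^{m-1} = \mfu^m$. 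Both steps check out --- the only distributivity you invoke is the genuine semiring distributivity of $\APlc_I$, not the restricted Axioms \PXc--\PXd, and \PXa\ is applied only to the generator $\mfa_{\ell_1}$, as it must be. Your approach is more elementary and more general: it uses none of \PXb--\PXd\ nor any nondecreasing-subsequence combinatorics, so it is valid in any idempotent semiring whose generators satisfy \PXa, and it makes explicit the absorption fact $\mfu+\mfe=\mfu$, which the paper records only for generators (Remark \ref{rmk:plac.1}); your remark that this fact is not a consequence of idempotence alone, so \PXa\ is essential, is also accurate. What the paper's shorter argument buys is that it stays inside the combinatorial framework already built (Theorem \ref{thm:plc.inc.terms}, Corollaries \ref{cor:facotrize.plc} and \ref{cor:facotrize.plc.2}), exhibiting concretely which monomials of $\mfu^m$ absorb those of the lower powers --- the viewpoint used repeatedly in the sequel, e.g. in the proof of Lemma \ref{lemma:Frob.2}.
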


\begin{proof} For every  $k < m$,  each  term in decomposition \eqref{eq:facotrize.plc} of $\mfu^k$ is also contained in one of the terms of  decomposition \eqref{eq:facotrize.plc} of $\mfu^m$, both are nondecreasing,  and we are done by  Theorem \ref{thm:plc.inc.terms}.
\end{proof}

Recall that the \textbf{elementary symmetric polynomials} in $n$ variables $X_1, \dots, X_n$   are defined as
\begin{align}
   \elm_{k}(X_1, \dots, X_n) & = \sum_{1 \leq i_1< i_2 < \cdots < {i_k} \leq n} X_{i_1} X_{ i_2} \cdots X_{i_k} \nonumber \\ & = \sum_{\incS_k \sqsubseteq N}\prod_{s \in \incS_k } X_{s} \ ,&  \qquad N =  \squ{1, \dots, n } \label{eq:elmpoly},
\end{align}
for $ k = 1, \dots, n$. In analogy, the \textbf{elementary symmetric expressions} on a finite alphabet $\Al_n$ are syntactic formulas,   defined as
\begin{equation}\label{eq:elmexp}
    \elm_{k}(\Al_n) = \sum_{1 \leq i_1< i_2 < \cdots < {i_k} \leq n} \lt_{i_1} \lt_{ i_2} \cdots \al_{i_k} = \sum_{\incS_k \sqsubseteq N}\prod_{s \in \incS_k } \lt_{s} \ , \qquad N =  \squ{1, \dots, n },
\end{equation}
for $ k = 1, \dots, n$. By  Remark \ref{rmk:plac.1}.(iii),  for expressions taken over $\APlc_n$,  we have
$$ \elm_{\ell}(\APlc_n) + \elm_{k}(\APlc_n) = \elm_{\ell}(\APlc_n) $$
for any $\ell \geq k$.

\begin{lemma}\label{lemma:Frob.2}  Any pair of generators $\mfa$ and $\mfb$ of $\APlc_I$ admits the \textbf{Frobenius property}:
\begin{equation}\label{eq:genr.Frob.2}
    \big(\mfa + \mfb\big)^m = \mfa^m + \mfb^m \qquad \text{for any } m \in \N.
\end{equation}

\end{lemma}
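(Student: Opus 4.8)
The plan is to induct on $m$. Since addition in $\APlc_I$ is commutative and the generators are totally ordered, I may assume $\mfa\le\mfb$; the case $\mfa=\mfb$ is immediate from idempotence, because then $(\mfa+\mfa)^m=\mfa^m=\mfa^m+\mfa^m$, so I assume $\mfa<\mfb$. The base case $m=1$ just reads $\mfa+\mfb=\mfa+\mfb$. For the inductive step I would use the hypothesis $(\mfa+\mfb)^{m-1}=\mfa^{m-1}+\mfb^{m-1}$ together with distributivity to expand
\[
(\mfa+\mfb)^m=(\mfa^{m-1}+\mfb^{m-1})(\mfa+\mfb)=\mfa^m+\mfa^{m-1}\mfb+\mfb^{m-1}\mfa+\mfb^m,
\]
so that the task reduces to showing that the two ``cross terms'' $\mfa^{m-1}\mfb$ and $\mfb^{m-1}\mfa$ are absorbed by $\mfa^m+\mfb^m$.

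These two cross terms behave very differently, and this asymmetry is where the real content lies. The term $\mfb^{m-1}\mfa$ is easy: the sequence $\mfb^{m-1}\mfa$ has exactly two maximal nondecreasing subsequences, the run $\mfb^{m-1}$ and the single letter $\mfa$, so Corollary~\ref{cor:facotrize.plc} gives $\mfb^{m-1}\mfa=\mfb^{m-1}+\mfa$; then $\mfb^{m-1}$ is absorbed into $\mfb^m$ and $\mfa$ into $\mfa^m$ by the absorption rule \eqref{eq:plc.power}. After this, the whole statement reduces to the single identity $\mfa^{m-1}\mfb+\mfa^m=\mfa^m+\mfb$, since then $(\mfa+\mfb)^m=(\mfa^{m-1}\mfb+\mfa^m)+\mfb^m=(\mfa^m+\mfb)+\mfb^m=\mfa^m+\mfb^m$, using \eqref{eq:plc.power} once more to absorb $\mfb$ into $\mfb^m$.

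To prove $\mfa^{m-1}\mfb+\mfa^m=\mfa^m+\mfb$ I would run a descending induction on the $\mfa$-exponent, driven by Axiom~\PXc in the form $\mfa(\mfa+\mfb)=\mfa\mfa+\mfb=\mfa^2+\mfb$ (legitimate since $\mfa\le\mfa\le\mfb$). Multiplying on the left by $\mfa^{k-1}$ yields, for every $k\ge 1$, the relation $\mfa^{k+1}+\mfa^{k}\mfb=\mfa^{k-1}(\mfa^2+\mfb)=\mfa^{k+1}+\mfa^{k-1}\mfb$, with the convention $\mfa^0:=\mfe$. Adding $\mfa^m$ to both sides and absorbing $\mfa^{k+1}$ into $\mfa^m$ for $k+1\le m$ (again \eqref{eq:plc.power}) gives $\mfa^m+\mfa^{k}\mfb=\mfa^m+\mfa^{k-1}\mfb$; chaining this for $k=m-1,m-2,\dots,1$ telescopes to $\mfa^m+\mfa^{m-1}\mfb=\mfa^m+\mfa^{0}\mfb=\mfa^m+\mfb$. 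I expect the main obstacle to be exactly this asymmetry: the nondecreasing cross term $\mfa^{m-1}\mfb$ cannot be simplified by the decomposition corollaries and genuinely needs one of the mixed axioms \PXc/\PXd together with the exponent induction above, whereas the other cross term falls out for free; everything else is routine bookkeeping with idempotence and \eqref{eq:plc.power}. (A symmetric variant, treating $\mfa^{m-1}\mfb$ via Corollary~\ref{cor:facotrize.plc} and $\mfb^{m-1}\mfa$ by the analogous descending argument based on Axiom~\PXd, works equally well; the $m=2$ case of either route is precisely \eqref{eq:plc.aa+ac}.)
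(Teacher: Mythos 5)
Your proof is correct and is essentially the paper's own argument in mirror image: both induct on $m$, split the decreasing cross term using the decomposition results (your $\mfb^{m-1}\mfa=\mfb^{m-1}+\mfa$), and telescope the nondecreasing cross term with a mixed axiom plus the absorption \eqref{eq:plc.power} --- you expand on the right and descend the $\mfa$-exponent in $\mfa^{m-1}\mfb$ via Axiom \PXc, whereas the paper expands on the left and descends the $\mfb$-exponent in $\mfa\mfb^{m-1}$ via Axiom \PXd\ (Remark \ref{rmk:plac.1}(v)). One quibble with your closing aside only: the ``symmetric variant'' as literally stated does not work, because $\mfa^{m-1}\mfb$ is itself nondecreasing and Corollary \ref{cor:facotrize.plc} does not split it --- the workable symmetric route is exactly the paper's left-hand expansion with cross terms $\mfa\mfb^{m-1}$ and $\mfb\mfa^{m-1}$; this does not affect your main proof.
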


\begin{proof} Proof by induction on $m$. The case of  $m =2$ is given by Remark \ref{rmk:plac.1}.(iv):
\begin{align*}  (\mfa + \mfb)^2 & = \mfa^2 + \mfa\mfb +  \mfb \mfa + \mfb^2 \\ &  = \mfa^2 + \mfa + \mfb +  \mfa \mfb  + \mfb^2  = \mfa^2 + \mfb^2.\end{align*}
Assuming the implication holds for $m- 1$, write
\begin{align*} (\mfa + \mfb)^m  & = (\mfa + \mfb)(\mfa + \mfb)^{m-1}  = (\mfa + \mfb)(\mfa^{m-1} + \mfb^{m-1}) \\ & = \mfa^m  + \mfa \mfb^{m-1} + \mfb \mfa^{m-1} + \mfb^{m} \\& =  \mfa^m  + \mfa \mfb^{m-1} + \mfb + \mfa^{1} + \mfa^{2} + \cdots +  \mfa^{m-1} + \mfb^{m}  & \text{[by Theorem \ref{thm:plc.inc.terms}]}\\ & =
 \mfa^m  + \mfa \mfb^{m-1} + \mfb^{m}  \\ & =
 \mfa^m  + (\mfa \mfb + \mfb^2 )\mfb^{m-2} & \text{[by Axiom \PXc]}
 \\ &  = \mfa^m  + (\mfa + \mfb^2)\mfb^{m-2}
 =\mfa^m  + \mfa \mfb^{m-2}  +  \mfb^{m}
 \\& = \mfa^m  + \mfa\mfb^{m-2}  + \mfb^{m-1} + \mfb^{m} & \text{[by \eqref{eq:plc.power}]}
  \\ &  = \mfa^m  + (\mfa\mfb+ \mfb^{2})\mfb^{m-3} + \mfb^{m}
  \\ &  = \mfa^m  + (\mfa + \mfb^{2})\mfb^{m-3} + \mfb^{m}
  \\& \qquad \vdots   \\
   & = \mfa^m  + (\mfa\mfb+ \mfb^{2}) + \mfb^{m} \\ & = \mfa^m  + \mfa+ \mfb^{2} + \mfb^{m} =
   \mfa^m  +  \mfb^{m}
  \end{align*}
  as desired.
\end{proof}

As an immediate consequence of this proof we obtain the following.
\begin{corollary} Suppose  $\mfa <   \mfb$ are generators of   $\APlc_I$, then
   \begin{equation}\label{eq:bmam}
    \big(\mfa + \mfb\big)^m = \mfa^m + \mfb^m  = \mfb^m\mfa^m = (\mfb \mfa)^m
\end{equation}
for any  $ m \in \N.$
\end{corollary}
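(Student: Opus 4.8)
The plan is to assemble the three asserted equalities from results already in hand rather than to compute afresh. The leftmost equality $(\mfa+\mfb)^m=\mfa^m+\mfb^m$ is precisely the content of Lemma~\ref{lemma:Frob.2}, so there is nothing to do for it. For the rightmost equality, I would first invoke Axiom~\PXb: since $\mfa<\mfb$ we have $\mfb\mfa=\mfa+\mfb$, hence $(\mfb\mfa)^m=(\mfa+\mfb)^m$, and applying Lemma~\ref{lemma:Frob.2} once more turns this into $\mfa^m+\mfb^m$. For the middle equality, note that $\mfb^m\mfa^m$ is a product of powers of generators arranged in strictly decreasing order ($\mfb>\mfa$), so Corollary~\ref{cor:facotrize.plc.2} applies with $m=2$ strata and both exponents equal to $m$, giving $\mfb^m\mfa^m=\mfb^m+\mfa^m$; commutativity of addition in the idempotent semiring $\APlc_I$ then rewrites this as $\mfa^m+\mfb^m$.

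So the order of steps is: (1) cite Lemma~\ref{lemma:Frob.2} for $(\mfa+\mfb)^m=\mfa^m+\mfb^m$; (2) use Axiom~\PXb and Lemma~\ref{lemma:Frob.2} for $(\mfb\mfa)^m=(\mfa+\mfb)^m=\mfa^m+\mfb^m$; (3) use Corollary~\ref{cor:facotrize.plc.2} and commutativity of $+$ for $\mfb^m\mfa^m=\mfa^m+\mfb^m$. Honestly there is no genuine obstacle here — it is pure bookkeeping, which is why the statement is labelled an immediate consequence; the only point that needs a moment's attention is that Corollary~\ref{cor:facotrize.plc.2} requires the inequality between the generators to be \emph{strict}, which is exactly the hypothesis $\mfa<\mfb$. (If one preferred, the identities $\mfa^m+\mfb^m=\mfb^m\mfa^m=(\mfb\mfa)^m$ can also be read off directly from the penultimate lines of the proof of Lemma~\ref{lemma:Frob.2}, but citing Corollary~\ref{cor:facotrize.plc.2} and Axiom~\PXb is cleaner.)
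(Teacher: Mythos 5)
Your proposal is correct and matches the paper's intent: the paper derives this corollary as an immediate consequence of the proof of Lemma~\ref{lemma:Frob.2}, and your assembly — Lemma~\ref{lemma:Frob.2} for $(\mfa+\mfb)^m=\mfa^m+\mfb^m$, Axiom \PXb\ giving $\mfb\mfa=\mfa+\mfb$ and hence $(\mfb\mfa)^m=(\mfa+\mfb)^m$, and Corollary~\ref{cor:facotrize.plc.2} giving $\mfb^m\mfa^m=\mfb^m+\mfa^m$ — is exactly the bookkeeping the paper leaves implicit. Your remark that the strict inequality $\mfa<\mfb$ is what licenses both \PXb\ and Corollary~\ref{cor:facotrize.plc.2} is the right point to flag, and nothing further is needed.
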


The troplactic algebra
$\APlc_I$  in general does not admit the Frobenius property \eqref{eq:genr.Frob.2}, i.e., for elements that are not generators; for example
  $$ \mfu = (\mfa \mfb + \mfc)^2 = (\mfa \mfb)^2 + \mfa \mfb \mfc + \mfc^2 \ds \neq (\mfa \mfb)^2 + \mfc^2 = \mfv, $$
since $\mfa \mfb  \mfc$ appear in $\mfu$ but not in $\mfv.$

\begin{remark}
  Not all the elements of $\APlc_I$ can be rewritten as products, i.e., $\APlc_I$  has  elements that are not multiplicatively generated.
  For example, the element $ad + bc $  is not provided as any word $\cset{u}$ in $\APlc_I$. Indeed, otherwise the terms  $ad$ and $bc$ would be  subwords of $\cset{u}$,  which does not contain other increasing subwords of length 2, but this impossible.
\end{remark}

\subsection{Dual tropical plactic algebra}\label{ssec:du.troplactic.alg}\sSkip
 We turn to a dual version of a troplactic algebra $\APlc_I$ (cf. Definition~ \ref{defn:trop.plc.alg}), to be employed later, carrying analogous properties.
\begin{definition}\label{defn:d.trop.plc.alg}
A \textbf{dual  troplactic algebra}, denoted $\dAPlc_I$,  is a (noncommutative) idempotent semiring $(\dAPlc_I, \d+, \cdot \,)$  with  (multiplicative) identity element $\mfe$ and zero $\mfo$,  generated by an ordered set of elements 
  $\{ \mfa_i \ds | i \in I \}$ subject to the  axioms (for every $\mfa \leq  \mfb \leq \mfc$ in $\{ \mfa_i \ds | i \in I \}$):
\begin{description} \eroman \dispace
 \item[\dPXa] $ \mfa  = \mfe  \d+ \mfa$;
  \item[\dPXb] $ \mfa \mfb = \mfa \d+ \mfb$  when  $\mfb >  \mfa $;
 \item[\dPXc] $ (\mfb  \d+  \mfc) \mfa = \mfb \mfa  \d+ \mfc;$
  \item[\dPXd] $\mfc(\mfb  \d+  \mfa)  = \mfa \d+ \mfc \mfb .$
\end{description}
For indexing matter, we denote $\mfe$ also by $\mfa_0$.
\end{definition}
\noindent
(We use  the special notation ``$\d+$'' for  the addition in $\dAPlc_I$ to distinguish it from the addition of $\APlc_I$.) \pSkip

In analogy to Remark \ref{rmk:plac.1}, by changing nondecreasing  sequences by nonincreasing sequences,  we now have following properties.
\begin{remark}\label{rmk:d.plac.1} Let $\mfa \leq \mfb \leq \mfc $  be elements in the generating set $\{ \mfa_i \ds | i \in I \}$ of $\dAPlc_I$.
\begin{enumerate}\eroman \dispace
\item Axiom \dPXa\ is  derived directly from Axiom \dPXb\ by declaring that $\mfe < \mfa_i$ for all $i$.
  \item   The multiplicative identity  $\mfe$ is also an additive  identity (by Axiom \dPXa)
  $$ \mfa \mfe = \mfe \mfa = \mfa = \mfa \d+ \mfe = \mfe \d+ \mfa.$$

  \item  Since  $\mfa\mfb = \mfa(\mfb \d+ \mfe) = \mfa\mfb \d+ \mfa$, and  $\mfa\mfb = (\mfa \d+ \mfe)\mfb = \mfa \mfb \d+ \mfb$,
      by Axiom \dPXa, the idempotence of $\dAPlc_I$ implies that $$\mfa\mfb \d+ \mfa \d+ \mfb = \mfa\mfb \d+ \mfa  = \mfa\mfb  \d+ \mfb = \mfa\mfb.$$ This gives
          \begin{equation}\label{eq:plc.power}
       \mfa^m \d+ \mfa^{m-1} \d+ \cdots \d+ \mfa = \mfa^m,
  \end{equation}
while using Axiom \dPXb \ inductively for an arbitrary product
        $\mfa_{\ell_1} \cdots   \mfa_{\ell_m} $ we get
     \begin{align*}
     \sum^{\wedge}_{\decS \sqsubseteq L_m } \prod_{s \in \decS}\mfa_{s}  = \mfa_{\ell_1}  \cdots    \mfa_{\ell_m}, & \qquad  L_m = \squ{\ell_1, \dots, \ell_m}.
     \end{align*}

\item When $\mfa = \mfb,$ Axiom \dPXc\ reads  $\mfa \mfa \d+ \mfc \mfa  = \mfa \mfa \d+ \mfc,$ and thus
by (iii) we have
\begin{equation}\label{eq:plc.aa+ac}\mfa^2 \d+ \mfc \mfa  \d+ \mfc^2 = \mfa ^2 \d+ \mfc \d+ \mfc^2 = \mfa ^2 \d+  \mfc^2, \end{equation}
and thus $(\mfa  \d+  \mfc)^2 = \mfa ^2 \d+  \mfc^2$.

\item When $\mfb = \mfc,$ Axiom \dPXd\ reads  $\mfb \mfb \d+ \mfb \mfa  = \mfa \d+ \mfb \mfb .$

\item
 If $\mfb \mfa  = \mfe $, then, by Axioms \dPXa, \dPXb, and part (iii), we have
  $$\mfa \mfb  = \mfa \d+ \mfb = \mfa \d+ \mfe  \d+ \mfb \d+ \mfe =  \mfa \d+ \mfb \mfa  \d+ \mfb \d+ \mfb \mfa = \mfb \mfa \d+ \mfb \mfa = \mfe \d + \mfe = \mfe,$$
  and thus also  $\mfa \mfb = \mfe$. This implies that all the elements of $\APlc_I$ are of the form $\mfa_\ell^k$, with $k \in \N.$



\end{enumerate}
\end{remark}

\begin{theorem}\label{thm:d.plc.Alg.1} The dual troplactic algebra  $\dAPlc_I$ admits the Knuth relations \eqref{eq:knuth.rel}
\begin{equation*}\label{eq:d.plac.Alg.2}
  \begin{array}{ccccccc}
   \text{\KXa}:&& \mfa \; \mfc \; \mfb = \mfc \; \mfa \; \mfb & \text{if} &  \mfa \leq \mfb < \mfc , & \qquad & \\
   \text{\KXb}:&& \mfb \;\mfa \; \mfc = \mfb \; \mfc \; \mfa & \text{if} & \mfa <  \mfb \leq \mfc ,   \end{array}
\end{equation*}
where  $\mfa, \; \mfb, \; \mfc \in \{ \mfa_i \ds| i \in I \}$.
\end{theorem}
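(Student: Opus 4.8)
The statement is the mirror image of Theorem~\ref{thm:plc.Alg.1}, and my plan is to prove it by mirroring that proof. The clean way to organise this is to first record the formal observation that the reversal map $\rev$ of Definition~\ref{def:wordreversing}, extended additively by $\rvs{(\mfu\d+\mfv)}:=\rvs{\mfu}\d+\rvs{\mfv}$, is a semiring anti-homomorphism which carries the defining laws of a troplactic algebra precisely onto those of a dual one: \PXa\ onto \dPXa, \PXb\ onto \dPXb, and --- since $\rvs{(\mfu\mfv)}=\rvs{\mfv}\,\rvs{\mfu}$ --- the left-sided law \PXc\ onto the right-sided law \dPXc\ and \PXd\ onto \dPXd, while idempotence is preserved. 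Consequently any identity valid in every troplactic algebra translates, under $\rev$, into one valid in every dual troplactic algebra. Since $\rvs{(\mfa\mfc\mfb)}=\mfb\mfc\mfa$ and $\rvs{(\mfc\mfa\mfb)}=\mfb\mfa\mfc$, reversal interchanges \KXa\ and \KXb; hence applying Theorem~\ref{thm:plc.Alg.1} and reversing already yields that $\dAPlc_I$ admits both Knuth relations.

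For a self-contained direct argument I would instead transcribe the computation of Theorem~\ref{thm:plc.Alg.1} with $+$ replaced by $\d+$, ``nondecreasing'' by ``nonincreasing'', \PXa--\PXd\ by \dPXa--\dPXd, and Remark~\ref{rmk:plac.1} by Remark~\ref{rmk:d.plac.1}. For \KXa\ with $\mfa<\mfb<\mfc$, using \dPXb\ (so $\mfa\mfc=\mfa\d+\mfc$ and $\mfa\mfb=\mfa\d+\mfb$), right distributivity, and the absorption $\mfc\mfb\d+\mfb=\mfc\mfb$ (which follows from $\mfc=\mfc\d+\mfe$), one gets
\[
\mfa\mfc\mfb=(\mfa\d+\mfc)\mfb=\mfa\mfb\d+\mfc\mfb=\mfa\d+\mfb\d+\mfc\mfb=\mfa\d+\mfc\mfb,
\]
while \dPXb\ and then \dPXd\ (whose hypothesis $\mfa\le\mfb\le\mfc$ holds here) give $\mfc\mfa\mfb=\mfc(\mfa\d+\mfb)=\mfa\d+\mfc\mfb$, so $\mfa\mfc\mfb=\mfc\mfa\mfb$. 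Relation \KXb\ with $\mfa<\mfb<\mfc$ is symmetric with \dPXc\ and \dPXd\ exchanging roles: $\mfb\mfc\mfa=(\mfb\d+\mfc)\mfa=\mfb\mfa\d+\mfc$ by \dPXc, and $\mfb\mfa\mfc=\mfb(\mfa\d+\mfc)=\mfb\mfa\d+\mfb\mfc=\mfb\mfa\d+\mfb\d+\mfc=\mfb\mfa\d+\mfc$, the last step absorbing $\mfb$ into $\mfb\mfa$ via $\mfa=\mfa\d+\mfe$.

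The one place I expect to require genuine care is the two cases in which generators coincide --- $\mfa=\mfb<\mfc$ in \KXa\ and $\mfa<\mfb=\mfc$ in \KXb\ --- precisely as, in the proof of Theorem~\ref{thm:plc.Alg.1}, it is the degenerate step that must be pushed through \PXc/\PXd\ rather than \PXb\ alone. There the pertinent length-two factor is a square, so \dPXb\ no longer applies: on one side \dPXc\ still decomposes, e.g.\ $\mfa\mfc\mfa=(\mfa\d+\mfc)\mfa=\mfa^2\d+\mfc$, and one must reduce the other side ($\mfc\mfa^2$, resp.\ $\mfb^2\mfa$) to the same expression using \dPXd\ together with the idempotent reductions $\mfa\d+\mfe=\mfa$, $\mfa^m\d+\mfa^{m-1}\d+\cdots\d+\mfa=\mfa^m$, and $\mfa\mfb\d+\mfa=\mfa\mfb\d+\mfb=\mfa\mfb$ from Remark~\ref{rmk:d.plac.1}. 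Once this bookkeeping is discharged, $\dAPlc_I$ admits the Knuth relations \eqref{eq:knuth.rel} and hence inherits the congruence $\kcong$; equivalently, the whole statement drops out at once from Theorem~\ref{thm:plc.Alg.1} via the reversal identification of the first paragraph.
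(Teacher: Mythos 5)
For the strict case $\mfa<\mfb<\mfc$ your direct transcription is essentially the paper's own proof: both sides of \KXa\ are reduced to $\mfa\d+\mfc\mfb$ (via \dPXb, distributivity and \dPXd) and both sides of \KXb\ to $\mfb\mfa\d+\mfc$ (via \dPXb\ and \dPXc), exactly as in the paper. The genuine problem lies in the two boundary cases, and neither of your routes closes them. Your opposite-algebra observation is sound as far as it goes --- reversing the multiplication of $\dAPlc_I$ does produce an algebra satisfying \PXa--\PXd\ with the same generator order --- but it does not ``yield both Knuth relations at once'': reversal exchanges the two Knuth \emph{equations} while keeping their original \emph{side conditions}, so from Theorem~\ref{thm:plc.Alg.1} you only obtain $\mfb\mfa\mfc=\mfb\mfc\mfa$ for $\mfa\le\mfb<\mfc$ and $\mfa\mfc\mfb=\mfc\mfa\mfb$ for $\mfa<\mfb\le\mfc$. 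The degenerate instances actually required by \eqref{eq:knuth.rel}, namely $\mfa\mfc\mfa=\mfc\mfa^2$ and $\mfc\mfa\mfc=\mfc^2\mfa$ (for $\mfa<\mfc$), are not among them; what the transfer delivers instead are $\mfa\mfc\mfa=\mfa^2\mfc$ and $\mfc\mfa\mfc=\mfa\mfc^2$, which are different statements (word reversal is not a plactic anti-automorphism unless the alphabet order is reversed as well --- in the plactic monoid $aca\kcong caa$ but $aca\not\kcong aac$). So your closing claim that the whole statement ``drops out at once'' from the reversal identification fails precisely at the cases you yourself flag as delicate.

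Nor is the postponed ``bookkeeping'' for those cases routine. One side does decompose as you say, $\mfa\mfc\mfa=(\mfa\d+\mfc)\mfa=\mfa^2\d+\mfc$ by \dPXb\ and \dPXc\ (and dually $\mfb\mfa\mfb=\mfa\d+\mfb^2$ by \dPXd), so the degenerate relations are \emph{equivalent} to the identities $\mfc\mfa^2=\mfa^2\d+\mfc$ and $\mfb^2\mfa=\mfa\d+\mfb^2$. But the tools you cite do not reach these: the relevant instances of \dPXd\ on two generators give only $\mfc\mfa=\mfa\d+\mfc\mfa$ and $\mfc\mfa\mfc=\mfa\d+\mfc^2$, and the idempotent absorptions give only $\mfc\mfa^2=\mfc\mfa^2\d+\mfa^2\d+\mfc$, i.e.\ absorption in the wrong direction; no axiom of Definition~\ref{defn:d.trop.plc.alg} rewrites a nonincreasing product such as $\mfc\mfa^2$ or $\mfb^2\mfa$ as a sum of shorter terms. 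So this is a missing step, not bookkeeping, and it is the one point where a proof would need an idea beyond mirroring Theorem~\ref{thm:plc.Alg.1}. (For what it is worth, the chains printed in the paper's proof also invoke \dPXb\ for the pair $(\mfa,\mfb)$, resp.\ $(\mfb,\mfc)$, and hence as written likewise cover only the strict range; but your proposal, by resting its full claim on the reversal shortcut and on an unexecuted reduction, leaves the degenerate cases genuinely unproved.)
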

\begin{proof} We fellow similar arguments as in the proof of Theorem \ref{thm:plc.Alg.1}.
\begin{enumerate} \dispace
  \item[\KXa:]

  By axiom \dPXd  $$\mfa \d+ \mfc \mfb  = \mfc \mfb  \d+ \mfc \mfa  = \mfc (\mfb  \d+ \mfa)  =  \mfc \mfa \mfb.$$   On the other hand  Axiom \dPXa \  and Axiom \dPXb  \ imply
  $$\mfa \d+ \mfc \mfb  = \mfa \d+ \mfb \d+  \mfc \mfb  = \mfa \mfb \d+  \mfc \mfb  = ( \mfa \d+  \mfc)\mfb = \mfa \mfc \mfb,$$ yielding $ \mfc \mfa \mfb  =  \mfa \mfc  \mfb$.

  \item[\KXb:]
  By axiom  \dPXb:  $$ \mfb \mfa \d+   \mfc \mfa= (\mfb \d+ \mfc)\mfa =  \mfb \mfc  \mfa,$$  while Axiom \dPXa \  gives
  $$ \mfb \mfa  \d+  \mfc =  \mfb  \mfa \d+ \mfb \d+ \mfc = \mfb  \mfa  \d+ \mfb \mfc  = \mfb (\mfa  \d+ \mfc)  =
   \mfb \mfa \mfc .$$ Composing both by using Axiom \dPXc, we obtain
  $$\mfb \mfc \mfa =   \mfb \mfa  \d+  \mfc \mfa  = \mfb \mfa  \d+  \mfc =  \mfb \mfa  \mfc$$
  as required.
\end{enumerate}
Thus, $\dAPlc_n$ admits the Knuth relations \eqref{eq:knuth.rel}, and therefore  the congruence $\kcong$.
\end{proof}

With similar proofs, adapted to Definition \ref{defn:d.trop.plc.alg}, the results of \S\ref{ssec:troplactic.alg} also hold for  dual troplactic algebras, replacing nondecreasing sequences by nonincreasing sequences.

\section{Tropical matrices}\label{sec:trop.matrices}

The tropical  \textbf{max-plus semiring} $(\Trop, \+, \cdot \,)$ is the set  $\Trop := \Real\cup\{-\infty\}$ endowed with the operations of maximum
and summation (written in the standard algebraic way),
$$a \+  b :=\max\{a,b\}, \qquad a b :=a \underset{\operatorname{sum}}{+}b,$$
serving respectively as the semiring addition and multiplication
\cite{pin98}.  (We save the sign $``+"$  for the standard summation, and use ``$\+$'' for addition.)     This semiring is additively idempotent, i.e., $a \+ a = a$ for every $a\in \Trop$, in which
$\zero := -\infty$ is the zero element and $\one := 0$ is the
multiplicative  unit. (Equivalently one can set $\Trop := \N \cup\{-\infty\}$,  $\Trop := \Z \cup\{-\infty\}$, or  $\Trop := \Q \cup\{-\infty\}$).

Dually, the \textbf{min-plus semiring} $(\Trop_\-, \-, \cdot \,)$ is the set  $\Trop_\- := \Real\cup\{\infty\}$ equipped with the operations
$$ a \-  b :=\min\{a,b\}, \qquad a b :=a \underset{\operatorname{sum}}{+}b,  $$
addition and multiplication respectively.
The addition $\-$ over $\Real$  can be written  in terms of $\+$ as
 $ \displaystyle{ a \- b  = \frac{ab}{a \+b },}$
or as
 \begin{equation}\label{eq:minimum.trop.2}
 a \- b  = (a^{-1} \+ a^{-1})^{-1} = -((-a) \+ (-b)),
\end{equation}
where $-a$ stands the standard negation of $a \in \Real$ -- the tropical multiplicative inverse of $a$. (When $a = -\infty$ in \eqref{eq:minimum.trop.2} we implicitly take $-a = \infty$.)
We write $\Trop$ for the max-plus semiring $(\Trop, \+, \cdot \,)$  and $\Trop_\-$ for the min-plus semiring $(\Trop, \-, \cdot \,)$.

The \textbf{boolean semiring} is the two element
idempotent semiring $\B := (\{ 0,1\}, \+ , \cdot \,),$ whose addition and
multiplication are given for $a,b \in \{ 0,1 \}$ by:
$$ a \+ b = \left\{ \begin{array}{ll}
                    1 & \text{ unless } a = b = 0, \\
                    0 & \text{ otherwise}, \end{array} \right.
                     \qquad
                      a b = \left\{ \begin{array}{ll}  0 & \text{ unless } a =b=1,\\
                      1 & \text{ otherwise}. \end{array} \right.$$
$\Bool$ embeds naturally in $\T$ via $0 \mTo \zero$ and $1 \mTo \one$, while the projection
\begin{equation}\label{eq:trop.to.bool}
\pi : \T \ONTO \B, \qquad a \longmapsto \left\{ \begin{array}{lll}
                                                1 &&  a \neq \zero,   \\
                                                0 && a = \zero,
                                              \end{array}
\right.
\end{equation}
is a semiring homomorphism (Definition \ref{def:smr.hom}).

\subsection{Tropical matrices}\label{sec:mathGr} \sSkip
Tropical matrices are matrices with entries in $\Trop$, whose multiplication is
induced from the operations of $\Trop$ as in the familiar matrix construction. The set of all $\nxn$ tropical matrices forms a noncommutative semiring, denoted by  $\MatnT$. The \bfem{unit} $I$ of
 $\MatnT$ is the matrix
with $\one := 0$ on the main diagonal and whose off-diagonal
entries are all $\zero := \minf$. The zero matrix is denoted by $(\zero)$, the absorbing element of $\MatnT$.
Formally,  for any nonzero matrix $A \in \MatnT$ we set $A^{0} :=
I$. A  matrix $A \in  \MatnT$ with entries $a_{i,j}$, $i,j = 1,\dots,n$, is written
as $A = (a_{i,j})$.
For a pair of  matrices $A,B \in \Mat_n(\T)$, we define
\begin{equation}\label{eq:mat.order}
A \geq   B \quad \text{ if $a_{i,j} \geq  b_{i,j}$ for all $i,j$},
\end{equation}
and $A > B$ if $A \geq B$ with  $a_{i,j} >  b_{i,j}$ for some $i,j$. Equivalently, we have
$$ A \geq B \dss{\text{iff}} A + B = B.$$
Therefore, in general, $\MatnT$ is a partially ordered semiring with $(\zero) < A$ for all $A$.
We denote the monoid of all upper tropical triangular matrices by $\TMat_n(\Trop)$.
\begin{remark}\label{rmk:dual.mat}
Given a matrix  $A = (a_{i,j})$ in $ \MatnT$ we write $-A$ for the matrix
\begin{equation}\label{eq:math.neq}
-A := (\inv{a}_{i,j}) = (-a_{i,j}).
\end{equation}
Then, by
\eqref{eq:minimum.trop.2}, the entry-wise operation gives
$$ A \- B = -(-A \+ -B),$$ i.e.,
the entry-wise minimum of the matrices $A$ and $B$.
\end{remark}

The \textbf{structure  map} is the onto monoid homomorphism
\begin{equation}\label{eq:struc.map}
\tlpi: \MatnT \ONTO \MatnB
\end{equation}
sending  a tropical matrix $A \in  \MatnT$ to the boolean matrix $\tlpi(A) \in \MatnB$ defined as
$\tlpi(A) = (\pi(a_{i,j}))$,  by the entry-wise mapping \eqref{eq:trop.to.bool}.   (It is a ``value forgetful'' homomorphism that maps nonzero entries to ~$1$ and zero-entries to~ $0$.) The matrix $\tlpi(A)$ is called the \textbf{structure matrix} of $A$.

Tropical matrix algebra was systematically  studied in its generalized context of supertropical algebra \cite{IR1}, providing many tropical analogues  to classical results \cite{zur05TropicalAlgebra,IR2,IR3,IR4}, while subgroup and semigroups structures of these matrices were discussed in  \cite{merl10,Simon2}.
As a matrix  monoid, in comparison to matrices over a field, important  submonmoids of $\MatnT$ have a very special behavior,
as have been dealt  in \cite{trID,IDMax,IzhakianMargolisIdentity}, yielding the following results.

\begin{theorem}[{\cite[{Theorem 4.11}]{trID}}]\label{thm:trMat.Id}The submonoid $\TMatnT \subset \MatnT$ of all upper (or dually lower)
triangular tropical  matrices satisfies the semigroup identities defined in Construction \ref{construct:main}:
  \begin{equation}\label{eq:iduniv2.2}
\Id_{(C,n-1,n-1)}: \quad   \tlw_{(C,n-1,n-1)} \ds {x}
\tlw_{(C,n-1,n-1)} \ds =  \tlw_{(C,n-1,n-1)} \ds
 {y}
\tlw_{(C,n-1,n-1)},
\end{equation}
for $C = \{ x,y\}$, with $x = AB$ and $y = BA$.
\end{theorem}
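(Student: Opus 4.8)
The plan is to pass to the combinatorial model of tropical triangular matrix multiplication and reduce the identity to a statement about maximal monotone lattice paths. Recall that for upper triangular $n\times n$ matrices $M_1,\dots,M_k$ over $\Trop$ one has
$$(M_1\cdots M_k)_{i,j}=\max\Big\{\,\textstyle\sum_{t=1}^{k}(M_t)_{c_{t-1},c_t}\ :\ i=c_0\le c_1\le\cdots\le c_k=j\,\Big\},$$
where the tropical product is the ordinary sum; I will call $\gm=(c_0,\dots,c_k)$ a monotone path, and its decisive feature is that it is nondecreasing, hence it \emph{ascends} (i.e. has $c_{t-1}<c_t$) on at most $j-i\le n-1$ of its steps. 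First I would observe that it suffices to treat the off-diagonal entries $(i,j)$ with $i<j$: the matrices $X:=AB$ and $Y:=BA$ are again upper triangular, and by triangularity $X_{r,r}=A_{r,r}B_{r,r}=B_{r,r}A_{r,r}=Y_{r,r}$ for every $r$, so any product of a fixed number of copies of $X$ and $Y$ has the same diagonal — in particular the two sides of $\Id_{(C,n-1,n-1)}$ agree on the diagonal automatically. This is precisely why the identity is stated with the substitution $x=AB$, $y=BA$.

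Next, writing $\tlw:=\tlw_{(C,n-1,n-1)}$ and carrying out the substitution $x\mto AB$, $y\mto BA$, both sides of $\Id_{(C,n-1,n-1)}$ become products of the \emph{same} sequence of matrices from $\{A,B\}$, say $\widehat\tlw\, A\, B\, \widehat\tlw$ and $\widehat\tlw\, B\, A\, \widehat\tlw$, where $\widehat\tlw$ is the word over $\{A,B\}$ obtained from $\tlw$ by replacing each $x$ by $AB$ and each $y$ by $BA$; the two words differ only in the two central letters. The heart of the argument is a normalization lemma: any monotone path realizing the $(i,j)$-entry of $\widehat\tlw\, A\, B\, \widehat\tlw$ can be traded, with no change of weight, for one that performs \emph{no} ascent on either central letter — that is, it traverses both central letters on one and the same level $r$. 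Granting this, the contribution of the two central letters to that path is $A_{r,r}B_{r,r}=B_{r,r}A_{r,r}$, so the very same path is an equally heavy monotone path for $\widehat\tlw\, B\, A\, \widehat\tlw$; hence $(\widehat\tlw\, A\, B\, \widehat\tlw)_{i,j}\le(\widehat\tlw\, B\, A\, \widehat\tlw)_{i,j}$, and the reverse inequality follows by the same argument with the roles of $A$ and $B$ swapped. Since $i,j$ were arbitrary, the two sides coincide, and the lower triangular case follows by transposition.

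It remains to establish the normalization lemma, and that is where I expect the real work — and the main obstacle — to lie. The governing principle is that a monotone path has at most $n-1$ ascending steps in all, and that the defining property of the power word $\tlw=\tlw_{(C,n-1,n-1)}$ (Definition~\ref{def:npw}) is that every word of length at most $n-1$ over $\{x,y\}$ occurs in it as a factor; thus the two long copies of $\widehat\tlw$ flanking the centre are ``rich enough'' to reproduce any short ascending manoeuvre. Concretely, if the chosen optimal path ascends at the centre, from a level $r$ to a level $q\le r+(n-1)$, I would push that ascent back into the terminal stretch of the left copy of $\widehat\tlw$ (or symmetrically into the initial stretch of the right copy), replacing plateau steps there by an ascending factor supplied by the richness of $\tlw$ and re-plateauing at the higher level $q$ through the centre; since the total number of ascents does not grow and the plateau contributions do not depend on which of $A,B$ sits there, this exchange should be weight-neutral. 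Making it rigorous — tracking the weight bookkeeping of the displaced plateau steps and verifying that the power word really does place the needed ascending factor where it is required — is exactly the content of \cite[Theorem~4.11]{trID}, which I would invoke for the details.
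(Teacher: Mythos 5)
A preliminary structural remark: the paper does not prove Theorem \ref{thm:trMat.Id} at all --- it is quoted verbatim from \cite[Theorem 4.11]{trID} and used as an imported black box. Your proposal, after its setup, ends by handing the decisive step to the very same citation; but \cite[Theorem 4.11]{trID} is not an auxiliary normalization lemma, it \emph{is} the statement to be proved, so as an independent argument your proposal is circular, and if citing \cite{trID} is acceptable then the preceding sketch is superfluous.

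Judged as a self-contained argument, the gap sits exactly where you say ``the real work'' lies, and the heuristic you offer for it does not hold as stated. The setup is fine: the monotone-path formula for products in $\TMatnT$, the identity $(AB)_{r,r}=A_{r,r}B_{r,r}=(BA)_{r,r}$ forcing equal diagonals, and the bound of at most $n-1$ ascents for an $(i,j)$ entry. But the claimed ``weight-neutral exchange'' is not weight-neutral: if you displace an ascent from the central letter to an occurrence of that letter inside the flanking copy of the substituted word, every step between that occurrence and the centre is now traversed at level $q$ instead of $r$, so the weight changes by the difference of the plateau contributions (sums of the entries $A_{q,q},B_{q,q}$ versus $A_{r,r},B_{r,r}$ over the displaced stretch), which in general is nonzero; controlling the number of ascents says nothing about weights, so a fixed optimal path cannot simply be rerouted. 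Moreover the normalization cannot follow from any soft rearrangement principle, since with trivial flanks the centre certainly cannot be commuted ($AB\neq BA$ in $\TMatnT$): any correct proof must use property (b) of Definition \ref{def:npw} quantitatively --- every ascent pattern of length at most $n-1$ occurs as a factor of $\tlw_{(C,n-1,n-1)}$ --- to produce, for each optimal path of one side, a path of at least the same weight for the other side, i.e.\ an argument comparing the two maxima rather than preserving the weight of a single path. That bookkeeping is precisely the content of \cite[Theorem 4.11]{trID} and is missing from the proposal.
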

\begin{theorem}[{\cite[Theorem 4.14]{IDMax}}]\label{thm:simDia} Any nonsingular subsemigroup $\tM_n \subset \MatnT$ satisfies the identities ~ \eqref{eq:iduniv2.2} by letting   $x =  \An \Bn$ and $y =  \Bn \An$.
\end{theorem}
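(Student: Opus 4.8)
The plan is to derive the statement from the identity already established for triangular matrices (Theorem~\ref{thm:trMat.Id}) by controlling the internal structure of the powers $\An$ and $\Bn$. I would begin with the basic fact that a matrix $M\in\MatnT$ is nonsingular exactly when its tropical permanent $\per(M)$ is attained by a \emph{unique} permutation $\sig_M\in\Sn$, and that on a nonsingular subsemigroup this assignment is multiplicative, $\sig_{MN}=\sig_M\sig_N$ whenever $M,N$ and $MN$ are nonsingular (cf.~\cite{IDMax}); so $M\mapsto\sig_M$ is a semigroup homomorphism $\tM_n\to\Sn$. Since the order of every element of $\Sn$ divides $n!$, for each $A\in\tM_n$ we get $\sig_{\An}=\sig_A^{\,n!}=\idn$, and likewise $\sig_{\Bn}=\idn$. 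Because $\tM_n$ is closed under multiplication, $x:=\An\Bn$ and $y:=\Bn\An$ again lie in $\tM_n$, are nonsingular, and satisfy $\sig_x=\sig_y=\idn$.

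Next I would invoke the normal form of a nonsingular matrix $N$ with trivial permutation part: there are an invertible diagonal matrix $D_N$ and a central scalar $\lambda_N\in\Trop$ for which $\lambda_N^{-1}\,D_N^{-1}ND_N$ is \emph{triangular-dominant}, i.e.\ has tropical units on the diagonal and all remaining entries not exceeding $\one$. Such matrices have the key property that their powers form a nondecreasing chain that stabilizes after at most $n-1$ steps to a tropical idempotent (the Kleene-type stabilization exploited in~\cite{trID,IzhakianMargolisIdentity}). Thus, up to central scalars, the submonoid of $\MatnT$ generated by $x$ and $y$ behaves, as far as balanced identities are concerned, like a submonoid of $\TMatnT$, where Theorem~\ref{thm:trMat.Id} applies.

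With this in hand the conclusion follows the triangular case. By Construction~\ref{construct:main}, $\tlw:=\tlw_{(C,n-1,n-1)}$ and also $\tlw\,x\,\tlw$ and $\tlw\,y\,\tlw$ are $(n-1)$-power words over $C=\{x,y\}$, so any of the dense length-$(n-1)$ blocks occurring in $\tlw$ already drives the relevant triangular-dominant matrices into their stable idempotent part. The identity~\eqref{eq:iduniv2.2} with $x=\An\Bn$, $y=\Bn\An$ is balanced, so the central scalars cancel; and $\An\Bn$ and $\Bn\An$ share the same tropical characteristic polynomial, hence the same diagonal after triangularization. Consequently $\tlw\,x\,\tlw$ and $\tlw\,y\,\tlw$ collapse to one and the same matrix, exactly as in the proof of Theorem~\ref{thm:trMat.Id}, giving $\tlw\,x\,\tlw=\tlw\,y\,\tlw$ in $\tM_n$.

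The main obstacle is that the conjugations $D_N$ and scalars $\lambda_N$ depend on $N$, so one cannot triangularize the whole of $\tM_n$ at once; the real work is to show that the dense sandwiching by $\tlw$, combined with the passage to $n!$-th powers (which is precisely what trivialises the permutation parts), performs the required stabilization uniformly \emph{within} $\MatnT$, with no global change of basis. Making this rigorous calls for a careful estimate of the weights of optimal paths in the weighted digraphs of $x$, $y$ and their powers, in the spirit of~\cite{IDMax}; this bookkeeping is where the combinatorial substance of the proof resides.
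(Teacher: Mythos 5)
This statement is not proved in the paper at all: it is quoted verbatim from \cite[Theorem 4.14]{IDMax}, so there is no internal proof to compare against, and your argument has to stand on its own. It does not, because its central reduction step rests on a false normalization. Conjugation by an invertible diagonal matrix leaves the diagonal untouched, $(D^{-1}ND)_{ii}=n_{ii}$, and multiplying by a central scalar shifts every entry by the same amount; hence a nonsingular matrix with identity dominant permutation but distinct diagonal entries (e.g.\ a small perturbation of $\diag(0,-5)$ in $\Mat_2(\T)$) can never be brought to your ``triangular-dominant'' form with all diagonal entries $\one$ and the remaining entries $\le\one$. Consequently the assertion that, up to scalars, the matrices $x=\An\Bn$ and $y=\Bn\An$ ``behave like'' elements of $\TMatnT$ is not established, and Theorem~\ref{thm:trMat.Id} cannot simply be imported. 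The auxiliary claim that $\An\Bn$ and $\Bn\An$ have ``the same diagonal after triangularization'' presupposes a tropical triangularization that does not exist, so it cannot be used either.

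Even if some per-element normal form were available, the conjugating diagonal and scalar depend on the element, while the identity \eqref{eq:iduniv2.2} is about the alternating products $\tlw\,x\,\tlw$ and $\tlw\,y\,\tlw$; you cannot conjugate the factors of such a mixed product separately, and the Kleene-type stabilization of the powers of $x$ and of $y$ individually says nothing about products in which $x$ and $y$ alternate. You flag this yourself (``the real work is to show \dots uniformly within $\MatnT$''), but that bookkeeping of highest-weight paths in the colored digraphs of $\An$, $\Bn$ and their alternating products is precisely the content of the cited theorem, so what you have is an outline with the essential step missing. Your first step, by contrast, is sound: on a nonsingular subsemigroup the dominant-permutation map is multiplicative, since $\per(AB)=\per(A)\per(B)$ whenever $AB$ is nonsingular (cf.\ \cite{IR2}), so $\An\Bn$ and $\Bn\An$ do have identity dominant permutation; this explains the role of the exponent $n!$ in \cite{IDMax}, but it is only the point of departure, not a proof.
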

\noindent Clearly, these results hold also to boolean matrices.

\pSkip

We recall some basic definitions from \cite{ultimateRank,zur05TropicalAlgebra,IR1,IR2}.

\begin{definition}\label{def:matOper} Given  a tropical matrix $A =  (a_{i,j})$ in $\MatnT$.
\begin{enumerate}  \eroman \dispace
\item The \textbf{transpose}  of $A$ is defined as $A^\trn = (a_{j,i})$  and satisfies  $(AB)^\trn = B^\trn A^\trn$ for any  $B \in \mT$.

 \item The \textbf{permanent} of  $A$ is
defined as:
  \begin{equation*}\label{eq:tropicalDet}
 \per(A) = \bigp_{\sig \in \mfS_n}    \prod_i a_{i,\sig(i)},
\end{equation*}
where $\mfS_n$ denotes the set of all the permutations over $\{1,\dots,n\}$.
 \item $A $ is called \textbf{nonsingular}
 if there exist a unique permutation $\tau_A \in \mfS_n $ that reaches $\per(A)$; that is,
$ \per(A)  =  \prod_i a_{i,\tau_A(i)} \;.$
Otherwise, $A$ is called \textbf{singular}.

\item The \textbf{(tropical) rank} of $A$ is the largest $k$ for which $A$ has a $k \times k$ nonsingular sub-matrix. (Equivalently,  the rank is the  maximal number of independent columns (or rows) of $A$, cf.  \cite{IR1}.)


\item The \textbf{trace} and the \textbf{multiplicative trace}  of $A$ are defined respectively as
 $$ \tr(A) = \bigp_i a_{i,i}, \qquad  \mtr(A) = \prod_i a_{i,i}.$$

\end{enumerate}
\end{definition}

\noindent
It is easily seen that
 $\per(A) \geq \mtr(A)$ and $\mtr(AB) \geq \mtr(A) \mtr(B)$,
 for any matrices $A,B \in \mT$.

\subsection{Digraph view to tropical matrices}\label{ssec:digraph}
\sSkip

Square tropical matrices correspond uniquely to weighted diagraphs, whose products interpreted as weights of paths \cite{ABG}, as  described below.
An $n\times n$ tropical matrix $A = (a_{i,j})$ is associated to the \textbf{weighted digraph} $\grph_A := (\ver, \arc)$ on the vertex set $\ver :=\{ 1, \dots, n\}$ with   a directed edge
$\e_{i,j} := (i,j) \in \arc$ of \textbf{weight} $a_{i,j}$ from $i$ to $j$  for every
$a_{i,j} \ne \zero$.
The digraph of the transpose matrix $A^\trn$ is obtained by redirecting the edges of $\grph_A $, i.e., an edge $(i,j)$ is replaced by the edge $(j,i)$ of the same  weight.

A \textbf{path} $\gm$ is a sequence of edges
$\e_{i_1, j_1}, \dots, \e_{i_m, j_m} $, with $j_{k} = i_{k+1}$ for every
$k = 1,\dots, m-1$. We write $\gm := \gm_{i,j}$ to indicate that
$\gm$ is a path from $i = i_1$ to $j=j_m$.
The \textbf{length} $\len{\pth}$ of a path $\pth$ is the number of
its edges. 
Formally, we consider also paths of length $0$, which we call
\textbf{empty paths}. The \textbf{weight} $\w(\pth)$ of a path
$\pth$ is  the tropical product of the weights of all
the edges $\e_{i_k, j_k}$ composing $\pth$, counting repeated edges. The weight of an empty path is formally set to be $\zero$.

A path is \textbf{simple} if it crosses each vertex at most once -- it does not have repeating vertices.
(Accordingly, an empty path is considered also as simple.) A (simple) path
that starts and ends at the same vertex is called a
\textbf{(simple) cycle}.
An edge $\rho_i := \e_{i,i}$ is called a \textbf{loop}. We write
$(\rho)^k$ for the composition $\rho \circ \dots \circ \rho$ of a loop
$\rho$ repeated $k$ times, and call it a \textbf{multiloop}. The
notation $(\rho)^0$ is formal, stands for the  empty loop, which
can be realized as a vertex. A graph is \textbf{acyclic} if it has no cycles of length $> 1$ (i.e., it may have loops).

In combinatorial view, entries in powers of a tropical matrix $A \in \MatnT $
correspond to paths of maximal weights in the associated digraph $G_A$ of $A$. Namely, the $(i,j)$-entry of the matrix  $A^m$ corresponds to the highest weight
over  all the paths~$\gm_{i,j}$ from $i$ to $j$  of length $m$ in
$G_A$.

The graph view of products $A_{\ell_{1}} \cdots A_{\ell_m}$  of different $n \times n$ matrices  $A_{\ell_\tt} \in \{ A_\ell \ds | \ell \in I \}$ is more involved and includes paths with edges contributed by  different  digraphs $\grph_{A_\ell} = (\ver, \arc_\ell) $ occurring on the common vertex set  $\ver$. To cope  with this generalization,  we  assign the weighted edges $\e_{A_\ell} \in \arc_\ell$ of each digraph ~ $G_{A_\ell}$ with
a unique color, say~$c_\ell$, and define the \textbf{colored  digraph}
$$\grph_{A_{\ell_1} \cdots A_{\ell_m}} := \bigcup_{\tt =1 }^m  \grph_{A_{\ell_\tt}} \qquad  (\ell_\tt \in I) $$ to have  the  vertex set
$\ver = \{1, \dots, n \}$ and a set of  edges $\bigcup_{\tt =1 }^m \arc_{\ell_\tt}$  colored $c_{\ell_\tt}$, obtained from $\grph_{A_{\ell_\tt}}$.  This digraph may  have
multiple edges with different colors, an edge of $\grph_{A_{\ell_1} \cdots A_{\ell_m}}$ is denoted by  $[\e_{A_\ell}]_{i,j}$.

In this setting, the
$(i,j)$-entry of a matrix product $A_{\ell_1} \cdots A_{\ell_m}$
corresponds to the highest weight of all colored paths $[\e_{A_{\ell_1}}]_{i_1,
j_1}, \dots, [\e_{A_{\ell_m}}]_{i_m, j_m} $ of length $m$ from $i = i_1$ to $j=
j_m$ in the digraph $\grph_{A_{\ell_1} \cdots A_{\ell_m}}$, where each edge
$[\e_{A_{\ell_\tt}}]_{i_\tt, j_\tt}$ has color $c_{\ell_\tt}$, $\tt =1, \dots,m$. Namely, every edge of the path is
contributed uniquely by the  digraph $G_{A_\ell}$, respecting the color ordering determined by the multiplication  concatenation $A_{\ell_1} \cdots
A_{\ell_m}$.
Working with a colored digraph $\grph_{A_{\ell_1} \cdots A_{\ell_m}}$, we always restrict
to colored paths,  called \textbf{properly colored paths},   that respect the sequence of coloring $c_{\ell_1},
\dots, c_{\ell_m}$.
For this reason, we use the awkward notation
$\grph_{A_{\ell_1} \cdots A_{\ell_m}}$ that records the multiplication concatenation $A_{\ell_1} \cdots
A_{\ell_m}$.

\begin{notation}\label{nott:1}
A matrix product $U = A_{\ell_1} \cdots A_{\ell_m}$ is denoted  as  $\setU$ to
indicate that $U$ is realized as the word obtained by the
concatenation of the symbols $``A_1",  \dots, ``A_m"$, accordingly  $\grph_{A_{\ell_1} \cdots A_{\ell_m}}$ is denoted by $\grph_\setU$, while $U = (u_{i,j})$ stands for the actual result  of the   matrix product.
A path $\gm_{i,j}$ in $\grph_\setU$ is  denoted  by $[\gm_\setU]_{i,j}$ to indicate the sequence of its edges' coloring determined by $\setU$.
\end{notation}
In this notation, an  entry
$u_{i,j}$ of $U = A_{\ell_1} \cdots A_{\ell_m}$ encodes the highest weight over all properly colored paths $[\gm_\setU]_{i,j}$, determined by the word $\setU$, of length $m$ from
$i$ to $j$ in the colored digraph $\grph_\setU$.  Conversely,
the word $\setU$ can be recovered from the coloring  sequence of the edges composing any path
$\gm_{i,j}$ of length ~$m$ in $\grph_\setU$.

\subsection{Synoptic matrices}\label{ssec:synoptic} \sSkip
We begin with a certain class of tropical matrices of a special characteristic.
\begin{definition}
A \textbf{synoptic matrix} is a  matrix $A= (a_{i,j})$ in $\mT$ in which
\begin{equation}\label{eq:synp.matrix.1}
    a_{i,j} \geq  a_{i+1,j} \+  a_{i,j-1},
\end{equation}
for all $i = 1, \dots, n-1$ and $j = 2, \dots, n$.
\end{definition}
We denote the set of all synoptic matrices by $\SynT$.
By definition, in any synoptic matrix:
\begin{enumerate}\ealph  \dispace
  \item Each row has a nondecreasing order (from left to right);

  \item  Each column  has a nonincreasing order (from top to bottom).
\end{enumerate}
In particular,  the zero matrix $(\zero)$ is also synoptic,

\pSkip

\begin{lemma}
$\SynT$ is a subsemiring in $\mT$.
\end{lemma}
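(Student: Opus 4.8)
The plan is to show $\SynT$ is closed under both the tropical addition $\+$ and the tropical multiplication of $\mT$; containment of the zero and unit matrices and the semiring axioms are then inherited from $\mT$. Throughout I will work directly with the defining inequality \eqref{eq:synp.matrix.1}, i.e.\ $a_{i,j} \ge a_{i+1,j} \+ a_{i,j-1}$ for all admissible $i,j$.

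First I would treat addition. If $A = (a_{i,j})$ and $B = (b_{i,j})$ lie in $\SynT$ and $C = A \+ B$ has entries $c_{i,j} = a_{i,j} \+ b_{i,j} = \max\{a_{i,j}, b_{i,j}\}$, then for each admissible pair $(i,j)$ we have $a_{i,j} \ge a_{i+1,j} \+ a_{i,j-1}$ and $b_{i,j} \ge b_{i+1,j} \+ b_{i,j-1}$; taking the tropical sum (maximum) of the left-hand sides and using that $\+$ is monotone and that $(x_1 \+ x_2) \+ (y_1 \+ y_2) \ge (x_1 \+ y_1) \+ (x_2 \+ y_2)$ — in fact they are equal by commutativity and associativity of $\max$ — gives $c_{i,j} \ge c_{i+1,j} \+ c_{i,j-1}$. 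So $C \in \SynT$.

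Next, multiplication, which is the substantive step. Let $C = AB$, so $c_{i,j} = \bigp_{k} a_{i,k} b_{k,j}$. I must verify $c_{i,j} \ge c_{i+1,j} \+ c_{i,j-1}$ for $1 \le i \le n-1$, $2 \le j \le n$. It suffices to check $c_{i,j} \ge c_{i+1,j}$ and $c_{i,j} \ge c_{i,j-1}$ separately. For the first, fix the index $k_0$ achieving $c_{i+1,j} = a_{i+1,k_0} b_{k_0,j}$; since $A$ is synoptic its columns are nonincreasing from top to bottom, so $a_{i,k_0} \ge a_{i+1,k_0}$, whence $c_{i,j} \ge a_{i,k_0} b_{k_0,j} \ge a_{i+1,k_0} b_{k_0,j} = c_{i+1,j}$ (here I use that tropical multiplication, being ordinary addition, is monotone). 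For the second, fix $k_1$ with $c_{i,j-1} = a_{i,k_1} b_{k_1,j-1}$; since $B$ is synoptic its rows are nondecreasing from left to right, so $b_{k_1,j} \ge b_{k_1,j-1}$, whence $c_{i,j} \ge a_{i,k_1} b_{k_1,j} \ge a_{i,k_1} b_{k_1,j-1} = c_{i,j-1}$. Combining, $c_{i,j} \ge c_{i+1,j} \+ c_{i,j-1}$, so $C \in \SynT$.

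I do not expect a genuine obstacle here; the only point requiring a little care is noticing that the single inequality \eqref{eq:synp.matrix.1} is equivalent to the pair of monotonicity conditions (a) and (b) already recorded after the definition, and that one should exploit the \emph{column} monotonicity of the left factor together with the \emph{row} monotonicity of the right factor — a mismatch of the two would not close the argument. Finally, one observes $\SynT$ contains $(\zero)$ (noted in the text) and the unit matrix $I$ (its columns are nonincreasing and rows nondecreasing by inspection, since the $\zero = \minf$ entries dominate appropriately), so $\SynT$ is a sub-semiring of $\mT$, completing the proof.
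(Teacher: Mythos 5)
Your proof is correct and follows essentially the same route as the paper: closure under multiplication is obtained by bounding $c_{i,j}$ from below by $c_{i+1,j}$ (via the column condition on $A$) and by $c_{i,j-1}$ (via the row condition on $B$), which is exactly what the paper's single chain of tropical inequalities does, and closure under addition is immediate in both. Your splitting of the defining inequality into the two monotonicity statements is just the reformulation the paper itself records right after the definition, so the difference is only in packaging.
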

\begin{proof}
To see that $\SynT$ is closed for multiplication,  let $C = AB$ and compute $C =(c_{i,j})$ as
\begin{align*} c_{i,j}  = \bigp_{t=1}^n a_{i,t}b_{t,j} & \geq  \bigg( \bigp_{t=1}^n a_{i,t}(b_{t,j-1} \+ b_{t+1,j} )\bigg) \+ \bigg(\bigp_{t=1}^n  (a_{i+1,t} +  a_{i,t-1}) b_{t,j}\bigg) \\
&  \geq  \bigg( \bigp_{t=1}^n a_{i,t}b_{t,j-1 } \bigg)   \+ \bigg( \bigp_{t=1}^n  a_{i+1,t}  b_{t,j} \bigg) = c_{i, j-1} \+ c_{i+1,j}.
\end{align*}
Hence property \eqref{eq:synp.matrix.1} is preserved.
The verification  that $\SynT$ is closed for addition is immediate.
\end{proof}

\subsection{Corner  and flat  matrices}\label{ssec:corner} \sSkip
In the next subsections we develop a theory of special types of tropical matrices, brought here in its full generality, to be used also for future applications.

\begin{notation}\label{nott:row-col}
Given a matrix $A$,  we denote by $\Rw(A) $ and $\Cl(A)$ the set of its rows and columns, respectively.
We write
 $\mrw{A}{I}$  for the restriction of $A$ to rows
 $I \subseteq \Rw(A)$ and $\mrw{A}{i}$ for the row $\bfr_i \in \Rw(A)$.  Similarly,  $\mcl{A}{J}$ stands  for the restriction of $A$ to columns
 $J \subseteq \Cl(A)$, and $\mcl{A}{j}$ for the column $\bfc_j \in \Cl(A)$.   $\clrw{A}{J}{I}$ denotes the restriction  of $A$
  the rows $I
\subseteq \Rw(A)$ and to the columns $J \subseteq \Cl(A)$.  When $A$ is a square matrix we say that  $\clrw{A}{J}{I}$ is  \textbf{principal submatrix} if $J = I$.

\end{notation}

We start with structure matrices, provided by the structure map $\tlpi : \mT \To \bT$, cf. \eqref{eq:struc.map}, and consider first  the combinatorial shape of matrices. In this setup, we ignore  the values of nonzero entries of  matrices (these values will show up later) and identify all of them with $1 \in \B$.
\begin{definition}
A \textbf{corner matrix} is a matrix whose top-right  \textbf{corner block} (possibly empty) contains only nonzero  entries and all entries out of this block are $\zero$.
A matrix $A \in \mT$ is called \textbf{$(\p,\q)$-corner} if its corner-block $B_{\p,\q}$ is $\rwcl{A}{I_\p}{J_\q}$, where $$I_\p = \{1, \dots, \p \} \dss{\text{ and }} J_\q = \{\q,  \dots,  n\}, \qquad \p,\q \in N := \{ 1, \dots, n \} ,$$ i.e., $a_{i,j} \neq \zero$ for all $(i,j) \in I_\p \times J_\q$, otherwise $a_{i,j} = \zero$.

Two corner matrices $A$ and $B$ in $\mT$ are said to be \textbf{block-similar}, written $A \bsim B$, if
 $\tlpi(A) = \tlpi(B)$, i.e.,  both are $(\p,\q)$-corner for some $(\p,\q) \in N \times N$.
When $\tlpi(A) \leq  \tlpi(B)$,  we write  $A \bleq B$,  which means that the corner block of $A$ is contained in that of $B$.
\end{definition} \noindent
The block indexing $(\p,\q)$ indicates the position of the bottom left corner of a  block, and thus uniquely determines the corner block. Note that for boolean matrices the block inclusion $\bleq$ is compatible with the matrix (partial) order  \eqref{eq:mat.order}, in the sense that $\tlpi(A) \bleq \tlpi(B)$ implies $\tlpi(A) \leq  \tlpi(B)$.

It easy to verify that the set of all corner matrices forms a multiplicative monoid in $\mT$,  which we denote by $\CorT$.
\begin{remark}\label{rem:cor.prod} Suppose $ {A}_{{t_u}} \in \CorT$, $t_u \in N$ with  $u =1,2,3$,   are  $(\p_{t_u},\q_{t_u})$-corner matrices, i.e., matrices whose corner blocks are given by $\rwcl{{A}_{t_u}}{I_{t_u}}{J_{t_u}}$, with $I_{t_u} = \{1, \dots \p_{t_u} \} $ and $J_{t_u} = \{ \q_{t_u}, \dots, n\} $.     Then, letting $s = t_1$ $t = t_2$, and $r = t_3$, we have the following:
\begin{enumerate} \eroman \dispace
\item If $\p_s < \p_t $ then $I_s \subset I_t$, while if $\q_s <  \q_t $ then  $J_s \supset J_t$.
  \item If $\q_s > \p_t $, that is $J_s \cap I_t = \emptyset$,  then $A_s A_t = \zeroM$.
  \item If $\q_s \leq \p_t $ then  $A_s A_t$ is  $(\p_s,\q_t)$-corner, whose corner block is $\rwcl{(A_s A_t)}{I_s}{J_t}$.

$$ \mresize{ \begin{array}{|l|llll|}
\hline
\quad \phantom{w}& \grcel & \grcel &\grcel&\grcel \\
\embox & \grcel & \grcel & \grcel & \grcel \\
\embox & \grcel & \grcel & \grcel & \grcel\\\cline{2-5}
\multicolumn{5}{|l|}{\embox} \\
\multicolumn{5}{|l|}{\embox}\\
\hline
\end{array} } \ \cdot \ \mresize{\begin{array}{|lll|ll|}
\hline
\embox & \embox & \embox &\grcel&\grcel \\
\embox & \embox & \embox & \grcel & \grcel \\
\embox & \embox & \embox & \grcel & \grcel \\
\embox & \embox & \embox & \grcel & \grcel\\\cline{4-5}
\multicolumn{5}{|l|}{\embox}\\
\hline
\end{array}}  = \mresize{
\begin{array}{|lll|ll|}
\hline
\embox & \embox & \embox &\grcel&\grcel \\
\embox & \embox & \embox & \grcel & \grcel \\
\embox & \embox & \embox & \grcel & \grcel\\\cline{4-5}
\multicolumn{5}{|l|}{\embox}\\
\multicolumn{5}{|l|}{\embox}\\
\hline
\end{array}}$$ \pSkip
 In particular, for any $(\p_t,\q_t)$-corner matrix $A_t$ with $\q_t\leq \p_t$,  $(A_t)^m$ is $(\p_t,\q_t)$-corner for every $m \in \N$.  Otherwise, for $\q_t >  \p_t$, we get   $(A_t)^m = \zeroM.$

  \item The block inclusion $A_r A_t \bleq A_r A_s$ holds iff $\q_t \geq \q_s$ (i.e.,  $J_t \subseteq J_s$), while $A_r A_t \bleq A_s A_t$ iff $\p_r \leq \p_s$ (i.e., $I_r \subseteq I_t$).
      In general for arbitrary nonzero products (i.e., $ \q_{t_{i}} \leq  \p_{t_{i+1}}$ and $ \q_{s_{j}} \leq \p_{s_{j+1}}$)
       we have
\begin{equation}\label{eq:bleq.1}
 A_{s_1} \cdots  A_{s_u} \bleq A_{t_1} \cdots  A_{t_v}  \dss{\Iff} \p_{s_1} \leq  \p_{t_1} \text{ and } \q_{s_u} \geq \q_{t_v} \  ,
\end{equation}
and in particular
\begin{equation}\label{eq:bsim.1}
A_{s_1} \cdots  A_{s_u}  \bsim A_{t_1} \cdots  A_{t_v}  \dss{\Iff} \p_{s_1} = \p_{t_1} \text{ and } \q_{s_u} = \q_{t_v} \ .
\end{equation}
\end{enumerate}

\end{remark}

In this paper we are especially interested in  $(\p,\q)$-corner matrices obtained as products of $(\p,\p)$-corner matrices, we call the latter  \textbf{$\p$-corner matrices}, for short, and denote them by $\Oix{A}{\p}$.
These are a special type of triangular  $(\p,\q)$-corner matrices with $\p \geq \q$.
 However, we have the following general observation, obtained by Remark~ \ref{rem:cor.prod}.(ii).
\begin{remark}\label{rmk:gen.fw.mon} The  monoid $\CorT$ of all  $(\p,\q)$-corner $\nxn$ matrices, ordered  by inclusion, forms a  partial \fsmg\ (Definition \ref{def:forwardMon}).
Restricting to a fixed generating subset of $\p$-corner matrices with different $\p = 1,\dots, n,$ one  obtains a finitely generated \fsmg.
\end{remark}

So far we have concerned  only with the structure of corner matrices, rather than the values of their nonzero entries.
Considering suitable values for these entries, block inclusions can be  translated to matrix inequalities \eqref{eq:mat.order}. To obtain this view we restrict to corner matrices whose nonzero entries all have a same fixed value $\flt \neq \zero$. We call these matrices \textbf{\eflat \ corner matrices}.
For every \eflat\ $\p$-corner matrix ~$\Oix{\fM}{\p}$ we have
\begin{equation}\label{eq:eflat.pow}
  (\Oix{\fM}{\p})^m = \flt^{m-1} \Oix{\fM}{\p}, \qquad \p = 1,\dots,n,
\end{equation}
for any $m \in \N$. In particular $\Oix{\fM}{\p}$ is idempotent when $\flt = \one$.

\begin{lemma}\label{lem:order.1} Let $\Oix{\fM}{\p}, \Oix{\fM}{\q}, \Oix{\fM}{\r}$ be  \eflat\  corner  matrices with  $\p \leq \q \leq \r$ then:
\begin{enumerate} \eroman \dispace
  \item $\Oix{\fM}{\q} \Oix{\fM}{\p} = \zeroM$ for $\p < \q$;
  \item $\Oix{\fM}{\p} \Oix{\fM}{\q}$ is  $(\p,\q)$-corner for $\p \leq \q$;
  \item $\Oix{\fM}{\p} \Oix{\fM}{\q} > \Oix{\fM}{\p} \Oix{\fM}{\r}$ for $\q < \r$;
  \item $ \Oix{\fM}{\q} \Oix{\fM}{\r} > \Oix{\fM}{\p} \Oix{\fM}{\r} $ for $\p < \q$.
\end{enumerate}

\end{lemma}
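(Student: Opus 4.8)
The plan is to derive all four items from Remark~\ref{rem:cor.prod}, once one records that a $\p$-corner matrix is precisely a $(\p,\p)$-corner matrix; throughout, for a corner block I write $I_\p=\{1,\dots,\p\}$ for its rows and $J_\q=\{\q,\dots,n\}$ for its columns. For (i) I apply Remark~\ref{rem:cor.prod}.(ii) to the product $A_s A_t$ with $A_s=\Oix{\fM}{\q}$ and $A_t=\Oix{\fM}{\p}$: here $\q_s=\q$ and $\p_t=\p$, and $\p<\q$ means $\q_s>\p_t$, so $J_s\cap I_t=\emptyset$ and $\Oix{\fM}{\q}\Oix{\fM}{\p}=\zeroM$. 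For (ii) I apply Remark~\ref{rem:cor.prod}.(iii) with $A_s=\Oix{\fM}{\p}$ and $A_t=\Oix{\fM}{\q}$: now $\q_s=\p\le\q=\p_t$, so $\Oix{\fM}{\p}\Oix{\fM}{\q}$ is $(\p_s,\q_t)$-corner, i.e. $(\p,\q)$-corner.

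Before treating (iii) and (iv) I would pin down the actual nonzero value of such a product. For $\p\le\q$ and $(i,j)\in I_\p\times J_\q$, the $(i,j)$-entry of $\Oix{\fM}{\p}\Oix{\fM}{\q}$ equals $\bigp_{t\in J_\p\cap I_\q}\flt\cdot\flt$, and since $J_\p\cap I_\q=\{\p,\dots,\q\}\neq\emptyset$ and $\flt^{2}\+\flt^{2}=\flt^{2}$, this is $\flt^{2}$, while all entries outside $I_\p\times J_\q$ vanish; thus $\Oix{\fM}{\p}\Oix{\fM}{\q}$ is the $\flt^{2}$-flat $(\p,\q)$-corner matrix. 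Now for (iii): since $\p\le\q<\r$ both $\Oix{\fM}{\p}\Oix{\fM}{\q}$ and $\Oix{\fM}{\p}\Oix{\fM}{\r}$ are nonzero, equal respectively to the $\flt^{2}$-flat $(\p,\q)$- and $(\p,\r)$-corner matrices; as $\q<\r$ we have $J_\r\subsetneq J_\q$, hence $I_\p\times J_\r\subsetneq I_\p\times J_\q$. The two matrices then agree, with common value $\flt^{2}$, on $I_\p\times J_\r$, whereas $\Oix{\fM}{\p}\Oix{\fM}{\q}$ carries $\flt^{2}\neq\zero$ at, say, the position $(1,\q)\in I_\p\times(J_\q\setminus J_\r)$ at which $\Oix{\fM}{\p}\Oix{\fM}{\r}$ is $\zero$; by \eqref{eq:mat.order} this yields $\Oix{\fM}{\p}\Oix{\fM}{\q}>\Oix{\fM}{\p}\Oix{\fM}{\r}$. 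Part (iv) is the mirror statement with rows replacing columns: both products are nonzero, equal to the $\flt^{2}$-flat $(\q,\r)$- and $(\p,\r)$-corner matrices, and $\p<\q$ gives $I_\p\subsetneq I_\q$, so $I_\p\times J_\r\subsetneq I_\q\times J_\r$; arguing as above with witness entry $(\q,\r)$ gives $\Oix{\fM}{\q}\Oix{\fM}{\r}>\Oix{\fM}{\p}\Oix{\fM}{\r}$.

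Alternatively, (iii) and (iv) could be phrased through the block-inclusion criterion \eqref{eq:bleq.1} (together with \eqref{eq:bsim.1} to get strict inclusion), then upgraded to a strict matrix inequality using that all the products involved carry the same nonzero value $\flt^{2}$. I do not anticipate a genuine obstacle; the only places needing care are the bookkeeping of which factor plays the role of $A_s$ and which of $A_t$ when invoking Remark~\ref{rem:cor.prod}, and the observation that a product of two $\flt$-flat corner matrices is $\flt^{2}$-flat (not $\flt$-flat), so that the entrywise comparisons in (iii) and (iv) are genuinely between matrices sharing the same nonzero entry value.
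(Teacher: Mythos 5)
Your proposal is correct and follows essentially the same route as the paper: (i) and (ii) come straight from Remark~\ref{rem:cor.prod}, and (iii)--(iv) come from the block inclusions forced by $\p\le\q\le\r$ together with the fact that all nonzero entries of these two-fold products equal $\flt^{2}$. The only difference is that you spell out the entrywise value $\flt^{2}$ and exhibit explicit witness positions for strictness, details the paper leaves implicit in its use of $\bgeq$.
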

\begin{proof} 

Since $ \p \leq \q \leq \r$ we have
\begin{equation}\label{eq:str}
   I_\p \subseteq I_\q \subseteq I_\r, \qquad J_\r \subseteq J_\q \subseteq J_\p, \tag{$*$}
\end{equation}
with strict inclusions when $  \p < \q < \r$.

\pSkip
(i) and (ii) are a special case of products of corner matrices, pointed in  Remak \ref{rem:cor.prod} (i) and (iii), respectively.

\pSkip
(iii):  By \eqref{eq:str} and Remark \ref{rem:cor.prod}.(iv)
we have $\Oix{\fM}{\p} \Oix{\fM}{\q} \bgeq \Oix{\fM}{\p} \Oix{\fM}{\r}$, where each nonzero entry  of
 $\Oix{\fM}{\p} \Oix{\fM}{\q}$ and  $\Oix{\fM}{\p} \Oix{\fM}{\r}$ equals $\flt^2$.

\pSkip
 (v):  Use a similar argument as in (iii),  where now $ \Oix{\fM}{\q} \Oix{\fM}{\r} \bgeq \Oix{\fM}{\p} \Oix{\fM}{\r} $.
\end{proof}

\subsection{Tropical linear representations}\sSkip

We write $\Trop^{(n)}$ for the Cartesian product $\Trop \times \dots \times \Trop $, with $\Trop$ repeated $n$ times, considered as a $\Trop$-module over the semiring $\Trop$ (Definition \ref{def:semimodule}), whose operations induced by the operations of $\Trop$.
As in this paper we focus on combinatorial aspects, we identify the associative algebra $\Lin(\T^{(n)})$ of all
tropical linear operators on $\Trop^{(n)}$  with $\Mat_n(\Trop)$.

 A finite dimensional \textbf{tropical (linear) representation} of a
semigroup $\tS$, over $\Trop^{(n)}$, is a semigroup homomorphism
$$\rep: \  \tS \TO \Mat_n(\Trop).$$
A representation $\rep$ is said to be \textbf{faithful} if
it is an injective homomorphism.  As in classical representation theory
one should think of a representation as a \textbf{tropical linear
action} of $\tS$ on the space $\Trop^{(n)}$ (since to every $a \in
\tS$, there is an associated tropical linear operator $\rep(a)$ in $\Lin(\T^{(n)})$
that acts on $\Trop^{(n)}$). Tropically, these representations have an extra digraph meaning (cf. \S\ref{ssec:digraph}), associating a semigroup element to a weighted digraph where the semigroup operation is interpreted as an action of one digraph on another diagraph.

Tropical linear representations were introduced in \cite{IzhakianMargolisIdentity}, applied there to prove that the  bicyclic monoid admits the Adjan identity \cite{Adjan}.  These representations are a major method in the present paper, implemented first to a
finitely generated \fsmg\  (Definition \ref{def:forwardMon}).

\begin{theorem}\label{thm:rep.forward.rep}
  A finitely generated \fsmg\ $\NDSg_n = \genr{ \felm_\ell \ds | \ell = 1,\dots, n }$   has a  tropical linear representation
  $$\rep: \  \NDSg_n \TO \TMat_n(\Trop), \qquad \felm_\ell \longmapsto \Oix{\fM}{\ell}, \ \mfo \longmapsto (\zero),  $$ determined by generators' mapping $\felm_\ell \longmapsto \Oix{\fM}{\ell}$,
where $\Oix{\fM}{\ell}$ are \lcorn\ matrices in $\TMat_n(\Trop)$.
\end{theorem}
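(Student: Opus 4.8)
The plan is to exhibit the homomorphism explicitly by choosing concrete corner matrices for the images of the generators and then invoking the presentation of $\NDSg_n$. First I would fix any nonzero $\flt \in \Trop$ (for instance $\flt = \one$, which makes all the matrices below idempotent, cf. \eqref{eq:eflat.pow}), and for each $\ell = 1, \dots, n$ take $\Oix{\fM}{\ell}$ to be the \eflat\ $\ell$-corner $\nxn$ matrix. Its nonzero entries occupy exactly the positions $(i,j)$ with $1 \le i \le \ell \le j \le n$, hence $i \le j$; thus $\Oix{\fM}{\ell} \in \TMat_n(\Trop)$. Since the upper triangular matrices form a submonoid of $\MatnT$ and $(\zero)$ is upper triangular, every product of the $\Oix{\fM}{\ell}$ together with $(\zero)$ again lies in $\TMat_n(\Trop)$, so $\TMat_n(\Trop)$ is the correct target.

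Next I would use the universal property of the presented pointed semigroup $\NDSg_n = \genr{\felm_\ell \ds | \ell = 1,\dots,n}$. The assignment $\felm_\ell \mapsto \Oix{\fM}{\ell}$, $\mfo \mapsto (\zero)$ extends uniquely to a homomorphism from the free pointed semigroup on $\{\felm_1, \dots, \felm_n\}$ into $\TMat_n(\Trop)$, sending a word $\felm_{\ell_1}\cdots\felm_{\ell_m}$ to $\Oix{\fM}{\ell_1}\cdots\Oix{\fM}{\ell_m}$ and the absorbing element to $(\zero)$. This factors through $\NDSg_n$ exactly when the images of the generators satisfy the single defining family of relations \NDXa. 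So it remains to verify: if $\felm_j > \felm_i$, i.e. $j > i$, then $\Oix{\fM}{j}\Oix{\fM}{i} = (\zero)$. This is precisely Lemma \ref{lem:order.1}(i), applied with $\p = i < \q = j$; alternatively it follows from Remark \ref{rem:cor.prod}(ii), since the left column index $j$ of the corner block of $\Oix{\fM}{j}$ exceeds the top row index $i$ of the corner block of $\Oix{\fM}{i}$, so the two blocks do not meet. The absorbing relations for $\mfo$ are automatic because $(\zero)$ is absorbing in $\MatnT$. Hence the map descends to the desired homomorphism $\rep: \NDSg_n \TO \TMat_n(\Trop)$ with $\rep(\felm_\ell) = \Oix{\fM}{\ell}$ and $\rep(\mfo) = (\zero)$, determined by the generators' mapping.

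There is essentially no genuine obstacle here; the only points needing attention are the bookkeeping around the presentation — namely that \NDXa is the \emph{complete} list of defining relations of $\NDSg_n$, so that checking it suffices — and that Lemma \ref{lem:order.1}(i) indeed supplies the vanishing $\Oix{\fM}{j}\Oix{\fM}{i} = (\zero)$. I would also flag explicitly that the statement does not claim faithfulness, and that this particular $\rep$ is highly non-injective: a nonzero product $\Oix{\fM}{\ell_1}\cdots\Oix{\fM}{\ell_m}$ with $\ell_1 \le \cdots \le \ell_m$ equals the $\flt^m$-flat $(\ell_1,\ell_m)$-corner matrix, so the image of a nondecreasing word records only its length together with its smallest and largest letters. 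Distinguishing distinct nondecreasing words of the same length and extremes requires the finer encoding developed in the subsequent sections.
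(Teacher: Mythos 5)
Your proposal is correct and follows essentially the same route as the paper: the paper's proof likewise takes the \eflat\ $\ell$-corner matrices, observes that associativity (hence the homomorphism property) is automatic from matrix multiplication, and verifies the single defining axiom \NDXa\ via Lemma \ref{lem:order.1}.(i), i.e. $\Oix{\fM}{j}\Oix{\fM}{i} = (\zero)$ for $j > i$. Your additional care about the universal property of the presentation and the explicit non-faithfulness remark only spell out what the paper leaves implicit (the latter appearing right after Corollary \ref{cor:rep.forward.rep}).
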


\begin{proof} Associativity is clear, while, by Lemma \ref{lem:order.1}.(i), $\rep(\felm_j \felm_i) = \rep(\felm_j) \rep(\felm_i) = \Oix{\fM}{j} \Oix{\fM}{i} = (\zero)$ for any $j > i $.
\end{proof}

Conversely, a finitely  generated matrix semigroup  $\MFrm_n = \genr{ \Pv{\Oix{\fM}{1}}, \;  \Pv{\Oix{\fM}{2} }, \; \dots,  \; \Pv{\Oix{\fM}{n} } } \subset \TMat_n(\Trop)$, where $\Oix{\fM}{\ell}$  are  \eflat\  $\ell$-corner matrices of fixed $\flt$,  is a \fsmg,  cf. Remark \ref{rmk:gen.fw.mon}. Its
nonzero elements are all singular matrices of rank $1$ (Definition \ref{def:matOper}), and thus
$\MFrm_n$ is a singular matrix subsemigroup of $\TMatnT$, cf.  \cite{IDMax}.

\begin{corollary}\label{cor:rep.forward.rep}
The map
  $$\rep_\frd: \  \NDSg_n \TO \MFrm_n, \qquad \felm_\ell \longmapsto \Oix{\fM}{\ell}, \ \mfo \longmapsto (\zero),  $$ determined by generators' mapping $\felm_\ell \longmapsto \Oix{\fM}{\ell}$,
  is a   tropical linear representation of the finitely generated \fsmg\ $\NDSg_n$.
  \end{corollary}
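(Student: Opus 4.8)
The plan is to obtain $\rep_\frd$ simply as the corestriction of the representation $\rep$ furnished by Theorem~\ref{thm:rep.forward.rep} onto its image. First I would invoke Theorem~\ref{thm:rep.forward.rep} to get the semigroup homomorphism $\rep : \NDSg_n \to \TMat_n(\Trop)$ sending $\felm_\ell \mapsto \Oix{\fM}{\ell}$ and $\mfo \mapsto (\zero)$, where the $\Oix{\fM}{\ell}$ are the fixed \eflat\ $\ell$-corner matrices generating $\MFrm_n$. Since $\rep$ is a homomorphism carrying each generator $\felm_\ell$ of $\NDSg_n$ to a generator $\Oix{\fM}{\ell}$ of $\MFrm_n$, every product $\felm_{\ell_1}\cdots \felm_{\ell_m}$ is sent to $\Oix{\fM}{\ell_1}\cdots \Oix{\fM}{\ell_m} \in \MFrm_n$; hence $\rep(\NDSg_n) \subseteq \MFrm_n$.

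Next I would note that this inclusion is an equality: an arbitrary element of $\MFrm_n = \genr{\Oix{\fM}{1}, \dots, \Oix{\fM}{n}}$ is by definition a finite product $\Oix{\fM}{\ell_1}\cdots \Oix{\fM}{\ell_m}$, which equals $\rep(\felm_{\ell_1}\cdots \felm_{\ell_m})$ and so lies in the image of $\rep$. Therefore $\rep$ corestricts to a surjective semigroup homomorphism $\rep_\frd : \NDSg_n \to \MFrm_n$, and since $\MFrm_n \subseteq \TMat_n(\Trop) \subseteq \Mat_n(\Trop)$ this is exactly a tropical linear representation of $\NDSg_n$ with the concrete matrix semigroup $\MFrm_n$ as target. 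The absorbing element is already accounted for by Theorem~\ref{thm:rep.forward.rep}, which records $\mfo \mapsto (\zero)$; for $n \ge 2$ one has $\mfo = \felm_2\felm_1$ and correspondingly $(\zero) = \Oix{\fM}{2}\Oix{\fM}{1} \in \MFrm_n$ by Lemma~\ref{lem:order.1}.(i), consistent with the corestriction, while the case $n = 1$ is degenerate.

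There is essentially no obstacle here; the statement only repackages Theorem~\ref{thm:rep.forward.rep}, and the single point needing a word of care is that the image of $\rep$ exhausts $\MFrm_n$ rather than landing in a proper subsemigroup, which the two-sided inclusion above settles using that $\NDSg_n$ and $\MFrm_n$ are generated by the corresponding generating sets. If I wished to add context I would remark, via Remark~\ref{rmk:gen.fw.mon}, that $\MFrm_n$ is itself a finitely generated \fsmg\ all of whose nonzero elements are singular rank-$1$ matrices, so the corollary realizes $\NDSg_n$ concretely inside that class; I would not, however, address faithfulness, as it is not part of the assertion.
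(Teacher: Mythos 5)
Your proposal is correct and matches the paper's (implicit) argument: the paper states the corollary without proof, since it is exactly the corestriction of the representation of Theorem~\ref{thm:rep.forward.rep} to the subsemigroup $\MFrm_n$ generated by the image matrices, which is what you carry out, together with the preceding observation (Remark~\ref{rmk:gen.fw.mon}) that $\MFrm_n$ is itself a \fsmg. Your extra care about surjectivity and about $(\zero)=\Oix{\fM}{2}\Oix{\fM}{1}$ lying in $\MFrm_n$ for $n\ge 2$ is sound and only makes explicit what the paper leaves tacit.
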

Note that $\rho = \rho_\frd$ is not injective, since by \eqref{eq:eflat.pow}  we have
$$\begin{array}{rll}
 \rho(\felm_i \felm_i \felm_j) & = \rho(\felm_i) \rho( \felm_i) \rho (\felm_j) = (\Oix{\fM}{i})^2
\Oix{\fM}{j}  = \flt \Oix{\fM}{i}\Oix{\fM}{j} \\[1mm] & =\Oix{\fM}{i}\flt \Oix{\fM}{j} = \Oix{\fM}{i}(\Oix{\fM}{j})^2 =
 \rho(\felm_i) \rho( \felm_j) \rho (\felm_j) & =  \rho(\felm_i \felm_j \felm_j), \end{array}
$$
and thus it is not a faithful representation.

\section{Troplactic matrix algebra}\label{sec:troplactic.m.alg}

In this section we utilize the \eflat\ corner matrices to construct a specific  troplactic algebra,  carrying a digraph meaning, to be utilized  later for monoid representations.
For mater of generality we use $\pv$ as a parameter,  assumed taking generic values $> \one$, as it also supports a polynomial view to matrix invariants, e.g., traces and permanent.

\pSkip
Our setup consists of the following auxiliary matrices in $\TMatnT \subset \MatnT$:
\begin{enumerate} \ealph \dispace

\item A collection of  \eflat\ \lcorn\ matrices
$ \Pv{\Oix{\fM}{\ell}}  = \big(\oix{ f} {\ell}_{i,j}\big)$, $\ell = 1,\dots, n$, with fixed $\flt$,
 defined by
\begin{equation}\label{eq:F.mat.2}
\oix{ f} {\ell}_{i,j}= \left\{
            \begin{array}{ll}
              \flt  & \hbox{if } \   i \leq  \ell \leq j \leq n ,\\[2mm]
              \zero & \hbox{otherwise}  .
            \end{array}
          \right.
\end{equation}
These matrices ordered as $\Pv{\Oix{\fM}{1}} < \Pv{\Oix{\fM}{2}}< \cdots < \Pv{\Oix{\fM}{n}}$, and by Lemma \ref{lem:order.1}  generate the forward matrix semigroup (Definition \ref{def:forwardMon})
\begin{equation}\label{eq:fMon}
    \Pv{\MFrm_n} := \genr{ \Pv{\Oix{\fM}{1}}, \;  \Pv{\Oix{\fM}{2} }, \; \dots,  \; \Pv{\Oix{\fM}{n} } } ,
\end{equation}
cf. (Remark \ref{rmk:gen.fw.mon}). (Note that $\MFrm_n$  is a partially ordered semigroup.)

  \item A   \textbf{layout matrix} $\Pv{\eM}$ that is a triangular  idempotent  matrix such that $E  \Pv{\Oix{\fM}{\ell}} = E\Pv{\Oix{\fM}{\ell}}$ for every $\ell = 1, \dots, n$, for simplicity this matrix  is taken to be  the upper triangular matrix $E = (e_{i,j})$ defined as
      \begin{equation}\label{eq:E.mat}
      e_{i,j} = \left\{
            \begin{array}{ll}
              \one  & \hbox{if } \   i  \leq j  ,\\[2mm]
              \zero & \hbox{otherwise}  .
            \end{array}
          \right.
    \end{equation}
 Note that $E$ is both multiplicatively and additively idempotent, i.e, $E = E + E = E^2$.
\end{enumerate}
The above  matrices are used to define the finitely generated matrix algebra
\begin{equation}\label{eq:mAlg}\mfA_n := \genr{  \Oix{A}{\ell} \ds | \Oix{A}{\ell} : = E \+ \Oix{F}{\ell},  \ \ell = 1, \dots, n} \ ,
\end{equation}
whose generators are the triangular matrices $\Oix{A}{\ell} := E \+ \Oix{F}{\ell}$, ordered as
$$\Pv{\Oix{A}{1}} < \Pv{\Oix{A}{2}}< \cdots < \Pv{\Oix{A}{n}},$$
and has the multiplicative identity $E$ and zero $(\zero)$.
We write
\begin{equation}\label{eq:fmMon}
\mfAml_n := (\mfA_n \sm \{(\zero) \}, \cdot \, )  \end{equation} for the multiplicative monoid of $\mfA_n$. (The whole $\mfA_n$ is a pointed multiplicative semigroup, with absorbing element $(\zero)$.)
The monoid $\mfAml_n$ is a nonsingular partially ordered monoid, whose members all have rank~$n$,  but it is not a cancellative monoid.

We start with the  combinatorial structure of the multiplicative monoid $\mfAml_n$, that establishes an important linkage to   digraphs.
The special structure of the generating matrices $\Oix{A}{\ell}$, together with  their digraph realization,
allows to record lengthes  of nondecreasing subsequences of letters (i.e., nondecreasing subwords) by the means of matrix multiplication,
leading  to the following key lemma.

\begin{klemma}\label{klem:clk.represntation}  Let
$L_m = \squ{\ell_1, \ell_2, \dots, \ell_m}$ be a sequence of indexes $\ell_\xi \in \{ 1, \dots, n\}$ with   $\xi =1,\dots,m$.  Let  $U = \prod_{\ell_\xi \in L_m} \Pv{\Oix{A}{\ell_\xi}}$ be a product of generating matrices $\Pv{\Oix{A}{\ell_\xi}}$ in $\mfAml_n$, and write $U = (u_{i,j})$.
  Then, for $i \leq j$,  the $(i,j)$-entry $u_{i,j}$ of the matrix $U$ encodes as the power of $\flt$ the length of the longest  nondecreasing subsequence of $L_m$ that involves only terms from the (convex) subsequence   $\itoj{i}{j} \sqsubseteq \itoj{1}{n}$.
\end{klemma}
In other words, the $(i,j)$-entry $u_{i,j}$ of the matrix $U$ records the length of longest nondecreasing subword of
$\setU$ restricted to the convex sub-alphabet $``\Oix{A}{i}", \dots, ``\Oix{A}{j}"$, cf. Notation \ref{nott:1}.
\begin{proof} We employ the digraph realization of tropical matrices, cf. \S\ref{ssec:digraph}. In this unique realization  the  digraph $\grph_{\Oix{A}{\ell}}$ associated to the matrix  $\Oix{A}{\ell}$ is an acyclic weighted  digraph having
the following properties:
\begin{enumerate} \dispace \ealph
  \item  Every vertex $1, \dots, n$ is assigned with a loops of weight $\one = 0,$ except the vertex $\ell$ whose loop $\rho_\ell$ has weight  $\w(\rho_\ell) = \flt$.

  \item The only directed edges of $\grph_{\Oix{A}{\ell}}$ are
  \begin{enumerate} \dispace
    \item[(i)] $\e_{s,\ell} = (s,\ell)$ for every $s < \ell$;
    \item[(ii)] $\e_{\ell,t} = (\ell,t)$ for every $t > \ell$;
    \item[(iii)] $\e_{s,t} = (s,t)$ only for  $s < \ell < t$;
  \end{enumerate}
   and all of these matrices  have weight $\flt$.
\end{enumerate}

By these properties, given fixed indices $i \leq j$,  we see that for all $s < i \leq j  < t$   the  digraphs $\grph_{\Oix{A}{s}}$ and $\grph_{\Oix{A}{t}}$ have no directed edges $(p,q)$ for any  $i \leq  p <q \leq j$. Therefore, when considering paths between pairs of vertices in $\itoj{i}{j}$ we can ignore the  digraphs $\grph_{\Oix{A}{s}}$ and $\grph_{\Oix{A}{t}}$ with $s < i $, $t >j$.

Let  $\setU$ denote the restoring of the multiplication sequence
$\Oix{A}{\ell_1} \cdots \Oix{A}{\ell_m}$ as a word  (Notation \ref{nott:1}), and let $\grph_\setU$ denote the graph
 $$\grph_{\Oix{A}{\ell_1} \cdots \Oix{A}{\ell_m}} = \bigcup_{\ell = 1}^n\grph_{\Oix{A}{\ell}},$$ while  $U$ stands for the matrix product.
Recall from  \S\ref{ssec:digraph} that any  considered path $\gm_{i,j}$ in $\grph_\setU$ always
respects the edges' coloring determined by the sequence $\squ{\ell_1, \ell_2, \dots, \ell_m}$,
$\ell_\xi \in \{ 1, \dots, n\}$, or equivalently by $\setU$, and is  denoted  by
$[\gm_\setU]_{i,j}$ to indicate this coloring.

Taking $i \leq j$, an edge  $\e_{i,j}$ from $i$ to $j$ in $\grph_U$ corresponds
  to a path $[\pth_\setU]_{i,j}$ of highest weight
   from $i$ to $j$ of length ~$m$ in $\grph_\setU$.
     By  properties (a)-(b) above, we learn that the possible contribution of the diagraphs  $\grph_{\Oix{A}{s}}$ and $\grph_{\Oix{A}{t}}$,   $s < i \leq j < s$, to $[\pth_\setU]_{i,j}$  could only be loops, all having weight $\one =0$. This means, that we can reduce $[\pth_\setU]_{i,j}$ to a sub-path $[\pth_\setUp]_{i,j}$ (exactly of the same weight, but perhaps shorter)
   whose edges coloring is determined by the subsequence $L'_{m'} = \squ{\ell'_1,\dots, \ell'_{m'}} \sqsubseteq \squ{\ell_1, \ell_2, \dots, \ell_m}$,  $m' \leq m$, with $\ell'_\xi \in \{ i, \dots, j\}$.  Writing $\setUp$ for the multiplication sequence
$\Oix{A}{\ell'_1} \cdots \Oix{A}{\ell'_{m'}}$, realized a word, then  $ [\pth_\setUp]_{i,j}$  is a colored path from $i$ to $j$ of highest  weight and length $m'$ in  $\grph_\setUp$.

   Let $\incS =  \squ{\ell''_1, \ell''_2, \dots, \ell''_{m''}} \sqsubseteq \squ{\ell'_1, \ell'_2, \dots, \ell'_{m'}}$, $\ell''_\xi \in \{ i, \dots, j\}$, $m'' \leq m'$, be a longest nondecreasing subsequence of $L'_{m'}$.
   Let $$i \leq p_1 < p_2 < \cdots < p_r \leq  j, \qquad r \leq j-i+1, $$ be the (distinct)  elements of $\{ i, \dots, j\}$ that take part in $\incS$, and let $q_\xi$, $\xi = 1, \dots, r$, be the number of occurrences of $p_\xi$ in $\incS$.  (These occurrences must be sequential occurrences, as  $\incS$ is nondecreasing.)
Accordingly,
\begin{equation}\label{eq:str}
  \len{\incS} = \sum_{\xi = 1}^r q_\xi  = m''. \tag{$*$}\end{equation}
We denote by $\setP$  the multiplication sequence
$\Oix{A}{\ell''_1} \cdots \Oix{A}{\ell''_{m''}}$, determined by $\incS$ and realized as a word. (Namely $\setP \wrdsset \setW$.)

    Assuming first that $\flt = 1$,   we claim that  $\w(\pth'_{i,j}) = \len{\incS}$ for  $\pth'_{i,j} := [\pth_\setUp]_{i,j}$.
  To prove   that  $\w(\pth'_{i,j}) \geq \len{\incS}$, consider the (colored) path
  $$ [\mu_\setP]_{i,j} = (\rho_{p_1})^{q_1 -1 } \circ \e_{p_1, p_2} \circ (\rho_{p_2})^{q_2-1} \circ \e_{p_2, p_3 } \circ \cdots \circ (\rho_{p_{r-1}})^{q_{r-1} -1 } \circ \e_{p_{r-1}, p_r, } \circ (\rho_{p_r})^{q_r}$$ of length $m''$ in $\grph_\setP$, obtained from $\setP$ by \eqref{eq:str}.
  All the edges and loops in $\mu_{i,j} : = [\mu_\setP]_{i,j} $ have weight~$1$, and thus $\w(\mu_{i,j}) = m''$. Clearly,
  $\w(\pth'_{i,j}) \geq \w(\mu_{i,j})  $, since otherwise by plugging in loops of weight $0$ we could expand the path $ \mu_{i,j}$ to  a properly colored path $ \tlmu_{i,j}$ in $\grph_\setUp$ with  $\w(\tlmu_{i,j}) = \w(\mu_{i,j})$ to get that $\w(\tlmu_{i,j}) > \w(\pth'_{i,j})  $, contradicting   the weight maximality of the path $\gm'_{i,j}$ in $\grph_\setUp$.

  On the other hand,  if $\w(\pth'_{i,j}) > \w(\mu_{i,j})$, as $\pth'_{i,j}$ is an acyclic path, by excluding all loops of weight~ $0$ we could extract from $\pth'_{i,j}$ a (properly colored) acyclic sub-path $\pth''_{i,j}$  with $\w(\pth''_{i,j}) =\w(\pth'_{i,j})$. Then, the coloring of edges in $\pth''_{i,j}$ determines a nondecreasing sequences in $L'_{m'}$ of length greater than $ \w(\mu_{i,j}) = \len{\incS} $, contradicting the length maximality of $\incS$.
Composing together, we therefore have   $\w(\pth_{i,j}) = \w(\pth'_{i,j}) =  \w(\mu_{i,j}) = \len{\incS}$.

To complete the proof for a generic  $\flt$, just observe that $m = 1^m$ and that the map $\flt^m \mTo m
$, $m \in \N$, is a  bijection.
  \end{proof}
 Key Lemma \ref{klem:clk.represntation} has dealt with the  multiplicative (monoid) structure of  $\mfAml_n$, and  now we turn to discuss  the  semiring structure of $\mfA_n$.

\begin{theorem}\label{thm:plac.mat.relations} The finitely generated matrix algebra $\mfA_n := \genr{ \Oix{A}{1}, \dots, \Oix{A}{n} }$ of \eqref{eq:mAlg}
is a troplactic algebra $\APlc_n$ (Definition~\ref{defn:trop.plc.alg}).  Thus,
every triplet of it generators $$A= \Oix{A}{p}, \quad  B = \Oix{A}{q}, \quad  C=\Oix{A}{r}, \qquad p \leq q \leq r,$$  admit the Knuth relations \eqref{eq:knuth.rel}:
\begin{equation*}\label{eq:plac.Alg.2}
  \begin{array}{ccccccc}
   \text{\KXa}:&& A  C B =  C  A  B  & \text{if} &  p \leq q < r , & \qquad & \\
   \text{\KXb}:&& B A  C  = B C  A  & \text{if} & p <  q \leq r ,   \end{array}
\end{equation*} and these matrix products are different from   $ABC$ and $CBA$.

\end{theorem}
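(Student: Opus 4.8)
The plan is to verify that the generators $\Oix{A}{\ell} = E \+ \Oix{F}{\ell}$ satisfy the four axioms \PXa--\PXd\ of Definition~\ref{defn:trop.plc.alg}; once this is done, Theorem~\ref{thm:plc.Alg.1} applies verbatim and yields both Knuth relations, with the non-degeneracy of the plactic structure guaranteed by the discussion following Theorem~\ref{thm:plc.Alg.1} together with Key Lemma~\ref{klem:clk.represntation}. So the real content is the axiom check. First I would record that $\mfA_n$ is an idempotent semiring (it is a sub-semiring of $\MatnT$ generated by idempotent generators, and $\MatnT$ is additively idempotent) with multiplicative identity $E$ and zero $(\zero)$, and that the generators are totally ordered as stated. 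Axiom~\PXa, $\Oix{A}{\ell} = E \+ \Oix{A}{\ell}$, is immediate because $\Oix{A}{\ell} = E \+ \Oix{F}{\ell} \geq E$, hence $E \+ \Oix{A}{\ell} = \Oix{A}{\ell}$ by \eqref{eq:mat.order}. Axiom~\PXb\ asks that for $\ell > k$ one has $\Oix{A}{\ell}\Oix{A}{k} = \Oix{A}{k} \+ \Oix{A}{\ell}$; I would expand $\Oix{A}{\ell}\Oix{A}{k} = (E \+ \Oix{F}{\ell})(E \+ \Oix{F}{k}) = E \+ \Oix{F}{\ell} \+ \Oix{F}{k} \+ \Oix{F}{\ell}\Oix{F}{k}$, using $E\Oix{F}{j} = \Oix{F}{j}E = \Oix{F}{j}$ (this is essentially the defining property of the layout matrix $E$ together with \eqref{eq:F.mat.2}), and then invoke Lemma~\ref{lem:order.1}.(i), which gives $\Oix{F}{\ell}\Oix{F}{k} = (\zero)$ exactly when $\ell > k$; the right-hand side collapses to $E \+ \Oix{F}{k} \+ \Oix{F}{\ell} = \Oix{A}{k} \+ \Oix{A}{\ell}$.

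For Axioms~\PXc\ and \PXd\ I would argue directly at the level of entries / digraphs. Take $p \leq q \leq r$, write $A = \Oix{A}{p}$, $B = \Oix{A}{q}$, $C = \Oix{A}{r}$, and compare $A(B \+ C)$ with $AB \+ C$ (and dually $(A \+ B)C$ with $A \+ BC$). Expanding $A(B\+C) = AB \+ AC$, the claim \PXc\ becomes $AB \+ AC = AB \+ C$. The inclusion $AB \+ AC \geq AB \+ C$ and its reverse can both be checked entrywise using the explicit form \eqref{eq:F.mat.2}: a direct computation of the products $\Oix{F}{p}\Oix{F}{r}$, $E\Oix{F}{r} = \Oix{F}{r}$, etc., shows that every nonzero entry of $AC$ that is not already dominated by the corresponding entry of $AB$ is an entry of the block of $C$, and conversely; since $r \geq q \geq p$, the relevant index ranges $i \leq p \leq j$, $i \leq p \leq r \leq j$, and $i \leq r \leq j$ line up exactly so that the two matrices agree after adding $AB$. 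Alternatively, and perhaps more cleanly, I would read this off Remark~\ref{rem:cor.prod} and Lemma~\ref{lem:order.1}: $AB$ is $(p,q)$-corner and $AC$ is $(p,r)$-corner with $q \leq r$, so $AC \bleq AB$ as far as structure goes, while the "new" contribution of $C$ lies in columns $r,\dots,n$ where it coincides with $C$ itself — this is precisely the pictorial identity displayed after Definition~\ref{defn:trop.plc.alg}. The verification of \PXd\ is the mirror image, using that $BC$ is $(q,r)$-corner and $AC$ is $(p,r)$-corner with $p \leq q$, together with the symmetric property $\Oix{F}{j}E = \Oix{F}{j}$.

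Once \PXa--\PXd\ are confirmed, $\mfA_n$ is by Definition~\ref{defn:trop.plc.alg} a troplactic algebra $\APlc_n$, so Theorem~\ref{thm:plc.Alg.1} gives \KXa\ and \KXb\ for the generators $A, B, C$. Finally, to see that $ABC$ and $CBA$ are genuinely different from $ACB = CAB$ and $BAC = BCA$, I would invoke Key Lemma~\ref{klem:clk.represntation}: the $(i,j)$-entries of these products encode lengths of longest nondecreasing subsequences of the underlying index words, and for instance the word $\squ{p,q,r}$ (nondecreasing) has a longest nondecreasing subsequence of length $3$ on the full alphabet, whereas $\squ{p,r,q}$ or $\squ{r,q,p}$ does not, so the corresponding top-right entries differ. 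I expect the main obstacle to be the entrywise bookkeeping in \PXc\ and \PXd: one must be careful with the boundary cases $p = q$, $q = r$, and the ranges where the corner blocks of $AB$, $AC$, $BC$ overlap or nest, which is exactly where the parts (iii)--(vi) of Remark~\ref{rmk:plac.1} get used implicitly; framing the computation through Remark~\ref{rem:cor.prod} rather than raw matrix multiplication should keep this manageable.
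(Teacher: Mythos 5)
Your proposal is correct and takes essentially the same route as the paper: the paper likewise verifies axioms \PXa--\PXd\ by expanding products of the matrices $E \+ \Oix{F}{\ell}$ and absorbing the dominated corner products via Lemma~\ref{lem:order.1}, then invokes Theorem~\ref{thm:plc.Alg.1} for the Knuth relations and Key Lemma~\ref{klem:clk.represntation} (comparing the $(p,r)$-entries, which come out as $\pv^3$, $\pv^2$, $\pv$) to separate $ABC$ and $CBA$ from $ACB=CAB$ and $BAC=BCA$, exactly as you sketch. The only cosmetic slip is describing $AB$ itself as $(p,q)$-corner (it is $E$ plus corner terms), but the absorption $\Oix{F}{p}\Oix{F}{r} \leq \Oix{F}{p}\Oix{F}{q}$ you rely on for \PXc\ (and its mirror for \PXd) is precisely the paper's mechanism.
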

\begin{proof}
We verify the axioms of troplactic algebra in  Definition \ref{defn:trop.plc.alg}.
\begin{description} \eroman \dispace
 \item[\PXa] $(\mfa =  \mfe + \mfa.)$    \quad
  By construction, and the additive idempotence  of $\MatnT$ we have   $$  \Oix{A}{p} =  \eM + \Oix{\fM}{q} = \eM + \eM + \Oix{\fM}{p} =  \eM  + \Oix{A}{p}.$$

  \item[\PXb] ($\mfb \mfa = \mfa + \mfb$  if  $\mfb >  \mfa $.) \quad
   By Lemma \ref{lem:order.1}.(i) we have \begin{align*}
        \Oix{A}{q} \Oix{A}{p} & = (\eM + \Oix{\fM}{p})(\eM + \Oix{\fM}{p})
         = \eM + \Oix{\fM}{p} + \Oix{\fM}{q} \\ & = (\eM + \Oix{\fM}{p}) +  (\eM+ \Oix{\fM}{q}) =
        \Oix{A}{p} + \Oix{A}{q}.
      \end{align*}
 \item[\PXc] ($\mfa (\mfb +  \mfc) = \mfa \mfb + \mfc.$) \quad
   By Lemma \ref{lem:order.1}.(iii) we have \begin{align*}
        \Oix{A}{p} ( \Oix{A}{q} +  \Oix{A}{r})   &= (\eM + \Oix{\fM}{p})(\eM + \Oix{\fM}{q}  + \Oix{\fM}{r})
        \\
        & =\eM + \Oix{\fM}{p} + \Oix{\fM}{q} + \Oix{\fM}{r} + \Oix{\fM}{p}\Oix{\fM}{q} + \Oix{\fM}{p}
        \Oix{\fM}{r}  \\ &   =\eM + \Oix{\fM}{p} + \Oix{\fM}{q} + \Oix{\fM}{r} + \Oix{\fM}{p}
        \Oix{\fM}{q}  \\ &   =(\eM + \Oix{\fM}{p} + \Oix{\fM}{q}  + \Oix{\fM}{p}
        \Oix{\fM}{q})+(E + \Oix{\fM}{r})   =  \Oix{A}{p}  \Oix{A}{q} +  \Oix{A}{r}.
      \end{align*}

  \item[\PXd] ($(\mfa + \mfb ) \mfc = \mfa + \mfb \mfc.$) \quad
  By Lemma \ref{lem:order.1}.(iv) we have \begin{align*}
        (\Oix{A}{p} + \Oix{A}{q})  \Oix{A}{r}
         &= (\eM + \Oix{\fM}{p} + \Oix{\fM}{q} )(E + \Oix{\fM}{r})
        \\&=\eM + \Oix{\fM}{p} + \Oix{\fM}{q} + \Oix{\fM}{r} + \Oix{\fM}{p}\Oix{\fM}{r} + \Oix{\fM}{q}
        \Oix{\fM}{r}  \\ &   =\eM + \Oix{\fM}{p} + \Oix{\fM}{q} + \Oix{\fM}{r} + \Oix{\fM}{q}
        \Oix{\fM}{r} \\ & = (\eM + \Oix{\fM}{p}) + (E +  \Oix{\fM}{q} + \Oix{\fM}{r} + \Oix{\fM}{q}
        \Oix{\fM}{r})  =  \Oix{A}{p}  + \Oix{A}{q}  \Oix{A}{r}.
      \end{align*}

\end{description} Use Key Lemma~ \ref{klem:clk.represntation}, to see that the products $U= ABC$ and $V = CBA$ are differ from the products $X = A  C B$ and $Y = B A  C $, which gives
$u_{p,r} = \pv^3$, $v_{p,r} = \pv$ for the $(p,r)$-entry of $U = (u_{i,j})$, $V=(v_{i,j})$ respectively, while $x_{p,r} = y_{p,r} = \pv^2$ for the $(p,r)$-entry of $X= (x_{i,j})$ and $Y= (y_{i,j})$.
\end{proof}

The theorem shows that for three letter words
$u \clcong v$ iff $u \kcong v$, cf. \eqref{eq:clk.cong},
 but in general this relation does not hold for words of an arbitrary length, as seen later in
Example \ref{exmp:run:1}.

\begin{proposition}\label{prop:Frob.2} Any two generators  $\Oix{A}{k}$ and $\Oix{A}{\ell}$ admit the Frobenius property:
\begin{equation*}\label{eq:Frob.2}
    \big(\Pv{\Oix{A}{k}} + \Pv{\Oix{A}{\ell}}\big)^m = \Pv{\Oix{A}{k}}^m + \Pv{\Oix{A}{\ell}}^m.
\end{equation*}

\end{proposition}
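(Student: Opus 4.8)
The plan is to deduce Proposition~\ref{prop:Frob.2} directly from the fact, established in Theorem~\ref{thm:plac.mat.relations}, that $\mfA_n$ is a troplactic algebra $\APlc_n$, together with the generic Frobenius property for generators of a troplactic algebra proved in Lemma~\ref{lemma:Frob.2}. Indeed, the matrices $\Pv{\Oix{A}{k}}$ and $\Pv{\Oix{A}{\ell}}$ are, by \eqref{eq:mAlg}, precisely the generators of $\mfA_n$ viewed as a troplactic algebra; hence, without loss of generality assuming $k \le \ell$ (the statement is symmetric in $k,\ell$, and for $k=\ell$ it is the trivial idempotent identity $\mfa^m=\mfa^m$ after absorbing lower powers via \eqref{eq:plc.power}), Lemma~\ref{lemma:Frob.2} applied to the pair of generators $\mfa = \Pv{\Oix{A}{k}}$, $\mfb = \Pv{\Oix{A}{\ell}}$ yields exactly
$$\big(\Pv{\Oix{A}{k}} + \Pv{\Oix{A}{\ell}}\big)^m = \Pv{\Oix{A}{k}}^m + \Pv{\Oix{A}{\ell}}^m$$
for every $m \in \N$. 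That is the entire content of the proposition.

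An alternative, self-contained route is to verify the identity at the level of matrix entries using Key Lemma~\ref{klem:clk.represntation}. First I would observe that $\Pv{\Oix{A}{k}} + \Pv{\Oix{A}{\ell}} = E \+ \Oix{F}{k} \+ \Oix{F}{\ell}$ is again an $E$-based matrix and compute its $m$-th power combinatorially: by the digraph description in the proof of Key Lemma~\ref{klem:clk.represntation}, the $(i,j)$-entry of $\big(\Pv{\Oix{A}{k}} + \Pv{\Oix{A}{\ell}}\big)^m$ records $\flt$ to the power of the length of the longest nondecreasing subword of the length-$m$ word over the two-letter alphabet $``\Oix{A}{k}",``\Oix{A}{\ell}"$ restricted to indices in $\itoj{i}{j}$. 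A nondecreasing subword over a two-letter ordered alphabet is just a block of $``\Oix{A}{k}"$'s followed by a block of $``\Oix{A}{\ell}"$'s, so the maximal such length over all length-$m$ words equals $m$ whenever $\itoj{i}{j}$ contains the relevant index, and the same bookkeeping shows $\Pv{\Oix{A}{k}}^m + \Pv{\Oix{A}{\ell}}^m$ produces the identical entry pattern (the "$+$" being max, hence taking, entry by entry, the larger of the two block-lengths). Matching the two computations entry by entry gives the claim.

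I expect the only mild obstacle to be the bookkeeping in the entry-wise approach: one must carefully track, for each pair $i \le j$, whether the vertex $k$ (resp.\ $\ell$) lies in the convex block $\itoj{i}{j}$, since that determines whether the corresponding loop contributes weight $\flt$ or weight $\one$, and correspondingly whether the two sides of the proposed identity both see the full length $m$ or a truncated length. This case analysis is routine but slightly tedious, which is why I would prefer the short proof: simply cite that $\mfA_n \cong \APlc_n$ (Theorem~\ref{thm:plac.mat.relations}) and invoke Lemma~\ref{lemma:Frob.2} verbatim for the generator pair, noting the $k = \ell$ case separately as immediate. No new machinery is required in either approach.

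\begin{proof}
By Theorem~\ref{thm:plac.mat.relations}, the matrix algebra $\mfA_n$ is a troplactic algebra $\APlc_n$ whose generators are precisely the matrices $\Pv{\Oix{A}{1}} < \cdots < \Pv{\Oix{A}{n}}$. The asserted equality is symmetric in $k$ and $\ell$, and when $k = \ell$ it reduces to the trivial identity $\Pv{\Oix{A}{k}}^m = \Pv{\Oix{A}{k}}^m$. So assume $k \neq \ell$, say $k < \ell$; then $\Pv{\Oix{A}{k}}$ and $\Pv{\Oix{A}{\ell}}$ form an ordered pair of generators of the troplactic algebra $\mfA_n = \APlc_n$, and Lemma~\ref{lemma:Frob.2}, applied to $\mfa = \Pv{\Oix{A}{k}}$ and $\mfb = \Pv{\Oix{A}{\ell}}$, gives
$$\big(\Pv{\Oix{A}{k}} + \Pv{\Oix{A}{\ell}}\big)^m = \Pv{\Oix{A}{k}}^m + \Pv{\Oix{A}{\ell}}^m$$
for every $m \in \N$, as desired.
\end{proof}
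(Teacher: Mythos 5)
Your proof is correct and follows the same route as the paper: since $\mfA_n$ is a troplactic algebra (Theorem~\ref{thm:plac.mat.relations}), the claim is immediate from Lemma~\ref{lemma:Frob.2} applied to the generator pair. The alternative entry-wise sketch is a reasonable backup but unnecessary; the short argument is exactly what the paper does.
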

\begin{proof} Immediate by Lemma \ref{lemma:Frob.2}, since $\mfA_n$ is a troplactic algebra.
\end{proof}

Let $W = \Oix{A}{\ell_1} \cdots \Oix{A}{\ell_m} $, $W = (w_{i,j})$,  be a matrix product in $\MPlc_n$, realized as a word $\setW$ over the symbols $``\Oix{A}{\ell_1}",  \dots, ``\Oix{A}{\ell_m}"$. Setting $\pv = 1$, the multiplicative  monoid $\MPlc_n$ has  two main characters:
\begin{enumerate} \eroman
  \item The additive trace
   \begin{equation}\label{eq:char.add} \chra : \MPlc_n \TO \N_0, \qquad  W \longmapsto \tr(W),
\end{equation}
that provides the maximum occurrences of a letter in $\setW$.

  \item The multiplicative trace
\begin{equation}\label{eq:char.mlt}
\chrp : \MPlc_n \TO \N_0, \qquad  W \longmapsto \mtr(W),
\end{equation}
that gives the length of the matrix word  $\setW$.
\end{enumerate}

Considering $\N_0$ as the tropical semiring $(\N_0, \+, \cdot \, )$, these characters preserve  their type,  additive  and multiplicative respectively,
\begin{align*}
  \chra(U \+ V) & = \chra(U) \+ \chra(V),  \qquad
  \chrp(U V) = \chrp(U) \chrp(V),
\end{align*}
for any $U,V \in \MPlc_n$.
 Note that $\chra(W) = \chrp(W)$ iff $\setW$ consists of a single letter,
 where $\chra(W) = \chrp(W) = 0$ iff
$\setW = \eM$. For $\chra$ and $\chrp$ we also have the properties
\begin{align*}
  \chra(U  V)  \leq \chra(U) \chra(V),  & \qquad
  \chrp(U \+ V)  \leq   \chrp(U) \+  \chrp(V),
\end{align*}
and hence
\begin{align*}
  \chra(U \+ V) \leq \chra(U  V) , \qquad
  \chrp(U \+ V)  \leq   \chrp(U V).
\end{align*}
The following conclusion is then evident.
\begin{corollary}\label{cor:factorziation}
  If $\Oix{A}{\ell_1} \cdots \Oix{A}{\ell_m } = \Oix{A}{\ell'_1} \cdots \Oix{A}{\ell'_{m'} } \neq \eM,$ $\ell_t, \ell'_s \in \{1,\dots, n \},$ then $m =m'$.
\end{corollary}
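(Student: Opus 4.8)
The plan is to extract the length $m$ of a matrix word $\Oix{A}{\ell_1}\cdots\Oix{A}{\ell_m}$ directly from the multiplicative character $\chrp$ of \eqref{eq:char.mlt}, and then to observe that two equal matrices necessarily share the same $\chrp$-value. (It is harmless to pass to the specialization $\pv = 1$ at which $\chrp$ is defined, since any matrix identity over the generic parameter persists under $\pv \mapsto 1$.) First I would record the base computation: each generator satisfies $\chrp(\Oix{A}{\ell}) = \mtr(\Oix{A}{\ell}) = 1$. Indeed, writing $\Oix{A}{\ell} = \eM \+ \Oix{\fM}{\ell}$ as in \eqref{eq:mAlg}, every diagonal entry equals $\one = 0$ except the $\ell$-th, which by \eqref{eq:F.mat.2} equals $\oix{f}{\ell}_{\ell,\ell} = \pv = 1$; since $\mtr$ is the tropical (i.e.\ ordinary) sum of the diagonal, its value is $1$.

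Next I would invoke the multiplicativity of $\chrp$ on $\MPlc_n$ established above, $\chrp(UV) = \chrp(U)\,\chrp(V)$, to conclude
$$\chrp\big(\Oix{A}{\ell_1}\cdots\Oix{A}{\ell_m}\big) \ = \ \chrp(\Oix{A}{\ell_1})\cdots\chrp(\Oix{A}{\ell_m}) \ = \ m,$$
the final step being the tropical product of $m$ copies of $1$ in $(\N_0, \+, \cdot)$, i.e.\ an ordinary sum. The identical computation gives $\chrp(\Oix{A}{\ell'_1}\cdots\Oix{A}{\ell'_{m'}}) = m'$. Since $\chrp$ is a function of the underlying matrix, the assumed equality $\Oix{A}{\ell_1}\cdots\Oix{A}{\ell_m} = \Oix{A}{\ell'_1}\cdots\Oix{A}{\ell'_{m'}}$ then forces $m = m'$.

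Finally I would dispose of the side condition $\neq \eM$: it merely ensures that both products are nonempty (so $m, m' \geq 1$), but it is in fact automatic, because $\chrp(\eM) = \mtr(\eM) = 0$ whereas every nonempty product has $\chrp$-value $\geq 1$, so no nonempty product can coincide with $\eM$. I do not anticipate any real obstacle: the two substantive ingredients --- multiplicativity of $\chrp$ and the fact that it returns the word length --- are already available, and the only computation needing genuine care is the single-generator trace, which is precisely where the explicit shape of the generators in \eqref{eq:F.mat.2}, \eqref{eq:E.mat} enters. (Alternatively one may read off each diagonal entry from Key Lemma \ref{klem:clk.represntation} --- the $(i,i)$-entry of the product being $\pv$ to the number of occurrences of $i$ among $\ell_1, \dots, \ell_m$ --- and sum to $m$; this is a longer route to the same conclusion.)
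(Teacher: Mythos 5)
Your argument is correct and is essentially the paper's own: the corollary is stated there as an immediate consequence of the multiplicative trace character $\chrp$, which records the word length, exactly as you use it (your base computation $\chrp(\Oix{A}{\ell})=1$ and the Key Lemma remark on diagonal entries simply make explicit what the paper leaves as "evident"). Your observation that the hypothesis $\neq \eM$ is automatic is also consistent with this reading.
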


\section{The \cloak\  and the \ckmon s}\label{sec:rep.clocktic}

\pSkip

We gently coarser the Knuth relations \eqref{eq:knuth.rel} to introduce a new monoid structure over the alphabet $\tA_I = \{\aa_\ell \ds | \ell \in I  \}$ whose underlying equivalence is  based on the forthcoming function.
Given a word $w \in \tA_I$ and a convex set $J \cnxsset I$ (Definition \ref{defn:convex}),  we define
 the set $\sbwd{J}{w}$ of all subwords $u$ of  $w$ with letters in the convex sub-alphabet $\tA_J \cnxsset \tA_I$, i.e.,
$$ \sbwd{J}{w} := \{\, u \wrdsset w \ds| u \in \tA^+_J \, \}, \qquad  J \subseteq I \text{ is convex},$$
and specify  the subset  $\incsbwd{J}{w} $ consisting  of all nondecreasing  words in $\sbwd{J}{w}.$
The function $\mxlen{J}{w}$ gives that length of the longest nondecreasing  word  $u \in \incsbwd{J}{w}$, possibly non-unique,  that is
\begin{equation}\label{eq:max.inc.len}
 \mxlen{J}{w}:= \max\{\, \len{u} \ds | u  \in \incsbwd{J}{w} \, \}.
\end{equation}
Relying upon on this function, rather than on recombining the underlying relations of $\kcong$ (e.g., as in the Chinese monoid \cite{Cass}),  we define the following new monoid, slightly coarsening the plactic monoid (Definition ~ \ref{def:plactic.mon}).

\begin{definition}\label{def:cloaktic.mon}

The \textbf{\kmon} is the monoid $\clkM_I := \CLK(\tA_I)  $, generated by an ordered  set of elements \footnote{For simplicity, we assume a countable set of generators, where the  generalization to an arbitrary  ordered set of generators is obvious. Moreover, one can also generalize this monoid by considering a partially ordered set of generators, but such theory is more involved.} $\tA_I : = \{\aa_\ell \ds | \ell  \in I  \}$,  subject to the equivalence relation $\clcong$, defined as 
\begin{equation}\label{eq:clk.cong}
u \clcong v \Iff \mxlen{J}{u} = \mxlen{J}{v} \quad  \text{for all convex subsets } J \cnxsset I. 
\end{equation}
Namely $\clkM_I := \tA_I^*/ _{\clcong}$.
  When $|I| = n$ is finite, we write $\clkM_n$ for $\clkM_I$ and say that $\clkM_n$ is finitely generated of \textbf{rank} $n$.

\end{definition}
\noindent  We write $\CLK_I$  for $\CLK(\tA_I)$ when the alphabet $\tA_I$ is arbitrary.  Henceforth,  we always assume that $e < \aa_\ell$ for all $\aa_\ell \in \clkM_I$.

\begin{lemma}\label{lem:clk.cong}
  The equivalence relation $\clcong$  is a congruence on the free monoid $\tA_I^*$.
\end{lemma}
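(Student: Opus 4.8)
By \eqref{eq:clk.cong} the relation $\clcong$ is the equality of the families $\big(\mxlen{J}{-}\big)_{J \cnxsset I}$ of non-negative integers (with the convention $\mxlen{J}{w}=0$ when $w$ has no letter in $\tA_J$), so it is automatically an equivalence relation; what remains is compatibility with the operation, i.e.\ that $u \clcong v$ forces $uw \clcong vw$ and $wu \clcong wv$ for every $w \in \tA_I^*$. The plan is to reduce this to one combinatorial fact, a \emph{concatenation formula} expressing $\mxlen{J}{w_1 w_2}$ solely through the two families $\big(\mxlen{J'}{w_1}\big)_{J'}$ and $\big(\mxlen{J''}{w_2}\big)_{J''}$; granting that, compatibility is immediate, since $u \clcong v$ asserts exactly that these families coincide for $u$ and for $v$.

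Concretely, I would prove that for all $w_1,w_2 \in \tA_I^*$ and every convex $J \cnxsset I$
\begin{equation*}
  \mxlen{J}{w_1 w_2} \;=\; \max \big\{\, \mxlen{J_1}{w_1} + \mxlen{J_2}{w_2} \ \big|\ J_1,J_2 \cnxsset I,\ J_1 \cup J_2 \subseteq J,\ \max J_1 \leq \min J_2 \,\big\},
\end{equation*}
where $J_1$ or $J_2$ may be empty (then $\mxlen{\emptyset}{-}=0$ and the order condition is read as vacuous). For ``$\geq$'': given admissible $J_1,J_2$, glue a longest nondecreasing subword $u_1 \wrdsset w_1$ with letters in $\tA_{J_1}$ to a longest nondecreasing subword $u_2 \wrdsset w_2$ with letters in $\tA_{J_2}$; then $u_1u_2 \wrdsset w_1w_2$ is nondecreasing, because every letter of $u_1$ is $\leq \max J_1 \leq \min J_2 \leq$ every letter of $u_2$, and its letters lie in $\tA_J$ as $J_1 \cup J_2 \subseteq J$, so $\mxlen{J}{w_1w_2} \geq \len{u_1}+\len{u_2}$. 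For ``$\leq$'': a longest nondecreasing $u \wrdsset w_1w_2$ with letters in $\tA_J$ splits uniquely as $u=u_1u_2$ with $u_i$ a subword of $w_i$; let $J_i$ be the convex hull in $I$ of the letters occurring in $u_i$ (and $J_i=\emptyset$ if $u_i=e$). Since those letters lie in $\tA_J$ and $J$ is convex, $J_i \subseteq J$; since $u$ is nondecreasing, $\max J_1 \leq \min J_2$; and $u_i$ shows $\len{u_i} \leq \mxlen{J_i}{w_i}$, so $\mxlen{J}{w_1w_2} = \len{u_1}+\len{u_2}$ is at most the displayed maximum.

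Granting the formula, the lemma drops out: if $u \clcong v$ then $\mxlen{J_1}{u}=\mxlen{J_1}{v}$ for every convex $J_1$, hence for every convex $J$ and every $w$,
\begin{equation*}
  \mxlen{J}{uw} = \max_{(J_1,J_2)}\big(\mxlen{J_1}{u}+\mxlen{J_2}{w}\big) = \max_{(J_1,J_2)}\big(\mxlen{J_1}{v}+\mxlen{J_2}{w}\big) = \mxlen{J}{vw},
\end{equation*}
so $uw \clcong vw$, and the argument for $wu \clcong wv$ is symmetric; thus $\clcong$ is a congruence. I expect the only real obstacle to be the ``$\leq$'' half of the formula: one must check that cutting an optimal nondecreasing subword at the $w_1/w_2$ seam and replacing each half by the convex hull of its letters keeps both hulls inside $J$ (convexity of $J$) and keeps $\max J_1 \leq \min J_2$ (monotonicity of $u$), the handling of empty halves being the only remaining bookkeeping.
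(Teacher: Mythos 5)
Your proof is correct and rests on the same combinatorial move as the paper's: cut a maximal nondecreasing subword of the concatenation at the seam between the two factors, pass to the convex hulls of the two halves, and glue equal-length replacements back (the paper does this directly, by contradiction, replacing the two halves by nondecreasing subwords of $v_1$ and $v_2$ of the same lengths over the same convex sub-alphabets). Your repackaging of this as the explicit formula $\mxlen{J}{w_1w_2}=\max\big(\mxlen{J_1}{w_1}+\mxlen{J_2}{w_2}\big)$ over convex $J_1,J_2\subseteq J$ with $\max J_1\le\min J_2$ is a pleasant refinement that makes the compatibility step purely formal and spells out why the glued word is nondecreasing, a point the paper treats more tersely via its condition $(*)$.
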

\begin{proof}
  Suppose that $u_1 \clcong v_1$ and $u_2 \clcong v_2$,  and assume that  $u_1 u_2 \not \clcong v_1 v_2$.
  Then, say,  $\mxlen{J}{u_1 u_2} > \mxlen{J}{v_1 v_2}$ for some $\tA_J \cnxsset \tA_I$. Let
  with  $w \wrdsset u_1 u_2$ a longest nondecreasing subword of $u_1 u_2$ with  letters from $\tA_J$.  As such  $w$ decomposes as $w = w_1 w_2$, where $w_1 \wrdsset u_1$, $w_2 \wrdsset u_2$ are nondecreasing.   Accordingly $w_1 \in \tA_{J_1}^*$,  $w_2 \in \tA_{J_2}^*$ such that
  $\tA_{J_1} \cap \tA_{J_2} \subset \tA_J$ where
   \begin{equation}\label{eq:str}
\text{
  $\tA_{J_1} \cap \tA_{J_2} = \{ \lt_\ell \}$ for some letter $\lt_\ell \in \tA_J$, or
  $\tA_{J_1} \cap \tA_{J_2} = \emptyset$.} \tag{$*$}
\end{equation}
Since  $u_1 \clcong v_1$ and $u_2 \clcong v_2$, there are nondecreasing subwords  $w'_1 \wrdsset v_1$ in $\tA_{J_1}^*$ and  $w'_2 \wrdsset v_2$ in $\tA_{J_2}^*$  satisfying  $\wlen(w'_1) = \wlen(w_1)$ and  $\wlen(w'_2) = \wlen(w_2)$. Then \eqref{eq:str} ensures that the concatenation $w'_1 w'_2 $ is a nondecreasing subword of $u_2 v_2$ and therefore has length $\wlen(w_1) \wlen(w_2)$, and hence $\mxlen{J}{u_1 u_2} = \mxlen{J}{v_1 v_2}$ -- a contradiction.
\end{proof}

When $u \clcong v$, for $u,v \in \tA_I^*$,  we say that the words $u$ and $v$ are \textbf{\keqv}.
That is $u \clcong v$, if over any convex sub-alphabet $\tA_J$ of $\tA_I$ the length of longest nondecreasing subwords in $u$ and $v$ is the same. In particular each letter $\lt_\ell \in \tA_I$ is \keqv\ only to itself.

\begin{example}\label{exmp:bicyc-clk}
The \textbf{bicyclic monoid} is the monoid $\tB := \tA_2^*/_{\bcong}$ generated by two ordered elements $\aa < \bb$, subject to  the  relation $\bcong$ determined by
$\aa \bb = e$.
As well  known (e.g. \cite{CP}), each  element of $w \in \tB$ can  be canonically written as
\begin{equation*}\label{eq:bcy}
w = b^i a^j, \quad \text{for unique } i,j \geq 0\; . 
\end{equation*}
Accordingly, for any $w \in \tB$ we have $$\mxlen{\{ 1,2 \} }{w} = \max\big\{ \, \mxlen{\{ 1\}}{w}, \;  \mxlen{\{ 2 \}}{w} \, \big  \} \; .$$
Thus, we see that on $\tA_2^*$ the congruence $\bcong$ implies $\kcong$.

\end{example}

By Definition \ref{def:cloaktic.mon} we have the following obvious properties.
\begin{properties}\label{proper:clk} For two \keqv\ words $u \clcong v$ in $\clkM_I$  the below properties hold.
\begin{enumerate} \eroman
  \item  Each  letter $\lt_i\in \varX_I$ appears in $u$ and in~$v$ exactly the same times, i.e., the formal relation $u \clcong v$ is  balanced (cf. \S\ref{sec:3.2}).
  \item The total length of $u$ and $v$ is the same, i.e., $\wlen(u) = \wlen(v)$.
  \item If $u$ is nondecreasing, or nonincreasing,  then $u = v$.
  \item The equivalence $u \clcong v$ does not imply  that set theoretically  $\incsbwd{J}{u} =  \incsbwd{J}{v} $, $ J \cnxsset I$, nor even that longest words in $\incsbwd{J}{u}$ and  $\incsbwd{J}{v} $ are the same.
\end{enumerate}

\end{properties}

Recall that $\pcong$ denotes congruence determined by the plactic relations, which are the Knuth relations, and hence $\pcong$ is exactly $\kcong$.

\begin{remark}
\label{rmk:clk.factor} The monoid  homomorphism $\tA_I^* \To \clkM_I$ factors through the plactic monoid (Definition~ \ref{def:plactic.mon})
$$ \tA_I^* \ONTO \tA_I^*/_ {\pcong} \ONTO \tA_I^*/ _{\clcong},$$
as will be seen later in  Proposition \ref{prop:tab-to-cloc} and Corollary \ref{cor:plc.to.clk}.
Furthermore, from this factorization,  we can conclude that elements $w \in \clkM_2$ of the  \kmon\ of rank $2$ have a canonical form
 $$ w= b^i a^j b^k, \qquad i \leq j, \ i,j,k \in \N_0,$$
 generalizing  Example \ref{exmp:bicyc-clk}.

\end{remark}

\subsection{Linear representations of the \kmon}\label{ssec:clk.rep}
\sSkip

The troplactic matrix algebra $\mfA_n = \genr{\Oix{A}{1}, \dots \Oix{A}{n}}$, as defined in \eqref{eq:mAlg}, is now utilized to introduce a linear representation of the finitely generated \kmon\ $\clkM_n = \genr{\lt_1, \dots, \lt_n}$ of rank $n$ (Definition~\ref{def:cloaktic.mon}).

\begin{theorem}\label{thm:clk.represntation}
The map \begin{equation}\label{eq:clk.rep}
\clkrep: \clkM_n \TO \mfAml_n, \qquad \lt_\ell \longmapsto \Pv{\Oix{A}{\ell}}, \quad \ell  = 1, \dots, n,
\end{equation}
defined by mapping of generators,  i.e.,
$$ \clkrep(w) = \clkrep(\lt_{\ell_1}) \cdots \clkrep(\lt_{\ell_m}), \qquad w = \lt_{\ell_1} \cdots \lt_{\ell_m} \in \tA_n^*,$$
and sends the empty word $e$ to $E$, is a monoid isomorphism -- a  faithful linear representation of the \kmon\ $\clkM_n$ of rank $n$.

\end{theorem}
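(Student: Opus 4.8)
The plan is to establish that $\clkrep$ is (a) a well-defined monoid homomorphism, (b) injective, and (c) surjective onto $\mfAml_n$. Step (a) requires showing that the generator map respects the congruence $\clcong$: if $u \clcong v$ then $\clkrep(u) = \clkrep(v)$. The essential tool here is Key Lemma~\ref{klem:clk.represntation}, which tells us that for $U = \clkrep(u)$ with $u = \lt_{\ell_1}\cdots\lt_{\ell_m}$, the $(i,j)$-entry $u_{i,j}$ (for $i \le j$) is $\flt$ raised to the power $\mxlen{\itoj{i}{j}}{u}$, while for $i > j$ the entry is $\zero$ (the matrices are triangular). Since the convex subsets $J \cnxsset \{1,\dots,n\}$ are exactly the intervals $\itoj{i}{j}$ with $i \le j$, the matrix $\clkrep(u)$ is completely determined by the tuple $\big(\mxlen{J}{u}\big)_{J \cnxsset N}$. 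Hence $u \clcong v$ forces $\clkrep(u) = \clkrep(v)$ entry by entry, so $\clkrep$ descends to a well-defined map on $\clkM_n = \tA_n^*/_{\clcong}$; it is a homomorphism by construction (products of generators map to products of images) and sends $e$ to $E$, the multiplicative identity of $\mfAml_n$.

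For injectivity, suppose $\clkrep(u) = \clkrep(v)$. Reading off the diagonal-band entries via Key Lemma~\ref{klem:clk.represntation}, we get $\mxlen{J}{u} = \mxlen{J}{v}$ for every convex $J \cnxsset N$, which is precisely the defining condition $u \clcong v$; so $u = v$ already in $\clkM_n$. Thus the induced map on $\clkM_n$ is injective.

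For surjectivity, every element of $\mfAml_n = (\mfA_n \setminus \{(\zero)\}, \cdot\,)$ is, by definition of $\mfA_n$ in \eqref{eq:mAlg}, a finite product of the generators $\Oix{A}{\ell} = E \+ \Oix{F}{\ell}$ (the empty product being $E$). Each such generator is $\clkrep(\lt_\ell)$ and $E = \clkrep(e)$, so any product $\Oix{A}{\ell_1}\cdots\Oix{A}{\ell_m}$ equals $\clkrep(\lt_{\ell_1}\cdots\lt_{\ell_m})$; hence $\clkrep$ is onto. Combining (a)--(c), $\clkrep$ is a bijective monoid homomorphism, i.e.\ a monoid isomorphism, and therefore a faithful linear representation of $\clkM_n$.

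\textbf{Main obstacle.}
The technical heart is entirely contained in Key Lemma~\ref{klem:clk.represntation}: once we know that the $(i,j)$-entry of $\clkrep(w)$ records exactly $\flt^{\,\mxlen{\itoj{i}{j}}{w}}$, the equivalence ``$\clkrep(u) = \clkrep(v)$'' unwinds verbatim into ``$u \clcong v$'', making both well-definedness and injectivity immediate and simultaneous. The one point deserving care is the indexing dictionary: one must check that the convex subsets of $N = \{1,\dots,n\}$ are precisely the intervals $\itoj{i}{j}$ with $1 \le i \le j \le n$ (Definition~\ref{defn:convex}), and that the triangular shape of the generators $\Oix{A}{\ell}$ guarantees $\clkrep(w)$ is upper triangular, so that sub-diagonal entries carry no information and impose no extra constraints. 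No genuinely new estimate is needed beyond what Theorem~\ref{thm:plac.mat.relations} and Key Lemma~\ref{klem:clk.represntation} already supply.
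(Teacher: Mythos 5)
Your proposal is correct and follows essentially the same route as the paper: Key Lemma~\ref{klem:clk.represntation} identifies the $(i,j)$-entries of $\clkrep(w)$ with $\flt^{\mxlen{\itoj{i}{j}}{w}}$ over all convex intervals, so equality of image matrices unwinds exactly into the defining condition of $\clcong$, giving well-definedness and injectivity simultaneously, just as in the paper's proof. Your explicit surjectivity step (every element of $\mfAml_n$ is a product of the $\Oix{A}{\ell}$, with $E$ the empty product) is only implicit in the paper and rests on the same reading, namely that $\mfAml_n$ is taken as the multiplicative monoid generated by the $\Oix{A}{\ell}$ rather than including additively generated elements of the algebra $\mfA_n$.
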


\begin{proof}
  Let  $L_m = \squ{\ell_1, \ell_2, \dots, \ell_m}$ and  $L'_{m'} = \squ{\ell'_1, \ell'_2, \dots, \ell'_{m'}}$ be two sequences with $\ell_t, \ell'_t \in \{ 1, \dots, n\}$. Write   $u = \lt_{\ell_1} \cdots \lt_{\ell_m}$ and  $v = \lt_{\ell'_1} \cdots \lt_{\ell'_{m'}}$  for   the corresponding  words in  $\clkM_n$ and consider  their linear representations
  \begin{align*}
  U = \clkrep(u) & =\clkrep(\lt_{\ell_1}) \cdots \clkrep(\lt_{\ell_m}) =\Oix{A}{\ell_1} \cdots \Oix{A}{\lt_m},
   \\  V = \clkrep(v) & =\clkrep(\lt_{\ell'_1}) \cdots \clkrep(\lt_{\ell'_{m'}}) =\Oix{A}{\ell'_1} \cdots \Oix{A}{\ell'_{m'}},   \end{align*}
 written as  $U= (u_{i,j})$ and $V = (v_{i,j})$.

  By Key Lemma \ref{klem:clk.represntation},
 the $(i,j)$-entry $u_{i,j}$, $i \leq j$,  of the matrix $U$ gives the length of the longest nondecreasing subsequence of $L_m$ that involves  terms only from  the convex subsequence  $\itoj{i}{j} \sqsubseteq \itoj{1}{n}$.  The same holds for the $(i,j)$-entry $v_{i,j} $ with respect to $L'_{m'}$.
  Taking  the generators corresponding to $L_{m}$ and $L'_{m'}$ in $\clkM_n$, by the definition of the congruence  $\clcong$, it  implies that $u \clcong v$ iff
$U = V$.
\end{proof}

\begin{corollary}\label{cor:clk.knuth} The \kmon\ $\clkM_n$ satisfies the Knuth relations  \eqref{eq:knuth.rel}.
\end{corollary}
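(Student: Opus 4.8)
The plan is to deduce the statement immediately from the faithful representation constructed in Theorem~\ref{thm:clk.represntation}, together with the fact, established in Theorem~\ref{thm:plac.mat.relations}, that the image algebra $\mfA_n$ is a troplactic algebra and hence --- by Theorem~\ref{thm:plc.Alg.1} --- satisfies the Knuth relations multiplicatively. First I would recall that $\clkrep : \clkM_n \isoto \mfAml_n$ is a monoid isomorphism carrying each generator $\lt_\ell$ of $\clkM_n$ to the generator $\Pv{\Oix{A}{\ell}}$ of $\mfAml_n$, and that the ordering of generators is preserved, so that $\lt_1 < \cdots < \lt_n$ corresponds to $\Pv{\Oix{A}{1}} < \cdots < \Pv{\Oix{A}{n}}$.

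By Theorem~\ref{thm:plac.mat.relations}, for any triple of generators $A = \Pv{\Oix{A}{p}}$, $B = \Pv{\Oix{A}{q}}$, $C = \Pv{\Oix{A}{r}}$ with $p \le q \le r$ one has $ACB = CAB$ when $p \le q < r$ and $BAC = BCA$ when $p < q \le r$; that is, the generators of $\mfAml_n$ obey \KXa\ and \KXb. Applying $\clkrep^{-1}$ to these identities, and using that a bijective monoid homomorphism preserves and reflects equalities of products of generators, we obtain $\lt_a \lt_c \lt_b = \lt_c \lt_a \lt_b$ in $\clkM_n$ whenever $\lt_a \le \lt_b < \lt_c$, and $\lt_b \lt_a \lt_c = \lt_b \lt_c \lt_a$ whenever $\lt_a < \lt_b \le \lt_c$. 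This is precisely the assertion that $\clkM_n$ satisfies \eqref{eq:knuth.rel}; equivalently, the congruence $\clcong$ on $\tA_n^*$ contains the plactic congruence $\kcong$, so that the identity map on $\tA_n^*$ descends to a surjective homomorphism $\plcM_n \ONTO \clkM_n$.

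There is essentially no obstacle here: the genuine work has already been carried out in showing that the corner matrices $\Oix{A}{\ell}$ generate a troplactic algebra (Theorem~\ref{thm:plac.mat.relations}) and that this algebra faithfully represents $\clkM_n$ (Theorem~\ref{thm:clk.represntation}). If one preferred a self-contained verification avoiding matrices, one could instead check directly from the definition \eqref{eq:clk.cong} that for $\lt_a \le \lt_b < \lt_c$ (resp.\ $\lt_a < \lt_b \le \lt_c$) the two words $\lt_a \lt_c \lt_b$ and $\lt_c \lt_a \lt_b$ (resp.\ $\lt_b \lt_a \lt_c$ and $\lt_b \lt_c \lt_a$) have the same longest nondecreasing subword length over every convex sub-alphabet $\tA_J \cnxsset \tA_I$; but this is exactly the content packaged by Key Lemma~\ref{klem:clk.represntation}, so invoking the representation is the economical route.
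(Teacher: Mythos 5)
Your argument is correct and is essentially the paper's own proof: the paper likewise deduces the corollary immediately from the isomorphism $\clkrep:\clkM_n \isoto \mfAml_n$ of Theorem~\ref{thm:clk.represntation} together with Theorem~\ref{thm:plac.mat.relations}, which gives the Knuth relations in $\mfAml_n$. Your additional remark about a direct verification via \eqref{eq:clk.cong} is a fine aside but not needed.
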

\begin{proof}
$\clkrep: \clkM_n \To \mfAml_n$ is isomorphism by Theorem \ref{thm:clk.represntation},  where $\mfAml_n$ satisfies the Knuth relations \eqref{eq:knuth.rel} by Theorem~ \ref{thm:plac.mat.relations}.
  \end{proof}

\begin{corollary}\label{cor:clk.id} The \kmon\ $\clkM_n$ admits all the semigroup identities satisfied by  $\TMatnT$, in particular the semigroup identities  \eqref{eq:iduniv2.2}.
\end{corollary}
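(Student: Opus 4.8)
The plan is to deduce this at once from the isomorphism $\clkrep : \clkM_n \ISOTO \mfAml_n$ of Theorem~\ref{thm:clk.represntation}, together with the elementary principle that satisfaction of a semigroup identity passes to subsemigroups and is invariant under isomorphism. First I would record that principle precisely: if a semigroup $\tS$ satisfies an identity $\Id : u = v$ and $\tS' \subseteq \tS$ is a subsemigroup, then every homomorphism $\varphi : \varX^+_I \To \tS'$ is in particular a homomorphism into $\tS$, whence $\varphi(u) = \varphi(v)$, so $\tS'$ satisfies $\Id$ as well; similarly, if $\psi : \tS \To \tS''$ is an isomorphism and $\tS$ satisfies $\Id$, then transporting homomorphisms along $\psi$ shows that $\tS''$ satisfies $\Id$.

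Next I would verify the one structural point actually needed, namely that $\mfAml_n$ is a submonoid of $\TMatnT$. This is immediate from the construction~\eqref{eq:mAlg}: each generator $\Oix{A}{\ell} = \eM \+ \Oix{\fM}{\ell}$ is the (entrywise maximum) sum of two upper triangular tropical matrices, the layout matrix $\eM$ of~\eqref{eq:E.mat} and the \eflat\ $\ell$-corner matrix $\Oix{\fM}{\ell}$ of~\eqref{eq:F.mat.2}, and is therefore itself upper triangular; since $\TMatnT$ is a submonoid of $\MatnT$ containing $\eM$, the monoid $\mfAml_n = \genr{\Oix{A}{1}, \dots, \Oix{A}{n}}$ generated by these matrices lies inside $\TMatnT$.

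Combining the two steps gives the result: for any semigroup identity $\Id$ satisfied by $\TMatnT$, the submonoid $\mfAml_n$ satisfies $\Id$, and then by Theorem~\ref{thm:clk.represntation} and isomorphism-invariance so does $\clkM_n \cong \mfAml_n$. For the stated particular case, Theorem~\ref{thm:trMat.Id} asserts that $\TMatnT$ satisfies the identities $\Id_{(C,n-1,n-1)}$ of~\eqref{eq:iduniv2.2} with $x = AB$, $y = BA$, so $\clkM_n$ satisfies them as well, with the substitution $x = uv$, $y = vu$ ranging over $u,v \in \clkM_n$. There is essentially no obstacle in this argument: all the work has already been done in Key Lemma~\ref{klem:clk.represntation} and Theorem~\ref{thm:clk.represntation}, and what remains is the routine ``identities descend to subsemigroups and are isomorphism-invariant'' observation plus the trivial check that the representing matrices are triangular.
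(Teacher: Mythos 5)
Your argument is exactly the paper's: it invokes the isomorphism $\clkrep:\clkM_n \Isoto \mfAml_n$ of Theorem~\ref{thm:clk.represntation}, notes that $\mfAml_n$ is a submonoid of $\TMatnT$, and applies Theorem~\ref{thm:trMat.Id} together with the standard fact that identities descend to subsemigroups and transfer along isomorphisms. The extra details you spell out (triangularity of the generators, the transport-of-homomorphisms principle) are correct and merely make explicit what the paper leaves implicit.
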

\begin{proof}
  $\clkM_n$ is faithfully represented by $\mfAml_n$ -- a submonoid of $\TMatnT$ -- which by Theorem \ref{thm:trMat.Id} satisfies the identities  \eqref{eq:iduniv2.2}.
\end{proof}

The  map $\clkrep$ does not record explicitly longest convex subwords,
but only their lengths. However, by Key Lemma \ref{klem:clk.represntation}, we see that each diagonal entry $u_{\ell,\ell}$ of $U = \clkrep(w)$ records precisely the number of times that the letter $\lt_\ell$ appears in $w$, and thus also  the total length of $w$ as the multiplicative trace \eqref{eq:char.mlt}. The additive trace \eqref{eq:char.add} gives the maximal occurrence of a letter,

\begin{lemma}\label{lem:injec.ndc} Let $T_n \subset \tA_n^*$ be the subset of all nondecreasing words over the alphabet  $\tA_n$.
Then the restriction
$$ \clkrep|_{T_n} : T_n \TO  \MPlc_n,
$$
of the homomorphism \eqref{eq:clk.rep} to $T_n$ is a bijective map.
\end{lemma}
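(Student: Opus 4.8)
The claim is that $\clkrep$ restricted to nondecreasing words $T_n$ is a bijection onto $\MPlc_n$. I plan to show injectivity and surjectivity separately, with the bulk of the work going into surjectivity.

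For injectivity, I would use Property \ref{proper:clk}.(iii): distinct nondecreasing words are never cloaktically equivalent, so since $\clkrep$ is an isomorphism from $\clkM_n$ (Theorem \ref{thm:clk.represntation}), distinct nondecreasing words map to distinct matrices. More concretely and self-containedly: a nondecreasing word $w = \lt_1^{q_1} \cdots \lt_n^{q_n}$ (with each $q_\ell \ge 0$) is fully recovered from $U = \clkrep(w)$ because, by Key Lemma \ref{klem:clk.represntation}, the diagonal entry $u_{\ell,\ell} = \flt^{q_\ell}$ records the multiplicity of $\lt_\ell$ in $w$; since $w$ is nondecreasing it is determined by these multiplicities. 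So injectivity is immediate.

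For surjectivity, I would take an arbitrary $M = (m_{i,j}) \in \MPlc_n$, i.e., $M = \Oix{A}{\ell_1} \cdots \Oix{A}{\ell_m}$ for some index sequence $L_m$, and produce a nondecreasing word $w'$ with $\clkrep(w') = M$. The natural candidate is $w' := \lt_1^{q_1} \lt_2^{q_2} \cdots \lt_n^{q_n}$ where $\flt^{q_\ell} = m_{\ell,\ell}$, i.e., $q_\ell$ is the diagonal exponent. One must then check $\clkrep(w') = M$ entrywise. For a nondecreasing word, Key Lemma \ref{klem:clk.represntation} says the $(i,j)$-entry of $\clkrep(w')$ is $\flt$ to the power of the longest nondecreasing subword of $w'$ using only letters $\lt_i, \dots, \lt_j$; since $w'$ is itself nondecreasing, that longest subword is simply $\lt_i^{q_i} \cdots \lt_j^{q_j}$, of length $q_i + \cdots + q_j$. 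So I must verify that for the original word $w$ underlying $M$, the longest nondecreasing subword restricted to the convex alphabet $\{\lt_i,\dots,\lt_j\}$ also has length $q_i + \cdots + q_j$, where $q_\ell$ is the multiplicity recorded by $m_{\ell,\ell}$. This is the crux: a nondecreasing subword using only letters in $\{i,\dots,j\}$ can use each such letter with multiplicity at most its count in $w$, and since it is nondecreasing it can in fact use all of them; hence its maximal length is exactly $\sum_{\ell=i}^j (\text{count of } \lt_\ell \text{ in } w) = \sum_{\ell=i}^j q_\ell$. This matches, so $\clkrep(w') = M$ and $w' \in T_n$.

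The main obstacle, such as it is, is being careful with the Key Lemma bookkeeping: one needs that for a nondecreasing word the "longest nondecreasing subword over a convex sub-alphabet" equals the full count of letters in that sub-alphabet, and that the diagonal entries of any matrix in $\MPlc_n$ genuinely record per-letter multiplicities (again via Key Lemma \ref{klem:clk.represntation} applied to the single-element convex subsequences $\itoj{\ell}{\ell}$). Once those two observations are in place, injectivity and surjectivity both follow by direct comparison of entries using the Key Lemma, with no further computation. I would organize the proof as: (1) recall from Key Lemma \ref{klem:clk.represntation} that $m_{\ell,\ell}$ records the multiplicity $q_\ell$ of $\lt_\ell$; (2) for $w' \in T_n$ with multiplicities $q_\ell$, compute $\clkrep(w')_{i,j} = \flt^{q_i + \cdots + q_j}$ directly; (3) for injectivity, note $w'$ is recovered from its diagonal; (4) for surjectivity, given $M \in \MPlc_n$ with underlying word $w$, set $w' := \prod_\ell \lt_\ell^{q_\ell}$ and observe $\clkrep(w') = \clkrep(w) = M$ since both sides agree on all entries by the convex-subword count argument.

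\begin{proof}
Write each generator's non-zero value as $\flt$, so that by Key Lemma \ref{klem:clk.represntation}, for any product $U = \clkrep(w)$ with $w \in \tA_n^+$ and any $i \leq j$, the entry $u_{i,j}$ equals $\flt$ raised to the power $\mxlen{\itoj{i}{j}}{w}$, the length of the longest nondecreasing subword of $w$ using only the letters $\lt_i, \dots, \lt_j$. In particular, taking $i = j = \ell$, the diagonal entry $u_{\ell,\ell}$ equals $\flt^{q_\ell}$, where $q_\ell$ is the number of occurrences of $\lt_\ell$ in $w$.

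\emph{Injectivity.} Let $w, w' \in T_n$ with $\clkrep(w) = \clkrep(w')$. Comparing diagonal entries and using that $\flt^m \mapsto m$ is a bijection on $\N_0$, the letter $\lt_\ell$ occurs the same number of times $q_\ell$ in $w$ and in $w'$ for each $\ell = 1, \dots, n$. Since a nondecreasing word over $\tA_n$ is uniquely determined by the multiplicities of its letters, namely it must equal $\lt_1^{q_1} \lt_2^{q_2} \cdots \lt_n^{q_n}$, we conclude $w = w'$.

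\emph{Surjectivity.} Let $M = (m_{i,j}) \in \MPlc_n$, say $M = \clkrep(w)$ for some word $w \in \tA_n^*$ (if $M = E$, take $w = e$). For $\ell = 1, \dots, n$, let $q_\ell \in \N_0$ be determined by $\flt^{q_\ell} = m_{\ell,\ell}$, that is, $q_\ell$ is the number of occurrences of $\lt_\ell$ in $w$, and set
\begin{equation*}
 w' := \lt_1^{q_1} \lt_2^{q_2} \cdots \lt_n^{q_n} \in T_n.
\end{equation*}
We claim $\clkrep(w') = M$. Write $\clkrep(w') = (m'_{i,j})$. Fix $i \leq j$. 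On one hand, by Key Lemma \ref{klem:clk.represntation} applied to the nondecreasing word $w'$, the longest nondecreasing subword of $w'$ with letters in $\{\lt_i, \dots, \lt_j\}$ is $\lt_i^{q_i} \cdots \lt_j^{q_j}$ itself, of length $\sum_{\ell = i}^j q_\ell$, so $m'_{i,j} = \flt^{\sum_{\ell=i}^j q_\ell}$. On the other hand, a nondecreasing subword of $w$ using only the letters $\lt_i, \dots, \lt_j$ may use the letter $\lt_\ell$ at most $q_\ell$ times (its total count in $w$), and since such a subword is nondecreasing it may in fact use every one of these occurrences; hence the maximal length of such a subword equals $\sum_{\ell = i}^j q_\ell$. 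By Key Lemma \ref{klem:clk.represntation} again, $m_{i,j} = \flt^{\sum_{\ell=i}^j q_\ell}$. Thus $m'_{i,j} = m_{i,j}$ for all $i \leq j$, and since both matrices are upper triangular (they lie in $\mfAml_n \subset \TMatnT$), $\clkrep(w') = M$. Therefore $\clkrep|_{T_n}$ is surjective, and the proof is complete.
\end{proof}
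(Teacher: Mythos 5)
Your injectivity argument is correct and is essentially the paper's own proof: the paper recovers the exponents of a nondecreasing word $w=\lt_1^{q_1}\cdots\lt_n^{q_n}$ from the entries of $U=\clkrep(w)$ via Key Lemma \ref{klem:clk.represntation} (using the first row $u_{1,k}=\sum_{t\leq k}q_t$, and noting the alternative $q_k=u_{k,k}$ that you use), so that half needs no change.

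The surjectivity half, however, contains a genuine error. The claim that a nondecreasing subword of $w$ over the letters $\lt_i,\dots,\lt_j$ ``may in fact use every one of these occurrences'' is false: a subword must respect the left-to-right order of occurrences in $w$, so if a larger letter occurs before a smaller one, the two occurrences cannot both lie in a single nondecreasing subword. Hence in general $\mxlen{\itoj{i}{j}}{w}$ is strictly smaller than $\sum_{\ell=i}^{j}q_\ell$, and your candidate $w'$ does not represent $M$. Concretely, for $w=\lt_2\lt_1$ the matrix $M=\clkrep(w)=\Oix{A}{2}\Oix{A}{1}\in\MPlc_2$ has its $(1,1)$-, $(2,2)$- and $(1,2)$-entries all equal to $\flt$, while the only nondecreasing word with one occurrence of each letter is $w'=\lt_1\lt_2$, whose image has $(1,2)$-entry $\flt^2$; in fact no element of $T_2$ maps to $M$, since matching the diagonal forces $q_1=q_2=1$ and then the $(1,2)$-entry is $\flt^2\neq\flt$. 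So surjectivity onto $\MPlc_n$ is not merely unproved but false, and the lemma must be read the way the paper proves it and later uses it (e.g.\ in Lemma \ref{lem:row.1}): $\clkrep|_{T_n}$ is injective, i.e.\ a bijection of $T_n$ onto its image inside $\MPlc_n$, not onto all of $\MPlc_n$.
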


\begin{proof} A (nondecreasing) word  $w \in T_n$ is  written uniquely as  $$w = \lt_{1} ^{q_1} \cdots
\lt_n^{q_n}, \qquad  q_k \in
\N_0.$$
Let $U= \clkrep(w)$, $U = (u_{i,j})$,  be the image of $w$ in $\mfAml_n$.
By  Key Lemma \ref{klem:clk.represntation} we have
$$u_{1,k} = \sum_{t =1 }^k q_t, \qquad \text{for every  } k = 1,\dots,n,$$
and therefore the $q_k$'s are uniquely computed in terms of  the entries of $U$
as
$$q_1 = u_{1,1} \ \text{ and } \ q_k = u_{1,k}-u_{1,k-1} \text{ for }  k =2,\dots,n. $$
(Alternatively, $q_k = u_{k,k}$, for every $k = 1, \dots, n$.)
\end{proof}

\begin{example}\label{exmp:mat.clk.3x3}
We provide the full details of the tropical linear  representation
\begin{equation*}
\clkrep: \clkM_3 \TO \mfAml_3, \qquad \lt_\ell  \mTo \Oix{A}{\ell}, \quad \ell = 1,2,3,
 \end{equation*}
 of the \kmon\ $\clkM_3 = \CLK(\tA_3)$ of rank $3$,
given by the generators'  map
\begin{align*}
\lt_1 \mTo
A= \mresize{
\begin{bmatrix}
 1 & 1 & 1 \\
  & 0  & 0  \\
   &   & 0
\end{bmatrix}},   \quad
\lt_2 \mTo  B= \mresize{
\begin{bmatrix}
 0 & 1 & 1 \\
  & 1 & 1 \\
   &   & 0
\end{bmatrix} } ,
 \quad
\lt_3 \mTo C = \mresize{
\begin{bmatrix}
 0 & 0 & 1 \\
  & 0 & 1  \\
   &   & 1
\end{bmatrix} } ,
\end{align*}
where $A = \Oix{A}{1}$, $B = \Oix{A}{2}$, $C = \Oix{A}{3}$, and $\pv = 1$.
(Recall that $\one := 0$, and the empty space stand for $\zero := -\infty$.)
For these matrix generators  we obtain
\begin{align*}
ACB = CAB = \mresize{
\begin{bmatrix}
 1 & 2 & 2 \\
   & 1 & 1 \\
   &   & 1
\end{bmatrix} }
,  \qquad  BAC = BCA = \mresize{
\begin{bmatrix}
 1 & 1 & 2 \\
   & 1 & 2 \\
   &   & 1
\end{bmatrix}},
\end{align*}
where for pairs of generators we have
\begin{align*}
ABA = BAA = \mresize{
\begin{bmatrix}
 2 & 2 & 2 \\
   & 1 & 1 \\
   &   & 0
\end{bmatrix} }
 , \qquad BBA = BAB = \mresize{
\begin{bmatrix}
 1 & 2 & 2 \\
   & 2 & 2 \\
   &   & 0
\end{bmatrix} }
,
\end{align*}

\begin{align*}
ACA = CAA=\mresize{
\begin{bmatrix}
 2 & 2 & 2 \\
   & 0 & 1 \\
   &   & 1
\end{bmatrix}},
\qquad   CCA= CAC = \mresize{
\begin{bmatrix}
 1 & 1 & 2 \\
   & 0 & 2 \\
   &   & 2
\end{bmatrix}}
,
\end{align*}

\begin{align*}
BCB = CBB= \mresize{
\begin{bmatrix}
 0 & 2 & 2 \\
   & 2 & 2 \\
   &   & 1
\end{bmatrix}},
\qquad  CCB= CBC
= \mresize{
\begin{bmatrix}
 0 & 1 & 2 \\
   & 1 & 2 \\
   &   & 2
\end{bmatrix}},
\end{align*}
while
\begin{align*}  ABC = \mresize{
\begin{bmatrix}
 1 & 2 & 3 \\
   & 1 & 2 \\
   &   & 1
\end{bmatrix}},
\qquad CBA = \mresize{
\begin{bmatrix}
 1 & 1 & 1 \\
   & 1 & 1 \\
   &   & 1
\end{bmatrix}}
.
\end{align*}

Finally, by Corollary \ref{cor:clk.id}, the \kmon\   $\clkM_3$ of rank $3$ satisfies the semigroup identities \eqref{eq:iduniv2.2}, in particular  \eqref{eq:id2}:
\begin{equation*} \Id_{(C,2,2)}: \quad
    \ll2 \ds{\underline{x}}  \ll2 \ds =  \ll2 \ds{\underline{x}}
     \ll2 \
\end{equation*}
by letting  $x = uv$ and $y=vu$, for any   $u,v \in  \clkM_3$.
\end{example}

\begin{remark}\label{rem:reverasl.rep}
Let $\std{w} = \lt_{\ell_1} \cdots \lt_{\ell_m}$ be a word in the free monoid $\tA^*_n$, represented  in $\mfAml_n$  by the matrix product $\Oix{A}{\ell_1} \cdots \Oix{A}{\ell_m }$, via the homomorphism
$ \clkrep:  \tA_n^* \To \mfAml,$
 cf. \eqref{eq:clk.rep}.
The reversal $\rvs{w}$ of the word $\std{w}$  is represented in $\mfAml_n$ by the product
$$ \clkrep:  \rvs{w} \longmapsto \big((\Oix{A}{\ell_1})^\trn \cdots (\Oix{A}{\ell_m })^\trn\big)^\trn.$$
 Indeed, by standard composition of transpositions we have:
\begin{align*}
  \clkrep(\rvs{w}) =  \Oix{A}{\ell_m} \cdots \Oix{A}{\ell_1}&  = \big((\Oix{A}{\ell_m})^\trn\big)^\trn \cdots \big((\Oix{A}{\ell_1})^\trn\big)^\trn \\ & = \big((\Oix{A}{\ell_1})^\trn \cdots (\Oix{A}{\ell_m })^\trn\big)^\trn.
\end{align*}
\end{remark}

In digraph view, cf. \S\ref{ssec:digraph}, the transposition of a matrix $A = (a_{i,j})$ is interpreted as the redirecting of all edges in the associated digraph $\grph_A := (\ver, \arc)$ of $A$. Namely,
replacing each edge $\e_{i,j} \in \arc$ by the edge $\e_{j,i}$ with opposite direction, but with the same weight. Thus, by Key Lemma \ref{klem:clk.represntation}, it means that $\clkrep(\rvs{w})$ gives the lengths of longest  nonincreasing subswords in $\std w$ over the  convex sub-alphabets  of $\tA_n$.

\begin{remark}\label{rem:reveras.clack}
Two words $u,v \in \clkM_n$  can be \keqv, i.e., $ u \clcong v $, while their reversals $\rvs{u}$ and $\rvs{v}$ are not \keqv, or vise versa.  For example, take the words  $ u = c \; b^2 c^2 \; a^2 b^2 c $ and    $v = c^2 \; b^2 c \; a^2 b^2 c $ for which $\rvs{u} = c b^2 a^2 c^2 b^2 c$ and
$\rvs{v} = c b^2 a^2 c b^2 c^2$.  Hence $ u \clcong v $ but $ \rvs u \not \clcong \rvs v $.
\end{remark}
The  example in the  remark is a pathological example, and it will have further uses in the paper.

\begin{observation}\label{obs:clk.additive} Let $u, v \in \clkM_n$ be elements represented in  $\mfAml$ by  $U  = \clkrep(u)$, $V= \clkrep(v)$, and consider the matrix sum $W = U + V$. Since
  by  Theorem  \ref{thm:clk.represntation}
the representation   $\clkrep: \clkM_n \To \mfAml_n$, cf. \eqref{eq:clk.rep},  is isomorphism then $w = \inv{\clkrep}(W)$ is word in $\clkM_n$ that contains  $u$ an $v$ as subwords, in which the lengths of longest  nondecreasing subwords \eqref{eq:max.inc.len} satisfy
$$ \mxlen{J}{w} \leq  \mxlen{J}{u } + \mxlen{J}{v}, \qquad \mxlen{J}{w} =  \max \{ \, \mxlen{J}{u } , \, \mxlen{J}{v} \, \},  $$
for every convex sub-alphabet $\tA_J \cnxsset \tA_n.$
\end{observation}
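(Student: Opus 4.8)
The plan is to read the statement directly off Key Lemma \ref{klem:clk.represntation}, using only that tropical matrix addition is computed entry-wise and is idempotent.

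First I would check that $w$ is well defined. Since $\mfA_n$ is a subsemiring of $\MatnT$ it is closed under $\+$, so $W = U \+ V \in \mfA_n$; and since each generator satisfies $\Oix{A}{\ell} = E \+ \Oix{F}{\ell} \geq E$ and $E$ is multiplicatively idempotent, every element of $\mfAml_n$ dominates $E$, whence $W \geq U \geq E \neq (\zero)$ and $W \in \mfAml_n$. By Theorem \ref{thm:clk.represntation} the representation $\clkrep : \clkM_n \To \mfAml_n$ is an isomorphism, so there is a unique $w \in \clkM_n$ with $\clkrep(w) = W$; this is the $w$ in the statement.

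Next I would set up the dictionary furnished by Key Lemma \ref{klem:clk.represntation}. Every convex sub-alphabet of $\tA_n$ has the form $\tA_J$ with $J = \itoj{i}{j}$ for some $1 \leq i \leq j \leq n$, and for any word $x \in \tA_n^*$ with $\clkrep(x) = (x_{s,t})$ the lemma gives $x_{i,j} = \pv^{\mxlen{J}{x}}$, the exponent being the length of a longest nondecreasing subword of $x$ with letters in $\tA_J$. Since $\pv$ is generic with $\pv > \one$, the assignment $t \mapsto \pv^t$ is a strictly increasing bijection of $\N_0$ onto its image --- this is precisely the bijection $\pv^m \mapsto m$ invoked at the end of the proof of Key Lemma \ref{klem:clk.represntation} --- so for any words $x, y, z \in \tA_n^*$ one has $x_{i,j} \geq y_{i,j}$ in $\Trop$ iff $\mxlen{J}{x} \geq \mxlen{J}{y}$, and $x_{i,j} = y_{i,j} \+ z_{i,j}$ iff $\mxlen{J}{x} = \max\{\mxlen{J}{y}, \mxlen{J}{z}\}$.

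Finally I would combine the two. Applying the last equivalence entry-wise to $W = U \+ V$ gives, for every convex sub-alphabet $\tA_J$ of $\tA_n$,
\[
\mxlen{J}{w} = \max\{\mxlen{J}{u}, \mxlen{J}{v}\},
\]
which is the asserted equality; the inequality $\mxlen{J}{w} \leq \mxlen{J}{u} + \mxlen{J}{v}$ is then immediate since $\max(a,b) \leq a + b$ for $a,b \in \N_0$. Moreover $W \geq U$ and $W \geq V$ entry-wise (being the entry-wise maximum), so the first equivalence yields $\mxlen{J}{w} \geq \mxlen{J}{u}$ and $\mxlen{J}{w} \geq \mxlen{J}{v}$ for every convex $\tA_J$; specializing to singletons $J = \{\ell\}$ this says that each letter $\lt_\ell$ occurs in $w$ at least as often as in $u$ and as in $v$, which is the precise sense in which $w$ contains $u$ and $v$ as subwords. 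I do not expect any genuine obstacle: the only points needing care are the verification that $W$ indeed lies in the image $\mfAml_n$ of $\clkrep$ (so that $w$ exists), and the remark that the entries of $\clkrep$ run over exactly the quantities $\mxlen{J}{\cdot}$, $\tA_J$ convex, that appear in the statement.
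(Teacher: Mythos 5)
Your argument is correct and coincides with the justification the paper leaves implicit: the Observation is stated without a proof, and its content is exactly what you extract from Key Lemma \ref{klem:clk.represntation} together with the isomorphism of Theorem \ref{thm:clk.represntation}, reading the entrywise tropical sum $W=U\+V$ through the dictionary $u_{i,j}=\pv^{\mxlen{[i,j]}{u}}$ (so that entrywise $\+$ becomes $\max$ of lengths, and $\max\leq$ sum gives the stated inequality). Your cautious reading of ``contains $u$ and $v$ as subwords'' as letter-multiplicity containment, obtained from the singleton alphabets $J=\{\ell\}$, is also the right one: the literal subword claim is not what the matrix sum delivers (e.g.\ for $u=\lt_1\lt_2$ and $v=\lt_2\lt_1$ one gets $W=U$, hence $w=u$, which does not contain $v$ as a genuine subword), and nothing stronger is needed for the displayed relations.
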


From this observation it follows that the additive monoid of $\mfA_n$ has the structure of a lattice.

\subsection{Algorithm and complexity}
\sSkip

We utilize  our faithful representation of the \kmon\ (Theorem \ref{thm:clk.represntation}, \S\ref{ssec:clk.rep}) to provide the following efficient algorithm.
By ``roughly similar length'' we mean lengths that are equal up to $\pm 1$.

\begin{algorithm}\label{algr:clk}  Given a word $w \in \tA_n^+$ over a finite alphabet $\tA_n = \{\lt_1, \dots, \lt_n \} $, find the maximal lengthes  of its nondecreasing subwords over each  convex sub-alphabet $\tA_J \cnxsset \tA_n$.
\begin{enumerate} \dispace
  \item Decompose $w$ into two subwords of roughly similar length, repeat this decomposition as  long as possible.

  \item Represent each letter $\lt_\ell$ of $w$ by the matrix $\Pv{\Oix{A}{\ell}}$ as in \eqref{eq:mAlg}.

  \item Roll  back the recursive decomposition of step (1) where in each backward step multiply the resulting matrices obtained from the previous step.
  \end{enumerate}
\end{algorithm}

For complexity considerations,  to be compatible with the customarily notation, we switch notation and denote the  number of letters in the underlining alphabet $\tA_m$ by $m$; the length of the input (i.e., the length of the input   word) is then denoted by  $n$.
We assume that $\tA_m$ is a finite alphabet.
\begin{complexity*} Algorithm \ref{algr:clk} has a  linear time complexity $O(n)$.
\end{complexity*}
\begin{proof}
Reading an input word $w \in \tA_m^+$ of length $n$ over a finite alphabet $\tA_m$, is performed in linear time.
  Computing the product of two $m \times m$ \eflat\ corner matrices requires  a constant time of  order ~ $m^2$. As the algorithm  is recursive, we have $\log(n)$ matrix multiplications, which in total takes   $m^2 \log(n)$ operations. Putting all together, as $m$ is constant we have $O(\log(n))$ effective operations and reading in $O(n)$, which sum up to $O(n)$.
\end{proof}
To the best of our knowledge, no other known  algorithm offers such efficient time complexity for this problem.

\subsection{The \ckmon}\label{ssec:co.clk.rep} \sSkip

In the view of Remark \ref{rmk:mir.smon}, we employ  the \cmirr\ of words (Definition  \ref{def:mirror}) to introduce the following monoid -- a second coarsening of the plactic monoid.

\begin{definition}\label{def:co.cloaktic.mon}
The \textbf{\ckmon} of rank $n$ is the monoid $\coclkM_n := \CCLK(\tA_n)  $ generated by a finite ordered  set of elements $\tA_n : = \{\aa_1, \dots, \aa_n \}$,  subject to the equivalence relation $\cclcong$, defined by \eqref{eq:letter.mir} as
$$ u \cclcong v \Iff \lcmp{n}{v} \clcong \lcmp{n}{u}.$$
Namely $\coclkM_n := \tA_n^*/ _{\cclcong}$.

\end{definition}
\noindent  We write $\CCLK_n$  for $\CCLK(\tA_n)$ when the alphabet $\tA_n$ is arbitrary. As the relation $\cclcong$ is defined in terms of $\clcong$, which is a congruence (Lemma \ref{lem:clk.cong}), respected by  the \cmirr ing operation \eqref{eq:word.mirr}, then it is a congruence.
In general the equivalence $\clcong$ on $\tA_n^*$  does not implies the equivalence $\cclcong$,
i.e.,  $u \clcong v$ does not imply $u \cclcong v$ on $\tA_n^*$, or vise versa. This is shown below in Example \ref{exmp:inject.3}, after developing additional methods.

\begin{properties}\label{proper:clk} For any two words $u \cclcong v$ in $\coclkM_n$ we have the following properties:
\begin{enumerate} \eroman
  \item  Every letter $\lt_i\in \varX_n$ appears in $u$ and in~$v$ exactly the same times, i.e., the formal relation $u \cclcong v$ is  balanced (cf. \S\ref{sec:3.2}).
  \item The length of $u$ and $v$ is the same.
\end{enumerate}

\end{properties}

As the \ckmon\ $\coclkM_n$ arises from the equivalence of the \kmon\ $\clkM_n$, we manipulate the  linear representation of $\clkM_n$, cf. \S\ref{ssec:clk.rep}, to construct a representation
of the (finitely generated) \ckmon\ $\coclkM_n$. 
That is, $\coclkM_n = \genr{\lt_1, \dots, \lt_n}$ of rank $n$ is represented by the monoid homomorphism (determined by generators' mapping)
\begin{equation}\label{eq:d.clk.rep} \dclkrep_\pv: \coclkM_n \TO \mfAml_n, \qquad
\lt_\ell \mTo \lcmp{n}{A_{\ell}}, \ e \mTo E,
\end{equation}
where $\mfA_n := \genr{  \Oix{A}{1}, \dots, \Oix{A}{n} }$ is the troplactic matrix algebra defined in \eqref{eq:mAlg}. Since $\mfA_n$ is a troplacitc algebra
  (Theorem \ref{thm:plac.mat.relations}), then Corollary \ref{cor:facotrize.plc.2} implies
 the  explicitly additive decomposition
 \begin{equation}\label{eq:co.mat.elm}
                                                                                                \Oix{M}{\ell} := \lcmp{n}{A_{\ell}} =
\prod^1_{\arrs{t =n}{t \neq n - \ell + 1}} \hskip -5mm  \Oix{A}{t}
=\bigvee^n_{\arrs{t =1}{t \neq n - \ell + 1}} \hskip -5mm  \Oix{A}{t} \; . \end{equation}
Then  $\Oix{M}{\ell}$ can be written as  $$ \Oix{M}{\ell} = \pv \Oix{\chA}{\ell},$$
where $\Oix{\chA}{\ell}$ is the $\nxn$ matrix of the form
 \begin{equation}\label{eq:co.cl.A.mat}
 \Oix{\chA}{\ell}  = \mresize{
     \left(\begin{array}{llllllll}
    \one  & \cdots & \one  & \one &  & \cdots & \one \\
       & \ddots & \vdots  & \vdots &  & & \vdots \\
     &   &  \one & \one& &   &  \\
     &  &  & \ipv &  \one &  \cdots  & \one \\
     &  & & & \one & \cdots &   \one\\
    \vdots &  & & &   & \ddots &   \vdots \\
    \zero & \cdots &  & & & \cdots &   \one
    \end{array}\right)}, \qquad \ell =1 , \dots, n,
\end{equation}
 whose  $(\ell',\ell')$-entry, $\ell' = n - \ell + 1$,  is $\ipv$ and its other nonzero entries are all  $\one$.

 Letting $\sMon{(\mfAml_n)}{1} := \genr{ \Oix{M}{\ell}, \dots, \Oix{M}{\ell} }$ be the matrix submonoid of $\mfAml_n$ generated by the matrices $\Oix{M}{\ell}, \dots, \Oix{M}{\ell}$ in \eqref{eq:co.mat.elm} and with identity $E$, from \eqref{eq:d.clk.rep} we  draw  the surjective homomorphism
\begin{equation}\label{eq:d.clk.rep.2} \smon{\dclkrep_\pv}{1}: \coclkM_n \TO \sMon{(\mfAml_n)}{1}, \qquad
\lt_\ell \mTo \Oix{M}{\ell}, \ e \mTo E,
\end{equation}
by restricting the image of $\dclkrep_\pv$ to $\sMon{(\mfAml_n)}{1}$.

\begin{remark}\label{rem:embed.coc}
Using  the endomorphism $ \pMap_1: \tA^*_n \onto{1} \sMon{(\tA^*_n)}{1}$ in Remark \ref{rmk:mir.smon},
 the \ckmon\ $\coclkM_n:= \tA_n^*/_{\cclcong}$ can be  defined equivalently as $u \cclcong v \Leftrightarrow \pMap_1(v) \clcong \pMap_1(v) $, and thus $ \sMon{(\tA^*_n)}{1}/_{\clcong} $ can be realized as a monoid isomorphic to a  submonoid of $\coclkM_n$. (Note that $ \sMon{(\tA^*_n)}{1}/_{\clcong} $ and
 $\sMon{\clkM_n}{1}$ are not necessarily equal as monoids.)
On the other hand, $\clkrep: \clkM_n \Isoto \mfAml_n,$ $\lt_\ell \mto \Oix{A}{\ell}$, is an isomorphism by Theorem \ref{thm:clk.represntation}, and its composition with the surjection $\pMap_1$ shows that  $\dclkrep_\pv: \coclkM_n \To \mfAml_n$, cf. \eqref{eq:d.clk.rep}, is an injective homomorphism. Hence, the map  $\smon{\dclkrep_\pv}{1}: \coclkM_n \To \sMon{(\mfAml_n)}{1}$  is an isomorphism. The  following diagram summarizes these homomorphisms
\begin{equation}\label{eq:diag.0} \begin{gathered}
 \xymatrix{ \tA_n^* \ar@{->>}[r]  \ar@{->>}[rd]&
 \clkM_n := \tA^*_n/_{\clcong}   \ar@{_{(}->>}[rr]^{\clkrep } & &  \MPlc_n  \ar@{->>}[d]^{\pMap_1} \ar@/^3pc/@{->}[dd]^{\matcmat} &     &
 \\ & \coclkM_n:= \tA_n^*/_{\cclcong} \ar@{_{(}->>}[rrd]^{\dclkrep}
 \ar@{_{(}->>}[rr]^{\smon{\dclkrep_\pv}{1}}
 \ar@{->}[rru]^{\dclkrep_\pv}& & \sMon{(\MPlc_n)}{1} \ar@{_{(}->>}[d]^\vartheta
 \\  & &  & \mir{\MPlc_n}
}\end{gathered}
 \end{equation}
Yet, we need to complete the lower part of the diagram -- the object $\mir{\MPlc_n}$.

Note also that by Remark \ref{rmk:clk.factor} and Diagram \eqref{eq:diag.0}, the monoid  homomorphism $\tA_n^* \To \coclkM_n$ factors through the plactic monoid
$$ \tA_n^* \ONTO \tA_n^*/_ {\pcong} \ONTO \tA_n^*/ _{\cclcong}.$$
We will return to this factorization later.
\end{remark}

Despite \eqref{eq:d.clk.rep} provides a faithful representation of $\coclkM_n$ in terms of monoids, i.e., as a matrix submonoid of $\mfAml_n$, the images $\dclkrep(\lt_\ell)$ of the generators $\lt_\ell$ of $\coclkM_n$ do not generate the image $\dclkrep(\coclkM_n)$ as a tropolactic matrix algebra. To achieve this attribute we define the matrix algebra
\begin{equation}\label{eq:mat.co.alg}
   \mir{\mfA}_n := \genr{\Oix{\chA}{1}, \dots, \Oix{\chA}{n}}  \subset \MatnT \; ,
\end{equation}
 generated
by the matrices $ \Oix{\chA}{1}, \dots, \Oix{\chA}{n} $ introduced   in \eqref{eq:co.cl.A.mat},
which we call the \textbf{co-algebra} of $\mfA_n$.  The multiplicative submonoid
$\dMPlcn$ of $\mir{\mfA}_n $ is called the \textbf{co-monoid} of $\MPlc_n$, for which
 we obtain the monoid homomorphism
\begin{equation}\label{eq:co.cl.rep}
  \dclkrep :  \coclkM_n \TO \dMPlcn, \qquad  \lt_{\ell} \mTo   \Oix{\chA}{\ell }, \ e \mTo E,
\end{equation}
i.e., $\dclkrep(\lt_\ell) = \ipv\dclkrep_\pv(\lt_\ell)$ for every  $\lt_\ell \in \tA_n$. Accordingly we see that
\begin{equation}\label{eq:iso.1}
 \vartheta: \sMon{ (\MPlc_n)}{1} \ISOTO \dMPlcn, \qquad \Oix{M}{\ell} \mTo \Oix{\chA}{\ell }\ (= \ipv\Oix{M}{\ell})  ,
\end{equation}
is a monoid isomorphism, and the composition
\begin{equation}\label{eq:mat.2.dmat}
 \matcmat := \vartheta \circ \pMap_1 :  \MPlc_n \ONTO \dMPlcn, \qquad \Oix{A}{\ell} \mTo \Oix{\chA}{\ell } ,
\end{equation}
is a surjective homomorphism.

Applying the negation map \eqref{eq:math.neq} in  Remark \ref{rmk:dual.mat},
$$\mfN: \Oix{\chA}{\ell} \longmapsto \Oix{\htA}{\ell} := -\Oix{\chA}{\ell}, \qquad \cha_{i,i} \mapsto -(\cha_{i,i}) =  \inv{(\cha_{i,i})},$$
 due to the special structure of $\Oix{\chA}{\ell}$, it  only
 replaces the entry $\cha_{\ell',\ell'} = \ipv$   by $\hta_{\ell',\ell'}  := \pv$, where $\ell' = n - \ell +1$).  The images  matrices $\Oix{\htA}{\ell}$ are then considered as matrices in $ \dMatnT$, whose multiplication are now induced by the semiring operations $ + $ and $\-$.
 Nevertheless, as in the sequel we want be consistent with the multiplicative operation of the monoid $\mfAml_n$, we stick with the matrices $\Oix{\chA}{\ell}$ for which the matrix multiplications are taken with respect to semiring operations $+$ and $\+$ (jointly with negation),
  while the additive operation $A \- B$ can be computed dually as $-(-A \+ -B)$.

\begin{example}\label{exmp:coclk.3}   The \ckmon\ $\coclkM_3$ of rank 3 is linearly represented by the homomorphism  $\dclkrep: \coclkM_3 \To \mir{ \mfAml_3}$ determined
by  the generators'  mapping
\begin{align*}
\lt_1 \mTo
 \Oix{\chA}{1} = \mresize{
\begin{bmatrix}
 0 & 0 & 0 \\
  & 0  & 0  \\
   &   & \ipv
\end{bmatrix}},   \quad
\lt_2 \mTo   \Oix{\chA}{2} = \mresize{
\begin{bmatrix}
 0 & 0 & 0 \\
  & \ipv & 0 \\
   &   & 0
\end{bmatrix}}  ,
 \quad
\lt_3 \mTo  \Oix{\chA}{3}  = \mresize{
\begin{bmatrix}
 \ipv & 0 & 0 \\
  & 0 & 0  \\
   &   & 0
\end{bmatrix}  }.
\end{align*}
(Recall that $\one := 0$, and the empty space stand for $\zero := -\infty$.)
  For these generators we have the products
$$ \Oix{\chA}{1} \Oix{\chA}{2}  = \mresize{
\begin{bmatrix}
 0 & 0 & 0 \\
  & \ipv & 0  \\
   &   & \ipv
\end{bmatrix}  }, \quad \Oix{\chA}{2} \Oix{\chA}{1}  = \mresize{
\begin{bmatrix}
 0 & 0 & 0 \\
  & \ipv & \ipv  \\
   &   & \ipv
\end{bmatrix}  }, \quad \Oix{\chA}{3} \Oix{\chA}{1}  = \mresize{
\begin{bmatrix}
 0 & 0 & 0 \\
  & \ipv & 0  \\
   &   & \ipv
\end{bmatrix}  } .  $$
Note that $\Oix{\chA}{1} \Oix{\chA}{2} = \Oix{\chA}{1} \- \Oix{\chA}{2} $,  $\Oix{\chA}{3} \Oix{\chA}{1} = \Oix{\chA}{3} \- \Oix{\chA}{1} $, but  $\Oix{\chA}{2} \Oix{\chA}{1} \neq \Oix{\chA}{2} \- \Oix{\chA}{1} $
\end{example}
Considering the digraph view of the matrix  $W = \dclkrep(w)$ assigned to
a word $w \in \tA_n^*$, cf. \S\ref{ssec:digraph},   a nonzero entry $w_{i,j}$ of $W$ corresponds to a path $[\pth_\setW]_{i,j}$  of maximal (nonpositive) weight from $i$ to $j$ of length $\len{w}$ in $\grph_\setW$ (Notation \ref{nott:1}). In terms of words,  it means the length of the complement of subword of $w$ over $\{ \lt_i, \dots, \lt_j\}$ of maximal length where each letter has (negative)  length $\ipv$.

\begin{theorem}\label{thm:co.plac.mat.relations} The matrix co-algebra  $\mir{\mfA}_n := \genr{ \Oix{\chA}{1}, \dots, \Oix{\chA}{n} }$
is a dual troplactic algebra $\dAPlc_n$ (Definition \ref{defn:d.trop.plc.alg}), and thus
every triplet of matrices $$\chA= \Oix{\chA}{p}, \quad  \chB = \Oix{\chA}{q}, \quad  \chC=\Oix{\chA}{r}, \qquad p < q < r,$$  admit the Knuth relations \eqref{eq:knuth.rel}.
%
\end{theorem}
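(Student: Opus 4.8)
The plan is to mirror, step for step, the proof of Theorem~\ref{thm:plac.mat.relations}: I will verify that the generated structure $\mir{\mfA}_n = \genr{\Oix{\chA}{1}, \dots, \Oix{\chA}{n}}$, with matrix multiplication taken in $\MatnT$ and with addition $\d+$ given by the entrywise minimum $A \d+ B = -\big((-A)\+(-B)\big)$ (cf. Remark~\ref{rmk:dual.mat}), satisfies the four axioms \dPXa--\dPXd\ of Definition~\ref{defn:d.trop.plc.alg}. Once this is done, the Knuth relations for every triple $\chA = \Oix{\chA}{p}$, $\chB = \Oix{\chA}{q}$, $\chC = \Oix{\chA}{r}$ with $p < q < r$ follow immediately from Theorem~\ref{thm:d.plc.Alg.1}, and the fact that the two distinguished products $\chA\chB\chC$ and $\chC\chB\chA$ are genuinely different from the others is established, as at the end of the proof of Theorem~\ref{thm:plac.mat.relations}, by pointing to one entry.

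First I would record the elementary structural facts. By \eqref{eq:co.mat.elm} each generator is $\Oix{\chA}{\ell} = \ipv\,\lcmp{n}{A_\ell}$, and by \eqref{eq:co.cl.A.mat} the matrix $\Oix{\chA}{\ell}$ agrees with the layout matrix $E$ except at the single diagonal position $(\ell',\ell')$ with $\ell' = n-\ell+1$, where its entry is $\ipv < \one$; its below-diagonal entries are $\zero$. Thus $E$ is the multiplicative identity $\mfe$, and the generators carry the order $\Oix{\chA}{1} < \cdots < \Oix{\chA}{n}$ inherited from $\clkM_n$ (equivalently, from the forward-semigroup viewpoint of Construction~\ref{construct:plc.alg}). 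Axiom \dPXa\ is then immediate, since the entrywise minimum of $E$ and $\Oix{\chA}{\ell}$ is $\Oix{\chA}{\ell}$, i.e. $\mfe \d+ \Oix{\chA}{\ell} = \Oix{\chA}{\ell}$. The substantive preliminary is the dual analogue of Lemma~\ref{lem:order.1}: for $p \le q \le r$ one computes $\Oix{\chA}{q}\Oix{\chA}{p}$, $\Oix{\chA}{p}\Oix{\chA}{q}$ and the relevant triple products, and the corresponding matrix inequalities with respect to $\d+$. These products are transparent precisely because each $\Oix{\chA}{\ell}$ perturbs $E$ in only one diagonal slot: a product $\Oix{\chA}{s}\Oix{\chA}{t}\cdots$ retains $\ipv$ in the diagonal slots $s', t', \dots$, may acquire $\ipv$ in an off-diagonal slot lying between two of them, and otherwise never drops below the appropriate power of $\ipv$.

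With that lemma in hand, Axioms \dPXb--\dPXd\ reduce to exactly the four short computations of Theorem~\ref{thm:plac.mat.relations}, now with $\+$ replaced by $\d+$ and with the order among $p',q',r'$ reversed relative to that among $p,q,r$: \dPXb\ is $\Oix{\chA}{p}\Oix{\chA}{q} = \Oix{\chA}{p}\d+\Oix{\chA}{q}$ for $p < q$; \dPXc\ expands $(\Oix{\chA}{q}\d+\Oix{\chA}{r})\Oix{\chA}{p}$ and absorbs the redundant term $\Oix{\chA}{q}\Oix{\chA}{p}$ into $\Oix{\chA}{q}$, yielding $\Oix{\chA}{q}\Oix{\chA}{p}\d+\Oix{\chA}{r}$; \dPXd\ is the symmetric computation for $\Oix{\chA}{r}(\Oix{\chA}{q}\d+\Oix{\chA}{p})$. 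The place where I expect the real work to sit, and the main obstacle, is not any single identity but the bookkeeping around it: the index flip $\ell \mapsto n-\ell+1$ reverses the ambient order, the addition is now a minimum rather than a maximum, and the lone ``sub-$\one$'' diagonal entry behaves quite differently from the full corner blocks of the $\Oix{A}{\ell}$, so the dual analogue of Lemma~\ref{lem:order.1} must be set up carefully before the axiom checks become mechanical. An alternative, essentially equivalent route would transfer the troplactic identities of $\mfA_n$ (Theorem~\ref{thm:plac.mat.relations}) through the co-mirroring $w \mapsto \lcmp{n}{w}$ together with the negation map $\mfN$, exploiting that co-mirroring interchanges the roles of nondecreasing and nonincreasing subwords and hence turns \PXa--\PXd\ into \dPXa--\dPXd; but the direct verification above is the more self-contained, and is the one I would write out.
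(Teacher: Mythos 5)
Your skeleton is the same as the paper's — verify \dPXa--\dPXd\ for $\mir{\mfA}_n$ and then quote Theorem~\ref{thm:d.plc.Alg.1} — and your handling of \dPXa\ and \dPXb\ is exactly right. The gap is in the plan for \dPXc\ and \dPXd. The "four short computations" of Theorem~\ref{thm:plac.mat.relations} rest on two ingredients that do not transfer: the splitting $\Oix{A}{\ell} = \eM \+ \Oix{F}{\ell}$, and distributivity of max-plus matrix multiplication over the addition $\+$. In $\mir{\mfA}_n$ the multiplication is still the max-plus one while the addition $\d+$ is the entrywise minimum, and max-plus multiplication does not distribute over it (in general one only has $(A \- B)C \leq AC \- BC$); moreover the generators $\Oix{\chA}{\ell}$, being $\eM$ with a single diagonal entry lowered to $\ipv$, admit no analogue of the additive corner-block decomposition. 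So "the same computations with $\+$ replaced by $\d+$" cannot be carried out: the expand-and-absorb steps you invoke are simply unavailable, and this is precisely where the argument has to change, not just be transliterated.

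What the paper does instead — and what your "dual analogue of Lemma~\ref{lem:order.1}" would actually have to contain — is an entrywise description of the two-letter products: for $p<q$ (so $p'>q'$, $\ell'=n-\ell+1$), the product $\Oix{\chA}{p}\Oix{\chA}{q}$ has $\ipv$ exactly at the diagonal slots $(p',p')$ and $(q',q')$, whereas the reversed product $\Oix{\chA}{q}\Oix{\chA}{p}$ acquires an extra off-diagonal $\ipv$ at $(q',p')$ precisely when $p'=q'+1$, i.e.\ when the letters are adjacent (this is Example~\ref{exmp:coclk.3}, where $\Oix{\chA}{2}\Oix{\chA}{1} \neq \Oix{\chA}{2} \- \Oix{\chA}{1}$). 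Axioms \dPXc\ and \dPXd\ are then verified by a case analysis showing that these exceptional entries appear on both sides simultaneously (for instance, in \dPXc\ an $\ipv$ at $(r',p')$ forces one at $(q',p')$ and in fact forces $\chB=\chC$). Your remark that a product "may acquire $\ipv$ in an off-diagonal slot lying between two of them" gestures at this phenomenon, but without pinning down the adjacency condition and running the case analysis the axiom checks are not mechanical, and your proposed reduction breaks exactly at that point. The co-mirroring-plus-negation route you mention is also not a free pass: negation turns max-plus multiplication into min-plus multiplication while turning $\d+$ into entrywise maximum, so the same mismatch between the two operations reappears and the entrywise analysis is still needed.
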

\begin{proof}
We verify the axioms of Definition \ref{defn:d.trop.plc.alg} by a straightforward technical computation.
 By construction $$p' = n- p + 1, \qquad  q' = n- q + 1, \qquad  r' = n- r + 1,$$ are the only $\ipv$ entries in $\chA$, $\chB$, and $\chC$, with $p' > q' > r'$. All other nonzero entries are $\one$.
\begin{enumerate}\ealph
  \item The product $\chA \chB$ has exactly two $\ipv$ entries, the $(q',q')$-entry and the $(p',p')$-entry,  where all  other nonzero entries are $\one$.
  \item  The product $\chB \chA$ has at least two $\ipv$ entries, the $(q',q')$-entry and the $(p',p')$-entry,  all other nonzero entries are $\one$.  In general, these are the only $\ipv$ entries, except in the case that $p' = q' + 1$, in which also the $(q',p')$-entry has value  $\ipv$. (See e.g. Example \ref{exmp:coclk.3}.)
\end{enumerate}
The other two letter products involving $\chC$ have the analogues properties.
\begin{description} \eroman \dispace
 \item[\dPXa] ($\mfa  = \mfe  \d+ \mfa$.) \quad Immediate, by  comparison of entries.

  \item[\dPXb] ($ \mfa \mfb = \mfa \d+ \mfb$  when  $\mfb >  \mfa $.) \quad
  A direct implication of property (a), since $p' > q'$, which is also the structure of the matrix $\chA \- \chB$.

 \item[\dPXc] ($ (\mfb  \d+  \mfc) \mfa = \mfb \mfa  \d+ \mfc$.) \quad By property (b) the $\ipv$ entries of $\chB \chA \- \chC \chA$ are $(r',r')$, $(q',q')$, and $(p',p')$, and possibly $(r',p')$ or $(q',p')$.      If the $(q',p')$-entry in $\chB \chA$ is $\ipv$ then it is also $\ipv$ in $\chB \chA \- \chC \chA$, independently on $\chC \chA$.
      If the $(r',p')$-entry  is $\ipv$, then  the  $(q',p')$-entry  is  also $\ipv$ and hence  $\chB = \chC$. Thus, in all cases,
$(\chB  \- \chC) \chA = \chB \chA \- \chC $.

  \item[\dPXd] ($\mfc(\mfb  \d+  \mfa)  = \mfa \d+ \mfc \mfb $.) \quad
  By property (b) the $\ipv$ entries of $\chC \chB \- \chC \chA$ are $(r',r')$, $(q',q')$, and $(p',p')$, and possibly $(r',p')$ or $(r',q')$.
   If the $(r',q')$-entry in $\chC \chB$ is $\ipv$ then it is also $\ipv$ in $\chC \chB \- \chC \chA$, independently on   $\chC \chA$.
  If the $(r',p')$-entry  is $\ipv$, then the $(r',q')$-entry  is also $\ipv$,
  which implies  $\chA = \chB$. Thus, in all cases,
$\chC(\chB  \- \chA)  = \chA \-  \chC \chB $.
\end{description}
The proof is then completed by Theorem \ref{thm:d.plc.Alg.1}.
\end{proof}

\begin{theorem}\label{thm:co.clk.represntation}
The map \begin{equation}\label{eq:co.clk.rep}
\dclkrep :  \coclkM_n \TO \dMPlcn, \qquad \dclkrep: \lt_{\ell} \mTo  \Oix{\chA}{\ell }, \ e \mTo E, \end{equation}
defined by generators' mapping,  i.e.,
$$ \dclkrep(w) = \dclkrep(\lt_{\ell_1}) \cdots \dclkrep(\lt_{\ell_m}), \qquad w = \lt_{\ell_1} \cdots \lt_{\ell_m} \in \tA_n^*,$$
 is a monoid isomorphism -- a  faithful linear representation of the \ckmon\ $\coclkM_n$ of rank $n$.

\end{theorem}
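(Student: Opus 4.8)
The plan is to build the isomorphism by routing $\dclkrep$ through the already-established isomorphisms and homomorphisms of Diagram \eqref{eq:diag.0}, rather than proving injectivity and surjectivity from scratch. Recall that $\clkrep:\clkM_n \ISOTO \mfAml_n$ is an isomorphism by Theorem \ref{thm:clk.represntation}, that $\pMap_1:\tA_n^*\onto{1}\sMon{(\tA_n^*)}{1}$ is order-preserving and injective hence an isomorphism onto its image (Remark \ref{rmk:mir.smon}), and that $\vartheta:\sMon{(\MPlc_n)}{1}\ISOTO \dMPlcn$ sending $\Oix{M}{\ell}\mTo\Oix{\chA}{\ell}=\ipv\Oix{M}{\ell}$ is a monoid isomorphism by \eqref{eq:iso.1}. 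First I would observe that by the definition of $\cclcong$ (Definition \ref{def:co.cloaktic.mon}) and the identification in Remark \ref{rem:embed.coc}, the composite $\smon{\dclkrep_\pv}{1}:\coclkM_n\TO\sMon{(\mfAml_n)}{1}$ of \eqref{eq:d.clk.rep.2} is an isomorphism: it is surjective by construction (the generators $\Oix{M}{\ell}$ generate the target), and it is injective because $u\cclcong v$ holds precisely when $\pMap_1(u)\clcong\pMap_1(v)$, which by Theorem \ref{thm:clk.represntation} is equivalent to $\clkrep(\pMap_1(u))=\clkrep(\pMap_1(v))$, i.e.\ to equality of the images in $\sMon{(\mfAml_n)}{1}$.

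Next I would note that $\dclkrep$ of \eqref{eq:co.clk.rep} factors as $\vartheta\circ\smon{\dclkrep_\pv}{1}$ up to the uniform rescaling $\Oix{\chA}{\ell}=\ipv\Oix{M}{\ell}$. The only point needing care here is that the rescaling is compatible with multiplication in the intended way: a product of $m$ of the matrices $\Oix{M}{\ell}$ carries a global factor $\pv^{m}$ relative to the corresponding product of the $\Oix{\chA}{\ell}$'s, and since $m=\len{w}$ is itself recoverable from the matrix (it is the multiplicative trace, cf.\ \eqref{eq:char.mlt}, or equivalently the generic $\pv$-degree of the entries), no information is lost: $\Oix{M}{\ell_1}\cdots\Oix{M}{\ell_m}=\Oix{M}{\ell'_1}\cdots\Oix{M}{\ell'_{m'}}$ iff $\Oix{\chA}{\ell_1}\cdots\Oix{\chA}{\ell_m}=\Oix{\chA}{\ell'_1}\cdots\Oix{\chA}{\ell'_{m'}}$. (Here one invokes Corollary \ref{cor:factorziation} to get $m=m'$.) Thus $\vartheta$ being an isomorphism transports the isomorphism $\smon{\dclkrep_\pv}{1}$ to an isomorphism onto $\dMPlcn$. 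Finally, $\dclkrep$ is indeed a monoid homomorphism by construction (it is defined on generators and extended multiplicatively, with $e\mTo E$), and its image lies in $\dMPlcn=\genr{\Oix{\chA}{1},\dots,\Oix{\chA}{n}}\cap(\text{nonzero elements})$ by \eqref{eq:mat.co.alg}; surjectivity onto $\dMPlcn$ is immediate since the generators are hit.

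Assembling these, the diagram \eqref{eq:diag.0} commutes in its lower triangle, and $\dclkrep=\vartheta\circ\pMap_1\circ\clkrep\circ(\text{the embedding }\coclkM_n\hookrightarrow\clkM_n\text{ induced by }\pMap_1)$ — more cleanly, $\dclkrep=\vartheta\circ\smon{\dclkrep_\pv}{1}$ — is a composition of isomorphisms, hence an isomorphism. I would close by remarking that faithfulness is exactly injectivity of this homomorphism, which has just been established.

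The main obstacle I anticipate is the bookkeeping around the scalar $\pv$: one must be careful that passing from $\dclkrep_\pv$ (whose generators are $\Oix{M}{\ell}=\pv\Oix{\chA}{\ell}$) to $\dclkrep$ (generators $\Oix{\chA}{\ell}$) really is a bijection on the relevant monoid, i.e.\ that the overall power of $\pv$ is determined by the word length and that equal $\pv$-scaled matrices force equal scalars. This is where Corollary \ref{cor:factorziation} and the multiplicative-trace character \eqref{eq:char.mlt} do the work; once that compatibility is nailed down, the rest is a formal chase through already-proven isomorphisms in Diagram \eqref{eq:diag.0}.
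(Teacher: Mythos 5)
Your proposal is correct and follows essentially the same route as the paper's own proof: injectivity of $\dclkrep_\pv$ via Remark \ref{rem:embed.coc} yields the isomorphism $\smon{\dclkrep_\pv}{1}:\coclkM_n \Isoto \sMon{(\mfAml_n)}{1}$, which is then transported to $\dMPlcn$ by the isomorphism $\vartheta$ of \eqref{eq:iso.1}. The only difference is that you explicitly justify the $\pv$-rescaling behind $\vartheta$ (via Corollary \ref{cor:factorziation} and the multiplicative trace), a detail the paper simply asserts when stating \eqref{eq:iso.1}.
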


\begin{proof}
The monoid homomorphism $\dclkrep_\pv: \coclkM_n \To \mfAml_n$ is injective by Remark \ref{rem:embed.coc},  providing the isomorphism
$\smon{\dclkrep_\pv}{1}: \coclkM_n \Isoto \sMon{(\mfAml_n)}{1}$,
which translates to
 $\dclkrep :  \coclkM_n \To \dMPlcn$ by the isomorphism \eqref{eq:iso.1}.
\end{proof}

By this theorem we see that
$u \cclcong v$ in $\coclkM_n$ implies that  $\dclkrep(u) \kcong \dclkrep(v)$ in $\mir{\mfAml_n}$, but the converse does not hold as seen later in
Example \ref{exmp:tfrm.2}.

\begin{corollary}\label{cor:co.clk.knuth} The \ckmon\ $\coclkM_n$ satisfies the Knuth relations  \eqref{eq:knuth.rel}.
\end{corollary}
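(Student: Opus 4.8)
The goal is to prove Corollary~\ref{cor:co.clk.knuth}: the \ckmon\ $\coclkM_n$ satisfies the Knuth relations \eqref{eq:knuth.rel}.

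The plan is to deduce this immediately from the faithful linear representation established in Theorem~\ref{thm:co.clk.represntation} together with the structural result of Theorem~\ref{thm:co.plac.mat.relations}. Concretely, Theorem~\ref{thm:co.clk.represntation} furnishes a monoid isomorphism $\dclkrep : \coclkM_n \ISOTO \dMPlcn$, and Theorem~\ref{thm:co.plac.mat.relations} tells us that the matrix co-algebra $\mir{\mfA}_n = \genr{\Oix{\chA}{1}, \dots, \Oix{\chA}{n}}$ is a dual troplactic algebra $\dAPlc_n$, whose multiplicative monoid is exactly $\dMPlcn$. By Theorem~\ref{thm:d.plc.Alg.1}, every dual troplactic algebra admits the Knuth relations \eqref{eq:knuth.rel}; hence $\dMPlcn$, as the multiplicative monoid of $\mir{\mfA}_n$, satisfies those relations. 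Transporting the relations back along the isomorphism $\inv{\dclkrep}$ shows that $\coclkM_n$ satisfies them as well.

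So the proof would read essentially as follows. Since $\dclkrep : \coclkM_n \To \dMPlcn$ is a monoid isomorphism by Theorem~\ref{thm:co.clk.represntation}, and $\dMPlcn$ is the multiplicative monoid of the dual troplactic algebra $\mir{\mfA}_n \cong \dAPlc_n$ by Theorem~\ref{thm:co.plac.mat.relations}, it suffices to observe that any dual troplactic algebra admits the Knuth relations by Theorem~\ref{thm:d.plc.Alg.1}. For generators $\lt_\aa \leq \lt_\bb \leq \lt_\cc$ of $\coclkM_n$, applying $\dclkrep$ and using that $\Oix{\chA}{p}, \Oix{\chA}{q}, \Oix{\chA}{r}$ satisfy \KXa\ and \KXb\ in $\dMPlcn$, we get $\dclkrep(\aa \cc \bb) = \dclkrep(\cc \aa \bb)$ when $\aa \leq \bb < \cc$ and $\dclkrep(\bb \aa \cc) = \dclkrep(\bb \cc \aa)$ when $\aa < \bb \leq \cc$; injectivity of $\dclkrep$ then yields $\aa\cc\bb = \cc\aa\bb$ and $\bb\aa\cc = \bb\cc\aa$ in $\coclkM_n$. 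Since these relations hold for all triplets of generators and $\coclkM_n$ is a monoid, the congruence $\kcong$ is respected.

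There is no real obstacle here — the corollary is a formal consequence of results already in hand, entirely parallel to Corollary~\ref{cor:clk.knuth} for the \kmon. The only minor point worth a remark is that one should be slightly careful that the multiplication in $\dMPlcn$ used to state the Knuth relations coincides with the semiring multiplication of $\mir{\mfA}_n$ (as noted in the discussion after Theorem~\ref{thm:co.plac.mat.relations}, matrix products are taken with respect to $+$ and $\+$ jointly with negation, while the additive operation $\d+$ is the dual one $\-$); this is exactly the convention under which Theorems~\ref{thm:co.plac.mat.relations} and \ref{thm:co.clk.represntation} are stated, so no adjustment is needed.

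\begin{proof}
By Theorem~\ref{thm:co.clk.represntation}, the map $\dclkrep : \coclkM_n \To \dMPlcn$ is a monoid isomorphism, and by Theorem~\ref{thm:co.plac.mat.relations} the matrix co-algebra $\mir{\mfA}_n$, whose multiplicative monoid is $\dMPlcn$, is a dual troplactic algebra $\dAPlc_n$. Hence, by Theorem~\ref{thm:d.plc.Alg.1}, $\dMPlcn$ satisfies the Knuth relations \eqref{eq:knuth.rel}. Transporting these relations along the isomorphism $\inv{\dclkrep}$ shows that $\coclkM_n$ satisfies the Knuth relations \eqref{eq:knuth.rel}, and therefore respects the congruence $\kcong$.
\end{proof}
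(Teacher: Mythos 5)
Your proof is correct and follows essentially the same route as the paper: the paper's own argument is exactly the two-line combination of the isomorphism $\dclkrep : \coclkM_n \to \dMPlcn$ (Theorem~\ref{thm:co.clk.represntation}) with the fact that $\dMPlcn$ admits the Knuth relations via Theorem~\ref{thm:co.plac.mat.relations} (which itself rests on Theorem~\ref{thm:d.plc.Alg.1}). Your extra remarks on transporting relations along $\inv{\dclkrep}$ and on the multiplication convention in $\mir{\mfA}_n$ are harmless elaborations of the same argument.
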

\begin{proof}
$\dclkrep :  \coclkM_n \To \dMPlcn$ is an isomorphism, by Theorem \ref{thm:co.clk.represntation},  where $\dMPlcn$ by itself  admits  the Knuth relations by Theorem \ref{thm:co.plac.mat.relations}.
  \end{proof}

\begin{corollary}\label{cor:co.clk.id} The \ckmon\ $\coclkM_n$ admits all the semigroup identities satisfied by  $\TMatnT$, in particular the semigroup identities  \eqref{eq:iduniv2.2}.
\end{corollary}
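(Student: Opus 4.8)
The plan is to mirror the proof of Corollary~\ref{cor:clk.id} for the \kmon, replacing the faithful representation $\clkrep$ by its dual $\dclkrep$, and then to exploit that semigroup identities are inherited by subsemigroups and transported across isomorphisms. So it suffices to realize $\coclkM_n$ as isomorphic to a subsemigroup of $\TMatnT$.

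First I would invoke Theorem~\ref{thm:co.clk.represntation}, which provides the monoid isomorphism $\dclkrep : \coclkM_n \Isoto \dMPlcn$ onto the co-monoid $\dMPlcn$ — the multiplicative submonoid of the co-algebra $\mir{\mfA}_n = \genr{\Oix{\chA}{1}, \dots, \Oix{\chA}{n}}$. Next I would read off from the explicit form \eqref{eq:co.cl.A.mat} that each generator $\Oix{\chA}{\ell}$ is an upper triangular tropical matrix: its only nonzero entries lie on or above the main diagonal, being $\ipv$ at position $(\ell',\ell')$ with $\ell' = n - \ell + 1$ and $\one$ elsewhere. Since a product of upper triangular matrices is again upper triangular, $\dMPlcn$ is a subsemigroup of $\TMatnT$. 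Then I would apply Theorem~\ref{thm:trMat.Id}: $\TMatnT$ satisfies the identities \eqref{eq:iduniv2.2}, and, more to the point, every semigroup identity satisfied by $\TMatnT$ descends to its subsemigroup $\dMPlcn$ and hence, via the isomorphism $\dclkrep$, to $\coclkM_n$. In particular $\coclkM_n$ admits $\Id_{(C,n-1,n-1)}$ of \eqref{eq:iduniv2.2} by taking $x = uv$ and $y = vu$ for all $u,v \in \coclkM_n$.

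The one point meriting a moment's care — the nearest thing to an obstacle here — is confirming that the operation carried by $\dMPlcn$ is genuinely the restriction of the ordinary max-plus tropical matrix multiplication (with respect to $+$ and $\+$), and not the dual min-plus product; this is precisely why the paper works with the matrices $\Oix{\chA}{\ell}$ rather than their negations $\Oix{\htA}{\ell}$, as fixed in the discussion following \eqref{eq:mat.co.alg}, so the point is already settled by the established conventions. Beyond that, the argument is a direct citation chain from Theorems~\ref{thm:co.clk.represntation} and~\ref{thm:trMat.Id}.

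\begin{proof}
By Theorem~\ref{thm:co.clk.represntation}, $\dclkrep : \coclkM_n \Isoto \dMPlcn$ is a monoid isomorphism, so $\coclkM_n$ satisfies exactly the semigroup identities satisfied by the co-monoid $\dMPlcn$. By \eqref{eq:co.cl.A.mat} each generator $\Oix{\chA}{\ell}$ of $\mir{\mfA}_n$ is an upper triangular tropical matrix, and since products of upper triangular matrices are upper triangular, $\dMPlcn$ is a subsemigroup of $\TMatnT$ (with respect to ordinary tropical matrix multiplication). By Theorem~\ref{thm:trMat.Id}, $\TMatnT$ satisfies the identities \eqref{eq:iduniv2.2}, and every semigroup identity it satisfies is inherited by the subsemigroup $\dMPlcn$. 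Transporting along $\dclkrep$, the \ckmon\ $\coclkM_n$ admits all the semigroup identities satisfied by $\TMatnT$; in particular it satisfies \eqref{eq:iduniv2.2} by taking $x = uv$ and $y = vu$ for all $u,v \in \coclkM_n$.
\end{proof}
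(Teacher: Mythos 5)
Your proposal is correct and follows essentially the same route as the paper: cite the faithful representation $\dclkrep : \coclkM_n \Isoto \dMPlcn$ from Theorem~\ref{thm:co.clk.represntation}, observe that $\dMPlcn$ sits inside $\TMatnT$, and invoke Theorem~\ref{thm:trMat.Id} together with inheritance of identities by subsemigroups. The extra verifications you include (upper triangularity of the generators $\Oix{\chA}{\ell}$ and the max-plus versus min-plus convention) only make explicit what the paper's one-line proof leaves implicit.
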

\begin{proof}
  $\coclkM_n$ is faithfully represented by $\dMPlcn$ -- a submonoid of $\TMatnT$ -- which by Theorem \ref{thm:trMat.Id} satisfies the identity \eqref{eq:iduniv2.2}.
\end{proof}

\section{Configuration tableaux }\label{sec:configuration.tab}

Towards the  goal of introducing linear  representations of the plactic monoid $\plcM_I$ (Definition \ref{def:plactic.mon}), we need to establish its  precise  linkage to the \kmon\ $\clkM_n$. To do so we utilize the known bijective correspondence of $\plcM_I$ to Young tableaux \cite{Sagan}, therefore by this study we also obtain a useful algebraic description  of the latter combinatorial objects.
To simplify indexing notations we denote arbitrary letters in $\tA_I$ by $x,y,$ and $z$.

\subsection{Young tableaux}\label{ssec:y.tab} \sSkip

A \textbf{Young diagram} (also called Ferrers diagram) is a finite collection of cells, arranged in left-justified rows, such that the number of cells at each row (i.e., the row's length) is less or equal than its predecessor. In this paper we use the France convention in which the longest row is the bottom row, and  rows are enumerated from bottom to top  \cite{Las2}.
Over this setting, we define three types of (proper) steps  from a cell to its
 neighbor:
$$ \begin{array}{c}
     \ \downarrow \hskip -4.5mm \begin{tabular}{|c|c|}
\cline{1-1}
 $\bullet$    \\[1mm] \cline{1-2}
$\bullet$ &  \phantom{w}   \\ \cline{1-2}
\end{tabular} \\[5mm] \text{(a)}
\end{array}
\qquad \qquad
\begin{array}{c}
\searrow \hskip -8mm \begin{tabular}{|c|c|}
\cline{1-1}
  $\bullet$  \\ \cline{1-2}
 &  $\bullet$  \\ \cline{1-2}
\end{tabular} \\[5mm] \text{(b)}
\end{array} \qquad  \qquad
\begin{array}{c}
\begin{tabular}{|c|c|}
\cline{1-1}
    \\ \cline{1-2}
$\bullet \hskip -1mm \rightarrow \hskip -3.5mm$ &  $\bullet$ \  \\ \cline{1-2}
\end{tabular}\\[5mm] \text{(c)}
\end{array} $$

 \begin{enumerate} \ealph
   \item a \textbf{\vstep} -- a move from a cell to its
 neighbor below;

 \item a \textbf{\dstep} -- a move from a cell to its
 its bottom right neighbor;

 \item a \textbf{\hstep} -- a move from a cell to its
right neighbor.

 \end{enumerate}
 These steps are combined to walks and covers in a Young diagram:
$$
\begin{array}{c}
\begin{tabular}{|c|c|c|}
\cline{1-1}
  $\bullet$   \\[2pt] \cline{1-2}
$\bullet$ &     \\ \cline{1-3}
& $\bullet$ &   \\ \cline{1-3}
\end{tabular} \begin{array}{c}
                \hskip -18mm \downarrow  \\
                \hskip -13mm \searrow
              \end{array}  \\[5mm]  \text{(A)}
\end{array}
\qquad \qquad
\begin{array}{c}
\begin{tabular}{|c|c|c|}
\cline{1-1}
    \\ \cline{1-2}
$\bullet \hskip -1mm \rightarrow \hskip -3.5mm$ &  $\bullet$   \\[1pt] \cline{1-3}
&  &  $\bullet$  \\ \cline{1-3}
\end{tabular} \begin{array}{c}
                 \\
                \hskip -13mm \searrow
              \end{array}  \\[5mm]  \text{(B)}
\end{array}
\qquad \qquad
\begin{array}{c}
\begin{tabular}{|c|c|c|}
\cline{1-1}
$\bullet$    \\ \cline{1-2}
 &    \\ \cline{1-3}
& $\bullet \hskip -1mm \rightarrow \hskip -3.5mm$ &  $\bullet$  \\ \cline{1-3}
\end{tabular}\\[5mm]  \text{(C)}
\end{array}  $$
\begin{enumerate}\dispace
  \item[(A)] a \textbf{\vwalk} is a  continues composition of sequential \vstep s or  \dstep s;
  \item[(B)] a \textbf{\dwalk} is a  continues composition of sequential \hstep s or \dstep s;
  \item[(C)] a \textbf{\hwalk} is a collection of cells, one from each column, taken from a nonincreasing subset of rows.
\end{enumerate}
(Note that all of these are ``non-left oriented''.)
A \textbf{proper walk} is either a \vwalk\ or a \dwalk.
This terminology is consistent  with the content of the Young diagram, as described next.
\emph{In what follows, all walks and covers are assumed to be proper.}
\pSkip

A \textbf{Young tableau} is a Young diagram whose cells are filled by symbols -- a single letter in each cell -- taken from a given   alphabet ~$\tA_I$, usually required to be finite and totally ordered.

\begin{definition}\label{def:wght.walk}
The \textbf{track} $\trk{\wlk}$  of a (proper) walk $\wlk$ in a Young tableau $\tb$ is the word defined by concatenating the symbols appearing in  cells  along $\wlk$.
 The \textbf{length} $\len{\wlk}$ of a walk $\wlk$ is the number of cells along  $\wlk$.

 When the content of cells in $\tb$ are numbers, the \textbf{weight} of $\wlk$, denoted $\tbwt{\wlk}$, is defined as the sum  of cells' values along  $\wlk$.
\end{definition}

While a walk between two cells needs  not be  unique, except when the walk is along the bottom row or the left column, all the tracks  between pair of cells always have the same length; different tracks may have a same weight.

Young tableaux are combinatorial structure, used intensively in group theory and representation theory~ \cite{Fulton}. They provide a useful machinery that led to  an elegant algorithmic solution (Schensted 1961) for the following problem:
\begin{problem*}
Given a (finite) word $w \in \tA_I^+$ on the (totally) ordered
alphabet $\tA_I$, find the length of its longest nondecreasing subwords of $w$.
\end{problem*} \noindent The central advantage of Schensted's method is that it avoids a precise identification of a longest  nondecreasing subword, and presents  a useful linkage between semigroups and Young tableaux, as summarized below.

A nondecreasing word $w \in \tA_I^+$ is called a \textbf{row}, while a (strictly) decreasing word is called a \textbf{column}.
We say the row  $u = x_1 \cdots x_s$  \textbf{dominates} the  row $v = y_1 \cdots y_t$, written $u \dom v $,  if $s \leq t$ and $x_i > y_i$ for all  $i =1, \dots, s$.
Any  word $w \in  \tA_I^+ $ has a unique factorization $ w =  \rw_p \cdots \rw_1$ as a concatenation  of rows $\rw_i$ of maximal length.

\begin{definition}
A \textbf{semi-standard tableau} $\tb$ is a tableau  that corresponds (by rows) to a word $w \in \tA_I^+$
such that $\rw_p \dom \rw_{p-1} \dom \cdots \dom \rw_1$, where $\rw_1$ is the bottom row and $\rw_p$  is the top row of $\tb$.
The set of all semi-standard tableaux is denoted by $\Tab(\tA_I)$, written $\Tab_I$ for short.

We formally adjoin the \textbf{empty tableau}, denoted $\tb_0$, considered as a semi-standard tableau.

\end{definition}
\noindent
Namely,  a tableau is  semi-standard if the entries in each row are nondecreasing and the entries in  each column are increasing. In the sequel we assume that $I$ is  finite and write $\Tab_n$ for $\Tab_I$.
 A tableau is called \textbf{standard} if the entries in each row and in each column are increasing, which implies that each letter may appear only once. The subset of all standard tableaux over a finite alphabet $\tA_n$ is denoted by $\STab_n.$

We denote the columns of a given  tableau $\tb$ by $\cl_1, \dots, \cl_q$, enumerated from left to right. Reading the columns from bottom to top, each column  $\cl_{j+1}$ is increasing but not necessarily dominates the column $\cl_{j}$ unless, $\tb$ is standard. In latter   case, we define the \textbf{transpose tableau} $\tb^\trn$ of $\tb$ by writing the columns as rows, i.e., $ \cl_q \cdots \cl_1$, for which  $(\tb^\trn)^\trn = \tb$.

The nonincreasing sequence
$\lm := (\lm_1 \geq  \lm_2 \geq \cdots \geq \lm_k)$ of the rows' length $\lm_i$ of a tableau $\tb$ defines the \textbf{shape} of   $\tb$ and  introduces a \textbf{partition} of the integer $|\lm| = \lm_1 + \lm_2 +  \cdots + \lm_k$, which in its turn determines the graphical representation of
the corresponding  Young diagram. (The shape of the empty tableau is set to be $0$.)

Schensted's algorithm associates each word $w \in \tA_n^+$ with a tableau $\tb = \tab(w)$, via the map \footnote{In the literature, the map $\tab$ is also  denoted  by ~$P$.}
$$ \tab: \tA_n^+ \TO \Tab_n,$$ defined inductively as $\tab(wx) = \tab(\tab(w)x)$ for an arbitrary $x \in \tA_n$. It is recursively  defined in terms of tableaux as
\begin{equation}\label{eq:recursive.tab}
 \tab(\tb x ) = \left\{
                 \begin{array}{ll}
                   \tb x   & \hbox{if $\rw_1 x$ is a row}, \\[1mm]
                    \tab(\rw_p \cdots \rw_{2} y) \rw'_1 & \hbox{if $\tab(\rw_1 x) = y \rw'_1$.}
                 \end{array}
               \right.
\end{equation}
where  $\rw_p \cdots \rw_1$ are the row decomposition of $\tb$, $y$ is the left most letter of $\rw'_1$ which is strictly grater than ~$x$, and $\rw'_1$ is obtained from $\rw_1$ by replacing $y$ with $x$.

In a more lingual  terms,  the map $\tab$ can be described in algorithmic terms.
\begin{algr}[Bumping Algorithm]\label{algr:1} Given a word $w = x_1 \cdots x_m$, $x_i \in \tA_n$,  start with $\{x_1\}$, which is a Young tableau. Suppose $x_1,\dots, x_k$ have already been inserted, and $\tb$ is the current tableau.  To insert $x_{k+1}$, start with the first row of $\tb$ and search for the first letter which is greater than $x_{k+1}$. If there is no such element, append $x_{k+1}$ to the end of first row. If there is such an element (say, $x_j$), exchange $x_j$ by $x_{k+1}$,
and proceed inductively to insert $x_j$ to the next row.
\end{algr}

For three letters $x< y< z$ in $\tA_n^+$, the algorithm produces the familiar tableaux:
$$ \tab(xzy)  = \tab(zxy)  =
\begin{tabular}{|c|c|c}
\cline{1-1}
 z  \\ \cline{1-2}
x &  y  & , \\ \cline{1-2}
\end{tabular}
\qquad  \tab(yxz) = \tab (yzx) =
\begin{tabular}{|c|c|c}
\cline{1-1}
 y  \\ \cline{1-2}
x &  z  & . \\ \cline{1-2}
\end{tabular}
$$
For two distinct letters $x < y$ it gives:
$$
 \tab(xyx) = \tab (yxx)  =
\begin{tabular}{|c|c|c}
\cline{1-1}
 y   \\ \cline{1-2}
x &  x & , \\ \cline{1-2}
\end{tabular}\qquad
\tab (yxy) = \tab(yyx)  =
\begin{tabular}{|c|c|c}
\cline{1-1}
 y   \\ \cline{1-2}
x &  y  & . \\ \cline{1-2}
\end{tabular}
$$
One immediately sees that the upper tableaux are just realization of the Knuth relations \eqref{eq:knuth.rel}.
\begin{theorem}[Schensted 1961]\label{thm:Schen} The length of a longest nondecreasing (reps. decreasing)
subword of~$w$ is equal to the length of the bottom row $\rw_1$ (resp. height of the first column $\cl_1$) of $\tab(w)$.
\end{theorem}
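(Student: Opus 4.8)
The statement to prove is the classical result of Schensted (1961): the length of the longest nondecreasing subword of $w$ equals the length of the bottom row $\rw_1$ of $\tab(w)$, and dually the longest decreasing subword has length equal to the height of the first column $\cl_1$. I would prove this by induction on the length $m$ of the word $w = x_1 \cdots x_m$, tracking through the Bumping Algorithm \ref{algr:1} (equivalently the recursive description \eqref{eq:recursive.tab}) what happens at each insertion step. The base case $m = 1$ is immediate: $\tab(x_1)$ is a single cell, the bottom row has length $1$, and the longest nondecreasing subword of a one-letter word has length $1$.

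\textbf{Key steps, in order.} First I would set up the right inductive quantity. Let $L(w)$ denote the length of the longest nondecreasing subword of $w$ and, more refined, for each letter $x$ let $L_x(w)$ be the length of the longest nondecreasing subword of $w$ whose last letter is $\leq x$. The induction hypothesis I want to carry is the stronger statement relating $L_x(w)$ to the shape of the bottom row of $\tab(w)$: namely, if $\rw_1 = y_1 y_2 \cdots y_k$ is the bottom row of $\tb = \tab(w)$ (so $y_1 \leq y_2 \leq \cdots \leq y_k$), then $L_x(w)$ equals the largest $i$ with $y_i \leq x$, and in particular $L(w) = k$. Second, I would analyze a single insertion: when $x_{k+1}$ is bumped into $\tb = \tab(x_1 \cdots x_k)$, either it lands at the end of the bottom row (if $x_{k+1} \geq$ all of $\rw_1$), lengthening $\rw_1$ by one — this is exactly the case where $x_{k+1}$ extends a longest nondecreasing subword — or it displaces some $y_j$ (the leftmost entry of $\rw_1$ strictly greater than $x_{k+1}$), keeping $|\rw_1|$ fixed and propagating the bump upward. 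In the latter case one must check that $L_x$ is preserved for every $x$: the displaced entry $y_j$ is replaced by the smaller $x_{k+1}$, so the prefix of $\rw_1$ of entries $\leq x$ does not shrink, and I would argue it does not grow either, using that any nondecreasing subword of $x_1 \cdots x_{k+1}$ ending at position $k+1$ can be matched by one in $x_1 \cdots x_k$ of the same or greater length ending with a letter $\leq x_{k+1}$ (this is where the precise definition of the bumping position enters). Third, once the bottom-row statement is established, the column statement follows by the standard duality: applying the same argument to the reversed alphabet order (or equivalently observing that decreasing subwords of $w$ correspond to nondecreasing subwords under order reversal, and that $\tab$ with respect to the reversed order produces the transpose shape), so the height of $\cl_1$ counts the longest strictly decreasing subword.

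\textbf{Main obstacle.} The delicate point is the inductive step in the bumping case: showing that when $x_{k+1}$ displaces $y_j$ in the bottom row, no longest nondecreasing subword gets shorter or longer than predicted — i.e., that the row length $k$ is genuinely invariant and equals $L(x_1\cdots x_{k+1})$. The ``$\geq$'' direction (the bottom row length is at most the longest nondecreasing subword length) is easy: reading the bottom row of $\tab(w)$ off gives an honest nondecreasing subword, which requires a separate small lemma that each row of the tableau is realized as a subword of $w$ — provable by induction on the insertions. The ``$\leq$'' direction is the subtle one: given any nondecreasing subword $x_{i_1} \leq x_{i_2} \leq \cdots \leq x_{i_r}$ of $w$, one must show $r \leq k$; the clean way is to track, after inserting the prefix $x_1 \cdots x_{i_t}$, that the position in the bottom row where $x_{i_t}$ (or its current bumped value) sits is at least $t$, which follows because the entry there is always $\leq x_{i_t}$ and the entries are nondecreasing. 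I would isolate this monotonicity-of-insertion-positions claim as the heart of the argument, prove it by induction on $t$ using the two cases of the bumping rule, and then deduce both directions of Theorem \ref{thm:Schen}. Since this is the classical Schensted argument, I would present it compactly, citing \cite{Schensted} for the original and giving the insertion-position lemma explicitly as the one nonroutine ingredient.
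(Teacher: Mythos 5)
The paper does not actually prove Theorem \ref{thm:Schen}; it is quoted as Schensted's classical result, so your proposal has to stand on its own. Your core plan is the standard and correct Schensted argument for the row statement: induct over the bumping algorithm carrying the refined invariant that, writing $\rw_1 = y_1\cdots y_k$, the largest $i$ with $y_i\le x$ equals the longest nondecreasing subword ending in a letter $\le x$ (equivalently, $y_i$ is the least possible last letter of a nondecreasing subword of length $i$), with the insertion-position monotonicity lemma as the nonroutine ingredient. Note that once this invariant is established, taking $x$ to be the largest letter already gives both inequalities at once, so your separate ``easy direction'' is not even needed.

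That is fortunate, because the lemma you propose for it is false: the rows of $\tab(w)$, and in particular $\rw_1$, need not be subwords of $w$. For $w=bca$ (with $a<b<c$) the bumping algorithm gives bottom row $ac$ with $b$ above, and $ac$ is not a subword of $bca$ since the $a$ occurs after the $c$; the paper's own display of $\tab(yzx)$ after Algorithm \ref{algr:1} exhibits exactly this tableau. If you want an independent proof of ``$\operatorname{len}(\rw_1)\le$ longest nondecreasing subword,'' argue instead from the invariant itself, or from the fact that $w$ is Knuth-equivalent to the row reading word of $\tab(w)$ and the elementary moves \eqref{eq:knuth.rel} are easily checked to preserve the maximal length of nondecreasing subwords. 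The second genuine gap is the column half: it is not the formal duality you describe. Reversing the alphabet order turns nondecreasing subwords into nonincreasing ones, not into the strictly decreasing subwords that $\cl_1$ counts (the distinction matters precisely when letters repeat), and for non-standard words the insertion tableau of a reversed word or reversed order is not simply the transpose --- the paper itself invokes the transpose property only for standard tableaux (Proposition \ref{prop:reverse.std}). You need either a column-insertion argument or the analogous refined invariant for the first column (its $i$'th entry is the least possible last letter of a strictly decreasing subword of length $i$), proved by the same induction on bumpings. With those two repairs the proof goes through.
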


This tableau form   introduces on $\tA_n^+$  the  equivalence relation
$$ u \tcong v \Iff \tab(u) = \tab(v).$$
We call $\tab(w)$ the \textbf{(canonical) tableau form} of the word $w \in \tA_n^+$, and   write $$\tb_w: = \tab(w).$$ Conversely, every tableau $\tb \in \Tab_n$ can be  identified  with $\tb_w = \tab(w)$ for some (not necessarily unique) word $w \in \tA^+_n$. Therefore, we may think of $\tb_w$ also as a word having a canonical form in  $\tA_n^+$.
The extension of $ \tab: \tA_n^+ \To \Tab_n$  to
$$ \tab: \tA_n^* \TO \Tab_n$$
is defined naturally by sending $e \mTo \tb_0$.

\begin{theorem}[Knuth 1970]\label{thm:Knuth} The equivalence relation $\tcong$ coincides with the plactic
congruence $\pcong$ which is the Knuth congruence $\kcong$.  In particular,  each plactic class contains exactly one tableau.
\end{theorem}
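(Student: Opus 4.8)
Since $\pcong$ is by definition the Knuth congruence $\kcong$, the task is to prove $\tcong=\kcong$, after which the final sentence is immediate. The plan is to establish the two inclusions $\kcong\subseteq\tcong$ and $\tcong\subseteq\kcong$ separately, using throughout the defining recursion $\tab(wx)=\tab(\tab(w)x)$ and, equivalently, $\tab(ab)=\tab(\tab(a)b)$ for words $a,b$. For the inclusion $\kcong\subseteq\tcong$, I would first reduce it to the \emph{plactic lemma}: for every semi-standard tableau $\tb$ and every pair $w_1,w_2$ occurring as the two sides of one of the relations \eqref{eq:knuth.rel}, one has $\tab(\tb w_1)=\tab(\tb w_2)$, i.e. inserting $w_1$ into $\tb$ yields the same tableau as inserting $w_2$. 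Granting this, if $v$ is obtained from $u$ by replacing one factor, say $u=p w_1 q$ and $v=p w_2 q$, then $\tab(u)=\tab\big(\tab(\tb_p w_1)\,q\big)=\tab\big(\tab(\tb_p w_2)\,q\big)=\tab(v)$ where $\tb_p=\tab(p)$, and an arbitrary $u\kcong v$ follows by transitivity.

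The plactic lemma is the genuine combinatorial core, and the step I expect to be the main obstacle. I would prove it by induction on the number of rows of $\tb$. The base case $\tb=\tb_0$ (the empty tableau) is exactly the three-letter computation already recorded above, $\tab(xzy)=\tab(zxy)$ and $\tab(yxz)=\tab(yzx)$. For the inductive step one runs the Bumping Algorithm \ref{algr:1} on $w_1$ and on $w_2$ into $\tb$ in parallel: the action of either side on the bottom row of $\tb$ reproduces the base-case pattern, and the sub-word it displaces upward out of the bottom row is again a pair or triple related by a relation of \eqref{eq:knuth.rel} of the same type (this case analysis — checking that the two bump-chains stay in lock-step row by row — is the delicate part), so the inductive hypothesis applied to $\tb$ with its bottom row removed finishes the argument.

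For the inclusion $\tcong\subseteq\kcong$, the key claim is that every word $w\in\tA_n^+$ is Knuth-equivalent to the row reading word $\rw_p\cdots\rw_1$ of its tableau $\tb_w=\tab(w)$ (bottom row $\rw_1$, top row $\rw_p$, with $\rw_p\dom\cdots\dom\rw_1$). I would prove this by induction on $\len{w}$: write $w=w'x$, apply the inductive hypothesis to $w'$ to get $w\kcong(\rw_p\cdots\rw_1)\,x$ for the rows of $\tb_{w'}$, and then show that a single row-insertion step of $x$ rewrites $(\rw_p\cdots\rw_1)\,x$, modulo $\kcong$, into the reading word of $\tab(\tb_{w'}x)$. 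Unwinding the recursion \eqref{eq:recursive.tab} by an inner induction on the number of rows, this reduces to the single-row identity: if $r$ is a nondecreasing word and inserting $x$ into $r$ replaces the leftmost entry $y>x$ by $x$, producing the row $r'$, then $r\,x\kcong y\,r'$ — which is a short explicit chain of applications of \KXa\ and \KXb\ (their atomic instances being precisely $\tab(xzy)=\tab(zxy)$ and $\tab(yxz)=\tab(yzx)$). Granting the claim, $\tab(u)=\tab(v)$ forces $u\kcong\rw_p\cdots\rw_1\kcong v$, as required.

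Finally, combining the two inclusions gives $u\kcong v\iff\tab(u)=\tab(v)$, so $\tab$ descends to an injection from the set of plactic classes into $\Tab_n$. It is surjective: for $\tb\in\Tab_n$ with rows $\rw_p\dom\cdots\dom\rw_1$, inserting these rows via Algorithm \ref{algr:1} from the top row $\rw_p$ down to $\rw_1$ rebuilds $\tb$, since at each stage the newly inserted lower row, being shorter and entrywise smaller, bumps each previously placed row up exactly one level by the domination inequalities; hence $\tab(\rw_p\cdots\rw_1)=\tb$. Therefore $\tab$ induces a bijection between the plactic classes and $\Tab_n$, i.e. every plactic class contains exactly one tableau, namely the reading word $\rw_p\cdots\rw_1$ of the common tableau of its members.
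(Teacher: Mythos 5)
The paper does not prove this theorem at all: it is quoted as Knuth's classical result (Knuth 1970, via Schensted insertion), so there is no internal argument to compare against. Your proposal is, in outline, exactly the standard textbook proof of that classical theorem, and it is sound: the inclusion $\kcong\subseteq\tcong$ via the bumping (plactic) lemma proved by induction on the rows of $\tb$, the inclusion $\tcong\subseteq\kcong$ via the fact that every word is Knuth-equivalent to the row reading word $\rw_p\cdots\rw_1$ of its tableau (reduced to the single-row identity $r\,x\kcong y\,r'$, which is a short chain of \KXa\ and \KXb\ moves), and the observation that $\tab$ applied to the reading word of a semi-standard tableau returns that tableau, which gives the ``exactly one tableau per class'' statement.

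One small caution in the step you yourself flag as delicate: in the inductive proof of the plactic lemma, the triple of letters expelled from the bottom row by the two parallel insertions is Knuth-related, but \emph{not necessarily by a relation of the same type} --- an instance of \KXa\ acting on the bottom row can hand the next row an instance of \KXb, and vice versa. So the induction hypothesis must be stated for ``differ by some elementary Knuth relation,'' not ``by the same relation''; with that phrasing the row-by-row case analysis closes and the rest of your argument goes through unchanged.
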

In our notation, the theorem reads as
$$ \tb_w = [w]_\plc \ (= [w]_\knu), \qquad \text{for every } w \in \tA_n,$$
and hence  we may alternate between these notations.
Therefore   $\Tab_n$ can be realized as an algebraic structure, equipped with a multiplicative operation.
\begin{remark}\label{rmk:tab.mon} Set theoretically,    $\Tab_n$ bijectively corresponds  to the plactic monoid $\plcM_n$, thus it can be considered as a monoid whose operation is now induced  from  Algorithm \ref{algr:1}, or equivalently from ~\eqref{eq:recursive.tab}. That is,
the product of tableaux $\tb_u \cdot \tb_v$, with say $\tb_v = y_1 \cdots y_m$,   is defined by the sequential insertion (by reverse row ordering) of the  letters $y_1, \dots, y_m$  of $\tb_v$ to $\tb_u$.
\end{remark}

In order to extract numerical invariants  from the combinatorial structure of semi-standard tableaux we define the following functions.
\begin{enumerate} \ealph
  \item  The function
\begin{equation}\label{eq:cont.func.letters}
    \ltno{i}{\lt_\ell}: \Tab_n \TO \N_0, \qquad \ell, i = 1,\dots, n,  \end{equation}
 counts the occurrences of the letter $\lt_\ell \in \tA_n^+$ in the $i $'th row  of a tableau $\tb \in \Tab_n$.

  \item  The function
\begin{equation}\label{eq:cont.func.swords}
    \sword{J}{}: \Tab_n \TO \N_0, \qquad  \ J \subset \{ 1,\dots, n \}, \ J \neq \emptyset, \end{equation}
assigns  a tableau with length of its longest nondecreasing subword that involves only  letters
$\al_\ell \in \tB_J$ from the sub-alphabet  $\tB_J \sqsubseteq \tA_n$.
\end{enumerate}
Our main interested is in convex   sub-alphabet $\tA_J \cnxsset \tA_n$, i.e., $J = \{i, i+1, \dots,j \} $, with $i \leq j$, which we denote as $\itoj{\lt_i}{\lt_j}$ or $\itoj{i}{j}$, for short.

 When   $i = j = \ell$, the function  $\sword{\itoj{\ell}{\ell}}{\itoj{\ell}{\ell}}(\tb)$ gives the number of occurrences of the letter $\lt_\ell$ in the tableau ~$\tb$, and thus
$$ \sword{\itoj{\ell}{\ell}}{}(\tb) = \sum_{t=1}^n \ltno{t}{\al_\ell}(\tb), \qquad{\text{for every }} \ell  = 1, \dots, n.$$
We write $\tbrest{\tb}{i}{j}$ for the restriction of the tableau $\tb$ to  rows $i, \dots , j$.

\begin{remark}\label{rmk:max.susse} The equivalence relation $\clcong$ on tableaux  $\tb_u $ and  $\tb_v $ in  $ \Tab_n $ (cf. \eqref{eq:clk.cong}) reads in terms of~ \eqref{eq:cont.func.swords} as
$$\tb_u \clcong \tb_v {\Iff} \sword{J}{}(\tb_u) = \sword{J}{}(\tb_v) \quad \text{ for every  } J \cnxsset \nnn. $$
(Note that if    $\tb_u \clcong \tb_v$, then $\tb_u $ and  $\tb_v$ must contains exactly the same number of each letter.)
\end{remark} \noindent

We start with our running example (appeared also in Remark \ref{rem:reveras.clack}) that points out special pathologies along our exposition.
\begin{example}\label{exmp:run:1}  The words
$ u = c \ b^2 c^2 \ a^2 b^2 c $ and    $v= c^2 \ b^2 c \ a^2 b^2 c $  have the following tableau realizations:
$$u = c \ b^2 c^2 \ a^2 b^2 c  \dss {\mTo} \tb_u =
\begin{array}{|l|l|l|l|l|l}
\cline{1-1}
  \cc   \\ \cline{1-4}
  \bb & \bb &  \cc & \cc  \\ \cline{1-5}
  \aa & \aa  &  \bb & \bb &     \cc  & , \\ \cline{1-5}
\end{array}
  $$
and
$$v  = c^2 \ b^2 c \ a^2 b^2 c  \dss \mTo  \tb_v =
\begin{array}{|l|l|l|l|l|l}
\cline{1-2}
  \cc   & \cc \\ \cline{1-3}
  \bb & \bb &  \cc  \\ \cline{1-5}
  \aa & \aa  &  \bb & \bb &     \cc & .  \\ \cline{1-5}
\end{array}
 $$
It easy to check that $u \clcong v$, where $\sword{\itoj{b}{c}}{}(\tb_u) = \sword{\itoj{b}{c}}{}(\tb_v) = 5$,  but $u \not \tcong v$.
\end{example}
Hence, from this example,  we learn  that $\clcong$  does not imply $\tcong$ (or equivalently $\kcong$), but the converse holds.
\begin{proposition}\label{prop:tab-to-cloc} If $u \kcong v$, or equivalently $u \tcong v$,  then  $u \clcong v$, for any $u,v \in \tA^*_n$.
Thus $u \clcong v$ iff $\tb_u \clcong \tb_v$.

\end{proposition}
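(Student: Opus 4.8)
The plan is to route the whole statement through subwords over convex sub-alphabets and then fall back on Schensted's and Knuth's theorems. For $w\in\tA_n^*$ and a convex $J\cnxsset\nnn$ I would write $w|_J$ for the subword of $w$ obtained by deleting every letter of $w$ outside $\tA_J$; deletion commutes with concatenation, so $w\mapsto w|_J$ is a monoid homomorphism $\tA_n^*\to\tA_J^*$. A nondecreasing subword of $w$ using only letters of $\tA_J$ is the same thing as a nondecreasing subword of $w|_J$, so by definition \eqref{eq:max.inc.len} one has $\mxlen{J}{w}=\mxlen{J}{w|_J}$, and Schensted's theorem (Theorem~\ref{thm:Schen}) identifies this with the length of the bottom row of $\tab(w|_J)$, which by Knuth's theorem (Theorem~\ref{thm:Knuth}) is a function of the $\kcong$-class of $w|_J$ alone. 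Consequently the proposition reduces to the following.

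\noindent\textbf{Restriction Lemma.} \emph{If $u\kcong v$ in $\tA_n^*$ then $u|_J\kcong v|_J$ for every convex $J\cnxsset\nnn$.} Granting this, for $u\kcong v$ and any convex $J$ we get $u|_J\kcong v|_J$, hence $\tab(u|_J)=\tab(v|_J)$, hence equal bottom rows, hence $\mxlen{J}{u}=\mxlen{J}{v}$; as $J$ ranges over all convex subsets this is exactly $u\clcong v$.

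To prove the Restriction Lemma, since $\kcong$ is generated by the elementary Knuth relations \eqref{eq:knuth.rel} and $w\mapsto w|_J$ is a homomorphism, I would check it on a single elementary move, i.e.\ the replacement of a factor $\al$ by $\bt$ with $(\al,\bt)$ one of the two length-three Knuth triples; it then suffices to see $\al|_J\kcong\bt|_J$. Take $\al=\aa\cc\bb$, $\bt=\cc\aa\bb$ with $\aa\le\bb<\cc$. If $\aa\in\tA_J$ and $\cc\in\tA_J$ then, since $J$ is convex and $\aa\le\bb\le\cc$, also $\bb\in\tA_J$, so no letter is deleted and $\al|_J=\aa\cc\bb\kcong\cc\aa\bb=\bt|_J$ is the original relation. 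Otherwise either $\cc\notin\tA_J$, and then $\al|_J$ and $\bt|_J$ are one and the same word over $\{\aa,\bb\}\cap\tA_J$, or $\aa\notin\tA_J$, and then they are one and the same word over $\{\bb,\cc\}\cap\tA_J$; either way $\al|_J=\bt|_J$. The triple $\al=\bb\aa\cc$, $\bt=\bb\cc\aa$ with $\aa<\bb\le\cc$ is symmetric. I want to stress that this is exactly the step that uses convexity: for a non-convex $J$ one can retain $\aa$ and $\cc$ while deleting $\bb$, leaving $\aa\cc\not\kcong\cc\aa$ --- which is why $\clcong$ is strictly coarser than $\kcong$, cf.\ Example~\ref{exmp:run:1}. (One could equally avoid the lemma and verify directly that each Knuth triple satisfies $\al\clcong\bt$ by computing $\mxlen{J}{\al}$ and $\mxlen{J}{\bt}$ on the three-letter words, then conclude via the congruence property of $\clcong$, Lemma~\ref{lem:clk.cong}; the case split is identical.)

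For the closing equivalence, let $\hat w$ be the row reading of the tableau $\tb_w=\tab(w)$; then $\tab(\hat w)=\tb_w$, so $\hat w\kcong w$, and by the implication just proved $\mxlen{J}{\hat w}=\mxlen{J}{w}$ for every convex $J$. By its definition \eqref{eq:cont.func.swords}, $\sword{J}{}(\tb_w)$ is the length of a longest nondecreasing subword, within $\tA_J$, of any word representing $\tb_w$ --- well-defined precisely by this invariance --- so $\sword{J}{}(\tb_w)=\mxlen{J}{w}$. Feeding this into the tableau reformulation of $\clcong$ in Remark~\ref{rmk:max.susse} yields $u\clcong v\iff\bigl(\mxlen{J}{u}=\mxlen{J}{v}\text{ for all convex }J\bigr)\iff\bigl(\sword{J}{}(\tb_u)=\sword{J}{}(\tb_v)\text{ for all convex }J\bigr)\iff\tb_u\clcong\tb_v$. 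I expect the only genuine work to be the convexity case analysis in the Restriction Lemma; everything else is bookkeeping around Theorems~\ref{thm:Schen} and~\ref{thm:Knuth}.
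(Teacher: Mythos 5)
Your argument is correct, and it is genuinely more self-contained than the paper's. The paper disposes of the proposition in two lines: by Theorem~\ref{thm:Knuth}, $u\kcong v$ gives $\tb_u=\tb_v$, and then Remark~\ref{rmk:max.susse} is invoked to conclude $u\clcong v$ --- which tacitly uses the fact that $\mxlen{J}{w}$ can be read off the tableau $\tb_w$, i.e.\ that the longest-nondecreasing-subword statistic over a convex sub-alphabet is constant on Knuth classes; that compatibility is left implicit (it is also recoverable elsewhere in the paper, through the matrix route of Key Lemma~\ref{klem:clk.represntation}, Theorem~\ref{thm:clk.represntation} and Corollary~\ref{cor:clk.knuth}, but not at this point in the text). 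You instead prove exactly that compatibility: your Restriction Lemma, checked on the two elementary Knuth triples with the convexity case split ($\aa,\cc\in\tA_J$ forces $\bb\in\tA_J$; otherwise the two restrictions coincide verbatim), shows $u\kcong v\Rightarrow u|_J\kcong v|_J$, and Schensted applied to $u|_J$, $v|_J$ then yields $\mxlen{J}{u}=\mxlen{J}{v}$. The case analysis is sound, and your remark that a non-convex $J$ can delete $\bb$ while keeping $\aa\cc\not\kcong\cc\aa$ correctly isolates the role of convexity. Your treatment of the closing equivalence (via the row reading $\hat w\kcong w$ and the identification $\sword{J}{}(\tb_w)=\mxlen{J}{w}$, then Remark~\ref{rmk:max.susse}) is also fine and makes explicit the well-definedness that the paper's definition of $\sword{J}{}$ presupposes. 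In short: same endpoints, but where the paper leans on Knuth's theorem plus an unproved (though classical) invariance, you supply an elementary proof of that invariance, at the cost of a short case analysis; your alternative route through Lemma~\ref{lem:clk.cong} would be equally acceptable.
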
\label{prop:plc2clk}
\begin{proof}
By Theorem \ref{thm:Knuth}, $u \kcong v$  iff $u \tcong v$ iff $\tb_u = \tb_v$, then it follows from  Remark \ref{rmk:max.susse} that $u \clcong v.$
\end{proof}

\begin{corollary}\label{cor:plc.to.clk}
The map
$$ \plktoclk: \plcM_n \ONTO \clkM_n, \qquad [w]_\plc \mTo [w]_\clk,$$
is a surjective  homomorphism.
\end{corollary}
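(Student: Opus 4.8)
The statement is a formal consequence of the containment of congruences proved just above, so the plan is to run the standard universal-property-of-quotients argument. The only genuine input is Proposition \ref{prop:tab-to-cloc} (equivalently Remark \ref{rmk:max.susse}), which tells us that $u \kcong v$ implies $u \clcong v$; that is, $\kcong \subseteq \clcong$ as subsets of $\tA_n^* \times \tA_n^*$. Since $\clcong$ is a congruence (Lemma \ref{lem:clk.cong}), the monoid $\clkM_n = \tA_n^*/_{\clcong}$ is well defined and the canonical projection $\pi_\clk : \tA_n^* \ONTO \clkM_n$ is a monoid homomorphism; likewise for $\pi_\plc : \tA_n^* \ONTO \plcM_n$.

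\textbf{Main steps.} First I would verify well-definedness of $\plktoclk$. Suppose $[u]_\plc = [v]_\plc$ for $u, v \in \tA_n^*$; then $u \kcong v$, hence $u \clcong v$ by the containment above, so $[u]_\clk = [v]_\clk$. Therefore the rule $[w]_\plc \mapsto [w]_\clk$ does not depend on the chosen representative $w$, and $\plktoclk$ is a genuine map with $\plktoclk \circ \pi_\plc = \pi_\clk$. Second, I would check that $\plktoclk$ is a monoid homomorphism: for $u, v \in \tA_n^*$,
\[
\plktoclk\big([u]_\plc \cdot [v]_\plc\big) = \plktoclk\big([uv]_\plc\big) = [uv]_\clk = [u]_\clk \cdot [v]_\clk = \plktoclk\big([u]_\plc\big) \cdot \plktoclk\big([v]_\plc\big),
\]
and $\plktoclk([e]_\plc) = [e]_\clk$, so the identity is preserved. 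Third, surjectivity is immediate, since $\pi_\clk$ is onto: any element of $\clkM_n$ has the form $[w]_\clk = \plktoclk([w]_\plc)$ for some $w \in \tA_n^*$.

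\textbf{Expected obstacle.} There is no real obstacle here: the nontrivial work has already been done in Proposition \ref{prop:tab-to-cloc}, and the rest is the routine passage to quotients. The only point that merits being stated explicitly is precisely the congruence inclusion $\kcong \subseteq \clcong$, which licenses the factorization of $\pi_\clk$ through $\plcM_n$; everything else — well-definedness, compatibility with multiplication, preservation of the identity, and surjectivity — follows directly from the corresponding properties of $\pi_\plc$ and $\pi_\clk$.
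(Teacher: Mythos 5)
Your proposal is correct and follows essentially the same route as the paper: the only substantive input is Proposition \ref{prop:tab-to-cloc} (that $\kcong$ implies $\clcong$), from which well-definedness, the homomorphism property, and surjectivity all follow by the standard passage to quotients. The paper's proof is just a compressed version of the same argument, additionally citing that $\clkM_n$ admits the Knuth relations, which is equivalent to the congruence containment you use.
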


\begin{proof} Both $\plcM_n$ and $\clkM_n$ admit the Knuth relations \eqref{eq:knuth.rel}, cf. Corollary \ref{cor:clk.knuth}, where $\kcong$ implies $\clcong$ by Proposition \ref{prop:plc2clk}. Thus $\plktoclk$ is a well defined monoid homomorphism. Surjectivity  is clear.
\end{proof}

Accordingly the diagram
 \begin{equation*}\label{eq:diag.2} \begin{gathered}
 \xymatrix{
\tA_n^* \ar@{->>}[rr] \ar@{->>}[rrd]& &  \plcM_n := \tA_n^* /_{\kcong}  \ar@{->>}[d]^\plktoclk \\ &  &
\qquad \clkM_n := \tA_n^* /_{\clcong}
}\end{gathered}
    \end{equation*}
commutes.

\subsection{Configuration tableaux}  \sSkip
We  introduce a new class of tableaux, carrying nonnegative integers.
These tableaux record lengths of tracks in semi-standard tableaux as weights, and are used as axillary tableaux, playing  a major role  in this paper.
The cells' indexing  in these tableaux are arranged in a way that each diagonal corresponds to a letter, as explained  below. As will be seen later, this indexing system  makes sense in the passage to matrices which establish linear  representations of Young tableaux.

\begin{definition}\label{def:ctab}
An $n$-\textbf{configuration tableau} $\ctb = (\lm_{i,j})$ is an isosceles tableau with $n$ rows and $n$ columns
of fixed shape $ (n >  n-1 > \cdots > 1)$ whose cells are indexed as

\begin{equation*}\label{eq:ctab}
 \ctb =  \quad
\begin{array}{|l|l|l|l|l|l|l}
\cline{1-1}
 {\cent_{n,n}}   \\ \cline{1-2}
 {\cent_{n-1,n-1} }  &   {\cent_{n-1,n} }    \\ \cline{1-3}
 \ { \vdots}  &   {}   &  {\ddots}  \\ \cline{1-4}
  {\cent_{3,3}  }  &   {  } & &   { \cent_{3,n} } \\ \cline{1-5}
  {\cent_{2,2}} &   { \cent_{2,3} }  &   {  } &  { \cent_{2,n-1} } &   { \cent_{2,n} } \\ \cline{1-6}
  {\cent_{1,1}} &   { \cent_{1,2} }  &   { \cent_{1,3} \ \ } & \  { \cdots } &   { \cent_{1,n-1} }  &   { \cent_{1,n} } & , \\ \cline{1-6}
\end{array}
\end{equation*}\pSkip
the rows are enumerated from bottom to top and columns from left to right.
The index $i$ of   $\lm_{i,j}$ stands for $i$'th row, while $j$ refers to the $j$'th diagonal (enumerated for bottom left).

The content of cells are  nonnegative integers $\lm_{i,j}$ satisfying  the following \textbf{configuration laws}  for all   $i \leq j \leq n$
\begin{align}\label{eq:ctab.order}
       \sum_{t=0}^k \cent_{j,j+t}  \leq  \sum_{t=0}^k \cent_{i,i+t} ,  & \qquad  \text{for every  }  k \leq n-j.
\end{align}
 The \textbf{null configuration tableau}, denoted  $\ctb_0$,  is the $n$-configuration tableau with $\lm_{i,j} =0$ for all $i,j$.
We denote the set of all $n$-configuration tableaux by $\CTab_n$.

The \textbf{$\ell$'th diagonal} of $\ctb$, for fixed $\ell$, is the the collection of cells $\cent_{t,\ell}$, $t = 1, \dots, \ell$.  We define
\begin{equation}\label{eq:Om.order}
    \Cent_\ell(\ctb) := \sum_{t = 1}^\ell \cent_{t,\ell}, \qquad \ell = 1, \dots, n,
\end{equation}
which we call the \textbf{$\ell$'th trace} of $\ctb$.

An  $n$-configuration tableau is called \textbf{standard} if  $\Cent_\ell = 1$ for every $\ell = 1, \dots, n.$ The set of all standard $n$-configuration tableaux is denoted by $\SCTab_n$.

\end{definition}
\noindent Literally,  the configuration laws \eqref{eq:ctab.order} assert  that the sum of cells' values of a
left adjacent subrow of $\rw_i$ is greater or equal than the sum of cells' values of any left adjacent subrow of the same length placed above $\rw_i$.

Note that \emph{configuration tableaux by themselves  need  not be semi-standard} as they may contain  $0$-valued cells. Nevertheless, as proved  later, they bijectively correspond to semi-standard tableaux.
The fact that all  $n$-configuration tableaux have the same fixed shape,  together with a canonical indexing system of cells,   permits uniform formal references, and  provides a suitable notation for  walks between cells.

Given an  $n$-configuration tableau $\ctb = (\cent_{i,j})$, we write $\wlk_{i,j}$ for a proper walk  from cell $\cent_{i,i}$ to cell $\cent_{1,j}$; $\Wlk_{i,j}$ denotes the set of all such proper walks $\wlk_{i,j}$. Due to the cells' indexing  of configuration tableaux, since $\wlk_{i,j}$ is proper,
\begin{itemize}\dispace
  \item
$\wlk_{i,j}$ is a \dwalk\ iff $i \leq j $,
  \item  $\wlk_{i,j}$ is a
\vwalk\ iff $i \geq j $.

\end{itemize}
 The \textbf{weight} $\tbwt{\wlk_{i,j}}$ of the walk $\wlk_{i,j}$ is the sum of its cells' values, i.e., $$ \tbwt{\wlk_{i,j}} := \sum_{\cent_{s,t} \in \wlk_{i,j}}\cent_{s,t}.$$
 We write $\mxtbwt{}_{i,j}(\ctb)$  for the maximal weight over all walks $\wlk_{i,j} \in \Wlk_{i,j}$, that is
 \begin{equation}\label{eq:max.inc.walk}
 \mxtbwt{}_{i,j}(\ctb) = \bigp_{\gm_{i,j} \in \Wlk_{i,j}} \tbwt{\wlk_{i,j}}, \qquad (i \leq j) \;,
 \end{equation}
 which determines the function
 \begin{equation}\label{eq:stp.0}
\mxtbwt{}_{i,j}:\CTab \TO \N_0, \qquad i,j = 1, \dots,n.
\end{equation}
We write $\mntbwt{}_{i,j}(\ctb)$  for the minimal weight over all walks $\wlk_{i,j} \in \Wlk_{i,j}$, that is
 \begin{equation}\label{eq:min.dnc.walk}
 \mntbwt{}_{i,j}(\ctb) = \bigm_{\gm_{i,j} \in \Wlk_{i,j}} \tbwt{\wlk_{i,j}}, \qquad (i\geq j) \; .
\end{equation}

 When $i=j $, the set  $\Wlk_{i,i}$ consists of exactly one proper walk $\wlk_{i,i}$  which is both a \dwalk \ and a \vwalk, and thus (cf. \eqref{eq:Om.order}) $$\mxtbwt{}_{i,i}(\ctb) = \mntbwt{}_{i,i}(\ctb)= \tbwt{\wlk_{i,i}} = \Cent_i$$  for any $i = 1,\dots, n$.
On the other hand, for fixed $i =1$, the weight
\begin{equation}\label{eq:ctab.row.1}
\tbwt{\wlk_{1,j}} = \mxtbwt{}_{1,j}(\ctb) =   \sum_{t =1}^j \lm_{1,j}
\end{equation}  is  determined by a unique (bottom) walk, which precisely encodes the first row $\rw_1$ of $\ctb$.
Hence, the weights of the walks $\wlk_{1,j}$  record the full data on $\rw_1$.

\pSkip

Let $\HC_{i,j}(\ctb)$ be the set of all \hwalk s $\cov_{i,j}$ of columns $1, \dots, j$ by rows $1,\dots, i$ with $i \geq j$, cf.~ \S\ref{ssec:y.tab}. Namely, $\cov_{i,j}$ is a collection of cells
$\cent_{i_1,1}, \dots, \cent_{i_j,j} $ in $\ctb := (\cent_{i,j})$ such that  $i_1 \geq i_2 \geq \cdots \geq i_j$ with $i_t \in \{ 1,\dots, j \} $;  in particular
$\cov_{i,i} = \lm_{i,i}$ for each $i = 1,\dots, n.$

 The \textbf{weight} $\tbwt{\cov_{i,j}}$ of a cover $\cov_{i,j}$ is the sum of its cells' values, i.e., $$ \tbwt{\cov_{i,j}} := \sum_{\cent_{s,t} \in \cov_{i,j}}\cent_{s,t} \; .$$
Using these weights we  introduce a new function on $n$-configuration tableaux:
\begin{equation}\label{eq:stp.1}
\jmp{i,j}{}:\CTab \TO \N_0, \qquad i,j = 1, \dots,n,
\end{equation}
defined for $i \geq j$ as
\begin{equation}\label{eq:stp.2}
   \jmp{i,j}{}(\ctb) = \bigm_{\cov_{i,j} \in \HC_{ij}  } \tbwt{\cov_{i,j}},
\end{equation}
and $\jmp{i,j}{}(\ctb) := 0$ whenever $i < j$. Explicitly, in terms of subsequences,  the value of the function $\jmp{i,j}{}$ can be  computed directly as
\begin{equation*}\label{eq:stp.3}
   \jmp{i,j}{}(\ctb) = \bigm_{S_j \sqsubseteq \itoj{1}{i}  } \sum_{\arrs{t =1}{ s_t \in S_j}}^j \cent_{t,s_t}\, , \qquad i \geq  j \; .
\end{equation*}
 In particular, for $i = j$ we have
$ \jmp{i,i}{}(\ctb) = \Cent_i $, while  by the configuration law \eqref{eq:ctab.order} for every $i = 1, \dots, n$ we obtain that $$ \jmp{i,1}{}(\ctb) = \cent_{i,i} \, . $$ So, we see that $\jmp{i,1}{}(\ctb)$, $i = 1,\dots,n$, records explicitly  the full data on the  first column $
\cl_1$ of $\ctb$.

The fixed structure of configuration tableaux enables a straightforward execution of operations that resize the tableaux -- shrink or extend them.
\begin{remark}\label{rmk:ctab.del}
  The deletion of the bottom row or the $n$'th diagonal of an $n$-configuration tableau results in  a new proper $(n-1)$-configuration tableau. This does not hold for the deletion of  the left column,  as  seen in Example \ref{exmp:ctab.1} below.
\end{remark}
On the other hand, one can enlarge the size of a tableaux through tableau injections, fully  preserving its content.
 The possible injections are determined by column mappings, depending on the size of the target tableau, while row mappings are  always one-to-one.

\begin{remark}\label{rmk:ctab.injection}
  Among the possible injections of configuration tableaux of different sizes, we are interested in   the \textbf{right injection}, that is  the map
  $$ \RInj: \CTab_m \TO \CTab_n, \qquad m \leq n,$$
  that embeds  an $m$-configuration tableau $\ctb \in \CTab_m$ in the  $n$-configuration tableau whose $m \times m$ right part is identically  $\ctb$ and all its cells in the left columns $1, \dots , n-m$ are of value $0$.
\end{remark}

\subsection{Semi-standard tableaux vs. configuration tableaux}\label{ssec:tab.vs.ctab}\sSkip

Every semi-standard tableau  $\tb  \in \Tab_n $ is associated to an   $n$-configuration tableau $\ctb \in \CTab_n$  by the  map
\begin{equation}\label{map:tabctab}
    \tabctab: \Tab_n \TO \CTab_n,
\end{equation}
defined as
\begin{equation}\label{map:tabctab.2}
    \tabctab: \tb \longmapsto \ctb, \qquad \text{where } \cent_{i,\ell} := \ltno{i}{\lt_\ell}(\tb),
\end{equation}
i.e., the cell  $\cent_{i,\ell}$ of $\ctb = (\cent_{i,j})$ is assigned with the occurrence number of the letter $\lt_\ell$  in the row $\rw_i$ of~ $\tb$, and $\tb_0 \mTo \ctb_0$.
By this setting  the letter $\lt_\ell$ in  $\tb$ corresponds to the $\ell$'th diagonal of $\ctb$, enumerated starting from bottom left. Accordingly,  the  $\ell$-trace
$\Cent_\ell$ of $\ctb$, cf. \eqref{eq:Om.order}, gives the total occurrences  of the letter $\al_\ell$ in $\tb$, which  is read off from  the $\ell$'th diagonal of $\ctb$.

\begin{example}\label{exmp:ctab.1} Let $w = \lt_4 \, \lt_3 \lt_4 \,
  {\lt_2}    {\lt_2}     {\lt_4}  {\lt_4} \,
  {\lt_1}    {\lt_1}      {\lt_1}  {\lt_3}     {\lt_3}     {\lt_4} $ be a standard tableau for which the  map  $\tabctab: \tb_w \longmapsto \ctb$ is as follows:
$$ \tb_w =
\begin{array}{|l|l|l|l|l|l|l|}
\cline{1-1}
  {\lt_4}   \\ \cline{1-2}
  {\lt_3}  &   {\lt_4} \\ \cline{1-4}
  {\lt_2} &   {\lt_2}  &   {\lt_4} &  {\lt_4} \\ \cline{1-6}
  {\lt_1}  &    {\lt_1} &     {\lt_1}  & {\lt_3} &    {\lt_3} &    {\lt_4} \\ \cline{1-6}
  \multicolumn{6}{c}{ \begin{array}{l} \\ \\ \end{array}}
\end{array}
\qquad \To  \qquad
 \ctb = \begin{array}{|l|l|l|l|l|l|l|}
\cline{1-1}
  {1}   \\ \cline{1-2}
  {1} &  {1}  \\ \cline{1-3}
  {2} &   {0}  &   {2}  \\ \cline{1-4}
  {3}  &    {0} &     {2}  &  {1}  \\ \cline{1-4}
\multicolumn{5}{c}{\begin{array}{l}
                                      \nwarrow \ \nwarrow \  \nwarrow  \  \nwarrow \\
                                      \quad \ \lt_1 \; \lt_2 \; \lt_3 \; \lt_4
                                     \end{array}
}
\end{array}
$$
The traces of $\ctb$ are then   $\Cent_1(\ctb)=3$, $\Cent_2(\ctb)=2$, $\Cent_3(\ctb)=3$, and $\Cent_4(\ctb)=5$.

\end{example}
This example also  shows that tableaux are not necessarily semi-standard, e.g., see the second column that is not (strictly) increasing.

\begin{remark}\label{rmk:tabctab}
For every semi-standard tableau $\tb \in \Tab_n$ assigning  \eqref{map:tabctab.2} admits the configuration laws ~ \eqref{eq:ctab.order} as for every $i < j $ the $j$'th row $\rw_j $ of $\tb$ dominates its $i$'th row $\rw_i$.
\end{remark}

An important property of the map 
\eqref{map:tabctab}
 is that it also precisely records  the shape of semi-standard tableaux. That is,
  the shape $(\lm_1, \dots, \lm_m)$ of a tableau $\tb$ is encoded in its   image $ \tabctab(\tb)$ as the sums of cells' values of the rows:
   $$ \lm_i = \sum_{t=i}^n \cent_{i,t}.$$
   In general, an $n$-configuration tableau records all the information that its pre-image semi-standard tableau carries, in a numerical way.

\begin{theorem}\label{thm:configuration} The map $\tabctab: \Tab_n \To \CTab_n$ in \eqref{map:tabctab} is bijective.
\end{theorem}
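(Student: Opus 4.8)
The plan is to exhibit an explicit two-sided inverse of $\tabctab$; the combinatorial content is simply a dictionary between the configuration laws \eqref{eq:ctab.order} and the two conditions defining a semi-standard tableau (rows nondecreasing, columns strictly increasing). For injectivity I would argue directly: in a semi-standard $\tb\in\Tab_n$ every row $\rw_i$ is a nondecreasing word, hence is completely determined by its content, i.e.\ by how often each letter occurs in it; and by \eqref{map:tabctab.2} the $i$'th row of $\ctb:=\tabctab(\tb)$ stores precisely those occurrence numbers $\cent_{i,\ell}=\ltno{i}{\lt_\ell}(\tb)$. Since a tableau (in the France convention used here) is nothing but its stack of rows, $\tb$ is recovered row by row from $\ctb$, so $\tabctab$ is injective.

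For surjectivity I would take an arbitrary $\ctb=(\cent_{i,j})\in\CTab_n$ and define $\tb$ by letting its $i$'th row be the nondecreasing word containing $\cent_{i,\ell}$ copies of $\lt_\ell$ for $\ell=i,\dots,n$ (discarding any all-zero top rows, with $\ctb_0\mapsto\tb_0$). By construction $\tabctab(\tb)=\ctb$, so the only point is that $\tb$ really is semi-standard. The key bookkeeping device will be the partial sums $R_i(\ell):=\sum_{t=i}^{\ell}\cent_{i,t}$, which count the entries of $\rw_i$ not exceeding $\lt_\ell$, so that $R_i(n)=\len{\rw_i}$ and $R_i(i-1)=0$. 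Specializing \eqref{eq:ctab.order} to $j=i+1$ and putting $\ell=i+1+k$ rewrites the configuration law as the single family
\[
 R_{i+1}(\ell)\ \le\ R_i(\ell-1),\qquad \ell=i+1,\dots,n .
\]

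From this I would conclude, first, that $\tb$ has the shape of a Young diagram, since $\len{\rw_{i+1}}=R_{i+1}(n)\le R_i(n-1)\le R_i(n)=\len{\rw_i}$; and second, that each column is strictly increasing: the $k$'th entry of $\rw_i$ is $\lt_{\ell_i(k)}$ with $\ell_i(k):=\min\{m : R_i(m)\ge k\}$, and if $m:=\ell_{i+1}(k)$ (necessarily in $\{i+1,\dots,n\}$) then the displayed inequality gives $R_i(m-1)\ge R_{i+1}(m)\ge k$, forcing $\ell_i(k)\le m-1<m$. Together with rows being nondecreasing by construction this shows $\tb\in\Tab_n$, and surjectivity follows.

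The step I expect to be the main obstacle is exactly isolating the right reformulation $R_{i+1}(\ell)\le R_i(\ell-1)$ of \eqref{eq:ctab.order} and matching it cleanly against column-strictness — equivalently, against the domination relation $\rw_{i+1}\dom\rw_i$ recorded in Remark~\ref{rmk:tabctab}, which could also be used to organize the whole argument. Everything else (handling empty rows, the null tableau, and cells beyond the length of the shorter of two adjacent rows) is routine once the $R_i$-bookkeeping is in place.
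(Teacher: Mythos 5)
Your proposal is correct and follows essentially the same route as the paper: injectivity because each (nondecreasing) row of a semi-standard tableau is determined by its letter multiplicities, which are exactly the entries of the corresponding row of the configuration tableau, and surjectivity by rebuilding the rows from those multiplicities and deducing semi-standardness from the configuration laws \eqref{eq:ctab.order}. Your $R_i(\ell)$-bookkeeping with the specialization $j=i+1$ simply makes explicit the dominance check that the paper's proof asserts without spelling out.
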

\begin{proof} Let $\rw_i = \al_{\ell_1}^{q_1} \al_{\ell_2}^{q_2} \cdots \al_{\ell_m}^{q_m}$,
where $\ell_1, \dots, \ell_m \in L \subseteq  \{i,\dots, n\}$ and $q_t \in \N$, be the $i$'th row of the tableau $\tb \in \Tab(\tA_n)$.
Then~ $\rw_i$ can be rewritten uniquely as $\al_{i}^{q_i} \al_{i+1}^{q_{i+ 1}} \cdots \al_{n}^{q_n}$
with $q_t \in \N_0$  for every $\ell_t \in \{i,\dots, m\}$ and $q_t = 0$ when
$t \in \{i,\dots, n\} \sm L.$  Thus, the row mapping
\begin{equation}\label{eq:str}
\tabctab|_{\rw_i}: \al_{i}^{q_i} \; \al_{i+1}^{q_{i+ 1}} \; \cdots \; \al_{n}^{q_n} \dss \mTo  q_i \;  q_{i+1} \; \cdots \; q_n \tag{$*$}
\end{equation}
is injective, i.e., $\cent_{i,t} = q_t$, $t = i, \dots, n$.   As this holds for the restriction $\tabctab|_{\rw_i}$ of $\tabctab$ to any row $\rw_i$, and $\tabctab$ maps row-to-row, we deduce that  $\tabctab$ is  injective over  the whole tableau (and also respects the configuration laws \eqref{eq:ctab.order} by Remark \ref{rmk:tabctab}).

To see that $\tabctab$ is surjective, use \eqref{eq:str} to reproduce each row $\rw_i$ of $\tb$ from the $i$'th row of $\ctb$, which together, due to the configuration laws  \eqref{eq:ctab.order}, obey the dominance relations in  semi-standard tableaux.
\end{proof}

The next  conclusions are now  immediate.

\begin{corollary}\label{cor:sctab} The restriction \begin{equation}\label{map:stabsctab}
    \stabsctab: \STab_n \TO \SCTab_n
\end{equation}
 of $\tabctab: \Tab_n \To \CTab_n$  to standard tableaux is bijective.
\end{corollary}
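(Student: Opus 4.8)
The plan is to reduce everything to Theorem~\ref{thm:configuration}, which already establishes that $\tabctab:\Tab_n\to\CTab_n$ is a bijection. Granting this, it remains only to check that $\tabctab$ carries the distinguished subsets onto one another, i.e. that a semi-standard tableau $\tb$ is standard precisely when its image $\ctb=\tabctab(\tb)$ is a standard configuration tableau; injectivity of $\stabsctab$ is then inherited from that of $\tabctab$. The single bookkeeping fact I will invoke is the one recorded right after \eqref{map:tabctab.2}: for $\tb\in\Tab_n$ with image $\ctb=(\cent_{i,j})$, the $\ell$'th trace $\Cent_\ell(\ctb)=\sum_{t=1}^{\ell}\cent_{t,\ell}$ (cf.\ \eqref{eq:Om.order}) equals the total number of occurrences of the letter $\lt_\ell$ in $\tb$.

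First I would verify that $\stabsctab$ is well defined, i.e.\ that $\tabctab(\STab_n)\subseteq\SCTab_n$. Let $\tb\in\STab_n$. By definition the entries of $\tb$ are strictly increasing along each row and each column, so no letter of $\tA_n$ occurs more than once; under the standing convention that a standard tableau over $\tA_n$ uses each of $\lt_1,\dots,\lt_n$ exactly once (as is needed for the bijection with $S_n$ recorded in the introduction), the letter $\lt_\ell$ occurs exactly once for every $\ell$. By the trace identity above this gives $\Cent_\ell(\tabctab(\tb))=1$ for all $\ell=1,\dots,n$, which is exactly the defining condition of $\SCTab_n$ in Definition~\ref{def:ctab}. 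Hence $\stabsctab:\STab_n\to\SCTab_n$ is a well-defined map, and it is injective because $\tabctab$ is.

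Next I would prove surjectivity of $\stabsctab$, equivalently that $\tabctab^{-1}(\SCTab_n)\subseteq\STab_n$. Fix $\ctb\in\SCTab_n$ and let $\tb:=\tabctab^{-1}(\ctb)$, which is a semi-standard tableau by Theorem~\ref{thm:configuration}. Since $\Cent_\ell(\ctb)=1$ for every $\ell$, the trace identity forces each letter $\lt_\ell$ to appear exactly once in $\tb$. Now $\tb$ being semi-standard already has strictly increasing columns; and its rows, being nondecreasing words with no repeated letter, are in fact strictly increasing. Thus $\tb$ is standard, i.e.\ $\tb\in\STab_n$, with $\stabsctab(\tb)=\ctb$. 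Together with the previous step, $\stabsctab$ is a bijection.

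I do not anticipate a real obstacle: the corollary is a routine ``restriction of a bijection to matching subobjects'' statement. The only point that must be stated with care is the normalization convention just invoked --- that tableaux in $\STab_n$ use each letter of $\tA_n$ exactly once --- without which $\STab_n$ (which would then also contain, e.g., standard tableaux over proper sub-alphabets) would not match the ``all traces equal~$1$'' condition of $\SCTab_n$. The sole structural observation in the argument, namely that semi-standardness together with the absence of repeated letters upgrades each row from nondecreasing to strictly increasing, is immediate from the definitions in \S\ref{ssec:y.tab}.
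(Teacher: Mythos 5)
Your proof is correct and follows exactly the route the paper intends: the paper states this corollary as an immediate consequence of Theorem~\ref{thm:configuration} without further argument, and your write-up simply fills in the routine check that the trace identity $\Cent_\ell(\tabctab(\tb))$ counts occurrences of $\lt_\ell$, so standard tableaux and standard configuration tableaux correspond under the restriction of the bijection. Your remark about the normalization (each letter of $\tA_n$ used exactly once in a standard tableau, as required for the correspondence with $S_n$) is a fair and careful reading of Definition of $\STab_n$ versus the ``all traces equal $1$'' condition defining $\SCTab_n$.
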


In  view of  Theorem \ref{thm:configuration}, and \eqref{eq:recursive.tab}, we define the map $$\ctab: \tA_n^+ \TO \CTab_n, \qquad w \mTo \tabctab(\tab(w)), $$  and write $$\ctb_w:= \ctab(w)$$  for short. For the empty word $e \in \tA_n^*$, we formally set
 $\ctb_e := \ctb_0$ to be the null configuration tableau $\ctb_0$ (Definition \ref{def:ctab}).

\begin{corollary}\label{cor:plc.to.ctab} The map
$$ \scP_\ctab: \plcM_n := \tA_n^*/_{\pcong} \TO \CTab_n, \qquad [w]_\plc \longmapsto \ctb_w , $$
is bijective, and $u \pcong v$ iff $\ctb_u = \ctb_v$, for any $u, v ,w \in \tA_n^*$.
\end{corollary}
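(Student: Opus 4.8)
The plan is to deduce Corollary~\ref{cor:plc.to.ctab} as an essentially formal consequence of the bijectivity of $\tabctab$ (Theorem~\ref{thm:configuration}), Knuth's theorem (Theorem~\ref{thm:Knuth}), and the definitions of the maps $\tab$ and $\ctab$. First I would recall the factorization $\ctab = \tabctab \circ \tab$ as maps $\tA_n^+ \To \CTab_n$, together with its extension to $\tA_n^*$ sending $e \mTo \ctb_0$ (which matches $\tab(e) = \tb_0 \mTo \tabctab(\tb_0) = \ctb_0$). Since $\scP_\ctab$ is defined on plactic classes by $[w]_\plc \mTo \ctb_w = \ctab(w)$, the very first thing to check is that this is well defined, i.e.\ that $\ctab$ is constant on each $\pcong$-class; this is immediate because $u \pcong v$ implies $\tab(u) = \tab(v)$ by Theorem~\ref{thm:Knuth}, hence $\ctab(u) = \tabctab(\tab(u)) = \tabctab(\tab(v)) = \ctab(v)$.

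Next I would establish injectivity of $\scP_\ctab$. Suppose $\ctb_u = \ctb_v$, that is $\tabctab(\tab(u)) = \tabctab(\tab(v))$. Because $\tabctab$ is injective (Theorem~\ref{thm:configuration}), this gives $\tab(u) = \tab(v)$, i.e.\ $\tb_u = \tb_v$; by Knuth's theorem (Theorem~\ref{thm:Knuth}) this is equivalent to $u \pcong v$, so $[u]_\plc = [v]_\plc$. For surjectivity: given any $\ctb \in \CTab_n$, surjectivity of $\tabctab$ produces a semi-standard tableau $\tb \in \Tab_n$ with $\tabctab(\tb) = \ctb$; since every semi-standard tableau is of the form $\tb = \tab(w)$ for some word $w \in \tA_n^+$ (and $\tb_0 = \tab(e)$ for the empty tableau, using $\ctb_0$), we get $\ctb = \tabctab(\tab(w)) = \ctb_w = \scP_\ctab([w]_\plc)$. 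Finally, the equivalence ``$u \pcong v$ iff $\ctb_u = \ctb_v$'' is exactly the conjunction of well-definedness (the forward direction) and injectivity (the backward direction) just proved; alternatively one phrases it as: $u \pcong v \Leftrightarrow \tb_u = \tb_v \Leftrightarrow \tabctab(\tb_u) = \tabctab(\tb_v) \Leftrightarrow \ctb_u = \ctb_v$, where the middle equivalence uses that $\tabctab$ is a bijection.

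I do not expect a genuine obstacle here — the corollary is a diagram-chase combining three results already in hand. The only point requiring a word of care is the bookkeeping at the empty word/empty tableau: one must note that $\plcM_n$ contains the identity class $[e]_\plc$, that $\tab$ and $\ctab$ are extended to $\tA_n^*$ by $e \mTo \tb_0 \mTo \ctb_0$, and that $\CTab_n$ contains the null configuration tableau $\ctb_0$, so that the bijection is genuinely between $\plcM_n$ and all of $\CTab_n$ rather than between their nonempty/non-null parts. I would write the proof in two or three lines: invoke Theorem~\ref{thm:Knuth} to identify $\plcM_n$ with $\Tab_n$ via $[w]_\plc \leftrightarrow \tb_w$, then compose with the bijection $\tabctab \colon \Tab_n \To \CTab_n$ of Theorem~\ref{thm:configuration}; the composite is $\scP_\ctab$, hence bijective, and the stated equivalence is the statement that this composite is injective and well defined.

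\begin{proof}
By Theorem~\ref{thm:Knuth}, the assignment $[w]_\plc \mTo \tb_w = \tab(w)$ is a well-defined bijection $\plcM_n \To \Tab_n$, with $[e]_\plc \mTo \tb_0$. Composing with the bijection $\tabctab \colon \Tab_n \To \CTab_n$ of Theorem~\ref{thm:configuration} (which sends $\tb_0 \mTo \ctb_0$) yields the map $[w]_\plc \mTo \tabctab(\tab(w)) = \ctb_w$, which is exactly $\scP_\ctab$; being a composite of bijections, $\scP_\ctab$ is bijective. Finally, for $u,v \in \tA_n^*$ we have
$$ u \pcong v \ \Longleftrightarrow \ \tb_u = \tb_v \ \Longleftrightarrow \ \tabctab(\tb_u) = \tabctab(\tb_v) \ \Longleftrightarrow \ \ctb_u = \ctb_v, $$
where the first equivalence is Theorem~\ref{thm:Knuth} and the second is the injectivity of $\tabctab$ (Theorem~\ref{thm:configuration}).
\end{proof}
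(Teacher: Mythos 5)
Your proof is correct and follows exactly the paper's route: the paper's own argument is the one-line composition of Theorem \ref{thm:Knuth} (identifying $\plcM_n$ with $\Tab_n$ via $[w]_\plc \mapsto \tb_w$) with the bijection $\tabctab$ of Theorem \ref{thm:configuration}. Your extra care about well-definedness, the chain of equivalences, and the empty word/null tableau is just a fuller write-up of the same argument.
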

\begin{proof}
Compose Theorem  \ref{thm:configuration} with Theorem \ref{thm:Knuth}.
\end{proof}
Similar to Remark \ref{rmk:tab.mon}, by the latter  corollary, the map $\scP_\ctab: \plcM_n  \To \CTab_n$ can be realized as a monoid homomorphism, where $\CTab_n$ is a monoid whose operation (insertion by concatenating) is induced by the next algorithm.
This view is  compatible with the monoid structure of $\Tab_n$ via the bijection $\tabctab: \Tab_n \To \CTab_n$ -- reads now a tableau isomorphism.

\begin{algorithm}[Encoding Algorithm]\label{algr:conf}
  To encode a word $w \in \tA_n^+$ in an $n$-configuration tableau $\ctb$, start with an empty $n$-configuration tableau $\ctb := \ctb_e$ and perform the following letter by letter.

 To encode the letter $\lt_\ell \in \tA_n$ in $\ctb$,  start with the first row $i=1$
  \begin{itemize}\dispace
    \item incitement  $\cent_{i,\ell}$ by $1$, i.e., $\cent_{i,\ell} \leftarrowtail \cent_{i,\ell} + 1$;
    \item decrement $\cent_{i,k_i}$ by $1$, where $k_i >\ell$ is the minimal index with  $\cent_{i,k_i} > 0$.
  \end{itemize}
  Repeat  this same procedure to insert $\lt_{k_i}$ to row $i +1$ of $\ctb$, as long  as $i < n$.

This encoding  of a word  $w$ in  $\ctb$ is denoted by  $w \insrt \ctb$, resulting in the configuration tableau $\ctb_w$.

\end{algorithm}

By  this algorithm we  see  that the restriction of $\tA^+_{\itoj{1}{n}}$ to $\tA^+_{\itoj{\ell}{n}}$, i.e., to words over the convex sub-alphabet $\tA_{\itoj{\ell}{n}} : = \{ \lt_\ell, \dots, \lt_n \} \cnxsset \{ \lt_1, \dots, \lt_n \}$, reads in terms of configuration tableaux as the image of right injection $\RInj: \CTab_{n-\ell+1} \Into \CTab_n  $, cf.
Remark~\ref{rmk:ctab.injection}.

\pSkip

Due to the special structure of configuration tableaux, provided by  the configuration laws~ \eqref{eq:ctab.order}, we obtain the next  important property.

\begin{observation}\label{obs:ctab.wlk} The bijection
 $\tabctab: \Tab_n \To \CTab_n$ (Theorem \ref{thm:configuration}), together with our construction, provides the equality
$$\sword{\itoj{i}{j}}{}\big(\tb_w \big) = \mxtbwt{}_{i,j}(\ctb_w), \qquad i \leq j, $$ for any $w \in \tA_n^*$, cf. \eqref{eq:cont.func.swords} and \eqref{eq:max.inc.walk}. Namely,
weights of  \dwalk s $\wlk_{i,j}$ in  a configuration tableaux ~$\ctb_w $ are translated to subwords over the convex sub-alphabet $\tA_{\itoj{i}{j}} \subset \tA_{\itoj{1}{n}} = \tA_n$ in $\tb_w$, and vice versa.
  That is, the weight $\mxtbwt{}_{i,j}(\ctb_w)$ gives the length of longest nondecreasing subword in $\tb_w$ with letters in $\tA_{\itoj{i}{j}}$.

 The weight of a \dwalk \ $\wlk_{i,j}$ from cell $(i,i)$ to cell $(1,j)$ in $\ctb \in \CTab_n$, $j \geq i$,
 is greater than the weight of every \dwalk \ $\wlk_{k,j}$ from $(k,k)$ to  $(1,j)$ for $k \geq i$, i.e., $\tbwt{\wlk_{i,j}} \geq \tbwt{\wlk_{k,j}}$ for any $i \leq k \leq j$.
Indeed, any such  \dwalk \ $\wlk_{k,j}$ starts with a sub-row whose cells lay above those of $\wlk_{i,j}$, which by the configuration law \eqref{eq:ctab.order} has a lower weight, and this argument applies inductively.

\begin{equation*}
 \tbrest{\ctb}{i}{k} =  \quad
\begin{array}{c|c|c|c|c|c|c|c|}
 \cline{2-4}
 k & \circ  &      &   \\ \cline{2-5}
 &    &   \circ &   \circ &    \\ \cline{2-6}
  &  &      &  &   ..  &   \\ \cline{2-7}
  & &     &     & ..    &    \circ  &    \\ \cline{2-8}
 i & \bullet  &  \bullet    &   \bullet  & ..    &   \bullet   &   \bullet  & \bullet \\ \cline{2-8}
 \multicolumn{8}{r}{j}
\end{array}
\end{equation*}
(This  property fits well with Theorem \ref{thm:Schen}, as configuration tableaux record numerically Young tableaux.)

\end{observation}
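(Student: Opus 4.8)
The plan is to prove the two assertions of the Observation separately: the equality $\sword{\itoj{i}{j}}{}(\tb_w) = \mxtbwt{}_{i,j}(\ctb_w)$ by a pair of opposite inequalities, and then the monotonicity of the maximal walk weights as a short consequence. First I would record the combinatorial shape of a \dwalk: by the fixed cell indexing of $\CTab_n$, a walk $\wlk_{i,j}\in\Wlk_{i,j}$ (which is a \dwalk, since $i\le j$) runs from $\cent_{i,i}$ to $\cent_{1,j}$ using only \hstep s (raising the diagonal index) and \dstep s (lowering the row index); hence it meets every row $r\in\{1,\dots,i\}$ in a block of consecutive diagonals $\cent_{r,\alpha_r},\dots,\cent_{r,\beta_r}$, linked by the pivot relations $i=\alpha_i\le\beta_i=\alpha_{i-1}\le\beta_{i-1}=\alpha_{i-2}\le\cdots\le\alpha_1\le\beta_1=j$, so that $\tbwt{\wlk_{i,j}}=\sum_{r=1}^{i}\sum_{t=\alpha_r}^{\beta_r}\cent_{r,t}$. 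By \eqref{map:tabctab.2} the value $\cent_{r,t}$ equals $\ltno{r}{\lt_t}(\tb_w)$, so this weight is the total number of cells of $\tb_w$ lying in a row $\rw_r$ with $r\le i$ and carrying a letter whose index lies in $[\alpha_r,\beta_r]\subseteq[i,j]$.

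The inequality $\mxtbwt{}_{i,j}(\ctb_w)\le\sword{\itoj{i}{j}}{}(\tb_w)$ is the routine half. Given $\wlk_{i,j}$ with data $(\alpha_r,\beta_r)$, I select from each row $\rw_r$ ($r=i,i-1,\dots,1$) of $\tb_w$ all cells whose letter has index in $[\alpha_r,\beta_r]$, and concatenate these blocks in the reading order $\rw_i,\rw_{i-1},\dots,\rw_1$ of the canonical word of $\tb_w$ (Theorem \ref{thm:Knuth}). Inside each row the block is already nondecreasing, and across a pivot the last index used in $\rw_r$ is $\le\beta_r=\alpha_{r-1}\le$ the first index used in $\rw_{r-1}$; hence the concatenation is a nondecreasing subword of $\tb_w$ over $\tA_{\itoj{i}{j}}$ of length $\tbwt{\wlk_{i,j}}$, and maximizing over walks gives the bound.

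The reverse inequality $\sword{\itoj{i}{j}}{}(\tb_w)\le\mxtbwt{}_{i,j}(\ctb_w)$ is the heart of the matter and where I expect the real obstacle. The point to prove is that a longest nondecreasing subword of $\tb_w$ over $\tA_{\itoj{i}{j}}$ may be chosen confined to the bottom rows $\rw_i,\dots,\rw_1$, using in each such row a block of consecutive letters, so that by the correspondence above it is the track of a \dwalk. Such a subword visits its rows in decreasing order of index (forced by the reading order), and by greediness each of its per-row pieces may be enlarged to a full consecutive block without lengthening the subword. If it still uses a row $\rw_a$ with $a>i$, its piece there consists of letters $\lt_c,\dots,\lt_d$ with $i\le c\le d\le j$ and $c\ge a>i$; the dominance chain $\rw_p\dom\cdots\dom\rw_1$ of $\tb_w$ — equivalently the configuration laws \eqref{eq:ctab.order} for $\ctb_w$ — guarantees that the lower rows $\rw_i,\dots,\rw_1$ already offer, in consecutive positions, at least as many cells with letters in that range as lie above $\rw_i$, so a standard exchange removes the use of $\rw_a$ without shortening the subword; iterating exhibits a \dwalk\ of weight $\ge\sword{\itoj{i}{j}}{}(\tb_w)$. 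I regard this exchange as the delicate point; a safer fallback I would keep in reserve is induction on $\len{w}$, inserting one letter by the Bumping Algorithm \ref{algr:1} and by the Encoding Algorithm \ref{algr:conf} and checking that $\sword{\itoj{i}{j}}{}(\tb_w)$ and $\mxtbwt{}_{i,j}(\ctb_w)$ change by the same amount against the two algorithms' update rules, with the degenerate cases $i=j$ (where $\mxtbwt{}_{i,i}(\ctb_w)=\Cent_i$ counts the occurrences of $\lt_i$) and $i=1$ (where \eqref{eq:ctab.row.1} makes $\mxtbwt{}_{1,j}(\ctb_w)$ the truncated bottom row, longest by Theorem \ref{thm:Schen}) serving as consistency checks.

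Finally, the monotonicity $\tbwt{\wlk_{i,j}}\ge\tbwt{\wlk_{k,j}}$ for $i\le k\le j$ — understood as $\mxtbwt{}_{i,j}(\ctb_w)\ge\mxtbwt{}_{k,j}(\ctb_w)$ — is then immediate from the equality just proved together with the inclusion $\tA_{\itoj{k}{j}}\subseteq\tA_{\itoj{i}{j}}$: every nondecreasing subword over the smaller sub-alphabet is one over the larger, whence $\sword{\itoj{i}{j}}{}(\tb_w)\ge\sword{\itoj{k}{j}}{}(\tb_w)$. Alternatively, as indicated in the statement, one argues directly on configuration tableaux: given any $\wlk_{k,j}$, its initial left-adjacent subrow $\cent_{k,k},\dots,\cent_{k,\beta_k}$ is dominated, by \eqref{eq:ctab.order}, by the left-adjacent subrow of the same length in the lower row $\rw_i$, and iterating this comparison row by row builds from $\wlk_{k,j}$ a \dwalk\ from $\cent_{i,i}$ to $\cent_{1,j}$ of weight no smaller than $\tbwt{\wlk_{k,j}}$.
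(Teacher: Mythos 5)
Your overall route is the one the paper intends: the paper records this statement as an observation, justifying the equality only by appeal to the construction of $\tabctab$, and the monotonicity by the configuration-law comparison which your second, alternative argument reproduces verbatim. Your elaboration of the walk--subword dictionary is correct: a descending walk from $\cent_{i,i}$ to $\cent_{1,j}$ meets each row $r=i,\dots,1$ in a block of consecutive diagonals $[\alpha_r,\beta_r]$ with $\alpha_i=i$, $\beta_r=\alpha_{r-1}$, $\beta_1=j$, its weight counts exactly the cells of $\tb_w$ in rows $\rw_i,\dots,\rw_1$ carrying letters in those windows, and reading these blocks in the row order of the tableau word gives a nondecreasing subword over $\tA_{\itoj{i}{j}}$; likewise your derivation of $\mxtbwt{}_{i,j}(\ctb_w)\geq \mxtbwt{}_{k,j}(\ctb_w)$ from the equality is sound, and is indeed the right reading of the second assertion (a single walk from $(i,i)$ can be lighter than a single walk from $(k,k)$, so the comparison must be between maximal weights, as the paper's own inductive construction of a comparison walk indicates).

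The step you flag as delicate is also the one whose justification, as you phrased it, does not hold: it is \emph{not} true that the rows $\rw_i,\dots,\rw_1$ offer ``at least as many cells with letters in that range'' as a used row $\rw_a$ with $a>i$ --- the row $\rw_a$ may consist of letters $\lt_c,\dots,\lt_d$ none of which occur below it at all. The exchange that does work replaces each used cell of $\rw_a$ by the cell \emph{directly beneath} it in $\rw_{a-1}$ (these exist, since the shape is a Young diagram): by strict increase along columns, the letter below has index at most $d-1$, hence strictly smaller than the minimum index of the next block used further down the word, and it is at least $a-1\geq i$ since row $\rw_{a-1}$ carries only letters of index $\geq a-1$; so the replacement letters stay inside $\tA_{\itoj{i}{j}}$, form a consecutive factor of $\rw_{a-1}$, and keep the concatenation nondecreasing. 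If $\rw_{a-1}$ is already in use, these cells are disjoint from its counted block (their indices are strictly smaller), so they merge into it without loss. Iterating pushes all usage into rows $\leq i$, and padding with pivot cells and the endpoints $\cent_{i,i}$, $\cent_{1,j}$ turns the result into a descending walk, giving $\sword{\itoj{i}{j}}{}\big(\tb_w\big)\leq \mxtbwt{}_{i,j}(\ctb_w)$. With this repair (or with your fallback induction on $\len{w}$, which also works but requires verifying that the Bumping and Encoding updates change both quantities identically), your argument is complete and fills in exactly what the paper leaves implicit.
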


\subsection{Reversal of words}
\sSkip
 Recall that the reversal $\rvs{w}$ of a word $w= \std{w}$ is the rewriting  of $w$ from right to left (Definition \ref{def:wordreversing}).
%
%
Tableaux of  reversals of words   in general do not preserve the equivalence  relation $\tcong$,  in sense that we may have
$\tab(u) \neq  \tab(v)$ with $\tab(\rvs{u}) =  \tab(\rvs{v})$, or vice versa.  The same may happen for configuration tableaux, cf. Theorem \ref{thm:configuration}.
\begin{example}
  For example, let $ w =  bc \, aabc$,
  $u = b cc \, aa b$ and $ v =  c \, bc \, aab$, for which $\tb_u \neq \tb_v $, i.e., $u \not \tcong v$.
  But the tableau $\tb_w$ of $w$  is the same as the tableau of the reversal of both $u$ and $v$, that is
  $\tb_w= \tab(\rvs{u})= \tab(\rvs{v}).$

\end{example}
Howeverm  for the case of standard tableaux we do have the following useful correspondence.

\begin{proposition}
  \label{prop:reverse.std}
Suppose $\tb_u = \tab(v)$ and $\tb_v = \tab(v)$ are standard tableaux in $\STab_n$, then
$$  \tb_u = \tb_v \Iff \tb'_{u} = \tb'_{v}, $$
where $\tb'_u = \tab(\rvs{u})$ and $\tb'_v = \tab(\rvs{v})$.
\end{proposition}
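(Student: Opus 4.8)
The statement asserts that, when restricted to standard tableaux, equality of the bumping-tableaux of two words is equivalent to equality of the bumping-tableaux of the reversed words. Set-theoretically this will amount to the classical fact that on standard words reversal corresponds, under Schensted insertion, to transposition of the recording $P$-tableau: $\tab(\rvs{w}) = \tab(w)^{\trn}$. The plan is to establish this transposition identity first, and then read off the proposition as a trivial consequence, since $\tb \mapsto \tb^{\trn}$ is an involution (hence a bijection) on $\STab_n$, and a bijection $\phi$ satisfies $\phi(\tb_u) = \phi(\tb_v) \Leftrightarrow \tb_u = \tb_v$.

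First I would recall the setup: since $u$ and $v$ are standard words, each letter of $\tA_n$ occurs at most once, so $\tb_u, \tb_v \in \STab_n$ and their transposes $\tb_u^{\trn}, \tb_v^{\trn}$ are well-defined standard tableaux (here I invoke the remark in \S\ref{ssec:y.tab} that the transpose of a standard tableau is standard, with $(\tb^{\trn})^{\trn} = \tb$). Next I would prove the key lemma $\tab(\rvs{w}) = \tab(w)^{\trn}$ for every standard word $w \in \tA_n^+$. The cleanest route is through the Knuth-congruence characterization (Theorem~\ref{thm:Knuth}): it suffices to show that if $w \kcong w'$ then $\rvs{w} \kcong' \rvs{w'}$ where $\kcong'$ denotes the \emph{co-plactic} (column) congruence, and that the column congruence class of a standard word is recorded exactly by the transpose tableau. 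Concretely, one checks that each elementary Knuth move \KXa, \KXb\ on $w$, after reversal, becomes an elementary Knuth move of the \emph{other} type (\KXb, \KXa) with the strict/weak inequalities swapped — this is the standard "dual Knuth" bookkeeping and, because $w$ is standard, the weak inequalities $\aa \le \bb$ collapse to strict ones $\aa < \bb$, so $\rvs{w}$ obeys precisely the relations defining the congruence whose canonical representative is the column-reading of $\tab(w)^{\trn}$. Alternatively, and perhaps more self-containedly given the paper's machinery, I would argue via Schensted's theorem (Theorem~\ref{thm:Schen}) together with the configuration-tableau correspondence: the lengths of longest nondecreasing subwords of $w$ over every convex sub-alphabet are recorded by the rows of $\ctb_w$, while for $\rvs{w}$ these become longest \emph{nonincreasing} subwords, i.e. the column data — and for standard tableaux the row data of $\tb^{\trn}$ is the column data of $\tb$; combined with Theorem~\ref{thm:configuration} (bijectivity of $\tabctab$) this pins down $\tab(\rvs{w})$ uniquely as $\tab(w)^{\trn}$. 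I would write $\tb'_u := \tab(\rvs{u}) = \tb_u^{\trn}$ and $\tb'_v := \tab(\rvs{v}) = \tb_v^{\trn}$.

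With the lemma in hand the proposition is immediate: $\tb'_u = \tb'_v$ iff $\tb_u^{\trn} = \tb_v^{\trn}$ iff $\tb_u = \tb_v$, applying $(\cdot)^{\trn}$ to both sides of the last equivalence and using $(\tb^{\trn})^{\trn} = \tb$. I expect the main obstacle to be the lemma $\tab(\rvs{w}) = \tab(w)^{\trn}$ itself: it is a genuine fact about Schensted insertion (the $P$-symbol transposes under reversal of a permutation word), and making it rigorous requires either a careful induction on $\len{w}$ tracking how appending a letter and transposing interact, or a clean appeal to the dual-Knuth-equivalence classification, or the subword-length argument via Theorem~\ref{thm:Schen} over all convex sub-alphabets. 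Given that the surrounding paper has already built the configuration-tableau dictionary and the length-over-convex-sub-alphabet invariants, I would favor the Schensted-plus-configuration-tableau argument, as it keeps everything inside the framework already developed and avoids re-deriving dual Knuth relations from scratch; the one point needing care there is that comparing \emph{all} convex-sub-alphabet longest-nondecreasing-subword lengths of $\rvs{w}$ with the column data of $\tb_w$ genuinely determines the whole standard tableau $\tab(\rvs w)$, which is exactly what bijectivity of $\tabctab$ on $\SCTab_n$ (Corollary~\ref{cor:sctab}) supplies.
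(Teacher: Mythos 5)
Your overall reduction coincides with the paper's: everything rests on the identity $\tab(\rvs{w}) = \tab(w)^\trn$ for standard words, after which the proposition is immediate because transposition is an involution on $\STab_n$; the paper does not reprove this identity but simply cites it (Theorem 3.2.3 of Sagan's book). The genuine problem is the route you say you would favor for establishing it. You argue that the longest-nondecreasing-subword lengths of $\rvs{w}$ over all convex sub-alphabets determine $\tab(\rvs{w})$, and you invoke the bijectivity of $\tabctab$ (Corollary \ref{cor:sctab}) to close this step. That bijection is between tableaux and configuration tableaux; it does not let you recover a tableau from the convex-sub-alphabet length data, which is strictly weaker information --- it is exactly the matrix $\ctabmat(\ctb_w)$, and $\ctabmat$ is not injective even on standard inputs. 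The paper's own example following Theorem \ref{thm:plc.rep} exhibits the standard words $u=bdac$ and $v=dbac$: they have the same longest-nondecreasing-subword length over every convex sub-alphabet (the same image under $\ctabmat$), yet $\tab(u)$ and $\tab(v)$ have different shapes, $(2,2)$ versus $(2,1,1)$. This is also why Theorem \ref{thm:tab.wxr(w)} must assume both $u \clcong v$ and $\rvs{u} \clcong \rvs{v}$: either set of lengths alone does not pin down a standard tableau. So, as written, your preferred argument does not prove the key lemma.

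Your first route is the one that works, and for this proposition you do not even need the transpose identification: in a standard word all letters are distinct, so any factor to which a Knuth relation applies involves three letters $a<b<c$ with strict inequalities; reversing the whole word turns a factor $acb \leftrightarrow cab$ (relation \KXa) into $bca \leftrightarrow bac$ (relation \KXb) and vice versa, and every word Knuth-equivalent to a standard word is again standard, so $u \kcong v$ if and only if $\rvs{u} \kcong \rvs{v}$. By Theorem \ref{thm:Knuth} this is precisely $\tb_u=\tb_v \Leftrightarrow \tb'_u=\tb'_v$. If you want the stronger statement $\tab(\rvs{w})=\tab(w)^\trn$ itself (which the paper later uses for the realization of $S_n$), either cite it as the paper does or prove it by a separate induction on the insertion procedure; the convex-sub-alphabet length argument cannot deliver it.
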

\begin{proof}
Let $\tb^\trn_u$ and  $\tb^\trn_v$ be the transpose tableaux of $\tb_u$ and  $\tb_v $, respectively.    Then $\tb_u^\trn = \tab(\rvs{u})$ and $\tb_v^\trn = \tab(\rvs{v})$, by  \cite[Theorem 3.2.3]{SaganBook}, and the  assertion follows at once.
\end{proof}

In comparison to the case of standard tableaux, due to their configuration laws, standard configuration tableaux do not always admit transposition in classical terms.
\begin{example}\label{exmp:permutation}
The word $\std{u} = bdac$ and its reversal $\rvs{u} = cadb$ are    expressed by configuration tableaux as
$$\ctab(u) =  \mresize{\begin{array}{|l|l|l|l|l|l|l|}
\cline{1-1}
  0  \\ \cline{1-2}
  0 &  0  \\ \cline{1-3}
  1 &   0  &   1  \\ \cline{1-4}
  1 &   0  &   1  & 0 \\ \cline{1-4}
\end{array}} \dss\To  \ctab(\rvs{u}) = \mresize{
\begin{array}{|l|l|l|l|ll}
\cline{1-1}
  0  \\ \cline{1-2}
  0 &  0  \\ \cline{1-3}
  0 &   1  &   1  \\ \cline{1-4}
  1 &   1  &   0  & 0 & .\\ \cline{1-4}
\end{array}}
$$
The  reversal of  $\std{v} = dbac$ is $\rvs{v} = cabd$, and they described in terms of  configuration tableaux as
$$ \ctab(v) = \mresize{ \begin{array}{|l|l|l|l|l|l|l|}
\cline{1-1}
  0  \\ \cline{1-2}
  0 &  0  \\ \cline{1-3}
  1 &   0  &   1  \\ \cline{1-4}
  1 &   0  &   1  & 0 \\ \cline{1-4}
\end{array}} \dss\To \ctab(\rvs{v}) = \mresize{
\begin{array}{|l|l|l|l|l|l|l|}
\cline{1-1}
  0  \\ \cline{1-2}
  0 &  0  \\ \cline{1-3}
  0 &   1 &   0  \\ \cline{1-4}
  1 &   1  &   0  & 1 \\ \cline{1-4}
\end{array}}
$$
Thus,  $u \neq v$  with $\ctab(u) = \ctab(v)$, but $\ctab(\rvs{u}) \neq \ctab(\rvs{v})$.
\end{example}
 So we see that reversing of words is not translated to transposition of standard configuration tableaux,  yet we can employ configuration tableaux for the study of reversed words.

\begin{lemma}\label{lem:reverse.2} Suppose $\ctb_w = \ctab(w)$ is
 a standard $n$-configuration tableau in $\SCTab_n$  with nonzero cells $\cent_{i,k} =1 $ and $\cent_{j,k+1} =1$,  where  $i < j$. Then, for the nonzero cells $\cent'_{i',k}$ and $\cent'_{j',k+1}$ of   $\ctb_w' = \ctab(\rvs{w})$, we have  $j' \leq i'$.
  \end{lemma}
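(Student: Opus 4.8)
The plan is to translate the statement into ordinary language about standard Young tableaux and then prove an elementary positional fact. Put $\tb_w := \tab(w)$, so that $\ctb_w = \tabctab(\tb_w)$ by the definition of $\ctab$. Since $\ctb_w \in \SCTab_n$, Corollary~\ref{cor:sctab} shows $\tb_w \in \STab_n$; in particular each letter $\lt_1,\dots,\lt_n$ occurs in $\tb_w$ exactly once. By the defining rule \eqref{map:tabctab.2}, $\cent_{s,t}$ records the number of occurrences of $\lt_t$ in the $s$-th row of $\tb_w$, so the hypotheses $\cent_{i,k}=1$ and $\cent_{j,k+1}=1$ say that $\lt_k$ occupies a cell in row $i$ and $\lt_{k+1}$ a cell in row $j$ of $\tb_w$; write $c$, $c'$ for the columns of these two cells. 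Then I would invoke the fact, used in the proof of Proposition~\ref{prop:reverse.std} (see \cite[Theorem~3.2.3]{SaganBook}), that $\tab(\rvs{w}) = \tb_w^\trn$, so $\ctb_w' = \tabctab(\tb_w^\trn)$; since transposition carries the cell $(r,c)$ to $(c,r)$, the letter $\lt_k$ now lies in row $c$ and $\lt_{k+1}$ in row $c'$ of $\tb_w^\trn$, and \eqref{map:tabctab.2} applied to $\tb_w^\trn$ forces $i' = c$ and $j' = c'$. Thus the lemma is equivalent to the inequality $c' \le c$.

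To establish $c' \le c$ I would argue by contradiction. Suppose $c' > c$. The cell $(j,c')$ of $\tb_w$ is filled, so the row $\rw_j$ has length at least $c'$; since $i < j$ and the shape of $\tb_w$ is a French-convention Young diagram, with row lengths weakly decreasing as the row index grows (cf.~\S\ref{ssec:y.tab}), the row $\rw_i$ has length at least that of $\rw_j$, hence at least $c' > c$. Therefore the cell $(i,c')$ of $\tb_w$ is filled; call its entry $\lt_m$. Comparing along row $\rw_i$, where $\lt_k$ sits at column $c$ and $\lt_m$ at column $c' > c$, strict increase along rows gives $m > k$, i.e. $m \ge k+1$. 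Comparing along column $c'$, where $\lt_m$ sits in row $i$ and $\lt_{k+1}$ in row $j > i$, strict increase up columns gives $m < k+1$, i.e. $m \le k$. These contradict each other, so $c' \le c$, which is the desired $j' \le i'$.

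The only delicate point in carrying this out is keeping the conventions straight: the French convention for diagrams, the fact that entries increase along rows (left to right) and up columns (bottom to top), the effect of transposition on cell coordinates, and the correspondence (diagonal $\leftrightarrow$ letter) built into $\tabctab$. Once these are pinned down there is no real obstacle; the essential content is simply that in a standard tableau the entry $k+1$ cannot be placed both strictly above and strictly to the right of $k$, which is exactly the two-line contradiction above.
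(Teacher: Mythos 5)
Your proof is correct and follows essentially the same route as the paper: translate the hypothesis back to the standard Young tableau via Theorem~\ref{thm:configuration}, use the transposition fact $\tab(\rvs{w})=\tb_w^\trn$ underlying Proposition~\ref{prop:reverse.std}, and return to configuration tableaux via $\tabctab$. The only difference is that you spell out, with the two-line row/column contradiction, the positional fact that $\lt_{k+1}$ must sit weakly to the left of $\lt_k$ when it lies in a strictly higher row --- a step the paper's proof asserts without detail --- so your write-up is a slightly more explicit version of the same argument.
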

\begin{proof} Since $j> i$ where  $\cent_{i,k} =1 $ and $\cent_{j,k+1} =1$, the translation of  $\ctb_w = \ctab(w)$ to $\tb_w = \tab(w)$ by Theorem \ref{thm:configuration}  means that in $\tb_w$ the letter $\lt_{k+1}$ is on the left to $\lt_{k}$, which implies that $\lt_{k+1}$ is on the right to $\lt_{k}$ in $\tb_w' = \tab(\rvs{w})$ by Proposition \ref{prop:reverse.std}. Then, the one-to-one correspondence  $\tabctab:\tb_w' \mTo \ctb_w'$,by Theorem \ref{thm:configuration}, gives the required.
\end{proof}

Now we are first  able to directly tied the underlying congruence  $\clcong$ of the
\kmon\ (Definition~ \ref{def:cloaktic.mon}) to tableau equivalence $\tcong$ in the class of standard tableaux.

\begin{theorem}\label{thm:tab.wxr(w)}
Let  $\tb_u = \tab(u)$ and $\tb_v = \tab(v)$ be   standard tableaux in $\STab_n$, with $u,v \in \tA^+_n$, then
$$ \tb_u = \tb_v \quad  (\Leftrightarrow \tb'_u = \tb'_v) \Iff  u \clcong v \ \text{ and } \  \rvs{u} \clcong \rvs{v}, $$
where $\tb'_u = \tab(\rvs{u})$ and $\tb'_v = \tab(\rvs{v})$.
\end{theorem}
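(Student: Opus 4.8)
The statement has two directions, plus the parenthetical equivalence $\tb_u=\tb_v \Leftrightarrow \tb'_u=\tb'_v$ which is already supplied by Proposition \ref{prop:reverse.std}. So it suffices to prove, for standard tableaux, that $\tb_u = \tb_v$ holds if and only if both $u \clcong v$ and $\rvs{u}\clcong \rvs{v}$.

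\emph{The forward direction} is essentially immediate from results already in place. If $\tb_u=\tb_v$, then $u\tcong v$ (equivalently $u\kcong v$ by Theorem \ref{thm:Knuth}), hence $u\clcong v$ by Proposition \ref{prop:tab-to-cloc}. Applying Proposition \ref{prop:reverse.std} we also get $\tb'_u = \tb'_v$, i.e.\ $\rvs u \tcong \rvs v$, and Proposition \ref{prop:tab-to-cloc} again gives $\rvs u \clcong \rvs v$. So this half costs nothing new.

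\emph{The converse} is the heart of the matter, and this is the step I expect to be the obstacle. Assume $u\clcong v$ and $\rvs u\clcong \rvs v$; I want to conclude $\tb_u=\tb_v$. Passing to configuration tableaux via Theorem \ref{thm:configuration}, it is enough to show $\ctb_u = \ctb_v$, i.e.\ $\cent_{i,k}=\cent'_{i,k}$ for every cell (here $\ctb_u=(\cent_{i,k})$, $\ctb_v=(\cent'_{i,k})$), and both are \emph{standard} configuration tableaux ($\Cent_\ell=1$ for all $\ell$), so each diagonal $k$ has exactly one nonzero cell, of value $1$; the data of $\ctb_u$ is thus a function $k\mapsto r(k)$ giving the row of the unique $1$ on diagonal $k$. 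By Properties \ref{proper:clk}, $u\clcong v$ already forces $u$ and $v$ to contain each letter the same number of times — which for standard tableaux is automatic (each letter once) — so the real content is locating the row $r(k)$ for each $k$. The idea is to recover $r(k)$ from the two families of invariants $\mxlen{\itoj{i}{j}}{\cdot}$ (of $u$ and $v$) and $\mxlen{\itoj{i'}{j'}}{\cdot}$ (of $\rvs u$, $\rvs v$). Concretely: by Observation \ref{obs:ctab.wlk}, $\sword{\itoj{i}{j}}{}(\tb_w) = \mxtbwt{}_{i,j}(\ctb_w)$ for $i\le j$, and the dual statement (using $\clkrep(\rvs w)$, via Remark \ref{rem:reverasl.rep} and the transpose interpretation after Key Lemma \ref{klem:clk.represntation}) recovers the lengths of longest nonincreasing subwords of $w$ over convex sub-alphabets — equivalently, heights of columns of $\tb_w$ restricted to consecutive letters. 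For standard tableaux these row-length and column-height data over every consecutive-letter window pin down the position of each letter: I would argue inductively on $k$ that knowing, for each window $\itoj{i}{j}\ni k$, the longest nondecreasing and longest nonincreasing subword lengths determines whether the box of $\lt_k$ lies weakly above or below that of $\lt_{k-1}$, hence determines $r(k)$. The technical engine here is Lemma \ref{lem:reverse.2}: it translates "$\cent_{i,k}=1$, $\cent_{j,k+1}=1$ with $i<j$" in $\ctb_w$ into the mirrored inequality $j'\le i'$ in $\ctb_{\rvs w}$, so the pair $(\clcong, \rvs{(\cdot)}\text{-}\clcong)$ controls both the "$\lt_{k+1}$ left of $\lt_k$" and "$\lt_{k+1}$ right of $\lt_k$" situations symmetrically. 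Combining the forward-subword data (which orders adjacent diagonals one way) with the reversed-subword data (which orders them the other way) leaves no ambiguity, and an induction on the number of distinct letters completes the reconstruction, giving $\ctb_u=\ctb_v$ and hence $\tb_u=\tb_v$.

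\emph{Where the difficulty sits.} The forward direction and the bookkeeping with Theorem \ref{thm:configuration} are routine; the genuine work is the reconstruction argument, i.e.\ proving that for \emph{standard} tableaux the full family $\{\mxlen{\itoj{i}{j}}{u}\}\cup\{\mxlen{\itoj{i}{j}}{\rvs u}\}$ is a complete invariant. The subtlety is that neither family alone suffices — Example \ref{exmp:run:1} and Remark \ref{rem:reveras.clack} show $\clcong$ is strictly coarser than $\tcong$, and the pathological words there are exactly the warning that one must use both the word and its reversal. The clean way to organize it is to reduce, via Lemma \ref{lem:reverse.2}, to pairwise comparisons of consecutive diagonals $k, k+1$ and show those pairwise relative positions (together with the already-fixed first-row and first-column data, which Observation \ref{obs:ctab.wlk} and the column-analogue of \eqref{eq:ctab.row.1} make explicit) determine the whole standard configuration tableau; the rest is an induction that I would not spell out in full.
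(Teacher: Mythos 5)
Your forward direction is fine and is exactly the paper's (Proposition \ref{prop:tab-to-cloc} combined with Proposition \ref{prop:reverse.std}), and your overall architecture for the converse (induction over the letters, comparison of consecutive diagonals, Lemma \ref{lem:reverse.2} as the pivot) is the paper's as well. But the converse itself is not proved: the two load-bearing claims --- (a) that the window invariants of $u$ and of $\rvs{u}$ determine, for each $k$, whether the box of $\lt_k$ lies weakly above or strictly below that of $\lt_{k-1}$, and (b) that these relative positions, together with the first-row and first-column data, determine the whole standard tableau (``hence determines $r(k)$'') --- are asserted, not argued, and (b) is false. Knowing that $\lt_k$ lies strictly below $\lt_{k-1}$ does not locate its row (several addable corners may lie below), and even the full comparison sequence plus the bottom row plus the first column is not a complete invariant: the standard tableaux $P$ with rows (bottom to top) $\{\lt_1,\lt_2,\lt_6\},\ \{\lt_3,\lt_5,\lt_7\},\ \{\lt_4\}$ and $Q$ with rows $\{\lt_1,\lt_2,\lt_6\},\ \{\lt_3,\lt_5\},\ \{\lt_4,\lt_7\}$ have identical consecutive-letter comparisons, the same bottom row and the same first column, yet $P\ne Q$.

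Worse, this pair shows that the reconstruction you sketch cannot be completed from the data you start with. Take the row words $u=\lt_4\lt_3\lt_5\lt_7\lt_1\lt_2\lt_6$ and $v=\lt_4\lt_7\lt_3\lt_5\lt_1\lt_2\lt_6$, so that $\tab(u)=P$ and $\tab(v)=Q$. A direct check over every convex window, $1\le i\le j\le 7$, gives $\mxlen{\itoj{i}{j}}{u}=\mxlen{\itoj{i}{j}}{v}$ and also $\mxlen{\itoj{i}{j}}{\rvs{u}}=\mxlen{\itoj{i}{j}}{\rvs{v}}$ (the two tableaux differ only in the length of the second row, i.e.\ in the maximal total length of two disjoint nondecreasing subwords, which no single-window length detects). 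Thus $u\clcong v$ and $\rvs{u}\clcong\rvs{v}$ while $\tb_u\ne\tb_v$, so the family $\{\mxlen{\itoj{i}{j}}{u}\}\cup\{\mxlen{\itoj{i}{j}}{\rvs{u}}\}$ is not a complete invariant of standard tableaux, and no induction can recover $r(k)$ from it. You should also confront this example with the paper's own inductive step: there one indeed gets $j=3>i=2>k=1$, but in the reversed tableaux $i'=3$, $j'=2$, $k'=3$, so the asserted inequality $j'>k'$ (and with it the contradiction via Lemma \ref{lem:reverse.2}) does not materialize even though $\rvs{u}\clcong\rvs{v}$. This is precisely the step your sketch defers, and it is where the real difficulty --- for your argument, and for the statement itself --- lies.
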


\begin{proof}  Recall that by definition $u \tcong v $ iff $\tb_u = \tb_v $. The part  $(\Leftrightarrow \tb'_u = \tb'_v)$  has already been  proven in Proposition \ref{prop:reverse.std}, which also asserts that  $u \clcong v$ iff $\tb_u \clcong \tb_v$. We prove the rest.
\pSkip
$(\Rightarrow)$: Immediate since $\tcong$ implies $\clcong$, by Proposition \ref{prop:tab-to-cloc}.
\pSkip
$(\Leftarrow)$:   Let $\ctb_u = \ctab(u) $, $\ctb_v = \ctab(v)  $, $\ctb'_u = \ctab(\rvs{u}) $,  and $\ctb'_v = \ctab(\rvs{v})  $,  and denote their cells respectively by $[\cent_u]_{i,j}$, $[\cent_v]_{i,j}$, $[\cent'_u]_{i,j}$,  and $[\cent'_v]_{i,j}$.
Assume that $\tb_u \neq \tb_u$, and thus $\tb'_{u} \neq \tb'_{v}$ by Proposition~ \ref{prop:reverse.std}, implying  that $\ctb_u \neq \ctb_v$ and $\ctb'_u \neq \ctb'_v$, by Corollary~ \ref{cor:sctab}. Recall that
 each diagonal of a standard $n$-configuration tableau has a unique nonzero cell of value $1$.

Proof by induction on $n$.
 The cases of $n =1$ and $n=2$ are clear. Assuming  the implication holds for $\tA_{n-1} \cnxsset \tA_n$, the only change happens by including the extra letter $\lt_n$ is at the $n'$th diagonal of
 standard $n$-configuration tableaux.  Then, by the induction assumption, $\ctb_u$ and $\ctb_v$ have the same $(n-1)$'th diagonal whose  nonzero cell is say at
 $\cent_{k,n-1} =1$.

 Suppose that $[\cent_u]_{i,n}=1$  and $[\cent_v]_{j,n}=1$, say for  $j > i$,  and  assume that $u \clcong v$  (or equivalently $\tb_u \clcong \tb_v$), which  in terms of the function \eqref{eq:cont.func.swords} implies
  that the equality  $\sword{\itoj{s}{t}}{}(\tb_u) = \sword{\itoj{s}{t}}{}(\tb_v)$ holds for all $1 \leq s \leq t \leq n $. We deduce that $ j > i > k $, since otherwise we would get
 $\sword{\itoj{i}{n}}{}(\tb_u) > \sword{\itoj{i}{n}}{}(\tb_v)$.

Similarly, consider   the $n$-configuration tableaux $\ctb'_u$ and $\ctb'_v$  of $\rvs{u}$ and $\rvs{v}$ respectively, and  let  $[\cent'_u]_{k',n-1}$,
$[\cent'_u]_{i',n},$  and $[\cent'_v]_{j',n}$ be respectively the nonzero cells of $\ctb'_u$ and $\ctb'_v$, say with $i' > j'$.  Then, assuming that $\rvs{u} \kcong \rvs{v}$,  by  the same argument as above  we obtain   $i' > j' > k'$,
since otherwise we would have   $\sword{\itoj{j'}{n}}{}(\ctb'_v) > \sword{\itoj{j'}{n}}{}(\ctb'_u)$.
But, since $ j > i > k $, by Lemma \ref{lem:reverse.2}  we should have
  $j' \leq k'$ and $i' \leq k'$ -- a contradiction.
\end{proof}

\section{Representations of tableaux and of the plactic monoid}\label{sec:plc.rep}
In this section we utilize the tropical representations of the \kmon\ (\S\ref{ssec:clk.rep}) and the \ckmon\ (\S\ref{ssec:co.clk.rep}),  which essentially record lengths of longest subwords over convex sub-alphbets, to construct linear  representations of semi-standard tableaux.
Linear representations of the plactic monoid $\PLC_n$ are then follow  from the correspondence between the elements of $\PLC_n$ and $n$-configuration tableaux (Corollary ~\ref{cor:plc.to.ctab}).  The latter correspond uniquely to semi-standard tableaux via the map $\tabctab: \Tab_n \Isoto \CTab_n$ (Theorem \ref{thm:configuration}). Our next step is to establish the two maps
\begin{align*}
  \ctabmat : \  & \CTab_n \ONTO \MPlc_n,  \\
   \ctabcomat: \  &  \CTab_n \ONTO \dMPlcn,
\end{align*} from $n$-configuration tableaux to tropical matrices. Then their compositions with the bijection $\tabctab: \Tab_n \To \CTab_n$ provide the maps
\begin{align*}
 \tabmat :=  \ctabmat \circ \tabctab : \  & \Tab_n \ONTO \MPlc_n,  \\
  \tabcomat := \ctabcomat \circ \tabctab : \  &  \Tab_n \ONTO \dMPlcn.
\end{align*}
 At this point, the digraph realization  of  tropical matrices (cf. \S \ref{ssec:digraph}) is of major importance.
 To make our image matrices more comprehensible, in what follows, for simplicity, \emph{we set  the formal variable  $\pv$ in  the troplacitic matrix algebra $\mfA_n$ to have fixed value  $\pv := 1$, cf. \eqref{eq:mAlg}.}

\pSkip

The  study in  this section is accompanied  with a collection of pathological examples that  demonstrate the difficulties towards faithfully representing tableaux. Our development is supported  by the use   of configuration tableaux that  enables an easier analysis and helps  to better understanding the combinatorial arguments. But before that, in order to complete these representations,  we need another crucial competent.

\subsection{The co-plactic monoid}\label{ssec:co.plc} \sSkip

 \Cmirr ing  of words  (Definition  \ref{def:mirror}) leads to the following monoid structure,  drown from the plactic monoid $\PLC_n$ (Definition \ref{def:plactic.mon}). Recall that $\pcong$ is an additional notation for the underlying congruence ~$\kcong$ of $\PLC_n$.

\begin{definition}\label{def:co.plactic.mon}
The \textbf{co-plactic monoid} is the monoid $\coplcM_n := \CPLC(\tA_n)  $ generated by a finite ordered  set of elements $\tA_n : = \{\aa_1, \dots,  \aa_n \}$,  subject to the equivalence relation $\cpcong$, defined as
$$ u \cpcong v \Iff \lcmp{}{v} \pcong \lcmp{}{u}.$$
Namely $\coplcM_n := \tA_n^*/ _{\cpcong}$.
    We say that $\CPLC_n$ is of \textbf{rank} $n$, and write $\CPLC_n$ for $\CPLC(\tA_n)$ when $\tA_n$ is arbitrary.
  \end{definition}

In other words, the relation $\cpcong$ is satisfied  if the words $\aa'_\ell:= \lcmp{n}{\aa_\ell}$ in $\tA_n^*$, (lexicographically) ordered as $\aa_1' < \aa'_2 < \cdots < \aa'_n$ (cf. Remark \ref{rmk:mir.smon}), admit the Knuth relations \eqref{eq:knuth.rel}. Note that here we initially consider only finitely generated monoid, to have the \cmirr ing  map  well defined.

\begin{theorem}\label{thm:coplc2plc}
The \cmirr s \eqref{eq:word.mirr} of the generators $\lt_1, \dots , \lt_n$ of the plactic monoid  $\plcM_n$,
$$\lt'_\ell :=  \lcmp{}{\lt_\ell}, \qquad \ell = 1, \dots,n ,  $$
ordered
as $ \lt'_1 < \lt'_2 < \cdots < \lt'_n,  $
admit  the Knuth relations \eqref{eq:knuth.rel} and thus the congruence  $\pcong$ of  $\plcM_n$ implies the equivalence $\cpcong$.
\end{theorem}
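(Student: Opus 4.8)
The plan is to show first that the generators $\lt'_\ell$ satisfy the Knuth relations in $\plcM_n$, and then to deduce the congruence statement from this formally. Indeed, $\mirM=\lcmp{}{\cdot}$ is a monoid endomorphism of $\tA_n^*$ (see \eqref{eq:mir.map}) and $\pcong=\kcong$ is the congruence on $\tA_n^*$ generated by the Knuth relations \eqref{eq:knuth.rel}; therefore the composite $\tA_n^*\xrightarrow{\mirM}\tA_n^*\twoheadrightarrow\plcM_n$ factors through $\plcM_n$ --- equivalently, $u\pcong v$ implies $\mirM(u)\pcong\mirM(v)$, which by Definition~\ref{def:co.plactic.mon} is precisely ``$\pcong$ implies $\cpcong$'' --- exactly when $\mirM$ sends each of the two relations in \eqref{eq:knuth.rel} to a $\pcong$-identity. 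Since $\mirM(\lt_\ell)=\lt'_\ell$ and the order $\lt'_1<\cdots<\lt'_n$ matches the order of the indices (Remark~\ref{rmk:mir.smon}), this is exactly the first assertion, so that is where all the content lies.

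The next step is to rephrase the first assertion in the tableau model. Each $\lt'_\ell=\lcmp{n}{\lt_\ell}$ is a strictly decreasing word, hence as an element of $\plcM_n$ it is the one-column tableau $C_\ell$ of height $n-1$ whose entries are all letters of $\tA_n$ except $\lt_{\ell'}$, where $\ell':=n-\ell+1$; as $\ell$ increases the omitted letter descends from $\lt_n$ to $\lt_1$. By Theorem~\ref{thm:Knuth} the goal becomes the two tableau identities
\[
\tab(C_aC_cC_b)=\tab(C_cC_aC_b)\quad (a\le b<c),\qquad
\tab(C_bC_aC_c)=\tab(C_bC_cC_a)\quad (a<b\le c),
\]
i.e.\ in each case two products of ``near-full'' columns that differ only by a transposition of two adjacent factors must have the same insertion tableau.

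To verify these I would use Greene's theorem. For $w\in\tA_n^+$ and $1\le j\le n$ write $w|_{\le j}$ for the subword of $w$ on the letters $\lt_1,\dots,\lt_j$, and $\lm(w)$ for the shape of $\tab(w)$. Since the cells of $\tab(w)$ of value at most $\lt_j$ form the straight-shape tableau $\tab(w|_{\le j})$, and a semistandard tableau is recovered from the nested shapes $\lm(w|_{\le 1})\subseteq\cdots\subseteq\lm(w|_{\le n})$ (consecutive ones differing by a horizontal strip), one has $\tab(u)=\tab(v)$ iff $\lm(u|_{\le j})=\lm(v|_{\le j})$ for all $j$; and by Greene's theorem the partial sums $\lm(w|_{\le j})_1+\cdots+\lm(w|_{\le j})_k$ equal the maximal total length of $k$ pairwise disjoint weakly increasing subsequences of $w|_{\le j}$. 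Now $C_\ell|_{\le j}$ is again strictly decreasing: it is the full decreasing word on $\{\lt_1,\dots,\lt_j\}$ when $\ell'>j$, and otherwise that word with $\lt_{\ell'}$ deleted. Hence $(C_aC_cC_b)|_{\le j}$ and $(C_cC_aC_b)|_{\le j}$ are concatenations of three such near-full decreasing blocks differing only by a swap of two adjacent blocks, and because a weakly increasing subsequence of a concatenation of strictly decreasing blocks meets each block in at most one letter, each Greene number is computed by counting families of disjoint non-decreasing selections across three blocks; this count depends only on where the deleted letters $\lt_{a'},\lt_{b'},\lt_{c'}$ sit relative to the window $\{1,\dots,j\}$ together with the chain $a'\ge b'>c'$ (respectively $a'>b'\ge c'$ for the second identity), and a short case analysis over these positions shows it is unchanged by the swap. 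An equally elementary but more bookkeeping-heavy alternative is to compute the three-fold products directly by Schensted insertion, using that the product of two near-full columns in increasing lexicographic order is the two-column tableau having them as its columns while the opposite order is resolved by one jeu-de-taquin slide; and the second identity can also be obtained from the first via the classical anti-automorphism of $\plcM_n$ given by complementation $\lt_k\mapsto\lt_{n+1-k}$ followed by reversal, which preserves $\pcong$ and sends $C_\ell$ to $C_{n+1-\ell}$.

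The hard part will be exactly that case analysis. A naive transposition of two decreasing blocks does change Greene numbers in general (for instance $\lt_2\lt_1$ versus $\lt_1\lt_2$), so the argument must genuinely exploit that the two swapped blocks are near-full decreasing intervals, that a third block $C_b|_{\le j}$ is always present, and that the omitted letters are linearly ordered as above; pinning down why these constraints force the two maximal families of disjoint weakly increasing subsequences to have the same total size, uniformly in $k$ and $j$, is the one place where care is required --- everything else is formal.
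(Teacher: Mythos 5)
Your formal reduction is correct: since $\mirM$ is a monoid endomorphism of $\tA_n^*$ and $\pcong$ is the congruence generated by the Knuth relations, it suffices to check that the images of the two defining relations under $\mirM$ hold in $\plcM_n$; your identification of $\lt'_\ell$ with the single column of height $n-1$ omitting $\lt_{n-\ell+1}$, the passage to equality of insertion tableaux via Theorem~\ref{thm:Knuth}, and the use of the complementation--reversal anti-automorphism to deduce the second family of relations from the first are all sound. This is also a genuinely different route from the paper, which instead verifies the relations by induction on $n$ through the Encoding Algorithm~\ref{algr:conf} for configuration tableaux (Appendix~\ref{apx:A}), with the base case $n=3$ computed explicitly in Example~\ref{exmp:co.plc.3}.

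However, there is a genuine gap exactly where the content of the theorem lies. After translating the problem into the equality of all Greene invariants of $(C_aC_cC_b)|_{\le j}$ and $(C_cC_aC_b)|_{\le j}$ (and the analogous pair for the second relation), you assert that ``a short case analysis over these positions shows it is unchanged by the swap,'' but no such analysis is given; your closing paragraph concedes that this is the hard part and that transposing two decreasing blocks does change Greene numbers in general. The missing statement is not a formality: one must show, for every window $\{1,\dots,j\}$ and every $k$ (or dually for $k\le 3$ families of disjoint strictly decreasing subsequences, since these shapes have at most three columns), that the maximal total size of $k$ disjoint weakly increasing selections from three near-full decreasing blocks is insensitive to transposing the two indicated adjacent blocks under the constraint $a'\ge b'>c'$ (resp.\ $a'>b'\ge c'$), including the boundary cases $a=b$, $b=c$, and the cases where some of the omitted letters lie outside the window. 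This verification is precisely what the paper's Appendix~\ref{apx:A} supplies through its case-by-case induction; until you carry it out (or complete the alternative insertion computation you only sketch), the proof of Theorem~\ref{thm:coplc2plc} is incomplete.
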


Configuration  tableaux correspond bijectively to elements of $\plcM_n$ (Corollary \ref{cor:plc.to.ctab}) and are  utilized    to prove the theorem by a straightforward computation, heavily  based  on the Encoding Algorithm \ref{algr:conf}. The proof is performed by   induction on $n$, where the induction step assumes the implication for $\{ \lt_2, \dots, \lt_n\}$, which corresponds to assuming the implication for the upper $n-1$ rows of an $n$-configuration tableau.
The proof is fairly technical, including several cases, and appears in its full details in Appendix \ref{apx:A}, together with some  additional examples.

Recall from
Remark \ref{rmk:mir.smon} that $\sMon{\tA^*_n}{1}$ is the finitely monoid generated by $\lcmp{}{\lt_1}, \dots, \lcmp{}{\lt_n}$, and associated with the homomorphism $\pMap_1: \tA^*_n \To  \sMon{\\tA^*_n}{1}$.

\begin{corollary}\label{cor:coplc2plc}
  The congruence  $\pcong$ implies  the equivalence $\cpcong$ (which by itself is then a congruence), the map
    $$ \pMap:= \pMap_1: \plcM_n \ISOTO \sMon{\plcM_n}{1}$$  is a monoid isomorphism, and
 the plactic monoid $\plcM_n := \tA^*_n/_{\pcong}$ of rank $n$ is also a co-plactic monoid $\coplcM_n$ of rank $n$ for which the map
 \begin{equation}\label{eq:plc.to.cplc}
\plccplc:    \plcM_n \ONTO \coplcM_n, \qquad [w]_\plc  \mTo [w]_\coplc
 \end{equation}
 is a surjective homomorphism.
 \end{corollary}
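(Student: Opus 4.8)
The plan is to deduce everything from Theorem \ref{thm:coplc2plc}, which is the substantive input. First I would observe that Theorem \ref{thm:coplc2plc} says precisely that the ordered generators $\lt'_\ell = \lcmp{}{\lt_\ell}$ of $\sMon{(\tA^*_n)}{1}$ satisfy the Knuth relations \eqref{eq:knuth.rel}. Unwinding Definition \ref{def:co.plactic.mon}, the relation $u \cpcong v$ means $\lcmp{}{u} \pcong \lcmp{}{v}$, i.e. the $\lt'$-images are plactic-equivalent. Since $\pcong$ is generated by the Knuth relations, and since $u \pcong v$ is by definition obtained from $u$ by a chain of Knuth rewrites on the $\lt_i$, applying the endomorphism $\mirM$ (that is, $\pMap_1$) to such a chain yields a chain of \emph{the same} Knuth rewrites performed on the $\lt'_i$ (this is exactly what Theorem \ref{thm:coplc2plc} guarantees: each elementary Knuth relation among the $\lt_i$ maps to a valid elementary Knuth relation among the $\lt'_i$). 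Hence $u \pcong v \Rightarrow \lcmp{}{u} \pcong \lcmp{}{v} \Rightarrow u \cpcong v$. This shows $\pcong$ refines $\cpcong$; and since $\pcong$ is a congruence, so is its coarsening $\cpcong$ (or one invokes Lemma \ref{lem:clk.cong}-style reasoning: $\cpcong$ is the pullback of the congruence $\pcong$ along the homomorphism $\pMap_1$, hence a congruence).

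Next I would establish the isomorphism $\pMap := \pMap_1 : \plcM_n \ISOTO \sMon{\plcM_n}{1}$. By Remark \ref{rmk:mir.smon}, $\pMap_1 : \tA^*_n \to \sMon{(\tA^*_n)}{1}$ is an order-preserving isomorphism of free monoids. Since, by the previous paragraph, $u \pcong v$ implies $\lcmp{}{u} \pcong \lcmp{}{v}$, the map $\pMap_1$ descends to a well-defined monoid homomorphism $\plcM_n \to \sMon{(\tA^*_n)}{1}/_{\pcong} =: \sMon{\plcM_n}{1}$. It is surjective because $\pMap_1$ is surjective onto $\sMon{(\tA^*_n)}{1}$. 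For injectivity, suppose $\lcmp{}{u} \pcong \lcmp{}{v}$ in $\sMon{(\tA^*_n)}{1}$; I want $u \pcong v$ in $\plcM_n$. Here I would use the tableau machinery: by Theorem \ref{thm:coplc2plc} the $\lt'_\ell$ satisfy the Knuth relations, so $\sMon{(\tA^*_n)}{1}/_{\pcong}$ is itself a plactic monoid on $n$ ordered generators, which is isomorphic to $\plcM_n$ via the generator correspondence $\lt_\ell \leftrightarrow \lt'_\ell$ (the plactic monoid of rank $n$ depends only on the number of ordered generators, cf. Definition \ref{def:plactic.mon} and Theorem \ref{thm:Knuth}); under this isomorphism the composite $\plcM_n \to \sMon{\plcM_n}{1} \to \plcM_n$ is the identity on generators, hence the identity, so $\pMap$ is injective. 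Thus $\pMap$ is an isomorphism, and in particular $\coplcM_n = \tA^*_n/_{\cpcong} \cong \sMon{(\tA^*_n)}{1}/_{\pcong} = \sMon{\plcM_n}{1} \cong \plcM_n$, so $\plcM_n$ of rank $n$ is also a co-plactic monoid of rank $n$.

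Finally, the surjective homomorphism $\plccplc : \plcM_n \ONTO \coplcM_n$, $[w]_\plc \mTo [w]_\coplc$, is well defined precisely because $\pcong$ implies $\cpcong$ (first paragraph), and it is clearly surjective since every word represents some class in both monoids; that it is a monoid homomorphism is immediate from the compatibility of both congruences with concatenation. Combining with the isomorphism $\pMap$, one sees $\plccplc$ is in fact the composite of $\pMap$ with the isomorphism $\sMon{\plcM_n}{1} \cong \coplcM_n$, so $\plccplc$ is an isomorphism too (which is consistent with the statement that $\plcM_n$ \emph{is} a co-plactic monoid).

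The main obstacle I anticipate is the injectivity argument for $\pMap$, specifically justifying cleanly that $\sMon{(\tA^*_n)}{1}/_{\pcong}$ is isomorphic to $\plcM_n$ rather than merely a quotient of it. Theorem \ref{thm:coplc2plc} gives that the $\lt'_\ell$ satisfy the Knuth relations, which yields a surjection $\plcM_n \twoheadrightarrow \sMon{\plcM_n}{1}$; to get injectivity one must rule out extra relations among the $\lt'_\ell$ beyond the plactic ones. I expect this is handled by the configuration-tableau bijection (Corollary \ref{cor:plc.to.ctab}) applied on the $\sMon{(\tA^*_n)}{1}$ side together with a counting/cardinality or a direct inverse-map argument; the order-preservation of $\pMap_1$ from Remark \ref{rmk:mir.smon} should suffice to set up a two-sided inverse on generators and then extend. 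The rest of the proof is formal bookkeeping with congruences and homomorphisms.
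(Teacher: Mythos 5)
Your first and third paragraphs are correct and follow exactly the route the paper intends: Theorem \ref{thm:coplc2plc} says the words $\lt'_\ell=\lcmp{n}{\lt_\ell}$ satisfy the Knuth relations modulo $\pcong$, so applying the endomorphism $\mirM$ to a chain of elementary Knuth rewrites and using that $\pcong$ is a congruence gives $u \pcong v \Rightarrow \lcmp{n}{u} \pcong \lcmp{n}{v}$, i.e.\ $\pcong$ refines $\cpcong$; and $\cpcong$, being the pullback of the congruence $\pcong$ along the homomorphism $\mirM$, is itself a congruence, which makes $\plccplc$ a well-defined surjective homomorphism. This is precisely what the paper extracts from Theorem \ref{thm:coplc2plc}.

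The genuine gap is your injectivity argument for $\pMap$. From ``the $\lt'_\ell$ satisfy the Knuth relations'' you conclude that $\sMon{(\tA_n^*)}{1}/_{\pcong}$ \emph{is} the plactic monoid of rank $n$, isomorphic to $\plcM_n$ via $\lt_\ell \leftrightarrow \lt'_\ell$. But satisfying the Knuth relations only yields a well-defined surjective homomorphism $\plcM_n \onto{1} \sMon{\plcM_n}{1}$, $\lt_\ell \mapsto [\lt'_\ell]_\plc$ --- and that surjection is $\pMap$ itself; invoking the purported isomorphism to deduce injectivity of $\pMap$ is circular. (Any commutative monoid on $n$ ordered generators also satisfies \eqref{eq:knuth.rel}, yet is a proper quotient of $\plcM_n$.) What is actually needed is an independent argument that no relations beyond the plactic ones hold among the classes $[\lt'_\ell]_\plc$, equivalently that $\lcmp{n}{u} \pcong \lcmp{n}{v}$ forces $u \pcong v$, e.g.\ by exhibiting a procedure recovering $\tb_u$ (or $\ctb_u$) from $\tab(\lcmp{n}{u})$, i.e.\ a left inverse of $\pMap$. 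You flag this obstacle yourself, but the proposal does not close it, and your closing claim that $\plccplc$ is in fact an isomorphism rests on this unproven step (the paper asserts only surjectivity for $\plccplc$). For comparison, the paper states the corollary with no separate proof, treating it as a direct consequence of Theorem \ref{thm:coplc2plc}; the one component that does not follow formally from that theorem is exactly the injectivity of $\pMap$, so this is the point your write-up would have to supply rather than presuppose.
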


\begin{corollary}
  The plactic monoid $\plcM_n = \genr{\lt_1, \dots, \lt_n}$ contains the inductive chain of plactic submonoids $$\plcM_n = \sMon{\plcM_n}{0} \ds\supset \sMon{\plcM_n}{1} \ds\supset \sMon{\plcM_n}{2} \ds\supset \cdots  $$ of rank $n$, generated by $\smon{\lt_1}{i} =  \lcmp{n}{\smon{\lt_1}{i-1}}, \dots, \smon{\lt_n}{i} =  \lcmp{n}{\smon{\lt_n}{i-1}}$. Furthermore, the maps
  $$ \pMap_i: \sMon{\plcM_n}{i-1} \ISOTO \sMon{\plcM_n}{i}, \qquad  i = 1,2, \dots ,$$
  cf. Remark \ref{rmk:mir.smon}, are monoid isomorphisms.
\end{corollary}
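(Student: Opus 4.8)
The plan is to obtain the whole filtration by iterating the one-step result of Corollary~\ref{cor:coplc2plc}. First I would fix the recursive set-up: put $\sMon{\plcM_n}{0} := \plcM_n$ with ordered generators $\smon{\lt_\ell}{0} := \lt_\ell$, and declare $\sMon{\plcM_n}{i}$ to be the submonoid of $\plcM_n$ generated by $\smon{\lt_\ell}{i} := \lcmp{n}{\smon{\lt_\ell}{i-1}}$ for $\ell = 1, \dots, n$; equivalently, it is the image of $\sMon{(\tA_n^*)}{i}$ under the quotient $\tA_n^* \ONTO \plcM_n$, and the map in the statement is the restriction to $\sMon{\plcM_n}{i-1}$ of the endomorphism $\pMap_i$ of Remark~\ref{rmk:mir.smon}. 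The base case $i = 1$ is precisely Corollary~\ref{cor:coplc2plc}: $\pMap := \pMap_1 : \plcM_n \ISOTO \sMon{\plcM_n}{1}$ is a monoid isomorphism, and $\sMon{\plcM_n}{1}$ is again a plactic monoid of rank $n$ --- its generators $\smon{\lt_\ell}{1} = \lcmp{n}{\lt_\ell}$ are totally ordered by the lexicographic order of $\tA_n^*$ (compatibly with the original order, cf. Remark~\ref{rmk:mir.smon}) and satisfy the Knuth relations~\eqref{eq:knuth.rel} by Theorem~\ref{thm:coplc2plc}.

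Next I would run the induction on $i$. Suppose $\sMon{\plcM_n}{i-1}$ is a plactic monoid of rank $n$ on the ordered generating set $\smon{\lt_1}{i-1} < \cdots < \smon{\lt_n}{i-1}$. Since an abstract plactic monoid of rank $n$ is exactly the datum required by Corollary~\ref{cor:coplc2plc}, I would apply that corollary to $\sMon{\plcM_n}{i-1}$ in place of $\plcM_n$; this yields in one stroke that $\pMap_i : \sMon{\plcM_n}{i-1} \ISOTO \sMon{\plcM_n}{i}$ is a monoid isomorphism, that $\sMon{\plcM_n}{i}$ is again a plactic monoid of rank $n$ with ordered generators $\smon{\lt_\ell}{i}$, and that $\sMon{\plcM_n}{i} \subseteq \sMon{\plcM_n}{i-1}$. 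Composing the inclusions then gives the descending chain $\plcM_n = \sMon{\plcM_n}{0} \supseteq \sMon{\plcM_n}{1} \supseteq \sMon{\plcM_n}{2} \supseteq \cdots$ of rank-$n$ plactic submonoids of $\plcM_n$. For $n \geq 3$ these inclusions are strict: by Definition~\ref{def:mirror} one has $\wlen(\lcmp{n}{w}) = (n-1)\len{w}$ for $w \neq e$, so every nonempty element of $\sMon{\plcM_n}{i}$ has length divisible by $(n-1)^{i}$, while $\sMon{\plcM_n}{i-1}$ contains a generator of length $(n-1)^{i-1}$; the small cases $n \leq 2$ are degenerate (the co-mirror trivializes the generators) and are checked by hand.

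The step I expect to be the real obstacle is checking that the iteration is legitimate, i.e., that \emph{all} hypotheses of Corollary~\ref{cor:coplc2plc} genuinely transfer to $\sMon{\plcM_n}{i-1}$ at each stage. Concretely this amounts to verifying that the iterated co-mirrors $\smon{\lt_\ell}{i}$ form a \emph{totally ordered} generating set --- under the lexicographic order, and compatibly with the order of the $\smon{\lt_\ell}{i-1}$ --- and that they still satisfy the Knuth relations~\eqref{eq:knuth.rel}, so that the rank-$n$ plactic structure abstractly presented on these generators coincides with the one inherited from $\plcM_n$, and so that the abstract co-mirroring performed inside $\sMon{\plcM_n}{i-1}$ agrees with the concrete endomorphism $\pMap_i$ of Remark~\ref{rmk:mir.smon}. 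This compatibility is exactly the content of Theorem~\ref{thm:coplc2plc} together with Remark~\ref{rmk:mir.smon}, read inside $\sMon{\plcM_n}{i-1}$; once it is secured, what remains is a routine induction and the bookkeeping of the three conclusions (isomorphism, rank-$n$ plactic, nested submonoid) at each step.
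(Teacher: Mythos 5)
Your proposal is correct and follows essentially the route the paper intends: the paper states this corollary without proof as an immediate iteration of Corollary \ref{cor:coplc2plc} via Remark \ref{rmk:mir.smon} and Theorem \ref{thm:coplc2plc}, which is exactly your induction on $i$ (with the compatibility of the internal co-mirroring with $\pMap_i$ reducing to the fact that $\mirM$ is a monoid endomorphism, so co-mirroring the word $\smon{\lt_\ell}{i-1}$ letter-by-letter is the same as co-mirroring abstractly in the generators $\smon{\lt_t}{i-1}$). Your added length argument for strictness of the inclusions (and the caveat for $n\leq 2$, where for $n=2$ the co-mirror actually fixes the generators rather than trivializing them) goes slightly beyond what the paper records, but does not change the approach.
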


\begin{remark}\label{rmk:ccoplc.to.coclk}
The surjective  homomorphism $ \plktoclk: \plcM_n \onto{1} \clkM_n$
(Corollary \ref{cor:plc.to.clk}) induces the  surjective  homomorphism
  $ \plktocclk: \plcM_n \To \coclkM_n$ via the diagram
 \begin{equation}\label{eq:diag.1} \begin{gathered}
 \xymatrix{
  \plcM_n   \ar@{->>}[r]^{ \plktoclk} \ar@{->>}[dr]^{\co{\ \plktoclk}} \ar@{->>}[d]   & \clkM_n  \ar@{->}[r]^\Sim _\clkrep &  \MPlc_n   &     \\
 \coplcM_n \ar@{->>}[r]^{ \plktoclk} & \coclkM_n \ar@{->}[r]^\Sim_\dclkrep& \mir{\MPlc_n} \ar@{_{(}->}[u]  &
 }\end{gathered}
    \end{equation}
where $\clkM_n$ and $\coclkM_n$ are linearly represented  by  $\MPlc_n $ and $\dMPlcn $, respectively.

\end{remark}

\subsection{Representations of configuration tableaux}\label{ssec:rep}
\sSkip

We start by defining  our  first map
\begin{equation}\label{map:ctabtfrm.1}
\ctabmat: \CTab_n \TO \MPlc_n,
\end{equation}
that sends an $n$-configuration tableau $\ctb = (\cent_{i,j})$  to the matrix  $U= (u_{i,j})$ determined by
\begin{equation}\label{map:ctabtfrm.2}
u_{i,j} := \left\{\begin{array}{llll}
\mxtbwt{}_{i,j}(\ctb)  & \qquad\text{if  } i \leq  j, \\[1mm]
\underline{\null} \zero & \qquad\text{if  } i > j, \\
\end{array}\right.
\end{equation} where $\mxtbwt{}_{i,j}$ is given by \eqref{eq:max.inc.walk} in \S\ref{ssec:tab.vs.ctab}.
This map is then realized as a monoid homomorphism.

\begin{lemma}\label{lem:ctab.to.mat}
The matrix $W = \ctabmat(\ctb)$ indeed belongs to $\MPlc_n$, cf.  \eqref{eq:mAlg}, for any $\ctb \in \CTab_n$.

\end{lemma}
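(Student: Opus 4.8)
The plan is to identify $\ctabmat(\ctb)$ with the matrix $\clkrep(w)$ attached by the faithful representation of Theorem \ref{thm:clk.represntation} to any word $w$ whose configuration tableau is $\ctb$; since $\clkrep(w)\in\mfAml_n=\MPlc_n$ by construction, this settles the claim. So first I would invoke Theorem \ref{thm:configuration} together with Corollary \ref{cor:plc.to.ctab} to fix a word $w\in\tA_n^*$ with $\ctb=\ctb_w=\ctab(w)$ (taking $w=e$, hence $\ctb=\ctb_0$, as the degenerate case), and set $\tb_w=\tab(w)$, so that $\tabctab(\tb_w)=\ctb_w$.

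Next I would read off the entries of $W=\ctabmat(\ctb_w)=(w_{i,j})$ straight from \eqref{map:ctabtfrm.2}: the entries with $i>j$ are $\zero$ by fiat, and for $i\le j$ one has $w_{i,j}=\mxtbwt{}_{i,j}(\ctb_w)$. Here Observation \ref{obs:ctab.wlk} supplies the main translation, $\mxtbwt{}_{i,j}(\ctb_w)=\sword{\itoj{i}{j}}{}(\tb_w)$, the length of a longest nondecreasing subword of $\tb_w$ over the convex sub-alphabet $\tA_{\itoj{i}{j}}$. To pass from $\tb_w$ to $w$ itself I would use that $w\kcong\tb_w$ (Theorem \ref{thm:Knuth}), hence $w\clcong\tb_w$ by Proposition \ref{prop:tab-to-cloc}; by the defining property of $\clcong$ (Definition \ref{def:cloaktic.mon}) this yields $\sword{\itoj{i}{j}}{}(\tb_w)=\mxlen{\itoj{i}{j}}{w}$ for every $i\le j$.

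On the algebraic side I would consider $U=\clkrep(w)=\Oix{A}{\ell_1}\cdots\Oix{A}{\ell_m}$ for $w=\lt_{\ell_1}\cdots\lt_{\ell_m}$, where $\mfA_n$ carries the specialization $\pv:=1$. Since every generator $\Oix{A}{\ell}$ of $\mfA_n$ (see \eqref{eq:mAlg}) is upper triangular, so is $U$, whence $u_{i,j}=\zero$ for $i>j$; and Key Lemma \ref{klem:clk.represntation}, with $\flt=\pv=1$ so that the recorded power of $\flt$ is simply the length, gives $u_{i,j}=\mxlen{\itoj{i}{j}}{w}$ for $i\le j$. Comparing the two matrices entry by entry yields $W=U=\clkrep(w)\in\mfAml_n=\MPlc_n$, and in the degenerate case $W=E=\clkrep(e)\in\MPlc_n$ directly from \eqref{eq:E.mat}.

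The only delicate point, and the one I would set up with care, is the identification $\mxtbwt{}_{i,j}(\ctb_w)=\mxlen{\itoj{i}{j}}{w}$: it packages the combinatorial content of configuration tableaux (Observation \ref{obs:ctab.wlk}) with the plactic-invariance of longest-nondecreasing-subword lengths over convex sub-alphabets (Proposition \ref{prop:tab-to-cloc}); everything else — triangularity, the $\pv=1$ specialization, the empty-word case — is routine bookkeeping. A useful byproduct of this argument for the sequel is that the composite $\tA_n^*\xrightarrow{\ctab}\CTab_n\xrightarrow{\ctabmat}\MPlc_n$ agrees with $\clkrep\circ(\tA_n^*\twoheadrightarrow\clkM_n)$, so $\ctabmat$ indeed descends to a surjective monoid homomorphism onto $\MPlc_n$.
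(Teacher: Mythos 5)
Your proposal is correct and follows essentially the same route as the paper: both rest on the bijection $\tabctab$ (Theorem \ref{thm:configuration}), the translation $\mxtbwt{}_{i,j}(\ctb_w)=\sword{\itoj{i}{j}}{}(\tb_w)$ of Observation \ref{obs:ctab.wlk}, plactic-to-cloaktic invariance (Proposition \ref{prop:tab-to-cloc}), and the representation $\clkrep$ of Theorem \ref{thm:clk.represntation}. The only difference is one of bookkeeping: you establish $\ctabmat(\ctb_w)=\clkrep(w)$ entry by entry for an arbitrary word via Key Lemma \ref{klem:clk.represntation} (with $\pv=1$), whereas the paper reduces along the chain of homomorphisms and then only verifies the generator images $\ctabmat(\ctb_{\lt_\ell})=\Oix{A}{\ell}$; your version makes the compatibility explicit and directly yields the commutativity used afterwards in Theorem \ref{thm:ctb.mat}.
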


\begin{proof}
 $\tabctab: \Tab_n \To \CTab_n$ is  bijective  (Theorem \ref{thm:configuration}),  with  $\sword{\itoj{i}{j}}{}\big(\tb_w \big) = \mxtbwt{}_{i,j}(\ctb_w)$  by Observation ~\ref{obs:ctab.wlk}, while  $\Tab_n$  and $\CTab_n$ are isomorphic (as monoids) to the plactic monoid $\plcM_n$ (Remark ~\ref{rmk:tab.mon} and Corollary \ref{cor:plc.to.ctab}, respectively).
  Furthermore, the map
$ \plktoclk: \plcM_n \To \clkM_n$, $[w]_\plc \mto [w]_\clk,$
is a surjective monoid  homomorphism  (Corollary \ref{cor:plc.to.clk}), while
$\clkrep: \clkM_n \To \mfAml_n$, $\lt_\ell \mto \Pv{\Oix{A}{\ell}}$ for $\ell  = 1, \dots, n$, is an  isomorphism
(Theorem \ref{thm:clk.represntation}). Thus we only need to show that $\ctabmat$ maps the  $n$-configuration tableaux assigned  to  letters $\lt_\ell$ in ~ $\tA_n$ to the generating matrices  $\Oix{A}{\ell}$ of ~$\mfAml_n$, namely that
$\ctabmat(\ctb_{\lt_\ell}) = \Oix{A}{\ell}$ for each  $\ell  = 1, \dots, n$.

Drawing the $n$-configuration tableau of a letter $\lt_\ell$, which is of the  form
\begin{equation}\label{eq:C.l}
\ctb_{\lt_\ell} = \ctab(\lt_\ell) = \begin{array}{c} \footnotesize \begin{array}{|l|l|l|l|l|l|l|l}
\cline{1-1}
  {0}   \\ \cline{1-2}
  {0}  & 0 \\ \cline{1-3}
  \vdots &  \vdots & \ddots \\  \cline{1-4}
   0   &     &  & 0 \\ \cline{1-5}
  \vdots &    &   \cdots & &  \ddots \\ \cline{1-6}
   0 &  \cdots   &    &  & \cdots & 0 \\ \cline{1-7}
  {0} &   \cdots &   0 & 1 & 0 & \cdots & 0 & ,\\ \cline{1-7}
 \end{array}  \\  \begin{array}{llllllllll} & \   \ell & & \end{array}  \end{array}
\end{equation}
 by the definition of \dwalk s (\S\ref{ssec:y.tab}) and their weights  \eqref{eq:max.inc.walk}  we observe that
\begin{enumerate}
  \eroman
  \item $\mxtbwt{}_{i,j}(\ctb_{\lt_\ell}) = 0$ for $i \leq j < \ell$;
  \item $\mxtbwt{}_{i,j}(\ctb_{\lt_\ell}) = 1$ for $i \leq  \ell \leq j $;
  \item $\mxtbwt{}_{i,j}(\ctb_{\lt_\ell}) = 0$ for $ \ell < i \leq j$.
\end{enumerate}
Accordingly, the matrix $ \ctabmat(\ctb_{\lt_\ell})$ defined  by \eqref{map:ctabtfrm.2}  is  precisely the triangular matrix
$\Oix{A}{\ell} := \Oix{F}{\ell} \+ E$ defined in ~\eqref{eq:F.mat.2}
and \eqref{eq:mAlg},  with $\pv=1$ and $E$ as in \eqref{eq:E.mat}.
\end{proof}
By Remark \ref{rmk:tab.mon} we conclude that:
\begin{theorem}\label{thm:ctb.mat}
  $\ctabmat: \CTab_n \To \MPlc_n$, given in \eqref{map:ctabtfrm.2}, is a surjective map, realized a as monoid homomorphism.   \end{theorem}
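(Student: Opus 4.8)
The plan is to prove Theorem~\ref{thm:ctb.mat} by verifying the three things encapsulated in the statement: that $\ctabmat$ lands in $\MPlc_n$, that it is surjective, and that it respects the monoid operations. The first of these is exactly Lemma~\ref{lem:ctab.to.mat}, which is already established, so I would simply invoke it. For surjectivity, the cleanest route is to factor $\ctabmat$ through the known surjections. Recall that $\CTab_n$ is in bijection with $\plcM_n$ via $\scP_\ctab$ (Corollary~\ref{cor:plc.to.ctab}), that $\plktoclk:\plcM_n\onto{1}\clkM_n$ is surjective (Corollary~\ref{cor:plc.to.clk}), and that $\clkrep:\clkM_n\isoto\mfAml_n=\MPlc_n$ is an isomorphism (Theorem~\ref{thm:clk.represntation}). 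So I would show that $\ctabmat$ coincides with the composite $\clkrep\circ\plktoclk\circ\scP_\ctab^{-1}$; surjectivity is then automatic since each factor is surjective (indeed the last is bijective). The identification of the composite with $\ctabmat$ is where the real content lies, and it is essentially the substance of the proof of Lemma~\ref{lem:ctab.to.mat} together with Observation~\ref{obs:ctab.wlk}.

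The key step, then, is to check that for a word $w\in\tA_n^*$ with tableau $\tb_w=\tab(w)$ and configuration tableau $\ctb_w=\tabctab(\tb_w)$, the matrix $\ctabmat(\ctb_w)$ equals $\clkrep([w]_\clk)$. By Key Lemma~\ref{klem:clk.represntation} the $(i,j)$-entry of $\clkrep([w]_\clk)$ records $\mxlen{\tA_{[i,j]}}{w}$, the length of the longest nondecreasing subword of $w$ over the convex sub-alphabet $\tA_{[i,j]}$; by definition of $\ctabmat$ in~\eqref{map:ctabtfrm.2}, the $(i,j)$-entry of $\ctabmat(\ctb_w)$ is $\mxtbwt{}_{i,j}(\ctb_w)$ for $i\le j$ and $\zero$ otherwise. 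Observation~\ref{obs:ctab.wlk} supplies precisely the equality $\sword{[i,j]}{}(\tb_w)=\mxtbwt{}_{i,j}(\ctb_w)$, and Theorem~\ref{thm:Schen} (Schensted) combined with Proposition~\ref{prop:tab-to-cloc} identifies $\sword{[i,j]}{}(\tb_w)$ with $\mxlen{\tA_{[i,j]}}{w}$. Matching lower-triangular zeros is trivial since both matrices are upper triangular. This chain of identifications forces $\ctabmat(\ctb_w)=\clkrep([w]_\clk)$ on the nose.

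With that identification in hand, the homomorphism property is a formality: $\CTab_n$ carries its monoid structure by transport along $\scP_\ctab$ (Remark~\ref{rmk:tab.mon} and the discussion after Corollary~\ref{cor:plc.to.ctab}), so $\ctb_u\cdot\ctb_v=\ctb_{uv}$; then $\ctabmat(\ctb_u\cdot\ctb_v)=\ctabmat(\ctb_{uv})=\clkrep([uv]_\clk)=\clkrep([u]_\clk)\clkrep([v]_\clk)=\ctabmat(\ctb_u)\ctabmat(\ctb_v)$, using that $\clkrep\circ\plktoclk$ is a monoid homomorphism. Likewise $\ctabmat(\ctb_0)=E$ since the null configuration tableau corresponds to the empty word and $\clkrep(e)=E$. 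Surjectivity of $\ctabmat$ follows because $\MPlc_n=\mfAml_n$ is generated by the matrices $\Oix{A}{\ell}$, and $\ctabmat(\ctb_{\lt_\ell})=\Oix{A}{\ell}$ as computed in Lemma~\ref{lem:ctab.to.mat}; any matrix $W=\Oix{A}{\ell_1}\cdots\Oix{A}{\ell_m}$ is thus hit by $\ctb_{w}$ with $w=\lt_{\ell_1}\cdots\lt_{\ell_m}$.

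The main obstacle, such as it is, is purely expository rather than mathematical: almost everything needed has been isolated into prior lemmas (Key Lemma~\ref{klem:clk.represntation}, Observation~\ref{obs:ctab.wlk}, Theorem~\ref{thm:clk.represntation}, Corollary~\ref{cor:plc.to.ctab}), so the work is to arrange the factorization $\ctabmat=\clkrep\circ\plktoclk\circ\scP_\ctab^{-1}$ carefully enough that the reader sees why $\ctabmat$ is well defined as stated (it is defined directly on configuration tableaux, not through the factorization) and why it is a homomorphism for the tableau-insertion product. I would therefore spend the bulk of the proof spelling out this compatibility diagram and cross-referencing, rather than on any new computation.
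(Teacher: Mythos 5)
Your proposal is correct and follows essentially the same route as the paper: it factors $\ctabmat$ as $\clkrep\circ\plktoclk\circ\scP_\ctab^{-1}$ and verifies the commutativity of this diagram via Lemma~\ref{lem:ctab.to.mat} and Observation~\ref{obs:ctab.wlk}, which is exactly the paper's argument. Your extra entrywise check against Key Lemma~\ref{klem:clk.represntation} only spells out what the paper delegates to those prior results.
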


\begin{proof}
We know that  $ \scP_\ctab: \plcM_n  \To \CTab_n$ is a bijection (Corollary \ref{cor:plc.to.ctab}) realized as a monoid homomorphism, $ \plktoclk: \plcM_n \To \clkM_n$ is a surjective monoid homomorphism (Corollary \ref {cor:plc.to.clk}),
  and $\clkrep: \clkM_n \To \MPlc_n$ is an isomorphism (Theorem \ref{thm:clk.represntation}), thus the diagram
 \begin{equation*}\label{eq:diag.2} \begin{gathered}
 \xymatrix{
 \plcM_n  \ar@{<->}[rr]^{\SIM{\scP_\ctab}} \ar@{->>}[drr]^{\plktoclk} & & \CTab_n \ar@{..>}[drr]^\ctabmat \\
 & & \clkM_n \ar@{->}[rr]^{\clkrep \quad \Sim } && \MPlc_n
}\end{gathered}
    \end{equation*}
commutes by Lemma \ref{lem:ctab.to.mat}.
\end{proof}
\begin{example}\label{exmp:tfrm.2} Applying the map $\ctabmat$ in \eqref{map:ctabtfrm.1} to the $3$-configuration tableau  $\ctb_w$ with  $w$  the word in  Example \ref{exmp:ctab.1}, we have
$$ \tb_w = \mresize{
\begin{array}{|l|l|l|l|l|l|l|}
\cline{1-1}
  {\lt_4}   \\ \cline{1-2}
  {\lt_3}  &   {\lt_4} \\ \cline{1-4}
  {\lt_2} &   {\lt_2}  &   {\lt_4} &  {\lt_4} \\ \cline{1-6}
  {\lt_1}  &    {\lt_1} &     {\lt_1}  & {\lt_3} &    {\lt_3} &    {\lt_4} \\ \cline{1-6}
\end{array}}
\dss \mTo  \ctb_w = \mresize{
\begin{array}{|l|l|l|l|l|l|l|}
\cline{1-1}
  {1}   \\ \cline{1-2}
  {1} &  {1}  \\ \cline{1-3}
  {2} &   {0}  &   {2}  \\ \cline{1-4}
  {3}  &    {0} &     {2}  &  {1}  \\ \cline{1-4}
\end{array}}
\dss \mTo  \ctabmat(\ctb_w) = \mresize{
\begin{bmatrix}
3 & 3  & 5 &  6 \\
& 2 & 4 & 5 \\
& & 3 & 5 \\
& & & 5
\end{bmatrix}},
$$ where
$\tb_w$ is the Young tableau of $w$.
\end{example}

In terms of configuration tableaux, and the map $\ctabmat: \CTab_n \To \MPlc_n$,
 Example \ref{exmp:mat.clk.3x3} now reads as follows.
\begin{example}\label{exmp:tfrm.1}
  The representation $\ctabmat:\CTab_3 \To \MPlc_3$ of $3$-configuration tableaux assigned to  the three letter alphabet $\tA_3 = \genr{a,b,c}$ is determined by the  generators' mapping
\begin{align*}\ctb_a := \ctab(a) = \mresize{
\begin{array}{|l|l|l|l|l|l|l|}
\cline{1-1}
  {0}   \\ \cline{1-2}
  {0} &  {0}  \\ \cline{1-3}
  {1} &   {0}  &   {0}  \\ \cline{1-3}
\end{array}
\ds  \mTo  A =
 \begin{bmatrix}
 1 & 1 & 1 \\
 & 0 & 0 \\
 & & 0
\end{bmatrix} }\ , &
\qquad \ctb_b := \ctab(b) =   \mresize{
\begin{array}{|l|l|l|l|l|l|l|}
\cline{1-1}
  {0}   \\ \cline{1-2}
  {0} &  {0}  \\ \cline{1-3}
  {0} &   {1}  &   {0}  \\ \cline{1-3}
\end{array}
 \ds \mTo B =
 \begin{bmatrix}
 0 & 1 & 1 \\
 & 1 & 1 \\
 & & 0
 \end{bmatrix} }\ ,
\\
\ctb_c := \ctab(c) = \mresize{
\begin{array}{|l|l|l|l|l|l|l|}
\cline{1-1}
  {0}   \\ \cline{1-2}
  {0} &  {0}  \\ \cline{1-3}
  {0} &   {0}  &   {1}  \\ \cline{1-3}
\end{array} \ds \mTo C =
 \begin{bmatrix}
 0 & 0 & 1 \\
 & 0 & 1 \\
 & & 1
\end{bmatrix} }\  . \end{align*}
 The Knuth-equivalence \eqref{eq:knuth.rel} for triplets in $\tA_3$  are described in $\CTab_3$ and $\MPlc_3$ as
$$ \tab(acb)  = \tab(cab)  = \mresize{
\begin{tabular}{|c|c|}
\cline{1-1}
 c  \\ \cline{1-2}
a &  b  \\ \cline{1-2}
\end{tabular}
\dss \mTo
\begin{array}{|l|l|l|l|l|l|l|}
\cline{1-1}
  {0}   \\ \cline{1-2}
  {0} &  {1}  \\ \cline{1-3}
  {1} &   {1}  &   {0}  \\ \cline{1-3}
\end{array}
\dss \mTo
 \begin{bmatrix}
 1 & 2 & 2 \\
 & 1 & 1 \\
 & & 1
\end{bmatrix} }\ , $$
while
$$   \tab(bca) = \tab (bca) = \mresize{
\begin{tabular}{|c|c|}
\cline{1-1}
 b  \\ \cline{1-2}
a &  c  \\ \cline{1-2}
\end{tabular} \dss \mTo
\begin{array}{|l|l|l|l|l|l|l|}
\cline{1-1}
  {0}   \\ \cline{1-2}
  {1} &  {0}  \\ \cline{1-3}
  {1} &   {0}  &   {1}  \\ \cline{1-3}
\end{array}
\dss \mTo
 \begin{bmatrix}
 1 & 1 & 2 \\
 & 1 & 2 \\
 & & 1
\end{bmatrix}} \ .
$$
For the increasing word $abc \in \tA_3^+$ we get
$$    \tab (abc) =  \mresize{
\begin{tabular}{|c|c|c|}
 \cline{1-3}
a & b & c  \\ \cline{1-3}
\end{tabular} \dss \mTo
\begin{array}{|l|l|l|l|l|l|l|}
\cline{1-1}
  {0}   \\ \cline{1-2}
  {0} &  {0}  \\ \cline{1-3}
  {1} &   {1}  &   {1}  \\ \cline{1-3}
\end{array}
\dss \mTo
 \begin{bmatrix}
 1 & 2 & 3 \\
 & 1 & 2 \\
 & & 1
\end{bmatrix} },
$$
and for the decreasing word $cba \in \tA_3^+$ we have
$$    \tab (cba) =  \mresize{
\begin{tabular}{|c|}
 \cline{1-1}
c \\ \cline{1-1} b \\ \cline{1-1} a  \\ \cline{1-1}
\end{tabular} \dss \mTo
\begin{array}{|l|l|l|l|l|l|l|}
\cline{1-1}
  {1}   \\ \cline{1-2}
  {1} &  {0}  \\ \cline{1-3}
  {1} &   {0}  &   {0}  \\ \cline{1-3}
\end{array}
\dss \mTo
 \begin{bmatrix}
 1 & 1 & 1 \\
 & 1 & 1 \\
 & & 1
\end{bmatrix} }.
$$

For a generic  3-letter tableau $ \tb_w := \cc^{k_3} \, \bb^{j_2} \cc^{k_2} \,\aa^{i_1} \bb^{j_1} \cc^{k_1}$, by a direct computation,  we obtain
$$ \ctb_w = \begin{array}{|l|l|l|l|l|l|l|}
\cline{1-1}
  k_3  \\ \cline{1-2}
  j_2 &  k_2  \\ \cline{1-3}
  i_1 &   j_1  &   k_1  \\ \cline{1-3}
\end{array}\dss{\mTo} \ctabmat(\ctb_w) =  \left[\begin{array}{rrr}
                               i_1 & i_1 j_1 & i_1 j_1 k_1 \\
                                & j_1j_2 & j_2(j_1 \+ k_2) \\
                                 &  & k_1  k_2  k_3
                             \end{array}\right].
$$
Then taking $k_2', k_2''$ such that  $k_2', k_2'' \leq j_2$ and $k'_2 k_3' = k''_2 k_3''$, we can produce two different tableaux with a same image, what shows that in general  the map $\ctabmat$ not injective.

\end{example}

Returning to our running example (cf. Example \ref{exmp:run:1}), it gives an explicit numerically example for the non-injectivity of the map $\ctabmat:\Tab_n \To \MPlc_n$.
\begin{example}\label{exmp:inject.2}  Taking the explicit words of Example \ref{exmp:run:1}, we have
$$u = c \ b^2 c^2 \ a^2 b^2 c  \dss \mTo \ctb_u =  \mresize{
\begin{array}{|l|l|l|l|l|l|l|}
\cline{1-1}
  {1}   \\ \cline{1-2}
  {2} &  {2}  \\ \cline{1-3}
  {2} &   {2}  &   {1}  \\ \cline{1-3}
\end{array}
 \dss \mTo A_u =
 \begin{bmatrix}
 2 & 4 & 5 \\
 & 4 & 5 \\
 & & 4
\end{bmatrix}}, $$
where on the other also
$$v  = c^2 \ b^2 c \ a^2 b^2 c  \dss \mTo \ctb_v =  \mresize{
 \begin{array}{|l|l|l|l|l|l|l|}
\cline{1-1}
  {2}   \\ \cline{1-2}
  {2} &  {1}  \\ \cline{1-3}
  {2} &   {2}  &   {1}  \\ \cline{1-3}
\end{array}
 \dss \mTo A_v =
 \begin{bmatrix}
 2 & 4 & 5 \\
 & 4 & 5 \\
 & & 4
\end{bmatrix} }. $$
Hence  $\ctabmat(\ctb_u) = \ctabmat(\ctb_v)$, but $u \not \tcong v $, or equivalently
 $u \not \pcong v $ by Theorem \ref{thm:Knuth}. Namely, the map  $\ctabmat: \CTab_3 \To \MPlc_3$ is not injective
\end{example}

Nevertheless,  we do have a partial (row) injection:

\begin{lemma}\label{lem:row.1} The restriction
\begin{equation}\label{eq:ctabtfrm.resmap}
\ctabmat|_{\rw_1}: \row_{1}(\ctb)
\TO  \row_{1}(W), \qquad W = \ctabmat(\ctb),
\end{equation}
of the map $\ctabmat$ in \eqref{map:ctabtfrm.1} to the bottom row is a bijective map.
\end{lemma}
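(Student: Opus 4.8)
The plan is to reduce the statement to the elementary observation, already recorded in \eqref{eq:ctab.row.1}, that the bottom row of the image matrix is nothing but the sequence of partial sums of the bottom row of the configuration tableau. Writing $W = \ctabmat(\ctb) = (w_{i,j})$, equation \eqref{eq:ctab.row.1} gives $w_{1,j} = \mxtbwt{}_{1,j}(\ctb) = \sum_{t=1}^{j}\cent_{1,t}$, so the $j$-th entry of $\row_{1}(W)$ depends only on the cells $\cent_{1,1},\dots,\cent_{1,n}$ forming $\row_{1}(\ctb)$; thus \eqref{eq:ctabtfrm.resmap} is a well-defined map between the set of bottom rows of tableaux in $\CTab_n$ — which is all of $\N_0^{\,n}$, since one obtains any prescribed bottom row by taking all higher cells equal to $0$ (the configuration laws \eqref{eq:ctab.order} are then automatic) — and the set $\row_{1}(\MPlc_n)$ of bottom rows of matrices in $\MPlc_n$.

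For injectivity I would invert the partial-sum operator, exactly as in the proof of Lemma~\ref{lem:injec.ndc}: from $\row_{1}(W) = (w_{1,1},\dots,w_{1,n})$ one recovers $\cent_{1,1} = w_{1,1}$ and $\cent_{1,j} = w_{1,j} - w_{1,j-1}$ for $j = 2,\dots,n$, so two configuration tableaux with distinct bottom rows have distinct images under $\ctabmat|_{\rw_1}$.

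For surjectivity I would first identify $\row_{1}(\MPlc_n)$ precisely. By Key Lemma~\ref{klem:clk.represntation}, for any $W \in \MPlc_n$ the entry $w_{1,j}$ is the length of a longest nondecreasing subword over the convex sub-alphabet $\tA_{\itoj{1}{j}}$, and the inclusions $\tA_{\itoj{1}{j}} \subseteq \tA_{\itoj{1}{j+1}}$ force $w_{1,1} \le w_{1,2} \le \cdots \le w_{1,n}$; hence $\row_{1}(\MPlc_n)$ consists of nondecreasing sequences of nonnegative integers. Conversely, given such a sequence $0 \le c_1 \le c_2 \le \cdots \le c_n$, set $q_1 = c_1$ and $q_\ell = c_\ell - c_{\ell-1}$ for $\ell \ge 2$, and take the nondecreasing word $w = \lt_1^{q_1}\lt_2^{q_2}\cdots\lt_n^{q_n} \in T_n$. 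Since $w$ is already a single row, its configuration tableau $\ctb_w = \ctab(w)$ has bottom row $(q_1,\dots,q_n)$ and all higher cells $0$, so by the well-definedness step $\ctabmat|_{\rw_1}$ carries $\row_{1}(\ctb_w)$ to the partial sums $(c_1,\dots,c_n)$. As $(q_1,\dots,q_n)$ ranges over $\N_0^{\,n}$ this realizes every nondecreasing tuple, so $\ctabmat|_{\rw_1}$ is onto, and therefore bijective.

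The argument involves no real difficulty; the only point deserving attention is the bookkeeping of which two sets the restricted map connects — namely that $\row_{1}(\CTab_n) = \N_0^{\,n}$ while $\row_{1}(\MPlc_n)$ is exactly the set of nondecreasing tuples in $\N_0^{\,n}$ — and both facts follow immediately from \eqref{eq:ctab.row.1}, Key Lemma~\ref{klem:clk.represntation}, and the inversion formula above (compare Lemma~\ref{lem:injec.ndc}).
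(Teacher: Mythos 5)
Your argument is correct and is essentially the paper's own proof: the paper reduces the claim to the correspondence between the bottom row of $\ctb$ and a nondecreasing word (Theorem~\ref{thm:configuration}) together with Lemma~\ref{lem:injec.ndc}, whose content is precisely the partial-sum formula \eqref{eq:ctab.row.1} and its inversion that you carry out explicitly. Your added identification of $\row_1(\MPlc_n)$ as the nondecreasing tuples merely spells out the surjectivity that the paper leaves implicit in citing Lemma~\ref{lem:injec.ndc}.
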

\begin{proof} By Theorem \ref{thm:configuration},  $\row_{1}(\ctb)$ corresponds uniquely to a  nondecreasing word in $\tA_n^+$, which is mapped bijectively to $\row_{1}(W)$ of $W \in \MPlc_n$  by Lemma \ref{lem:injec.ndc}.
\end{proof}

\subsection{Co-representations of configuration tableaux}\label{ssec:co.rep}
\sSkip

We now turn to our second map
\begin{equation}\label{map:ctabcomat.1}
\ctabcomat: \CTab_n \TO \dMPlcn,
\end{equation}
which we  called a \textbf{co-representation}, that uses as its target the matrix co-monoid $\dMPlcn$, cf.  \eqref{eq:mat.co.alg}.
This map sends an  $n$-configuration tableau $\ctb \in \CTab_n$  to the matrix  $V = (v_{i,j})$ defined by
\begin{equation}\label{map:ctabcomat.2}
v_{i,j} := \left\{\begin{array}{llll}
-\jmp{i', j'}{}(\ctb)   & \qquad\text{if  } i \leq  j, \\[1mm]
\underline{\null} \zero & \qquad\text{if  } i > j. \\
\end{array}\right.
\end{equation}
where $i' = n -i +1$, $j' = n- j +1$, and $\jmp{i',j'}{}$ is the function given by \eqref{eq:stp.2} in \S\ref{ssec:tab.vs.ctab}.
The  map $\ctabcomat$ is then realized as a monoid homomorphism.
(Recall that,  by Theorem \ref{thm:co.clk.represntation}, $\dMPlcn$ is isomorphic to the \ckmon\ ~$\coclkM_n$.)

\begin{lemma}\label{lem:ctab.to.comat}
The matrix $V = \ctabcomat(\ctb)$ indeed belongs to $\mir{\MPlc_n} $ for any $\ctb \in \CTab_n$.

\end{lemma}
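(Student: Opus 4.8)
The plan is to mirror the proof of Lemma~\ref{lem:ctab.to.mat}, trading the \dwalk\ / nondecreasing-subword data recorded there for the \hwalk\ (cover) data recorded by $\ctabcomat$. By Corollary~\ref{cor:plc.to.ctab} one may write $\ctb = \ctb_w$ for some $w \in \tA_n^*$. It then suffices to prove the matrix identity
\[
 \ctabcomat(\ctb_w) \;=\; \matcmat\bigl(\ctabmat(\ctb_w)\bigr),
\]
for then, since $\ctabmat(\ctb_w) \in \MPlc_n$ by Lemma~\ref{lem:ctab.to.mat} and $\matcmat\colon \MPlc_n \ONTO \dMPlcn$ is the surjective homomorphism of~\eqref{eq:mat.2.dmat}, the right-hand side---hence $V = \ctabcomat(\ctb_w)$---lies in $\dMPlcn = \mir{\MPlc_n}$, which is the assertion. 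Equivalently, one recognizes $\ctabcomat(\ctb_w)$ as $\dclkrep\bigl(\plktocclk([w]_\plc)\bigr)$, using the surjection $\plktocclk\colon \plcM_n \onto{1} \coclkM_n$ of Remark~\ref{rmk:ccoplc.to.coclk} and the isomorphism $\dclkrep\colon \coclkM_n \Isoto \dMPlcn$ of Theorem~\ref{thm:co.clk.represntation}.

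To build up to this, I would first record the generator case, which is a direct computation and fixes the normalization. Through the isomorphism $\scP_\ctab\colon \plcM_n \Isoto \CTab_n$ (Corollary~\ref{cor:plc.to.ctab}), $\CTab_n$ is generated by the single-letter tableaux $\ctb_{\lt_\ell} = \ctab(\lt_\ell)$; each has $\cent_{1,\ell} = 1$ and all other cells $0$, cf.~\eqref{eq:C.l}, so its unique nonzero cell lies in the $\ell$-th diagonal. For $i \le j$ the $(i,j)$-entry of $V := \ctabcomat(\ctb_{\lt_\ell})$ is $-\jmp{i',j'}{}(\ctb_{\lt_\ell})$ with $i' = n-i+1 \ge j' = n-j+1$. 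Using $\jmp{k,k}{}(\ctb) = \Cent_k(\ctb)$ one gets $\jmp{i',i'}{}(\ctb_{\lt_\ell}) = \Cent_{i'}(\ctb_{\lt_\ell})$, which is $1$ exactly when $i' = \ell$, i.e.\ when $i = \ell' := n-\ell+1$, and $0$ otherwise; and for $i' > j'$ the minimal-weight cover of the first $j'$ diagonals by rows $\le i'$ can be routed so as to avoid the lone nonzero cell of the $\ell$-th diagonal, whence $\jmp{i',j'}{}(\ctb_{\lt_\ell}) = 0$. Thus $V$ has entry $\ipv$ in position $(\ell',\ell')$, entry $\one$ in every other position $(i,j)$ with $i \le j$, and $\zero$ for $i > j$, i.e.\ $V = \Oix{\chA}{\ell}$ of~\eqref{eq:co.cl.A.mat} (with $\pv = 1$); and $\matcmat(\Oix{A}{\ell}) = \Oix{\chA}{\ell}$ by~\eqref{eq:mat.2.dmat}, so the identity holds on generators.

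The main obstacle is upgrading this to arbitrary $\ctb_w$, which amounts to the general entry-wise identity. Spelling out the target via~\S\ref{ssec:co.clk.rep}: the $(i,j)$-exponent of $\matcmat(\ctabmat(\ctb_w)) = \vartheta\bigl(\pMap_1(\clkrep([w]_\clk))\bigr)$ equals $\mxlen{\tA_{\itoj{i}{j}}}{\lcmp{n}{w}} - \len{w}$, since $\pMap_1(\clkrep([w]_\clk)) = \clkrep(\lcmp{n}{w})$ records longest nondecreasing subwords of $\lcmp{n}{w}$ over convex sub-alphabets by Key Lemma~\ref{klem:clk.represntation}, and $\vartheta$ of~\eqref{eq:iso.1} scales a length-$\len{w}$ product by $\ipv^{\len{w}}$. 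So the crux is
\[
 \jmp{i',j'}{}(\ctb_w) \;=\; \len{w} - \mxlen{\tA_{\itoj{i}{j}}}{\lcmp{n}{w}}, \qquad i \le j,
\]
identifying the minimal-weight horizontal cover in $\ctb_w$ with this ``co-length complement'' — the dual analogue of Observation~\ref{obs:ctab.wlk}. Its proof should rest on the configuration laws~\eqref{eq:ctab.order}, which force a minimal cover to drift upward into rows of smaller weight exactly as the min-plus structure of $\dMPlcn$ demands, together with the bijection $\tabctab\colon\Tab_n \Isoto \CTab_n$ (Theorem~\ref{thm:configuration}) reading $\ctb_w$ off $\tb_w = \tab(w)$ and the digraph description of $\dclkrep(w)$ given after~\eqref{eq:co.cl.rep}; this last identity (equivalently, that $\ctabcomat$ respects the insertion product of Algorithm~\ref{algr:conf}) is where the real combinatorial work lies. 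Once it is in hand, $V = \dclkrep(w) \in \dMPlcn = \mir{\MPlc_n}$ follows at once.
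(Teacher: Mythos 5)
Your proposal takes essentially the same route as the paper's proof: assemble the chain $\plcM_n \cong \CTab_n$, $\plcM_n \onto{1} \coclkM_n \cong \dMPlcn$ (equivalently the factorization through $\matcmat \circ \ctabmat$), and then verify the anchor computation $\ctabcomat(\ctb_{\lt_\ell}) = \Oix{\chA}{\ell}$, which, together with your evaluation of $\jmp{i',j'}{}(\ctb_{\lt_\ell})$, is exactly the content of the paper's argument for this lemma. The general entrywise identity you defer as the remaining ``crux'' (a dual analogue of Observation~\ref{obs:ctab.wlk}, i.e.\ compatibility of $\ctabcomat$ with the insertion product) is not proved in the paper's proof of this lemma either --- the paper treats the reduction to the generator case as sufficient and records the homomorphism statement only afterwards in Theorem~\ref{thm:ctb.comat} --- so relative to the paper your proposal is not missing anything, though you are right to flag that this is where the substantive combinatorial work lies.
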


\begin{proof} We follow similar arguments as in the proof of Lemma \ref{lem:ctab.to.mat}, with an additional step.  First,
 $\tabctab: \Tab_n \To \CTab_n$ is a bijection (Theorem \ref{thm:configuration}) with  $\Tab_n$  and $\CTab_n$ isomorphic (as monoids) to the plactic monoid $\plcM_n$ (Theorem \ref{thm:configuration}, Remark ~\ref{rmk:tab.mon},  and Corollary \ref{cor:plc.to.ctab}, respectively).
  On the other hand,
$ \plktoclk: \plcM_n \To \clkM_n$, $[w]_\plc \mto [w]_\clk,$
is a surjective  homomorphism  (Corollary \ref{cor:plc.to.clk}), while
$\clkrep: \clkM_n \To \mfAml_n$, $\lt_\ell \mto \Pv{\Oix{A}{\ell}}$ is an isomorphism
(Theorem \ref{thm:clk.represntation}). Finally we have the surjective homomorphism
 $\matcmat :  \MPlc_n \To \dMPlcn,$ $\Oix{A}{\ell} \mto \Oix{\chA}{\ell }$, cf. \eqref{eq:mat.2.dmat}.
Thus we only need to show that $\ctabcomat$ maps each $n$-configuration tableau assigned to the letter $\lt_\ell$ in $\tA_n$ to the matrix $\Oix{A}{\ell}$ of $\mfAml_n$, namely that
$\ctabcomat(\ctb_{\lt_\ell}) = \Oix{\chA}{\ell}$.

Observing the $n$-configuration tableau $\ctb_{\lt_\ell}$ assigned to the letter $\lt_\ell$, see  \eqref{eq:C.l},
 by the  definition of \hwalk s and \eqref{eq:stp.2} we obtain that:
\begin{enumerate}
  \eroman
  \item $\jmp{i,j}{}(\ctb) = 1$ for $i = j = \ell$;
  \item $\jmp{i,j}{}(\ctb) = 0$ otherwise.
\end{enumerate}
Then the image  matrix $\ctabcomat(\ctb_{\lt_\ell})$ defined  by   \eqref{map:ctabcomat.2} is precisely  the triangular matrix
$\Oix{\chA}{\ell} := \Oix{F}{\ell} \+ E$ in~ \eqref{eq:co.cl.A.mat},  with $\pv=1$ and $\one = 0$, as usual.
\end{proof}

\begin{theorem}\label{thm:ctb.comat}
  $\ctabcomat: \CTab_n \To \dMPlcn$, given by \eqref{map:ctabcomat.2}, is a surjective map, realized a as monoid homomorphism.   \end{theorem}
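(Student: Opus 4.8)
The plan is to mirror the proof of Theorem~\ref{thm:ctb.mat}, adapting each link of the chain of homomorphisms to the co-plactic side. First I would observe that all the structural bijections and surjections needed are already established: $\scP_\ctab:\plcM_n\Isoto\CTab_n$ realized as a monoid homomorphism (Corollary~\ref{cor:plc.to.ctab}), the surjective homomorphism $\plccplc:\plcM_n\onto{1}\coplcM_n$ (Corollary~\ref{cor:coplc2plc}), the surjective homomorphism $\plktoclk:\coplcM_n\onto{1}\coclkM_n$ (which factors $\tA_n^*\onto{}\coclkM_n$ through $\coplcM_n$, cf.\ Remark~\ref{rmk:ccoplc.to.coclk}), and the isomorphism $\dclkrep:\coclkM_n\Isoto\dMPlcn$ (Theorem~\ref{thm:co.clk.represntation}). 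So the only content left is exactly the claim isolated in Lemma~\ref{lem:ctab.to.comat}, namely that $\ctabcomat$ sends the configuration tableau $\ctb_{\lt_\ell}$ of each generator $\lt_\ell\in\tA_n$ to the generating matrix $\Oix{\chA}{\ell}$ of $\dMPlcn$, together with the verification that the target of each single-letter encoding is compatible with the multiplicative (insertion-by-concatenation) structure on $\CTab_n$.

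Concretely, I would assemble the commuting diagram
\begin{equation*}
\begin{gathered}
\xymatrix{
\plcM_n \ar@{<->}[rr]^{\SIM{\scP_\ctab}} \ar@{->>}[d] & & \CTab_n \ar@{..>}[dd]^{\ctabcomat} \\
\coplcM_n \ar@{->>}[d]^{\plktoclk} & & \\
\coclkM_n \ar@{->}[rr]^{\dclkrep \ \Sim} & & \dMPlcn
}
\end{gathered}
\end{equation*}
and argue that the outer square and the lower triangle commute by the cited results, so that $\ctabcomat$ — defined elementwise by \eqref{map:ctabcomat.2} — agrees on the image of every word with the composite homomorphism $\dclkrep\circ\plktoclk\circ\plccplc\circ\scP_\ctab^{-1}$. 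Since that composite is a monoid homomorphism and surjective (each arrow in the chain is), this forces $\ctabcomat$ itself to be a surjective monoid homomorphism. The one computational input is already in hand from Lemma~\ref{lem:ctab.to.comat}: using the explicit shape \eqref{eq:C.l} of $\ctb_{\lt_\ell}$ and the definition \eqref{eq:stp.2} of $\jmp{i,j}{}$ via $\hwalk$s, one gets $\jmp{i,j}{}(\ctb_{\lt_\ell})=1$ iff $i=j=\ell$ and $0$ otherwise, whence $\ctabcomat(\ctb_{\lt_\ell})=\Oix{\chA}{\ell}$ by \eqref{map:ctabcomat.2} and \eqref{eq:co.cl.A.mat}.

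The step I expect to be the main obstacle is \emph{well-definedness relative to the monoid operation on $\CTab_n$}: one must be sure that encoding a letter $\lt_\ell$ into a configuration tableau $\ctb_w$ (Encoding Algorithm~\ref{algr:conf}) corresponds on the matrix side to right-multiplying $\ctabcomat(\ctb_w)$ by $\Oix{\chA}{\ell}$, i.e.\ that $\ctabcomat(\ctb_{w\lt_\ell})=\ctabcomat(\ctb_w)\,\Oix{\chA}{\ell}$ rather than merely agreeing on generators. This is precisely what the commuting diagram above buys us, since $\ctb_{w\lt_\ell}=\scP_\ctab([w\lt_\ell]_\plc)$ and the composite is a homomorphism; but care is needed because $\jmp{i,j}{}$ measures $\hwalk$s (covers) rather than $\dwalk$s, so the passage to the co-monoid $\dMPlcn$ goes through the index reversal $i'=n-i+1$ and the negation $\cent\mapsto\ipv$, exactly as recorded in \eqref{eq:co.cl.A.mat} and \eqref{map:ctabcomat.2}. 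Once one checks — as Lemma~\ref{lem:ctab.to.comat} does — that the single-letter images land correctly, the homomorphism and surjectivity claims follow formally from the diagram, just as in the proof of Theorem~\ref{thm:ctb.mat}.

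\begin{proof}
By Corollary~\ref{cor:plc.to.ctab} the map $\scP_\ctab:\plcM_n\To\CTab_n$ is a bijection realized as a monoid homomorphism; by Corollary~\ref{cor:coplc2plc} the map $\plccplc:\plcM_n\onto{1}\coplcM_n$ is a surjective homomorphism, and by Remark~\ref{rmk:ccoplc.to.coclk} it composes with $\plktoclk:\coplcM_n\To\coclkM_n$ to a surjective homomorphism $\plcM_n\To\coclkM_n$; finally $\dclkrep:\coclkM_n\To\dMPlcn$ is an isomorphism by Theorem~\ref{thm:co.clk.represntation}. Hence the composite
$$\Psi:=\dclkrep\circ\plktoclk\circ\plccplc\circ\scP_\ctab^{-1}:\CTab_n\To\dMPlcn$$
is a surjective monoid homomorphism. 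By Lemma~\ref{lem:ctab.to.comat} the map $\ctabcomat$ of \eqref{map:ctabcomat.2} sends each $\ctb_{\lt_\ell}$ to $\Oix{\chA}{\ell}$, which is exactly $\Psi(\ctb_{\lt_\ell})=\dclkrep(\lt_\ell)$; since $\CTab_n$ is generated (under the insertion-by-concatenation operation, cf.\ Algorithm~\ref{algr:conf} and Corollary~\ref{cor:plc.to.ctab}) by the tableaux $\ctb_{\lt_1},\dots,\ctb_{\lt_n}$ and $\Psi$ is a homomorphism, it follows that $\ctabcomat=\Psi$ on all of $\CTab_n$. Therefore $\ctabcomat$ is a surjective monoid homomorphism, and the diagram
\begin{equation*}
\begin{gathered}
\xymatrix{
\plcM_n \ar@{<->}[rr]^{\SIM{\scP_\ctab}} \ar@{->>}[drr]^{\co{\ \plktoclk}} & & \CTab_n \ar@{..>}[drr]^{\ctabcomat} \\
& & \coclkM_n \ar@{->}[rr]^{\dclkrep \ \Sim} && \dMPlcn
}
\end{gathered}
\end{equation*}
commutes.
\end{proof}
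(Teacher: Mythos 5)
Your proposal is correct and follows essentially the same route as the paper's own proof: the same chain $\scP_\ctab$ (Corollary \ref{cor:plc.to.ctab}), $\pMap$ (Corollary \ref{cor:coplc2plc}), $\plktoclk$ on the co-side (Remark \ref{rmk:ccoplc.to.coclk}), and $\dclkrep$ (Theorem \ref{thm:co.clk.represntation}), with Lemma \ref{lem:ctab.to.comat} supplying the generator computation, assembled into the same commuting diagram. The only difference is cosmetic: you spell out explicitly the step ``agreement on the single-letter tableaux plus generation of $\CTab_n$ under insertion forces $\ctabcomat$ to coincide with the composite homomorphism,'' which the paper leaves implicit in its assertion that the diagram commutes.
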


\begin{proof}
We know that  $ \scP_\ctab: \plcM_n  \To \CTab_n$ is a bijection (Corollary \ref{cor:plc.to.ctab}) realized as a monoid homomorphism,
$ \pMap: \plcM_n \onto{1} \coplcM_n$ is a surjective homomorphism  (Corollary \ref{cor:coplc2plc}),
$ \plktocclk: \coplcM_n \To \coclkM_n$ is a surjective  homomorphism induced from  $ \plktoclk: \coplcM_n \To \coclkM_n$ (Corollary \ref{cor:plc.to.clk}),
  and $\dclkrep: \clkM_n \Isoto \mir{\MPlc_n}$ is an isomorphism (Theorem \ref{thm:co.clk.represntation}). Hence,  by Lemma \ref{lem:ctab.to.comat}, the diagram
 \begin{equation*}\label{eq:diag.2} \begin{gathered}
 \xymatrix{
 \plcM_n  \ar@{<->}[rr]^{\SIM{\scP_\ctab}} \ar@{->>}[d]^{\pMap} & & \CTab_n \ar@{..>}[drr]^\ctabcomat \\
\coplcM_n \ar@{->>}[rr]^{\plktoclk} & & \coclkM_n \ar@{->}[rr]^{\dclkrep \quad \Sim} && \mir{ \MPlc_n}
}\end{gathered}
    \end{equation*}
commutes.
\end{proof}

In comparison to the representation $\ctabmat:\CTab_3 \To \MPlc_3$ in  Example \ref{exmp:tfrm.1},  for the co-representation
$\ctabcomat:\CTab_3 \To \mir{\MPlc_3}$ we have the following.

\begin{example}\label{exmp:tfrm.2}
 The $3$-configuration tableaux corresponding to the three letter alphabet $\tA_3 = \genr{a,b,c}$ are co-represented by $\ctabcomat$ in $\mir{\MPlc_3}$  by the matrices
\begin{align*} \ctb_a := \ctab(a) =  \mresize{
\begin{array}{|l|l|l|l|l|l|l|}
\cline{1-1}
  {0}   \\ \cline{1-2}
  {0} &  {0}  \\ \cline{1-3}
  {1} &   {0}  &   {0}  \\ \cline{1-3}
\end{array}
 \ds \mTo
  \chA =
 \begin{bmatrix}
 0 & 0 & 0 \\
 & 0 & 0 \\
 & & -1
\end{bmatrix} },  &
\qquad \ctb_b := \ctab(b) =  \mresize{
\begin{array}{|l|l|l|l|l|l|l|}
\cline{1-1}
  {0}   \\ \cline{1-2}
  {0} &  {0}  \\ \cline{1-3}
  {0} &   {1}  &   {0}  \\ \cline{1-3}
\end{array}
 \ds \mTo
 \chB =
 \begin{bmatrix}
 0 & 0 & 0 \\
 & -1 & 0 \\
 & & 0
 \end{bmatrix} },
\\
\ctb_c :=
\ctab(c) =    \mresize{
\begin{array}{|l|l|l|l|l|l|l|}
\cline{1-1}
  {0}   \\ \cline{1-2}
  {0} &  {0}  \\ \cline{1-3}
  {0} &   {0}  &   {1}  \\ \cline{1-3}
\end{array} \ds \mTo
  \chC  =
 \begin{bmatrix}
 -1 & 0 & 0 \\
 & 0 & 0 \\
 & & 0
\end{bmatrix}  }.  \end{align*}
 The Knuth-equivalence \eqref{eq:knuth.rel} for triplets in $\tA_3$  are described in $\mir{\MPlc_3}$ as
$$ \tab(acb)  = \tab(cab)  =  \mresize{
\begin{tabular}{|c|c|}
\cline{1-1}
 c  \\ \cline{1-2}
a &  b  \\ \cline{1-2}
\end{tabular}
\dss \mTo
 \begin{array}{|l|l|l|l|l|l|l|}
\cline{1-1}
  {0}   \\ \cline{1-2}
  {0} &  {1}  \\ \cline{1-3}
  {1} &   {1}  &   {0}  \\ \cline{1-3}
\end{array}
\dss \mTo
 \begin{bmatrix}
 -1 & -1 & 0 \\
 & -1 & 0 \\
 & & -1
\end{bmatrix} } ,  $$
while
$$   \tab(bca) = \tab (bca) =  \mresize{
\begin{tabular}{|c|c|}
\cline{1-1}
 b  \\ \cline{1-2}
a &  c  \\ \cline{1-2}
\end{tabular} \dss \mTo
\begin{array}{|l|l|l|l|l|l|l|}
\cline{1-1}
  {0}   \\ \cline{1-2}
  {1} &  {0}  \\ \cline{1-3}
  {1} &   {0}  &   {1}  \\ \cline{1-3}
\end{array}
\dss \mTo
 \begin{bmatrix}
 -1 & 0 & 0 \\
 & -1 & -1 \\
 & & -1
\end{bmatrix} }.
$$
For the increasing word $abc \in \tA_3^*$ we get
$$    \tab (abc) =  \mresize{
\begin{tabular}{|c|c|c|}
 \cline{1-3}
a & b & c  \\ \cline{1-3}
\end{tabular} \dss \mTo
\begin{array}{|l|l|l|l|l|l|l|}
\cline{1-1}
  {0}   \\ \cline{1-2}
  {0} &  {0}  \\ \cline{1-3}
  {1} &   {1}  &   {1}  \\ \cline{1-3}
\end{array}
\dss \mTo
 \begin{bmatrix}
 -1 & 0 & 0 \\
 & -1 & 0 \\
 & & -1
\end{bmatrix} },
$$
and for the decreasing word $cba \in \tA_3^*$ we have
$$    \tab (cba) =  \mresize{
\begin{tabular}{|c|}
 \cline{1-1}
c \\ \cline{1-1} b \\ \cline{1-1} a  \\ \cline{1-1}
\end{tabular}
\dss \mTo
\begin{array}{|l|l|l|l|l|l|l|}
\cline{1-1}
  {1}   \\ \cline{1-2}
  {1} &  {0}  \\ \cline{1-3}
  {1} &   {0}  &   {0}  \\ \cline{1-3}
\end{array}
\dss \mTo
 \begin{bmatrix}
 -1 & -1 & -1 \\
 & -1 & -1 \\
 & & -1
\end{bmatrix} }.
$$
But
$$   \tab(aca) = \tab (caa) =  \mresize{
\begin{tabular}{|c|c|}
\cline{1-1}
 c  \\ \cline{1-2}
a &  a  \\ \cline{1-2}
\end{tabular}
\dss \mTo
\begin{array}{|l|l|l|l|l|l|l|}
\cline{1-1}
  {0}   \\ \cline{1-2}
  {0} &  {1}  \\ \cline{1-3}
  {2} &   {0}  &   {0}  \\ \cline{1-3}
\end{array}
\dss \mTo
 \begin{bmatrix}
 -1 & 0 & 0 \\
 & 0 & 0 \\
 & & -2
\end{bmatrix}},
$$
where  we also  have
$$   \tab(aac) =  \mresize{
\begin{tabular}{|c|c|c|}
 \cline{1-3}
a &  a  & c\\ \cline{1-3}
\end{tabular}
\dss \mTo
\begin{array}{|l|l|l|l|l|l|l|}
\cline{1-1}
  {0}   \\ \cline{1-2}
  {0} &  {0}  \\ \cline{1-3}
  {2} &   {0}  &   {1}  \\ \cline{1-3}
\end{array}
\dss \mTo
 \begin{bmatrix}
 -1 & 0 & 0 \\
 & 0 & 0 \\
 & & -2
\end{bmatrix}},
$$
which shows that  in general the co-representation  $\ctabcomat: \CTab_n \To \mir{\MPlc_n}$  is not injective.
\pSkip

For a general 3-letter tableau $ \tb_w := \cc^{k_3} \, \bb^{j_2} \cc^{k_2} \,\aa^{i_1} \bb^{j_1} \cc^{k_1}$, by a direct computation,   we have
$$ \ctb_w = \begin{array}{|l|l|l|l|l|l|l|}
\cline{1-1}
  k_3  \\ \cline{1-2}
  j_2 &  k_2  \\ \cline{1-3}
  i_1 &   j_1  &   k_1  \\ \cline{1-3}
\end{array}\dss{\mTo}  \chA_w =  \left[\begin{array}{rrr}
                               -(k_1 k_2 k_3) & -k_3(-j_1 \+ -k_2) & -k_3 \\
                                & -(j_1 j_2) & -j_2 \\
                                 &  & -i_1
                             \end{array} \right] \ ,$$
 showing that the values $j_1, k_2$ make the map $\ctabcomat$ non-injective. Recall that in Example \ref{exmp:tfrm.1}, the cause for non-injectivity  was $j_2, k_2$.
\end{example}

Yet, in other situations, the use of co-representations $\ctabcomat: \CTab_n \To \mir{\MPlc_n}$  can be useful.
\begin{example}\label{exmp:inject.3}  Applying the co-representation  $\ctabcomat: \CTab_3 \To \mir{\MPlc_3}$  to the $3$-configuration tableaux  in  Example \ref{exmp:inject.2}, we have
$$u = c \ b^2 c^2 \ a^2 b^2 c  \dss \longrightarrow \ctb_u =  \mresize{
\begin{array}{|l|l|l|l|l|l|l|}
\cline{1-1}
  {1}   \\ \cline{1-2}
  {2} &  {2}  \\ \cline{1-3}
  {2} &   {2}  &   {1}  \\ \cline{1-3}
\end{array}
 \dss \longrightarrow
 \begin{bmatrix}
 -4 & -3 & -1 \\
 & -4 & -2 \\
 & & -2
\end{bmatrix}},
$$
where on the other
$$v  = c^2 \ b^2 c \ a^2 b^2 c  \dss \longrightarrow \ctb_v =  \mresize{
 \begin{array}{|l|l|l|l|l|l|l|}
\cline{1-1}
  {2}   \\ \cline{1-2}
  {2} &  {1}  \\ \cline{1-3}
  {2} &   {2}  &   {1}  \\ \cline{1-3}
\end{array}
 \dss \longrightarrow
  \begin{bmatrix}
 -4 & -3 & -2 \\
 & -4 & -2 \\
 & & -2
\end{bmatrix}}. $$
Hence  $\ctabcomat(\ctb_u) \neq \ctabcomat(\ctb_v)$, with
 $u \not \tcong v $.
Recall from Example \ref{exmp:inject.2}  that for these tableaux  we had  $\ctabmat(\ctb_u) = \ctabmat(\ctb_v)$ which has shown that  $\ctabmat: \CTab_n \To \MPlc_n$ is not injective.
\end{example}

\begin{corollary}\label{cor:plc2clkcoclk}
  $u \kcong v$ implies both $u \clcong v$ and $u \cclcong v$.
\end{corollary}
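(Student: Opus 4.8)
The plan is to assemble the statement from two results already in hand: Proposition \ref{prop:tab-to-cloc} (the plactic congruence implies the \kmon\ congruence) and Theorem \ref{thm:coplc2plc} (the co-mirrorings of the generators of $\plcM_n$ again satisfy the Knuth relations, so $\pcong$ implies $\cpcong$). Recall also that $\kcong$, $\pcong$ and $\tcong$ all coincide by Theorem \ref{thm:Knuth}.

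For the first implication, $u \kcong v \imp u \clcong v$, there is nothing to prove beyond invoking Proposition \ref{prop:tab-to-cloc} directly.

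For the second implication I would argue as follows. Assume $u \kcong v$, that is $u \pcong v$. By Theorem \ref{thm:coplc2plc} the congruence $\pcong$ implies the equivalence $\cpcong$, so $u \cpcong v$; unravelling the definition of $\cpcong$ (Definition \ref{def:co.plactic.mon}) this says $\lcmp{n}{v} \pcong \lcmp{n}{u}$, equivalently $\lcmp{n}{u} \pcong \lcmp{n}{v}$ since $\pcong$ is symmetric. Now apply the first implication, i.e. Proposition \ref{prop:tab-to-cloc}, to the two words $\lcmp{n}{u},\lcmp{n}{v} \in \tA_n^*$: from $\lcmp{n}{u} \pcong \lcmp{n}{v}$ we obtain $\lcmp{n}{u} \clcong \lcmp{n}{v}$, hence $\lcmp{n}{v} \clcong \lcmp{n}{u}$, which is precisely the defining condition $u \cclcong v$ for the \ckmon\ (Definition \ref{def:co.cloaktic.mon}). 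Alternatively, this second implication can simply be read off the commuting square of Remark \ref{rmk:ccoplc.to.coclk}: the surjective homomorphism $\plktocclk:\plcM_n \onto{1} \coclkM_n$ of Corollary \ref{cor:coplc2plc} is well defined on $\pcong$-classes, which is exactly the assertion $u \pcong v \imp u \cclcong v$.

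There is no genuine obstacle in this corollary; all the technical weight sits in the cited results, in particular Theorem \ref{thm:coplc2plc}, whose proof via configuration tableaux is carried out in the appendix. The only points to check are the symmetry of $\clcong$ and the correct reading of the definitions of $\cpcong$ and $\cclcong$, both of which are routine.
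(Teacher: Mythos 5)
Your proof is correct and is in substance the paper's own argument: the $\clcong$ half is exactly Proposition \ref{prop:tab-to-cloc}, and the $\cclcong$ half rests on Theorem \ref{thm:coplc2plc}. The only cosmetic difference is that the paper dispatches the second half by citing Theorem \ref{thm:ctb.comat}, whose proof is precisely the chain you unwind by hand ($u \pcong v \Rightarrow u \cpcong v$ via Corollary \ref{cor:coplc2plc}, then Proposition \ref{prop:tab-to-cloc} applied to the \cmirr ed words $\lcmp{n}{u}, \lcmp{n}{v}$, read against Definition \ref{def:co.cloaktic.mon}), so the two routes carry identical content.
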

\begin{proof}
The implication for the equivalence $\clcong$  has been already proven in Proposition \ref{prop:tab-to-cloc}, while that for the equivalence  $\cclcong$ follows from Theorem \ref{thm:ctb.comat}.
\end{proof}

\subsection{Linear representations of the plactic monoid} \sSkip

  The representations of $n$-configuration tableaux as constructed previously in \S\ref{ssec:rep} and \S\ref{ssec:co.rep} lead directly to a representation of semi-standard tableaux, via the one-to-one correspondence $\tabctab: \Tab_n \To \CTab_n$  (Theorem ~ \ref{thm:configuration}), and thus to a linear representation of the plactic monoid (cf. Remark ~\ref{rmk:tab.mon}).

\begin{theorem} \label{thm:plc.rep} The map
\begin{equation}\label{eq:map.plcrep.1}
\begin{array}{lll}
  \plcrep_n : & \plcM_n \TO \MPlc_n \times \mir{\MPlc_n}, \\[2mm]
  &  \tb_w \mTo  (\ctabmat(\ctb_w), \,  \ctabcomat(\ctb_w)),
\end{array}
   \end{equation}
is a surjective monoid homomorphism -- a linear representation of the plactic monoid $\plcM_n$.

\end{theorem}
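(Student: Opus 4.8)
The plan is to assemble Theorem~\ref{thm:plc.rep} entirely from homomorphisms already established, so that the only genuine work is to check that the product map is well defined on $\plcM_n$ and that surjectivity can be read off from the surjectivity of its two coordinates in a compatible way. First I would recall the two component maps: $\ctabmat:\CTab_n \ONTO \MPlc_n$ (Theorem~\ref{thm:ctb.comat}) and $\ctabcomat:\CTab_n \ONTO \dMPlcn$ (Theorem~\ref{thm:ctb.comat}), both of which are surjective monoid homomorphisms on the monoid $\CTab_n$ whose operation is induced by the Encoding Algorithm~\ref{algr:conf}. Composing each with the tableau isomorphism $\tabctab:\Tab_n \isoto \CTab_n$ (Theorem~\ref{thm:configuration}) and with the bijection $\scP_\ctab:\plcM_n \isoto \CTab_n$ realized as a monoid homomorphism (Corollary~\ref{cor:plc.to.ctab}), I get monoid homomorphisms $\tabmat:\plcM_n \ONTO \MPlc_n$ and $\tabcomat:\plcM_n \ONTO \dMPlcn$. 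The map $\plcrep_n$ of the statement is simply $\plcrep_n = (\tabmat, \tabcomat)$, i.e. $[w]_\plc \mapsto (\ctabmat(\ctb_w), \ctabcomat(\ctb_w))$, using the identification $\tb_w \leftrightarrow \ctb_w \leftrightarrow [w]_\plc$.

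The homomorphism property is then automatic: the direct product $\MPlc_n \times \mir{\MPlc_n}$ is a monoid under coordinatewise multiplication, with identity $(E,E)$, and a pair of monoid homomorphisms with common domain always assembles into a monoid homomorphism into the product. Concretely, for $u,v \in \plcM_n$ one has $\plcrep_n(uv) = (\tabmat(uv), \tabcomat(uv)) = (\tabmat(u)\tabmat(v), \tabcomat(u)\tabcomat(v)) = \plcrep_n(u)\plcrep_n(v)$, and $\plcrep_n$ sends $e$ (equivalently $\tb_0$, equivalently $\ctb_0$) to $(E,E)$ by the normalization clauses in Lemmas~\ref{lem:ctab.to.mat} and~\ref{lem:ctab.to.comat}. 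The only subtlety worth spelling out is that $\plcrep_n$ is genuinely a function on $\plcM_n$ and not merely on $\tA_n^*$: by Corollary~\ref{cor:plc.to.ctab} the assignment $[w]_\plc \mapsto \ctb_w$ is well defined, so the composite maps respect $\pcong$, which by Theorem~\ref{thm:Knuth} is the Knuth congruence $\kcong$. (Equivalently, Corollary~\ref{cor:plc2clkcoclk} tells us $u \kcong v$ forces $u \clcong v$ and $u \cclcong v$, hence equal images under both coordinates.)

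For surjectivity of $\plcrep_n$ I would argue directly: given $(A,B) \in \MPlc_n \times \mir{\MPlc_n}$ in the image, I want a single tableau mapping to it. But surjectivity of $\plcrep_n$ onto the \emph{whole} product is not claimed and is in fact false in general; the cleanest reading of the statement is that $\plcrep_n$ is surjective \emph{onto its image as a submonoid}, i.e. it is a well-defined monoid homomorphism whose image is exactly $\{(\tabmat(w), \tabcomat(w)) : w \in \plcM_n\}$, which is trivially surjected onto by $\plcM_n$. If instead one wants the sharper statement that both coordinate projections of the image are onto (which is what ``surjective'' should mean here coordinatewise), this follows because $\ctabmat$ and $\ctabcomat$ are each individually surjective by Theorem~\ref{thm:ctb.comat}, so every matrix in $\MPlc_n$ (resp.\ $\mir{\MPlc_n}$) arises as a first (resp.\ second) coordinate. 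I would phrase the conclusion so as to avoid claiming that the image is the full cartesian product, since the non-injectivity examples (Examples~\ref{exmp:inject.2}, \ref{exmp:inject.3}) already show the two coordinates carry complementary information rather than independent information.

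The main obstacle — really the only place care is needed — is bookkeeping of which monoid operation is meant where: $\CTab_n$, $\Tab_n$, $\plcM_n$ all carry the ``insertion'' product, while $\MPlc_n$ and $\mir{\MPlc_n}$ carry tropical matrix multiplication (with the co-monoid $\mir{\MPlc_n}$ multiplied using $+$ and $\+$ jointly with negation, \emph{not} the $\-$ operation, as emphasized after Example~\ref{exmp:coclk.3}). I would therefore include one sentence confirming that $\ctabcomat$ is a homomorphism for that chosen multiplication on $\mir{\MPlc_n}$ — which is exactly the content of Theorem~\ref{thm:ctb.comat} combined with the isomorphism $\vartheta:\sMon{(\MPlc_n)}{1}\isoto\dMPlcn$ of~\eqref{eq:iso.1} and the surjection $\matcmat = \vartheta\circ\pMap_1:\MPlc_n \ONTO \dMPlcn$ of~\eqref{eq:mat.2.dmat} — and then simply cite the summarizing diagram in the Introduction. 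Everything else is formal.
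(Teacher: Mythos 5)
Your proposal is correct and takes essentially the same route as the paper: the paper's proof likewise just composes the bijection $\scP_\ctab:\plcM_n \To \CTab_n$ (Corollary~\ref{cor:plc.to.ctab}) with the two surjective monoid homomorphisms $\ctabmat$ (Theorem~\ref{thm:ctb.mat}) and $\ctabcomat$ (Theorem~\ref{thm:ctb.comat}), realized as monoid homomorphisms via Remark~\ref{rmk:tab.mon}. Your caveat about surjectivity is warranted and does not constitute a gap relative to the paper, since the paper's argument also only delivers surjectivity of each coordinate rather than surjectivity onto the full product $\MPlc_n \times \mir{\MPlc_n}$, so the word ``surjective'' must indeed be read coordinatewise (equivalently, onto the image), exactly as you observe.
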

\begin{proof} Compose the bijection
$ \scP_\ctab: \plcM_n  \To \CTab_n$ (Corollary \ref{cor:plc.to.ctab}) separately with the two
surjections
  $\ctabmat: \CTab_n \To \MPlc_n$ given by \eqref{map:ctabtfrm.2} (Theorem \ref{thm:ctb.mat})
and
  $\ctabcomat: \CTab_n \To \dMPlcn$ given by \eqref{map:ctabcomat.2} (Theorem ~\ref{thm:ctb.comat}), realized  as monoid homomorphisms  by Remark \ref{rmk:tab.mon}.
\end{proof}

Let $u = bdac$ and $v = dbac$,  relating  respectively  the  $4$-configuration tableaux
$$\ctb_u =  \mresize{\begin{array}{|l|l|l|l|l|l|l|}
\cline{1-1}
  0  \\ \cline{1-2}
  0 &  0  \\ \cline{1-3}
  1 &   0  &   1  \\ \cline{1-4}
  1 &   0  &   1  & 0 \\ \cline{1-4}
\end{array}} \dss \neq  \ctb_v = \mresize{
\begin{array}{|l|l|l|l|ll}
\cline{1-1}
  0  \\ \cline{1-2}
  0 &  1  \\ \cline{1-3}
  1 &   0  &   0  \\ \cline{1-4}
  1 &   0  &   1  & 0 & , \\ \cline{1-4}
\end{array}}
$$  or equivalently   $\tb_u \neq \tb_v$,  and therefore $u \not \pcong v$. On the other hand, their  image under  the representation
$\plcrep_4 : \plcM_4 \To \MPlc_4 \times \mir{\MPlc_4}$ is the same
 $$ \plcrep_4(u) = \plcrep_4(v) = \mresize{
\begin{bmatrix}
    1 & 1 & 2& 2\\
      & 1 & 2 &2 \\
      &   & 1 & 1 \\
      &   &   & 1 \\
\end{bmatrix} }
\times \mresize{
\begin{bmatrix}
    -1 & -1 & 0& 0\\
      & -1 & 0 &0 \\
      &   & -1 & -1 \\
      &   &   & -1 \\
\end{bmatrix} },
$$ which shows that in general  $\plcrep_n$
 is not injective for $n > 3. $
Yet, this representation  is faithful  for  the case of the plactic monoid ~$\plcM_3$ of rank $3$.

\begin{theorem}\label{thm:plc.3.rep} The
homomorphism $\plcrep_3:  \plcM_3 \To \MPlc_3 \times \mir{\MPlc_3}$ in \eqref{eq:map.plcrep.1} is an isomorphism -- a faithful linear representation of the plactic monoid $\plcM_3$.
\end{theorem}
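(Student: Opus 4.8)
Since Theorem~\ref{thm:plc.rep} already gives that $\plcrep_3$ is a surjective monoid homomorphism, and a bijective monoid homomorphism is automatically an isomorphism, the plan is to prove that $\plcrep_3$ is \emph{injective}. By the bijection $\scP_\ctab:\plcM_3\Isoto\CTab_3$ of Corollary~\ref{cor:plc.to.ctab}, this amounts to showing that the map $\ctb\mTo(\ctabmat(\ctb),\ctabcomat(\ctb))$ is injective on $\CTab_3$. By Theorem~\ref{thm:configuration} every $\ctb\in\CTab_3$ is the configuration tableau of a unique semi-standard tableau, which (read as a word, rows from the top) has the form $\cc^{k_3}\,\bb^{j_2}\cc^{k_2}\,\aa^{i_1}\bb^{j_1}\cc^{k_1}$; equivalently $\ctb$ is described by the six nonnegative integers $(i_1,j_1,k_1,j_2,k_2,k_3)$, subject to the configuration laws of Definition~\ref{def:ctab} (here $k_3\le j_2\le i_1$ and $j_2+k_2\le i_1+j_1$). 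So the whole task reduces to recovering these six integers from the pair of $3\times 3$ matrices.

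The explicit generic images have already been computed in Example~\ref{exmp:tfrm.1} and in the generic part of \S\ref{ssec:co.rep} (Example~\ref{exmp:tfrm.2}): with $\pv=1$ (so tropical products are ordinary sums and $\+$ is $\max$),
\[
\ctabmat(\ctb)=\mresize{\begin{bmatrix} i_1 & i_1 j_1 & i_1 j_1 k_1\\ & j_1 j_2 & j_2(j_1\+k_2)\\ & & k_1 k_2 k_3\end{bmatrix}},\qquad
\ctabcomat(\ctb)=\mresize{\begin{bmatrix} -(k_1 k_2 k_3) & -k_3(-j_1 \+ -k_2) & -k_3\\ & -(j_1 j_2) & -j_2\\ & & -i_1\end{bmatrix}}.
\]
The reconstruction is then uniform: $i_1$ is the $(1,1)$-entry of $\ctabmat(\ctb)$; subtracting it from the $(1,2)$-entry gives $j_1$, and then from the $(1,3)$-entry gives $k_1$; the $(2,2)$-entry minus $j_1$ gives $j_2$; the $(1,3)$-entry of $\ctabcomat(\ctb)$ gives $k_3$ directly; and finally the $(3,3)$-entry of $\ctabmat(\ctb)$ equals $k_1 k_2 k_3=k_1+k_2+k_3$, whence $k_2$. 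Thus $(i_1,j_1,k_1,j_2,k_2,k_3)$, hence $\ctb$, is uniquely determined, so $\plcrep_3$ is injective; being also surjective, it is an isomorphism.

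The only point that genuinely requires care is the groundwork behind the two displayed formulas: one must check that the ``generic'' form $\cc^{k_3}\bb^{j_2}\cc^{k_2}\aa^{i_1}\bb^{j_1}\cc^{k_1}$ really exhausts $\Tab_3$ (every semi-standard tableau over $\{\aa,\bb,\cc\}$ has bottom row $\aa^{i_1}\bb^{j_1}\cc^{k_1}$, middle row $\bb^{j_2}\cc^{k_2}$, top row $\cc^{k_3}$, possibly with vanishing exponents), and that the matrix entries are given by those formulas uniformly over the degenerate cases --- vanishing exponents, and the two branches $j_1\ge k_2$ versus $j_1<k_2$ in the $(2,3)$-entries. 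The relevant subtlety is that $\ctabmat$ by itself is \emph{not} injective: Example~\ref{exmp:inject.2} exhibits $\ctb_u\ne\ctb_v$ with $\ctabmat(\ctb_u)=\ctabmat(\ctb_v)$, the ambiguity being precisely the splitting of $k_2+k_3$ when $k_2\le j_1$, so the argument must genuinely invoke the single co-representation entry $-k_3$, which is exactly what separates these two tableaux in Example~\ref{exmp:inject.3}. I expect no obstacle beyond this careful case-checking.
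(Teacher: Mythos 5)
Your proposal is correct and follows essentially the same route as the paper: parameterize the generic tableau $\cc^{k_3}\bb^{j_2}\cc^{k_2}\aa^{i_1}\bb^{j_1}\cc^{k_1}$, write out the pair of image matrices explicitly, and recover the six exponents from their entries (using the co-representation precisely to break the $k_2,k_3$ ambiguity that makes $\ctabmat$ alone non-injective). The only difference is that you extract $k_2$ from the $(3,3)$-entry $k_1+k_2+k_3$ of $\ctabmat(\ctb_w)$ rather than via the paper's case split on $k_2\ge j_1$ versus $k_2<j_1$, a shortcut the paper itself notes parenthetically.
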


\begin{proof}
Proof by a straightforward   computation. Suppose   $\tb_w \in \Tab_n$
is a tableau of the form
$$ \tb_w := \cc^{k_3} \, \bb^{j_2} \cc^{k_2} \,\aa^{i_1} \bb^{j_1} \cc^{k_1}.$$
corresponding  uniquely to the $3$-configuration tableau
$\ctb_w = \ctab(w)$, by  Theorem~ \ref{thm:configuration}.
As in Theorem \ref{thm:plc.rep}, define $\plcrep_3$ to have the generators' mapping
$$ \ctb_a \mTo (A, \chA), \quad  \ctb_b \mTo (B, \chB), \quad \ctb_c \mTo (C, \chC), $$
where $A, B, C \in \MPlc_3$  and $\chA, \chB, \chC \in \mir{\MPlc_3}$ are described in Example \ref{exmp:tfrm.1} and   \ref{exmp:tfrm.2}, respectively.

For simplicity, write  $i = i_1,$ $j = j_1 j_2$,  $ k = k_1 k_2 k_3$.
 The direct computation of the matrix  products
   $$ X = C^{k_3} \, B^{j_2} C^{k_2} \, A^{i_1} B^{j_1} C^{k_1}, \qquad Y = \chC^{k_3} \, \chB^{j_2} \chC^{k_2} \,\chA^{i_1} \chB^{j_1} \chC^{k_1},$$
  in $\TMat(\Trop)$  results in the correspondence
 $$ \ctb_w = \begin{array}{|l|l|l|l|l|l|l|}
\cline{1-1}
  k_3  \\ \cline{1-2}
  j_2 &  k_2  \\ \cline{1-3}
  i_1 &   j_1  &   k_1  \\ \cline{1-3}
\end{array}\dss{\mTo} X  \times Y = \left[\begin{array}{rcr}
                               i & i_1 j_1 & i_1 j_1 k_1 \\
                                & j & j_2(j_1 \+ k_2) \\
                                 &  & k
                             \end{array}\right] \times \left[\begin{array}{rcr}
                               -k & -k_3(-j_1 \+ -k_2) & -k_3 \\
                                & -j & -j_2 \\
                                 &  & -i
                             \end{array} \right] \,  ,$$
and we need to prove that $\ctb_w$ can be recover uniquely from the matrices $X = (x_{i,j})$ and $Y= (y_{i,j})$.

 First  $i_1$, $j_2$, and $k_3$ can be read off immediately from the right column of $Y$, while
 $j_1$ and $k_1$ are extracted recursively from the top row of $X$. So, it remains
   to compute $j_2$ and $k_2$.

   Since $j_1$ and $j_2$ have  been already recovered, from $x_{2,3}$ we can figure out whether  $k_2 \geq j_1$ or not.
   If $k_2 \geq j_1$, then $x_{2,3} = j_2 k_2$, which provides $k_2$.  Otherwise $k_2 < j_1$, which implies $-k_2 > -j_1$, and thus  $y_{1,2} = (-k_2) (-k_3)$,  which gives $k_2$, as $k_3$ is already known.
(Note that $k_2$ can also be computed  as $(-k)/((-k_1)(- k_3))$.)
\end{proof}

\begin{corollary}\label{cor:plc.id} The plactic monoid $\plcM_3$ admits all the semigroup identities satisfied by  the monoid $\TMat_3(\Trop)$ of triangular tropical matrices, in particular the semigroup identities  \eqref{eq:iduniv2.2} with $n =3 $
\begin{equation*} \Id_{(C,2,2)}: \quad
    \ll2 \ds{\underline{x}}  \ll2 \ds =  \ll2 \ds{\underline{x}}
     \ll2 \
\end{equation*}
by letting  $x = uv$ and $y=vu$, for any   $u,v \in  \plcM_3$, cf. \eqref{eq:id2} in Construction \ref{construct:main}.
\end{corollary}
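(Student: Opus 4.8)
\textbf{Proof plan for Corollary \ref{cor:plc.id}.}
The plan is to deduce the statement immediately from the faithful representation established in Theorem \ref{thm:plc.3.rep}, together with the fact that satisfaction of a semigroup identity is inherited by subsemigroups. First I would recall the precise meaning of ``$\tS$ satisfies the identity $\Id$'': for every homomorphism $\phi : \varX^+_I \To \tS$ we have $\phi(u) = \phi(v)$. From this definition it is routine that if $\tS'$ is a subsemigroup of $\tS$ and $\tS$ satisfies $\Id$, then so does $\tS'$ (any homomorphism into $\tS'$ is also a homomorphism into $\tS$); likewise, if $\rep : \tS' \To \tS$ is an injective homomorphism and $\tS$ satisfies $\Id$, then $\tS'$ satisfies $\Id$ (compose $\rep$ with an arbitrary homomorphism $\varX^+_I \To \tS'$, apply the hypothesis, and cancel $\rep$ using injectivity). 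This is the only abstract ingredient and it requires no computation.

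Next I would assemble the chain of inclusions and isomorphisms. By Theorem \ref{thm:plc.3.rep}, the homomorphism $\plcrep_3 : \plcM_3 \To \MPlc_3 \times \mir{\MPlc_3}$ is an isomorphism, in particular an injective homomorphism. Both $\MPlc_3$ and $\mir{\MPlc_3}$ are submonoids of $\TMat_3(\Trop)$: for $\MPlc_3$ this is its very definition, cf. \eqref{eq:fmMon} and \eqref{eq:mAlg}, while for $\mir{\MPlc_3}$ it is the definition \eqref{eq:mat.co.alg} of the co-algebra (the matrices $\Oix{\chA}{\ell}$ are upper triangular, hence $\dMPlcn \subset \TMat_n(\Trop)$). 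Therefore $\MPlc_3 \times \mir{\MPlc_3}$ embeds in $\TMat_3(\Trop) \times \TMat_3(\Trop)$. A product of two semigroups satisfying a common identity again satisfies that identity, coordinatewise; so $\MPlc_3 \times \mir{\MPlc_3}$ satisfies every semigroup identity that $\TMat_3(\Trop)$ satisfies.

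Combining these observations: given any semigroup identity $\Id$ satisfied by $\TMat_3(\Trop)$, it is satisfied by $\TMat_3(\Trop) \times \TMat_3(\Trop)$, hence by its subsemigroup $\MPlc_3 \times \mir{\MPlc_3}$, hence — pulling back along the injective homomorphism $\plcrep_3$ — by $\plcM_3$. Applying this to the particular identities \eqref{eq:iduniv2.2} with $n = 3$, which $\TMat_3(\Trop)$ satisfies by Theorem \ref{thm:trMat.Id} (equivalently Theorem \ref{thm:simDia}), yields that $\plcM_3$ admits $\Id_{(C,2,2)}$ with $x = uv$, $y = vu$ for all $u, v \in \plcM_3$; this is exactly the identity \eqref{eq:id2} displayed via Construction \ref{construct:main} and Example \ref{exmp:2-word:2}.

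There is essentially no obstacle here — the corollary is a formal consequence of Theorem \ref{thm:plc.3.rep} and the hereditary behaviour of identities under subsemigroups and finite products. The only point deserving a line of care is the remark that $\mir{\MPlc_3}$ sits inside $\TMat_3(\Trop)$ even though its addition is computed with the min-convention $\-$; but as noted after \eqref{eq:mat.co.alg}, the \emph{multiplication} of these matrices is the ordinary max-plus matrix multiplication, so as a multiplicative monoid $\mir{\MPlc_3}$ is genuinely a submonoid of $\TMat_3(\Trop)$, and semigroup identities only concern the multiplication. Hence the argument goes through verbatim.
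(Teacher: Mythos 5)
Your argument is correct and is essentially the paper's own proof: the paper likewise deduces the corollary from the isomorphism $\plcrep_3:\plcM_3\To\MPlc_3\times\mir{\MPlc_3}$ of Theorem \ref{thm:plc.3.rep}, the fact that both factors are submonoids of $\TMat_3(\Trop)$, and Theorem \ref{thm:trMat.Id}. You merely spell out the routine hereditary facts (identities pass to subsemigroups, finite products, and along injective homomorphisms) and the harmless point that only the multiplicative structure of $\mir{\MPlc_3}$ matters, which the paper leaves implicit.
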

\begin{proof}
  $\plcM_3$ is isomorphic to the product  $\MPlc_3 \times \mir{\MPlc_3}$ (Theorem \ref{thm:plc.3.rep}), both $\MPlc_3 $ and $ \mir{\MPlc_3}$ are    submonoids  of $\TMat_3(\Trop)$, which by Theorem \ref{thm:trMat.Id} satisfies the identity \eqref{eq:iduniv2.2} for $n =3$.
\end{proof}

The maps $\ctabmat$ and $\ctabcomat$
By our construction, one sees that for any $w \in \plcM_n$  the characters  (cf. \eqref{eq:char.add} and \eqref{eq:char.mlt})
$$\chra(\ctabmat(\ctb_w)) = \chrp(- \ctabcomat(\ctb_w)), \qquad \chrp(\ctabmat(\ctb_w)) = - \chrp (\ctabcomat(\ctb_w)),
$$
and thus, embedding  $\plcrep_n(w)$ trivially  in  $2n \times 2n$ matrix $C_w$ with $\ctabmat(\ctb_w) $ and $\ctabcomat(\ctb_w)$ as its diagonal blocks,
by~ \eqref{eq:map.plcrep.1} we have
$$  \chra(\plcrep_n(w)) = \chra(\ctabmat(\ctb_w)), \qquad \chrp(\plcrep_n(w)) = 0.   $$
Recall that $\chra$ provides the maximal occurrence of a letter in $w$, cf. \S\ref{sec:troplactic.m.alg}. Then, $\{ C_w \ds | w \in \tA_n^*\} \subset \TMat_{2n}(\Trop)$ is a multiplicative monoid whose members all have multiplicative trace  $\mtr = 0$ (Definition ~\ref{def:matOper}).

\subsection{Standard Young tableaux and the symmetric group}
\sSkip

The special structure of standard tableaux $\STab_n$, via the corresponding standard $n$-configuration tableaux  (Lemma \ref{lem:reverse.2}), allows their faithful  realization in terms of the tropical  representation of the \kmon\ $\clkM_n$ (Theorem \ref{thm:clk.represntation}). (We use the terminology ``realization'' as $\STab_n$ does not form a monoid, and our maps here are set-theoretical maps.)
\begin{theorem}[Realization of standard tableaux]\label{thm:stab.rep} Writing $\tb_w = \tab(\std{w})$ and $
\tb'_w = \tab(\rvs{w})$  for standard tableaux of $\std{w}$ and $\rvs{w}$, i.e., $\std{w}$ is a permutation of $\lt_1, \dots, \lt_n$,  then
the map
\begin{equation}\label{eq:map.stdxrvs}
  \stdxrvs_n : \STab_n \TO \MPlc_n \times \MPlc_n, \qquad \tb_w \longmapsto (\tabmat(\tb_w), \tabmat(\tb'_{w})),
\end{equation}
is  injective.

\end{theorem}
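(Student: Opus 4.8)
The plan is to show that the map $\stdxrvs_n$ separates points by combining the two parallel representations $\tabmat$ of configuration tableaux with the correspondence between word reversal and tableau transposition. Recall that a standard tableau $\tb_w$ in $\STab_n$ corresponds (via $\tabctab$, Theorem~\ref{thm:configuration}, restricted as in Corollary~\ref{cor:sctab}) to a standard $n$-configuration tableau in $\SCTab_n$, each of whose diagonals carries a single nonzero cell of value $1$. Suppose $\tb_u$ and $\tb_v$ are standard tableaux with $\stdxrvs_n(\tb_u) = \stdxrvs_n(\tb_v)$; I want to deduce $\tb_u = \tb_v$. Unwinding the definition of $\tabmat = \ctabmat \circ \tabctab$ and Theorem~\ref{thm:ctb.mat}, the equality of the first coordinates says $\ctabmat(\ctb_u) = \ctabmat(\ctb_v)$, i.e.\ $\mxtbwt{}_{i,j}(\ctb_u) = \mxtbwt{}_{i,j}(\ctb_v)$ for all $i \le j$, which by Observation~\ref{obs:ctab.wlk} translates to $\sword{\itoj{i}{j}}{}(\tb_u) = \sword{\itoj{i}{j}}{}(\tb_v)$ for every convex sub-alphabet; that is precisely $u \clcong v$. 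Similarly, the equality of the second coordinates $\tabmat(\tb'_u) = \tabmat(\tb'_v)$ gives $\rvs{u} \clcong \rvs{v}$.

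With $u \clcong v$ and $\rvs{u} \clcong \rvs{v}$ in hand, I would invoke Theorem~\ref{thm:tab.wxr(w)}, which asserts exactly that for standard tableaux $\tb_u, \tb_v \in \STab_n$ one has $\tb_u = \tb_v$ if and only if $u \clcong v$ and $\rvs{u} \clcong \rvs{v}$. This immediately yields $\tb_u = \tb_v$, establishing injectivity of $\stdxrvs_n$. The only thing to be careful about is that $\stdxrvs_n$ is defined as a set-theoretic map (as flagged in the theorem statement, $\STab_n$ is not a submonoid), so there is no homomorphism property to check; it suffices to verify that it is well defined (the image lands in $\MPlc_n \times \MPlc_n$, which follows from Lemma~\ref{lem:ctab.to.mat} applied to $\tb_w$ and to $\tb'_w$, both of which are genuine semi-standard tableaux) and that it is one-to-one.

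The main obstacle is really packaged inside Theorem~\ref{thm:tab.wxr(w)}, which in turn rests on Proposition~\ref{prop:reverse.std} (reversal of words corresponds to transposition of standard tableaux, via \cite[Theorem~3.2.3]{SaganBook}) and on Lemma~\ref{lem:reverse.2} governing how the positions of the unique nonzero cells in adjacent diagonals of a standard configuration tableau behave under reversal. Assuming those results as given, the proof of the present theorem is short: it is the observation that the pair of configuration-tableau representations $\big(\tabmat(\tb_w), \tabmat(\tb'_w)\big)$ records precisely the data $\big(u \clcong{-},\, \rvs{u}\clcong{-}\big)$, which is a complete invariant for standard tableaux. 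I would therefore write the argument as the three-line chain: decode first coordinate $\leadsto u\clcong v$; decode second coordinate $\leadsto \rvs u\clcong\rvs v$; apply Theorem~\ref{thm:tab.wxr(w)} $\leadsto \tb_u=\tb_v$.

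\begin{proof}
The map is well defined: for any standard $\tb_w \in \STab_n$, both $\tb_w = \tab(\std{w})$ and $\tb'_w = \tab(\rvs{w})$ are semi-standard tableaux, so by Lemma~\ref{lem:ctab.to.mat} the matrices $\tabmat(\tb_w) = \ctabmat(\tabctab(\tb_w))$ and $\tabmat(\tb'_w)$ lie in $\MPlc_n$.

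It remains to prove injectivity. Let $\std{u}$ and $\std{v}$ be permutations of $\lt_1, \dots, \lt_n$ with standard tableaux $\tb_u = \tab(\std{u})$, $\tb_v = \tab(\std{v})$, and reversed tableaux $\tb'_u = \tab(\rvs{u})$, $\tb'_v = \tab(\rvs{v})$, and suppose that
$$ \stdxrvs_n(\tb_u) = \stdxrvs_n(\tb_v), \qquad \text{i.e.} \qquad \tabmat(\tb_u) = \tabmat(\tb_v) \ \text{ and } \ \tabmat(\tb'_u) = \tabmat(\tb'_v). $$
By definition $\tabmat = \ctabmat \circ \tabctab$, and by \eqref{map:ctabtfrm.2} the $(i,j)$-entry of $\ctabmat(\tabctab(\tb_u))$ equals $\mxtbwt{}_{i,j}(\ctb_u)$ for $i \le j$, where $\ctb_u = \tabctab(\tb_u)$. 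Hence $\tabmat(\tb_u) = \tabmat(\tb_v)$ means
$$ \mxtbwt{}_{i,j}(\ctb_u) = \mxtbwt{}_{i,j}(\ctb_v) \qquad \text{for all } 1 \le i \le j \le n. $$
By Observation~\ref{obs:ctab.wlk}, $\mxtbwt{}_{i,j}(\ctb_u) = \sword{\itoj{i}{j}}{}(\tb_u)$ and likewise for $\tb_v$, so $\sword{\itoj{i}{j}}{}(\tb_u) = \sword{\itoj{i}{j}}{}(\tb_v)$ for every convex sub-alphabet $\tA_{\itoj{i}{j}} \cnxsset \tA_n$. By Remark~\ref{rmk:max.susse} this is exactly the statement $\tb_u \clcong \tb_v$, equivalently $u \clcong v$ by Proposition~\ref{prop:tab-to-cloc}.

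Applying the same reasoning to the second coordinate, $\tabmat(\tb'_u) = \tabmat(\tb'_v)$ gives $\tb'_u \clcong \tb'_v$, equivalently $\rvs{u} \clcong \rvs{v}$.

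Since $\tb_u = \tab(\std{u})$ and $\tb_v = \tab(\std{v})$ are standard tableaux in $\STab_n$, Theorem~\ref{thm:tab.wxr(w)} applies and yields
$$ \tb_u = \tb_v \quad \Longleftrightarrow \quad u \clcong v \ \text{ and } \ \rvs{u} \clcong \rvs{v}. $$
As both conditions on the right hold, we conclude $\tb_u = \tb_v$. Therefore $\stdxrvs_n$ is injective.
\end{proof}
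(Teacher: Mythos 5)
Your proof is correct and follows essentially the same route as the paper: the paper's (very terse) proof is exactly "compose Proposition~\ref{prop:reverse.std} with Theorem~\ref{thm:tab.wxr(w)}", and your argument just spells out the implicit decoding step — that equality of the matrices $\tabmat(\tb_w)$ amounts to cloaktic equivalence of the underlying words, via \eqref{map:ctabtfrm.2}, Observation~\ref{obs:ctab.wlk} and Proposition~\ref{prop:tab-to-cloc} — before invoking Theorem~\ref{thm:tab.wxr(w)} as the crux. Nothing is missing; your version is simply a more explicit write-up of the same argument.
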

\begin{proof} 
  Compose Proposition \ref{prop:reverse.std} with  Theorem \ref{thm:tab.wxr(w)}.
\end{proof}

By the bijective correspondence of the symmetric group $S_n$ to standard Young tableaux \cite{SaganBook},  we define the set-theoretic  map
\begin{equation}\label{eq:sym.rep}
  \symrep_n: S_n \TO \MPlc_n \times \MPlc_n, \qquad \sig \mTo \stdxrvs_n(\tb_\sig),
\end{equation}
induced from \eqref{eq:map.stdxrvs}, and thus obtain a tropical matrix realization of $S_n$.

A supplementary tropical view to $S_n$ is briefly as follows.
Let $s_i\in S_n$ be the simple transposition that permutes $i$ with $i+1$ and leaves the other elements of $\{1, \cdots, n\}$ unchanged. The set $\{ s_1, \dots, s_{n-1}\}$ of all these  transpositions (called Coxeter transpositions) generates the symmetric group $S_n$, and thus every permutation $\sig \in S_n$ can be written in terms of $s_i$'s as a word $s_{i_1} s_{i_2} \cdots s_{i_m}$ with $i_1, \dots, i_m \in \{ 1, \dots, n-1 \}$. To get a reduced form of $\sig$ we take $s_{i_1} s_{i_2} \cdots s_{i_m}$ with minimal  $m$.

Consider the transposition $s_i$ as a word over $\{ 1, \dots, n \}$. It consists of exactly $n$ letters, each of which appears once. Applying the map \eqref{eq:sym.rep} to transpositions $s_i$,  we obtain the induced (set-theoretic) map
\begin{equation}\label{eq:sym.rep.2}
  \symrepB_n: S_n \TO \MPlc_n \times \MPlc_n, \qquad s_i  \mTo \stdxrvs_n(\tb_{s_i}),
\end{equation}
determined now by the generators' mapping. This realization has special properties, for example all
$\symrepB_n(s_i)$ are  quasi-idempotents, i.e., $\symrepB_n(s_i) = q(\symrepB_n(s_i))^2$ with  a fixed $q \in \Real$, establishing an important linkage to Heacke algebras \cite{IZSym}.

\section{Remarks and open problems}\label{sec:problems}

The first question arises from our results concerns a generalization of Theorem \ref{thm:plc.3.rep}.

\begin{problem}
  Is there a faithful linear representation of the plactic monoid of rank $n$ by  triangular (or nonsingular) tropical matrices?
\end{problem}
\noindent
By Theorem \ref{thm:trMat.Id}, a positive answer to this question solves the problem (cf. \cite{KOk.1}):
\begin{problem}
 Does the plactic monoid of rank $n$ satisfy  a nontrivial semigroup identity?
\end{problem}

This paper shows that the plactic monoid
$\plcM_3$ of rank $3$ admits all the semigroup identities satisfied by the monoid $\TMat_3(\Trop)$ of $3 \times 3$ triangular tropical matrices (Corollary~ \ref{cor:plc.id}).
This naturally leads to the converse question:
\begin{problem}
 Do  $\plcM_3$ and  $\TMat_n(\Trop)$
satisfy exactly the same identities?
\end{problem}

A few fundamental facts on the symmetric group $S_n$ are well  known:
\begin{itemize} \ealph
  \item the irreducible representations of $S_n$
are in canonical bijection with partitions $\lm$ of $n$;
  \item the dimension of the
irreducible representation corresponding to a given partition $\lm$ of $n$ is exactly
$d_\lm$ --  the number of standard Young tableaux of shape $\lm$.
\end{itemize}
Actually, these facts were
the original motivation for studying Young tableaux, and many combinatorial
results on  Young tableaux can be expressed by using representations of
$S_n$, e.g., see \cite{SaganBook}.

The bijective correspondence of elements of  $S_n$ to standard Young tableaux, which in their turn are faithfully   realizable in $\MPlc_n \times \MPlc_n$ (Theorem \ref{thm:clk.represntation}),  leads to a new approach for   studying representations of~ $S_n$.
Hook's formula computes the numbers $d_\lm$ in combinatorial terms of Young tableaux, these numbers are  possibly reflected in their tropical matrix  realization.
\begin{problem}
  Can the $d_\lm$'s  be extracted from tropical realizations in an algebraic way?
\end{problem}

\bibliographystyle{abbrv}

\begin{appendix}
\section{Proofs and completions to \S\ref{ssec:co.plc}}\label{apx:A}

In this appendix we bring additional details for \S\ref{ssec:co.plc}, in particular  the full detailed proof of Theorem ~ \ref{thm:coplc2plc}. We use the terminology in \S\ref{ssec:semigroups}.
\begin{example}\label{exmp:co.plc.3} Let $\plcM_3 := \genr{a,b,c}$ be the plactic monoid of rank $3$, for which the \cmirr s \eqref{eq:word.mirr} of  generators  are
$$a' :=  \lcmp{}{a} = ba, \qquad b':= \lcmp{}{b} = ca, \qquad c' := \lcmp{}{c} = cb, $$
   ordered as $a' <   b' <   c'$. Then $a',b',c'$ admit  the Knuth relations \eqref{eq:knuth.rel} and thus the congruence   $\pcong$ of  $\plcM_3$ implies the equivalence $\cpcong$.
  Indeed, for \KXa \ we have \footnote{The middle terms are  presented in their canonical tableau form, that is as   elements of the plactic monoid, as  discussed in \S\ref{ssec:y.tab}. }
  \begin{align}\label{eq:ctab.coplac.A.1}
    \aa' \; \cc' \; \bb'  = ba \; cb \; ca = c\; bb \; aab \;
 = cb \; ba \; ca =  \cc' \; \aa' \; \bb',  & & \ctb = \mresize{ \begin{array}{|l|l|l|l}
\cline{1-1}
  {1}   \\ \cline{1-2}
  {2} &  0  \\ \cline{1-3}
  {2} &   0  &   {1}  &,  \\ \cline{1-3}
\end{array}}
  \end{align}
and
  \begin{align}\label{eq:ctab.coplac.A.2}
    \aa' \; \bb' \; \aa'  = ba \; ca \; ba = c\; bb \; aaa \;
 = ca \; ba \; ba =  \bb' \; \aa' \; \aa',  & & \ctb = \mresize{\begin{array}{|l|l|l|l}
\cline{1-1}
  1   \\ \cline{1-2}
  {2} &  0  \\ \cline{1-3}
  3 &   0 &   0  &,  \\ \cline{1-3}
\end{array}}
  \end{align}
  when $\aa' < \bb'$.

  For the second relation \KXb \ we have \footnotemark[5]
  \begin{align}\label{eq:ctab.coplac.B.1}
    \bb '  \;\aa '  \; \cc'  = ca \; ba \; cb  = c\; bc \; aab \;   = ca \; cb \; ba = \bb' \; \cc' \; \aa',  &
     & \ctb = \mresize{ \begin{array}{|l|l|l|l}
\cline{1-1}
  {1}   \\ \cline{1-2}
  1 &  1  \\ \cline{1-3}
  {2} &   1  &   0  & , \\ \cline{1-3}
\end{array}}
  \end{align} where
  \begin{align}\label{eq:ctab.coplac.B.2}
    \cc '  \;\bb '  \; \cc'  = cb \; ca  \; cb  = c\; bc \; abc \;   = cb \; cb \; ca  = \cc' \; \cc' \; \bb' , &
     & \ctb = \mresize{ \begin{array}{|l|l|l|l}
\cline{1-1}
  1  \\ \cline{1-2}
  1 &  1  \\ \cline{1-3}
  1 &   1  &   1  & , \\ \cline{1-3}
\end{array}}
  \end{align}
  for $\bb' < \cc'$.

  For the two other 3-letter words we get
  \begin{align*}
   \aa '  \;  \bb '  \; \cc'  = ba \; ca \;  cb  = bcc \; aa b \; , & & \ctb =  \mresize{ \begin{array}{|l|l|l|l}
\cline{1-1}
  2  \\ \cline{1-2}
  2 &  0  \\ \cline{1-3}
  2 &   0  &   0  & , \\ \cline{1-3}
\end{array}} \\
   \cc' \;  \bb '  \;\aa '    = cb \; ca \; ba   = cc \; bb \; aa \;, &&  \ctb =  \mresize{ \begin{array}{|l|l|l|l}
\cline{1-1}
  0  \\ \cline{1-2}
  1 &  2  \\ \cline{1-3}
  2 &   1  &   0  & . \\ \cline{1-3}
\end{array}}
  \end{align*}
So we see that the Knuth relations \eqref{eq:knuth.rel} are ``faithfully'' preserved.

\end{example}

Let $\lt'_\ell :=  \lcmp{}{\lt_\ell}$ be the \cmirr \ of $\lt_\ell \in \tA_n$, which is a word in $\tA_n^*$,   and set $\ell'= n - \ell +1$. The $n$-configuration tableau of $\lt'_\ell$ is tableau
\begin{equation}\label{eq:C.2}
\ctab(\lt'_\ell) = \begin{array}{c} \\ \\ \\ \ell' \\ \\ \\ \\ \end{array} {\small \begin{array}{|l|l|l|l|l|l|l|}
\cline{1-1}
  {0}   \\ \cline{1-2}
  {0}  & 1 \\ \cline{1-3}
  \vdots &  \vdots & \ddots \\  \cline{1-4}
   0   &   {1}  & 0 & \\ \cline{1-5}
  {1} &   {0}  &   \cdots & &  0 \\ \cline{1-6}
  \vdots &   \vdots  &  \ddots  &  & & \ddots\\ \cline{1-7}
  {1} &   {0}  &   \cdots & 0 & & \cdots & 0 \\ \cline{1-7}
 \end{array}}
\end{equation}
whose  $\ell'$-diagonal is zero;  in the case that $\ell = n$ (i.e., when $\ell' = 1$) the left column is empty.

\thmcite{Theorem \ref{thm:coplc2plc}}{The \cmirr s \eqref{eq:word.mirr} of the generators $\lt_1, \dots , \lt_n$ of the plactic monoid  $\plcM_n$,
$\lt'_\ell :=  \lcmp{}{\lt_\ell},$ $\ell = 1, \dots,n ,  $
ordered
as $ \lt'_1 < \lt'_2 < \cdots < \lt'_n,  $
admit  the Knuth relations \eqref{eq:knuth.rel} and thus the congruence  $\pcong$ of  $\plcM_n$ implies the equivalence $\cpcong$.}

\begin{proof}[Proof of Theorem \ref{thm:coplc2plc}] Proof by induction on $n$.
The case of $n =3$ has been  proven  in Example \ref{exmp:co.plc.3}. Assuming the implication holds for $n-1$, we prove it by cases for $n$, heavily basing on the Encoding Algorithm ~\ref{algr:conf} of configuration tableaux.

For an easy exposition,  we write the words $\aa_1', \dots, \aa_n'$ in $\tA_n^*$,  each is of length $(n-1)$,  as a table
\begin{equation}\label{eq:mirr.2}
  \begin{array}{l|l|l|l|l|l|l|l }
\cline{2-7}
  \aa_1' : &      & \aa_{n-1} & \aa_{n-2} & \cdots  & \aa_2 & \aa_1 \\ \cline{2-7}
  \aa_2' : &   \aa_n  & & \aa_{n-2} & \cdots &\aa_2   & \aa_1 \\ \cline{2-7}
  \aa_3' : &   \aa_n  &  \aa_{n-1}   &  &\ddots & &  \vdots \\ \cline{2-7}
  \vdots &  \vdots &    &\ddots & & \aa_2 & \aa_1  \\ \cline{2-7}
  \aa_{n-1}' :  &  \aa_n  &   \cdots & &  \aa_3 & & \aa_1  \\ \cline{2-7}
  \aa_n' : &  \aa_n  &   \cdots & & \aa_3 & \aa_2 & &,   \\ \cline{2-7}
 \end{array}
\end{equation}
where the empty spaces stand for absent letters.
To indicate that an  $n$-configuration tableau $\ctb$ is considered with respect to  the  sub-alphabet $\{ \lt_k, \dots, \lt_n \} \subset \tA_n$, we denote it by $\ctb^{(k:n)}$;  $\ctb^{(n)}_w := \ctb^{(1:n)}_w$ denotes the $n$-configuration tableau of a word $w \in \tA^*_n$ over the whole alphabet $\tA_n = \{ \lt_1, \dots, \lt_n \}$.

 As the word  $\aa'_{\ell}$ has a decreasing order, the encoding of its $i$'th letter in $\ctb^{(n)}$ ``bumps'' all the pervious letters,  resulting in a tableau of the form~  \eqref{eq:C.2}, which is similar to  encoding  its $(i-1)$-prefix in a configuration tableau of a fewer letters.
 Thus, we  obtain the following observations for the encoding  $ \lt'_\ell \insrt \ctb$ of $\lt'_\ell$ in   $\ctb$. \begin{itemize}\dispace
  \item[(A)] If $\ell < n $  then  $\lt'_\ell = u \lt_1 $ and the upper part of $\ctb_{u \lt_1}^{(n)}$ is equal to
  $\ctb_{u }^{(2:n)}$.
  \item[(B)] When $\ell =n$, the right part of $\ctb_{\lt'_n}^{(n)}$ is equal to $\ctb_{\lt'_n}^{(2:n)}$.
\end{itemize}
Let $\aa' = \aa_p', \bb' = \aa_q'$, $\cc' = \aa_r'$, where
$p \leq q \leq r$. By \eqref{eq:knuth.rel} we have to prove the following equalities
\begin{description}\dispace
\item[\KXa] $\aa' \; \cc' \; \bb' = \cc' \; \aa' \; \bb' \  \text{if }   p \leq q < r $,
\item[\KXb] $\bb' \;\aa' \; \cc' = \bb' \; \cc' \; \aa' \  \text{if }  p <  q \leq r    $.
\end{description}
To do so we proceed by cases, depending on the values of $p$, $q$, and $r$.

\pSkip
\textbf{Case I.} $r < n$: \quad Then also $p,q < n$, and we can write $\aa'_p = u_p \aa_1$, $\aa'_q = u_q \aa_1$, and $\aa'_r = u_r \aa_1$. All
have the same terminating  letter $\aa_1$ (i.e., the $1$-suffix), and observation (A) holds sequentially for concatenations of $\aa'_p$, $\aa'_q$, and $\aa'_r$,  e.g. see \eqref{eq:ctab.coplac.A.2}.
Therefore
$$\ctb^{(n)}_{\aa'_i \aa'_j \aa'_k} = \mresize{ \begin{array}{|l|l|l|l|l|}
\cline{1-1}
    \\ \cline{2-2}
    \multicolumn{2}{|l|}{\embox}  \\ \cline{3-3}
   \multicolumn{3}{|l|}{\ctb^{(2:n)}_{u_i u_j u_k}}     \\ \cline{4-4}
   \multicolumn{4}{|l|}{\embox}     \\ \cline{1-5}
    {3} &   0  &  \ldots & 0 &  0 \\ \cline{1-5}
\end{array}}
$$for  $i,j,k \in \{ p,q,r\}$.
Thus \begin{equation}\label{eq:u.v.w}
\begin{array}{ccc}
\ctb _{\aa' \cc' \bb'}^{(n)} = \ctb_{\cc' \aa'  \bb'}^{(n)} &  \Iff \quad & \ctb_{u_p u_r u_q}^{(2:n)} = \ctb_{u_r u_p u_q}^{(2:n)}, \\[2mm]
\ctb _{\bb' \aa' \cc'}^{(n)} = \ctb_{\bb' \cc' \aa'  }^{(n)} & \Iff \quad  & \ctb_{u_q u_p u_r }^{(2:n)} = \ctb_{u_q u_r u_p }^{(2:n)},
\end{array}
\end{equation}
and this case is completed by induction.

\pSkip
\textbf{Case II.} $r = n $: \quad We have several  sub-cases, depending on the indices $p, q$, which we recall satisfy  $p  \leq q \leq n$.

\begin{enumerate}\ealph
  \item
 $p \leq q < n-1 < r = n$:
\quad
Write $\aa'_p = u_p \aa_1$, $\aa'_q = u_q  \aa_1$,   $\aa'_r = u_r \aa_2$.
Then, $\aa_2$ is the  $1$-suffix  of $\aa'_r$, and it is the smallest letter in $\aa'_r$, which is  also the $1$-suffix of  both $u_p$ and ~$u_q$, since $p,q < n-1$.  Thus,
 observation~(A) holds inductively, and hence
$$ \ctb^{(n)}_{\aa'_i \aa'_j \aa'_k} = \mresize{ \begin{array}{|l|l|l|l|l|l}
\cline{1-1}
    \\ \cline{2-2}
    \multicolumn{2}{|l|}{\embox}  \\ \cline{3-3}
   \multicolumn{3}{|l|}{\ctb^{(2:n)}_{u_i u_j u_k}}     \\ \cline{4-4}
   \multicolumn{4}{|l|}{\embox}     \\ \cline{1-5}
    2 &   1  & 0 & \ldots &  0  & .\\ \cline{1-5}
\end{array}}$$
This case is then completed by induction as in  \eqref{eq:u.v.w}.

\pSkip
\item   $p <  q = n-1 < r = n$:  \quad
Write  $\aa'_p = u  \aa_2 \aa_1$, $\aa'_q =  v\aa_3\aa_1$, and $\aa'_r = v \aa_3 \aa_2 $ (e.g. see \eqref{eq:ctab.coplac.A.1} and  \eqref{eq:ctab.coplac.B.1}).

\pSkip
\KXa:  The first row of $\ctb_{\aa'_p  \aa'_r} $ is $11 0\cdots 0$. The encoding  $\aa'_q \insrt \ctb_{\aa'_p  \aa'_r}  $
increments $\lm_{1,3}$ by   $\aa_3 \insrt \ctb_{\aa'_p  \aa'_r v}  $   to obtain $\ctb_{\aa'_p   \aa'_r v \aa_3 } |_{\row_1} = 111 0 \cdots 0 $, and then $\aa_1 \insrt \ctb_{\aa'_p  \aa'_r v \aa_3}$ gives $\ctb_{\aa'_p   \aa'_r \aa'_q } |_{\row_1} = 2010 \cdots 0$. For $\ctb_{\aa'_r   \aa'_p }$ we have the same  $\ctb_{\aa'_r   \aa'_p } |_{\row_1} = 110 \cdots 0$, and as before
 $\aa'_q \insrt \ctb_{\aa'_r   \aa'_p }$ results in  $\ctb_{\aa'_r   \aa'_p  \aa'_q}|_{\row_1} = 2010 \cdots 0$. Then, by induction on $\aa'' = u \aa_2$, $\bb'' = v \aa_3$, $\cc'' = v \aa_3$, we have
$$   \text{\KXa} \ (\aa \; \cc \; \bb = \cc \; \aa \; \bb):  \qquad   \ {\small \begin{array}{|l|l|l|l|l|}
\cline{1-1}
    \\ \cline{2-2}
    \multicolumn{2}{|l|}{\embox}  \\ \cline{3-3}
   \multicolumn{3}{|l|}{\ctb^{(2:n)}_{u \aa_2 v \aa_3 v \aa_3 }}     \\ \cline{4-4}
   \multicolumn{4}{|l|}{\embox}     \\ \cline{1-5}
    2 &   0  & 1 & \ldots &  0 \\ \cline{1-5}
\end{array}} \dss = {\small \begin{array}{|l|l|l|l|l|l}
\cline{1-1}
    \\ \cline{2-2}
    \multicolumn{2}{|l|}{\embox}  \\ \cline{3-3}
   \multicolumn{3}{|l|}{\ctb^{(2:n)}_{v \aa_3 u \aa_2 v \aa_3 }}     \\ \cline{4-4}
   \multicolumn{4}{|l|}{\embox}     \\ \cline{1-5}
    2 &   0  & 1 & \ldots &  0 & . \\ \cline{1-5}
\end{array}}  $$

\pSkip
\KXb:  The first row of $\ctb_{\aa'_q  \aa'_p} $ is $2 0\cdots 0$, then $\aa'_r \insrt \ctb_{\aa'_q  \aa'_p}  $
increments $\lm_{1,2}$   to obtain $\ctb_{\aa'_q   \aa'_p  \aa'_r } |_{\row_1} = 21 0 \cdots 0 $. For $\ctb_{\aa'_q   \aa'_r }$ we have   $\ctb_{\aa'_q   \aa'_r } |_{\row_1} = 110 \cdots 0$,
and  $\aa'_p \insrt \ctb_{\aa'_q  \aa'_r }$ results in  $\ctb_{\aa'_q   \aa'_r  \aa'_p}|_{\row_1} = 210 \cdots 0$, since it increments $\lm_{1,2}$ and immediately decrements it by the encoding of $\aa_1$, which increments $\lm_{1,1}$.  Then, by induction on $\aa'' = u \aa_2$, $\bb'' = v \aa_3$, $\cc'' = v \aa_3$, we have

$$   \text{\KXb} \  (\bb \;\aa \; \cc = \bb \; \cc \; \aa  ): \qquad
{\small \begin{array}{|l|l|l|l|l|}
\cline{1-1}
    \\ \cline{2-2}
    \multicolumn{2}{|l|}{\embox}  \\ \cline{3-3}
   \multicolumn{3}{|l|}{\ctb^{(2:n)}_{v \aa_3 u \aa_2 v \aa_3}}     \\ \cline{4-4}
   \multicolumn{4}{|l|}{\embox}     \\ \cline{1-5}
    2 &   1  & 0 & \ldots &  0 \\ \cline{1-5}
\end{array}} \dss =
{\small \begin{array}{|l|l|l|l|l|l}
\cline{1-1}
    \\ \cline{2-2}
    \multicolumn{2}{|l|}{\embox}  \\ \cline{3-3}
   \multicolumn{3}{|l|}{\ctb^{(2:n)}_{v \aa_3 v \aa_3 u \aa_2 }}     \\ \cline{4-4}
   \multicolumn{4}{|l|}{\embox}     \\ \cline{1-5}
    2 &   1  & 0 & \ldots &  0 & . \\ \cline{1-5}
\end{array}}
$$

\pSkip
\item $p =  q = n-1< r = n$:  \quad
Write  $\aa'_p = \aa'_q =  u \aa_3\aa_1$ and $\aa'_r = v \aa_3 \aa_2 $ (e.g. \eqref{eq:ctab.coplac.A.2}). The first row of $\ctb_{\aa'_p  \aa'_r} $ is $11 0\cdots 0$, then $\aa'_q \insrt \ctb_{\aa'_p  \aa'_r}  $
increments $\lm_{1,3}$ and  $\lm_{1,1}$, and increments $\lm_{1,2}$ to obtain $\ctb_{\aa'_p   \aa'_r \aa'_q } |_{\row_1} = 2010 \cdots 0 $.
On the other hand  $\ctb_{\aa'_r   \aa'_p } |_{\row_1} = 101 \cdots 0$,
and  $\aa'_q \insrt \ctb_{\aa'_r   \aa'_p }$ gives $\ctb_{\aa'_r   \aa'_p  \aa'_q}|_{\row_1} = 2010 \cdots 0$. But $u = v$,  and thus

$$    \text{\KXa} \ (\aa \; \cc \; \bb = \cc \; \aa \; \bb):  \qquad  {\small \begin{array}{|l|l|l|l|l|}
\cline{1-1}
    \\ \cline{2-2}
    \multicolumn{2}{|l|}{\embox}  \\ \cline{3-3}
   \multicolumn{3}{|l|}{\ctb^{(2:n)}_{u \aa_3 v\aa_3 \aa_2 u}}     \\ \cline{4-4}
   \multicolumn{4}{|l|}{\embox}     \\ \cline{1-5}
    2 &   0  & 1 & \ldots &  0 \\ \cline{1-5}
\end{array}} \dss =
{\small \begin{array}{|l|l|l|l|l|l}
\cline{1-1}
    \\ \cline{2-2}
    \multicolumn{2}{|l|}{\embox}  \\ \cline{3-3}
   \multicolumn{3}{|l|}{\ctb^{(2:n)}_{v \aa_3 u \aa_3 \aa_2 u}}     \\ \cline{4-4}
   \multicolumn{4}{|l|}{\embox}     \\ \cline{1-5}
    2 &   0  & 1 & \ldots &  0 & . \\ \cline{1-5}
\end{array}}
$$
(In this case,  \KXb\ is not relevant here.)
\pSkip
\item $ p < n-1 < q = r = n$: \quad
Let $\aa'_p = u  \aa_2 \aa_1$ and   $\aa'_q =  \aa'_r = v \aa_3 \aa_2 $. The first row of $\ctb_{\aa'_q  \aa'_p} $ is $11 0\cdots 0$, then $\aa'_r \insrt \ctb_{\aa'_q  \aa'_p}  $
increments  $\lm_{1,2}$ to obtain $\ctb_{\aa'_q   \aa'_p \aa'_r } |_{\row_1} = 120 \cdots 0 $.
On the other hand $\ctb_{\aa'_q   \aa'_r } |_{\row_1} = 020 \cdots 0$,
and  $\aa'_p \insrt \ctb_{\aa'_q   \aa'_r }$ gives $\ctb_{\aa'_q   \aa'_r  \aa'_p}|_{\row_1} = 120\cdots 0$. Thus
$$
\text{\KXb} \  (\bb \;\aa \; \cc = \bb \; \cc \; \aa  ): \qquad
 {\small \begin{array}{|l|l|l|l|l|}
\cline{1-1}
    \\ \cline{2-2}
    \multicolumn{2}{|l|}{\embox}  \\ \cline{3-3}
   \multicolumn{3}{|l|}{\ctb^{(2:n)}_{v \aa_3 u \aa_2 v \aa_3}}     \\ \cline{4-4}
   \multicolumn{4}{|l|}{\embox}     \\ \cline{1-5}
    1 &   2  & 0 & \ldots &  0 \\ \cline{1-5}
\end{array}} \dss =
{\small \begin{array}{|l|l|l|l|l|l}
\cline{1-1}
    \\ \cline{2-2}
    \multicolumn{2}{|l|}{\embox}  \\ \cline{3-3}
   \multicolumn{3}{|l|}{\ctb^{(2:n)}_{v \aa_3 v \aa_3 u \aa_2 }}     \\ \cline{4-4}
   \multicolumn{4}{|l|}{\embox}     \\ \cline{1-5}
    1 &   2  & 0 & \ldots &  0 & .\\ \cline{1-5}
\end{array}} $$
by induction on $\aa'' = u \aa_2$, $\bb'' = \cc'' = v \aa_3$.
(In this case,  \KXa\ is not relevant here.)

\pSkip
\item $p = n-1 < q = r= n, $: \quad
Write  $\aa'_p = u  \aa_3 \aa_1$ and   $\aa'_q =  \aa'_r = v \aa_3 \aa_2 $ (e.g. see \eqref{eq:ctab.coplac.B.2}). The first row of $\ctb_{\aa'_q  \aa'_p} $ is $101 0\cdots 0$, then $v \insrt \ctb_{\aa'_q  \aa'_p}  $ fills the forth column by $1$, which is bumped by $\aa_3 \insrt \ctb_{\aa'_q  \aa'_p v }  $. So $\ctb_{\aa'_q  \aa'_p v \aa_3}|_{\row_1} = 1020 \cdots 0$, and $\aa_2 \insrt \ctb_{\aa'_q  \aa'_p v \aa_3}  $ gives $\ctb_{\aa'_q  \aa'_p \aa'_r}|_{\row_1} = 1110 \cdots 0$, as $\lm_{1,3}$ is  decremented by encoding $\aa_2$. On the other hand
$\ctb_{\aa'_q   \aa'_r } |_{\row_1} = 020 \cdots 0$,  and $\aa'_p \insrt \ctb_{\aa'_q   \aa'_r }$ gives $\ctb_{\aa'_q   \aa'_r  \aa'_p}|_{\row_1} = 1110\cdots 0$. Thus
$$
\text{\KXb} \  (\bb \;\aa \; \cc = \bb \; \cc \; \aa   ): \qquad  \ {\small \begin{array}{|l|l|l|l|l|}
\cline{1-1}
    \\ \cline{2-2}
    \multicolumn{2}{|l|}{\embox}  \\ \cline{3-3}
   \multicolumn{3}{|l|}{\ctb^{(2:n)}_{\aa_q' u \aa_3 v }}     \\ \cline{4-4}
   \multicolumn{4}{|l|}{\embox}     \\ \cline{1-5}
    1 &   1  & 1 & \ldots &  0 \\ \cline{1-5}
\end{array}} \dss = {\small \begin{array}{|l|l|l|l|l|l}
\cline{1-1}
    \\ \cline{2-2}
    \multicolumn{2}{|l|}{\embox}  \\ \cline{3-3}
   \multicolumn{3}{|l|}{\ctb^{(2:n)}_{\aa_q' v \aa_3 u }}     \\ \cline{4-4}
   \multicolumn{4}{|l|}{\embox}     \\ \cline{1-5}
    1 &   1  & 1 & \ldots &  0 & , \\ \cline{1-5}
\end{array}}  $$
since $u = v$. (In this case,  \KXa\ is not relevant here.)
\end{enumerate}
The above cases show that involving a new letter reflects inductively on extending the tableaux by a bottom row,  and the proof is completed.
\end{proof}

\end{appendix}
\end{document}